\definecolor{newblue}{rgb}{0.2, 0.3, 0.85}
\newcommand{\X}{{\rm X}}
\newcommand{\mm}{\mathfrak m}
\newcommand{\sfd}{{\sf d}}
\newcommand{\ppi}{{\mbox{\boldmath\(\pi\)}}}
\newcommand{\nchi}{{\raise.3ex\hbox{\(\chi\)}}}
\numberwithin{equation}{section}
\definecolor{dgreen}{rgb}{0.0, 0.56, 0.0}
\newcommand{\N}{\ensuremath{\mathbb N}}
\newcommand{\R}{\ensuremath{\mathbb R}}
\newcommand{\meas}{\mathfrak{m}}
\newcommand{\lip}{{\rm lip \,}}
\newcommand{\st}{\ensuremath{\ :\ }} 
\newcommand{\eqdef}{\ensuremath{\vcentcolon=}}
\newcommand \eps{\ensuremath{\varepsilon}} 
\renewcommand{\epsilon}{\varepsilon}
\newcommand{\de}{\ensuremath{\,\mathrm d}} 
\renewcommand{\d}{\ensuremath{\mathrm d}} 
\DeclareMathOperator{\sn}{sn}
\newcommand{\CD}{\mathsf{CD}}
\newcommand{\RCD}{\mathsf{RCD}}
\newcommand{\MCP}{\mathsf{MCP}}
\newcommand{\dist}{\mathsf{d}} 
\newcommand{\haus}{\mathcal{H}} 
\newcommand{\fr}{\penalty-20\null\hfill\(\blacksquare\)}
\let\div\undefined
\DeclareMathOperator{\div}{div}
\newcommand{\lapl}{\ensuremath{\Delta}}
\newcommand{\ric}{\ensuremath{\mathrm{Ric}}} 
\DeclareMathOperator{\vol}{vol}
\theoremstyle{plain}
\newtheorem{theorem}{Theorem}[section] 
\theoremstyle{plain}
\theoremstyle{plain}
\newtheorem{proposition}[theorem]{Proposition}
\theoremstyle{plain}
\newtheorem{lemma}[theorem]{Lemma}
\theoremstyle{plain}
\newtheorem{corollary}[theorem]{Corollary}
\theoremstyle{definition}
\newtheorem{definition}[theorem]{Definition} 
\theoremstyle{definition}
\newtheorem{remark}[theorem]{Remark}
\theoremstyle{definition}
\theoremstyle{definition}
\title{Isoperimetric sets in spaces with lower bounds on the Ricci curvature}
\author[Gioacchino Antonelli]{Gioacchino Antonelli}\address{Scuola Normale Superiore, Piazza dei Cavalieri, 7, 56126 Pisa, Italy.}\email{gioacchino.antonelli@sns.it}
\author[Enrico Pasqualetto]{Enrico Pasqualetto}\address{Scuola Normale Superiore, Piazza dei Cavalieri, 7, 56126 Pisa, Italy.}\email{enrico.pasqualetto@sns.it}
\author[Marco Pozzetta]{Marco Pozzetta}\address{Dipartimento di Matematica e Applicazioni, Universit\`a di Napoli Federico II, Via Cintia, Monte S. Angelo, 80126 Napoli, Italy.}\email{marco.pozzetta@unina.it}
\date{\today}
\begin{document}

\maketitle

\begin{abstract}
In this paper we study regularity and topological properties of volume constrained minimizers of quasi-perimeters in $\RCD$ spaces where the reference measure is the Hausdorff measure. A quasi-perimeter is a functional given by the sum of the usual perimeter and of a suitable continuous term. In particular, isoperimetric sets are a particular case of our study. 

We prove that on an $\RCD(K,N)$ space $(\X,\dist,\mathcal{H}^N)$, with $K\in\mathbb R$, $N\geq 2$, and a uniform bound from below on the volume of unit balls, volume constrained minimizers of quasi-perimeters are open bounded sets with $(N-1)$-Ahlfors regular topological boundary coinciding with the essential boundary. 

The proof is based on a new Deformation Lemma for sets of finite perimeter in $\RCD(K,N)$ spaces $(\X,\dist,\meas)$ and on the study of interior and exterior points of volume constrained minimizers of quasi-perimeters.

The theory applies to volume constrained minimizers in smooth Riemannian manifolds, possibly with boundary, providing a general regularity result for such minimizers in the smooth setting.
\end{abstract}

\tableofcontents

\textbf{MSC 2020:} Primary: 49Q20, 53C23. Secondary: 26B30, 26A45, 49J40.

\medskip

\textbf{Keywords:} Regularity theory of volume constrained minimizers, isoperimetric sets, $\RCD$ spaces.

\section{Introduction}

On a space having notions of \emph{volume} and \emph{perimeter} on a subfamily of its subsets it makes sense to formulate the \emph{isoperimetric problem}. The most classical formulation of the problem aims at minimizing the perimeter among sets having a fixed volume. The development of the theory of sets of finite perimeter on metric measure spaces $(\X,\dist,\meas)$, see \cite{Ambrosio02, AmbrosioAhlfors, Miranda03, AmbrosioDiMarino14}, 
makes such ambient spaces a general framework where to set up the isoperimetric problem, naturally including the setting of smooth Riemannian manifolds. As for the problem formulated on the Euclidean spaces or in smooth Riemannian manifolds, it is useful to develop tools and a basic regularity theory able to treat \emph{isoperimetric sets}, i.e., minimizers of the isoperimetric problem, which therefore solve a minimization problem under a \emph{volume constraint}. In this paper, we want to develop part of this fundamental theory in the nonsmooth setting of $\RCD(K,N)$ metric measure spaces, considering the regularity theory for volume constrained minimizers of functionals called \emph{quasi-perimeters}, which include the usual perimeter as a particular case. Already in the classical smooth setting, the basic regularity theory for sets minimizing a functional under a volume constraint is much more involved than the theory for minimizers without constraints  \cite{GonzalezMassariTamanini, Xia05}. Indeed, roughly speaking, comparison and deformation arguments on a minimizer must preserve its volume.

The first achievement of the paper is a Deformation Lemma for sets of finite perimeter in the realm of $\RCD(K,N)$ spaces with $K\in\mathbb R$ and $N<+\infty$. We denote by $P(E)$ the perimeter of a set $E$, see \cref{def:BVperimetro}.

\begin{theorem}[Deformation Lemma]\label{thm:MainEst}
Let \((\X,\sfd,\mm)\) be an \({\sf RCD}(K,N)\) space with \(N<\infty\) and let $R>0$. Then there exists a constant
\(C_{K,N,R}>0\) such that the following holds. If $\bar x \in \X$ and \(E\subset\X\) is a set of finite perimeter, then
\begin{equation}\label{eq:main_claim2}
	\,P\big(E\setminus B_r(\bar x)\big)\leq \frac{C_{K,N,R}}{r}\,\mm\big(E\cap B_r(\bar x)\big)+\,
	P(E),\quad\text{for every}\,\,r\in(0,R),
\end{equation}
\begin{equation}\label{eq:main_claim2NEW}
	\,P\big(E\cup B_r(\bar x)\big)\leq \frac{C_{K,N,R}}{r}\,\mm\big( B_r(\bar x)\setminus E\big)+\,
	P(E),\quad\text{for every}\,\,r\in(0,R).
\end{equation}
\end{theorem}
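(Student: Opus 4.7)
The plan is to first reduce to \eqref{eq:main_claim2} and then prove the latter by a slicing argument adapted to the $\RCD$ framework. The second estimate \eqref{eq:main_claim2NEW} follows from \eqref{eq:main_claim2} applied to $E^c$, since $E \cup B_r(\bar x) = \bigl(E^c \setminus B_r(\bar x)\bigr)^c$ implies $P\bigl(E \cup B_r(\bar x)\bigr) = P\bigl(E^c \setminus B_r(\bar x)\bigr)$, while $\mm\bigl(B_r(\bar x) \setminus E\bigr) = \mm\bigl(E^c \cap B_r(\bar x)\bigr)$ and $P(E^c)=P(E)$. Hence I focus on \eqref{eq:main_claim2}. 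The approach rests on three ingredients: an intersection (Leibniz) rule for characteristic BV functions, the coarea formula for BV functions, and a Chebyshev averaging step, closed off by lower semicontinuity of the perimeter.

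First I would establish the intersection rule: for a.e. $s \in (0,R)$, namely those $s$ for which $B_s(\bar x)$ has finite perimeter and $|D\chi_E|\bigl(\partial^* B_s(\bar x)\bigr)=0$ (a property enjoyed by all but countably many radii via a standard Fubini argument), I expect
\[
P\bigl(E \setminus B_s(\bar x)\bigr) \,\le\, P(E) \,+\, \bigl|D\chi_{B_s(\bar x)}\bigr|\bigl(E^{(1)}\bigr),
\]
where $E^{(1)}$ denotes the density-$1$ points of $E$, so that only the portion of $\partial^* B_s(\bar x)$ lying inside $E$ contributes to the "new" perimeter. Next I would bound the new term via the coarea formula applied to the $1$-Lipschitz cutoff $u(x):=\bigl(r - \sfd(\bar x, x)\bigr)_+$. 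Since $\{u>t\}=B_{r-t}(\bar x)$ for $t\in[0,r)$ and $|Du|\le \mm\res B_r(\bar x)$ (using that the weak gradient coincides with the Lipschitz slope in the $\RCD$ framework), coarea yields
\[
\int_0^r \bigl|D\chi_{B_s(\bar x)}\bigr|\bigl(E^{(1)}\bigr)\,ds \,=\, |Du|\bigl(E^{(1)}\bigr) \,\le\, \mm\bigl(E \cap B_r(\bar x)\bigr).
\]
A Chebyshev-type argument then produces $s^* \in (r/2, r)$ with $\bigl|D\chi_{B_{s^*}(\bar x)}\bigr|\bigl(E^{(1)}\bigr) \le (2/r)\, \mm\bigl(E \cap B_r(\bar x)\bigr)$, and combining with the intersection rule gives \eqref{eq:main_claim2} at the radius $s^*$.

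To pass from $s^*$ to the prescribed $r$, I would run the above argument for an increasing sequence of radii $r_n \uparrow r$ (avoiding the at most countably many $r$ with $\mm\bigl(\partial B_r(\bar x)\bigr) > 0$), obtaining $s_n^* \in (r_n/2, r_n)$ with $s_n^* \to r$ along which the inequality holds. Since $\chi_{E \setminus B_{s_n^*}(\bar x)} \to \chi_{E \setminus B_r(\bar x)}$ in $L^1_{\mathrm{loc}}$, lower semicontinuity of the perimeter would then transfer the bound to $r$. The main obstacle will be the rigorous justification of the intersection step in the $\RCD$ setting: the naive bound $P\bigl(E \setminus B_s(\bar x)\bigr) \le P(E) + P\bigl(B_s(\bar x)\bigr)$ is useless, so one must carefully isolate the contribution of $\partial^* B_s(\bar x) \cap E^{(1)}$ (which is what gets added to the perimeter) from the rest of $\partial^* B_s(\bar x)$, and handle the choice of Borel representative of $E$ needed to evaluate the singular measure $|D\chi_{B_s(\bar x)}|$ on $E$. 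Any dependence on $K$, $N$, $R$ in the constant $C_{K,N,R}$ beyond the purely numerical factor $2$ will presumably come from technical constants in the $\RCD$ BV-calculus used to make the intersection step sharp.
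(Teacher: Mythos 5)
The complementation step for \eqref{eq:main_claim2NEW} and the subtraction/intersection rule $P(E\setminus B_s(\bar x))\le P(E)+P(B_s(\bar x),E^{(1)})$ (paper's \cref{lem:per_inters_general}, \cref{cor:per_inters_ball}) are both correct, as is the coarea identity $\int_0^\infty P(B_s(\bar x),\cdot)\,\d s=\mm$ (paper's \cref{lem:m_superp}). The gap is in the Chebyshev step and the claimed passage $s_n^*\to r$. From $s_n^*\in(r_n/2,r_n)$ and $r_n\uparrow r$ you can only conclude that $(s_n^*)$ eventually lies in $(r/2-\delta,r)$ for any $\delta>0$; nothing forces $s_n^*\to r$, so lower semicontinuity cannot transfer the estimate to the prescribed radius $r$. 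This is not a cosmetic issue: the coarea/Chebyshev argument controls only the \emph{average} of $s\mapsto P(B_s(\bar x),E^{(1)})$ over $(0,r)$, whereas the theorem needs a pointwise bound at $s=r$, and the two can differ by a dimensional factor. Concretely, take $E=B_{R'}(\bar x)$ in Euclidean $\R^N$ with $R'>R$ and apply \eqref{eq:main_claim2} at radius $r<R$: one gets $P(B_r)\le \frac{C}{r}\,|B_r|$, i.e.\ $N\omega_N r^{N-1}\le C\,\omega_N r^{N-1}$, forcing $C\ge N$. Meanwhile $\int_0^r P(B_s)\,\d s=|B_r|=\omega_N r^N$, so the average of $P(B_s)$ is $\omega_N r^{N-1}$ and the endpoint value $P(B_r)=N\omega_N r^{N-1}$ is $N$ times larger; the Chebyshev bound sees only the average and can never reach the endpoint. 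Your estimate with constant $2$ is therefore false, and the claim that $K,N,R$ enter only through ``technical constants in the $\RCD$ BV-calculus'' is misguided.

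What is missing is precisely the ingredient your argument never uses: a pointwise divergence/Laplacian bound. The paper's proof establishes the sharper estimate $r\,P(B_r(\bar x),E^{(1)})\le C_{K,N,R}\,\mm(E\cap B_r(\bar x))+r\,P(E,B_r(\bar x))$ (note the crucial extra $r\,P(E,B_r(\bar x))$ on the right, without which the inequality fails at specific radii, e.g.\ for thin annuli), by applying the Gauss--Green formula of \cite{BPS19} to a mollified version of $\nabla\sfd_{\bar x}^2$ and invoking the Laplacian comparison ${\boldsymbol\Delta}\sfd_{\bar x}^2\le 2N\,\tilde\tau_{K,N}\circ\sfd_{\bar x}\,\mm$. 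This is where the curvature hypothesis and the constant $C_{K,N,R}$ genuinely come from. Because the vector field $\nabla\sfd_{\bar x}^2$ is neither bounded nor has $L^p$-divergence, the paper needs the notion of $(p)$-divergence and the mollified heat flow machinery to make the Gauss--Green argument rigorous, and then the subtraction rule and lower semicontinuity to pass to every radius. Your outline skips the divergence-theorem step entirely, and no amount of care in the BV/intersection bookkeeping can substitute for it.
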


We stress the key role of the lower curvature and finite upper dimension bound in the previous statement. The proof of the previous statement will follow by a careful use of the Gauss--Green formula, and, in order to perform the estimate, the Laplacian comparison for the distance function will be of key importance. The last tool is classically known to be deeply linked to lower curvature bounds. Moreover, in order to perform the proof of the last result we will crucially need a second order differential calculus, which is currently available on $\RCD$ spaces and not on $\CD$ or $\MCP$ spaces, for example.

The previous result is quietly well-known in the Euclidean spaces, see e.g., \cite[Equation (8)]{GonzalezMassariTamanini} and references therein. It roughly says that, by adding (or subtracting) a ball to a set of finite perimeter, the perimeter increases in a controlled way with respect to the volume. In particular, for $r\ge r_0>0$ in \cref{thm:MainEst}, the perimeter of the deformed set given by adding (or subtracting) a ball is bounded \emph{linearly} with respect to the variation of the measure; this fact is the foundation of several classical arguments in different settings \cite{GonzalezMassariTamanini, Xia05, Nar14, CintiPratelli, PratelliSaracco}. Such a deformation with bound is also easily obtained in the smooth setting since one has a representation of the weak gradient of the characteristic function of a set as a vector valued measure, see \cite[Lemma 17.21]{MaggiBook}.

On the contrary, the proof of \cref{thm:MainEst} is nontrivial in the nonsmooth context of $\RCD$ spaces. The main idea to obtain such a result is to apply the recently obtained Gauss--Green formula for sets of finite perimeter in $\RCD$ spaces, see \cite[Theorem 2.2]{BPS19}, to the vector field $\nabla\dist_{\bar x}^2$, where $\dist_{\bar x}(\cdot):=\dist(\bar x,\cdot)$ is the distance function from $\bar x$. This is done in \cref{thm:key_estimate}. On the other hand, $\div(\nabla\dist_{\bar x}^2)=\lapl\dist_{\bar x}^2$ is a signed Radon measure \cite{Gigli12}, and thus $\nabla\dist_{\bar x}^2$ does not enjoy the regularity properties to directly apply the Gauss--Green formula in \cite[Theorem 2.2]{BPS19}. Hence one has to perform a careful smoothing argument,
based on the mollified heat flow for vector fields (cf.\ \cref{Mollified}). For a comparison of this approach with another possible one using the results in \cite{BCM20}, see \cref{rem:BCM}.
In order to conclude the proof of \cref{thm:key_estimate}, and in turn to obtain the result in \cref{thm:MainEst}, one also needs to couple the previous Gauss--Green formula with a proper integral version of the equality $\nabla\dist_{\bar x}^2(\cdot)=2\dist_{\bar x}(\cdot )\nu_{B_{\dist_{\bar x}(\cdot)}(\bar x)}$, where $\nu$ denotes the outer unit normal to balls centered at $\bar x$. The latter result is contained in \cref{prop:nu_sphere}. Let us mention that the analogue of \cref{thm:key_estimate}, that is the key step to obtain \cref{thm:MainEst}, in the setting of Riemannian manifolds with bound below on the Ricci tensor is contained in \cite[Lemma 4.8]{NardulliOsorio}.

The result in \cref{thm:MainEst}, when applied at an interior (or an exterior) point $\bar x$ of $E$, if any, yields the possibility of taking $r$ bounded below away from zero, realizing the aforementioned deformation of $E$ controlling the perimeter linearly as a function of the variation of the volume. Such an observation, itself sometimes called Deformation Lemma in the literature, is of crucial importance to obtain regularity results for the set $E$. A Deformation Lemma is already contained in Almgren's paper \cite[VI.2(3)]{AlmgrenBook}, and its importance is clear also in Morgan's book \cite[Lemma 13.5]{MorganBook}. Moreover, the statement of a Deformation Lemma for Euclidean spaces is in \cite[Lemma 17.21]{MaggiBook}; for contact sub-Riemannian manifolds in \cite[Lemma 4.5]{GalliRitore}; and for sub-Finsler nilpotent Lie groups in \cite[Lemma 3.6]{Pozuelo}. The analogous version of such Deformation Lemmas in $\RCD$ spaces, which is a direct consequence of \cref{thm:MainEst}, is contained in \cref{thm:VariazioniMaggi}.

From the previous discussion it seems clear that in order to apply the Deformation Lemma to obtain regularity results of volume constrained minimizers, one should first show that they have interior and exterior points. This is exactly what the authors prove in \cite[Theorem 1]{GonzalezMassariTamanini} in the setting of volume constrained minimizers in open sets of $\mathbb R^n$, and in \cite[Theorem 4.3]{Xia05} in the same setting but for minimizers of functionals of the form $P+G$, called \emph{quasi-perimeters}, where $P$ is the perimeter and $G$ is a suitable continuous term.  For the next result we adapt the strategy of \cite{Xia05} to show that, in the setting of $\RCD(K,N)$ spaces $(\X,\dist,\mathcal{H}^N)$, with $2\leq N<+\infty$ and $K\in\mathbb R$, volume constrained minimizers for suitable quasi-perimeters in open sets have interior and exterior points.  

For the exposition of the next results of the paper, we need to introduce some terminology. Let $(\X,\dist,\haus^N)$ be an $\RCD(K,N)$ space with $N\ge 2$ natural number, and $K\in\mathbb R$. Let $\Omega\subset \X$ be an open set. Let us consider a functional
\[
G:\left\{\text{$\haus^N$-measurable sets in $\Omega$} \right\}/\sim \,\,\to (-\infty,+\infty],
\]
where $E\sim F$ if and only if $\haus^N(E\Delta F)=0$, where $E\Delta F$ denotes the symmetric difference between $E$ and $F$, such that
\begin{equation}\label{eq:ConditionGIntro}
    \begin{array}{lll}
    &G(\emptyset)<+\infty,&\\
    &\forall\widetilde\Omega\Subset \Omega \text{ bounded open }\,\,&\exists C_G>0, \sigma>1-\frac1N \st \\ 
    &   &G(E)\le G(F) + C_G\haus^N(E\Delta F)^\sigma ,
    \end{array}
\end{equation}
for any Borel sets $E,F \subset \Omega$ such that $E\Delta F\subset\widetilde\Omega$. Observe that both $C_G$ and $\sigma$ may depend on $\widetilde\Omega$.

For such a function $G$, we define the \emph{quasi-perimeter $\mathscr P$ restricted to $\Omega$} by
\begin{equation*}
    {\mathscr P}(E,\Omega) \eqdef P(E,\Omega) + G(E\cap \Omega),
\end{equation*}
for any $\haus^N$-measurable set $E$ in $\Omega$. If $\Omega=\X$, we simply write $\mathscr P(E)\eqdef \mathscr P(E, \X) $. The class of quasi-perimeters clearly comprises many examples of energy functionals; a classical case consists in functionals appearing in the problem of the prescription of the mean curvature of a set, while a further application of the regularity theory developed in this work is contained in \cite{ABFP21}.

\begin{definition}\label{def:MinimizersIntro}
Let $(\X,\dist,\haus^N)$, $\Omega\subset \X$, $G$, $\mathscr P$ be as above.

We say that a set of locally finite perimeter $E \subset \X$ is a \emph{volume constrained minimizer of $\mathscr P$ in $\Omega$} if for any $\haus^N$-measurable set $F$ such that there is a compact set $K\subset \Omega$ with $\haus^N((E\Delta F)\setminus K)=0$, and such that $\haus^N(F \cap K)=\haus^N(E \cap K)$, it occurs that
\[
\mathscr P(E,\Omega) \le \mathscr P(F,\Omega).
\]

If $\Omega=\X$, $\haus^N(E)<+\infty$, and $E$ satisfies $\mathscr P(E) \le \mathscr P(F)$ for any $F$ with $\haus^N(F)=\haus^N(E)$, we say that $E$ is a \emph{volume constrained minimizer of $\mathscr P$}.
\end{definition}

We are then ready to state the second main achievement of this paper. We prove, as a first fundamental step, that on every $(\X,\dist,\mathcal{H}^N)$ that is an $\RCD(K,N)$ space with $2\leq N<+\infty$ and $K\in\mathbb R$, every volume constrained minimizer of a quasi-perimeter $\mathscr{P}$ in an open set $\Omega$ has interior (and exterior) points, i.e., points $x$ (resp.\! $y$) such that $B_r(x)\setminus E$ is negligible (resp.\! $B_s(y)\setminus E$ has full measure in \(B_s(y)\)) for some $r>0$ (resp.\! some $s>0$). Then we use the latter information, coupled with the Deformation Lemma discussed above, to obtain stronger regularity results.

We recall that $E^{(t)}$ is the set of points with density $t$ in $E$ with respect to $\mathcal{H}^N$, $\partial^e E\eqdef \X\setminus (E^{(0)}\cup E^{(1)})$ is the essential boundary of $E$, and $\partial F$ is the topological boundary of any set $F$. For the notion of Ahlfors regular set we refer the reader to \cref{def:Ahlfors}.

\begin{theorem}\label{thm:MainIntro2}
    Let $(\X,\dist,\haus^N)$ be an $\RCD(K,N)$ space with $2\leq N<+\infty$ natural number, and $K\in\mathbb R$ . Let $\Omega\subset \X$ be open, and let $\mathscr P=P+G$ be the quasi-perimeter restricted to $\Omega$ associated to $G$ as in \eqref{eq:ConditionGIntro}. Let $E\subset \X$ be a volume constrained minimizer of $\mathscr P$ in $\Omega$, and assume $P(E,\Omega)>0$.
    
    Hence $E\cap\Omega$ has both interior and exterior points, $E^{(1)}\cap \Omega$ is open, $\partial^e E \cap \Omega = \partial E^{(1)} \cap \Omega$, and $\partial E^{(1)}$ is locally $(N-1)$-Ahlfors regular in $\Omega$.
    
    If we further assume that the constants $C_G,\sigma$ in \eqref{eq:ConditionGIntro} are uniform on the choice of $\widetilde\Omega$, and that there exists $v_0>0$ such that $\mathcal{H}^N(B_1(x))\geq v_0$ for every $x\in \X$, then $\partial E^{(1)}$ is (globally) $(N-1)$-Ahlfors regular in $\Omega$. 
\end{theorem}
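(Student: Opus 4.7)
The overall approach adapts the Euclidean strategy of \cite{Xia05} and \cite{GonzalezMassariTamanini} to the $\RCD(K,N)$ framework, with \cref{thm:MainEst} in place of the Euclidean deformation lemma. The plan is to (I) produce an interior and an exterior point of $E\cap\Omega$; (II) derive uniform two-sided volume and perimeter density estimates at points of $\partial E^{(1)}\cap\Omega$; and (III) deduce the openness of $E^{(1)}\cap\Omega$, the identity $\partial^e E\cap\Omega=\partial E^{(1)}\cap\Omega$, and the $(N{-}1)$-Ahlfors regularity of $\partial E^{(1)}$.

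For stage (I), $P(E,\Omega)>0$ and the fact that $\mathcal H^N$-a.e.\ point has density $0$ or $1$ in $E$ let me pick $x\in E^{(1)}\cap\Omega$ and $y\in E^{(0)}\cap\Omega$ with $\bar B_{r_0}(x),\bar B_{r_0}(y)$ disjoint and compactly contained in $\Omega$. Suppose, toward a contradiction, that $x$ is not an interior point; then $v(r)\eqdef\mathcal H^N(B_r(x)\setminus E)>0$ for every $r>0$, while $v(r)=\smallO(r^N)$ since $x\in E^{(1)}$ and Bishop--Gromov gives $\mathcal H^N(B_r(x))\le C\,r^N$ for small $r$. For each small $r$ choose $s=s(r)$ with $w(s)\eqdef\mathcal H^N(E\cap B_s(y))=v(r)$ and consider the volume-preserving competitor $F_{r,s}\eqdef(E\cup B_r(x))\setminus B_s(y)$. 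Minimality of $E$ together with the Hölder bound in \eqref{eq:ConditionGIntro} gives
\[
P(E,\Omega)-P(F_{r,s(r)},\Omega)\le C_G\,(2v(r))^\sigma.
\]
On the other hand, the elementary decomposition $P(E\cup B_r(x),\Omega)\le P(E,\Omega\setminus\bar B_r(x))+\mathcal H^{N-1}(\partial B_r(x)\cap E^{(0)})$ (valid for $\mathcal L^1$-a.e.\ $r$) combined with \cref{thm:MainEst} on the $y$-side and the relative isoperimetric inequality in balls of $\RCD(K,N)$ spaces, yielding $P(E,B_r(x))\ge c\,v(r)^{(N-1)/N}$, produces the reverse bound
\[
P(E,\Omega)-P(F_{r,s(r)},\Omega)\ge c\,v(r)^{(N-1)/N}-v'(r)-C\,v(r)/s(r),
\]
where $v'(r)=\mathcal H^{N-1}(\partial B_r(x)\cap E^{(0)})$ for a.e.\ $r$ by the coarea formula. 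Since $y\in E^{(0)}$ forces $v(r)/s(r)=C\,\delta_{s(r)}^{1/N}\,v(r)^{(N-1)/N}$ with $\delta_{s(r)}\to 0$ as $r\to 0$, and $\sigma>1-1/N$ makes $v(r)^\sigma=\smallO(v(r)^{(N-1)/N})$, the two estimates combine into the differential inequality $v'(r)\ge c'\,v(r)^{(N-1)/N}$ for a.e.\ $r$ in some interval $(0,r_*)$. Integration yields $v(r)\ge (c'r/N)^N$, contradicting $v(r)=\smallO(r^N)$. The symmetric argument applied to $\X\setminus E$ produces an exterior point.

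For stage (II), the existence of an interior ball $B_\rho(z)\subset E$ and an exterior ball $B_\rho(\tilde z)\subset\Omega\setminus E$ allows one to compensate volume variations at any boundary point $x\in\partial E^{(1)}\cap\Omega$ by invoking \cref{thm:MainEst} with radius bounded below by $\rho/2$, so that the compensation cost is \emph{linear} in the volume change. A standard iterative scheme (cf.\ \cite[Chapter~17]{MaggiBook} and \cite[Theorem~4.3]{Xia05}), using quasi-perimeter minimality and again $\sigma>1-1/N$ to absorb the Hölder $G$-remainder, yields, for all $x\in\partial^e E\cap\Omega$ and all small $r$, the two-sided density estimates
\[
c_1\,r^N\le\mathcal H^N(E\cap B_r(x))\le c_2\,r^N,\qquad c_3\,r^{N-1}\le P(E,B_r(x))\le c_4\,r^{N-1},
\]
with constants depending only on $K,N,C_G,\sigma$, the reservoir radii, and $\dist(x,\partial\Omega)$. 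Stage (III) is then routine: the upper volume bound keeps points of $\partial^e E\cap\Omega$ from being interior to $E^{(1)}$ and the lower bound keeps them from being exterior, giving $\partial^e E\cap\Omega=\partial E^{(1)}\cap\Omega$ and the openness of $E^{(1)}\cap\Omega$. The perimeter density bounds, combined with the identification of $P(E,\cdot)$ with a constant multiple of $\mathcal H^{N-1}\res\partial^e E$ in $\RCD(K,N)$ spaces \cite{BPS19}, produce local $(N{-}1)$-Ahlfors regularity of $\partial E^{(1)}$ in $\Omega$. Under the additional uniformity of $C_G,\sigma$ and the volume lower bound $\mathcal H^N(B_1(x))\ge v_0$, Bishop--Gromov makes all density constants independent of $x$, upgrading to global $(N{-}1)$-Ahlfors regularity in $\Omega$.

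The main technical obstacle lies in stage (I): \cref{thm:MainEst} delivers only \emph{upper} perimeter bounds along the two deformations, which by themselves cannot contradict minimality; the contradiction is extracted by sharpening the $x$-side estimate via the relative isoperimetric inequality and recasting the competing bounds as a differential inequality for $v$. The delicate interplay between the isoperimetric exponent $(N-1)/N$ and the structural exponent $\sigma>1-1/N$ from \eqref{eq:ConditionGIntro} is precisely what closes the scheme.
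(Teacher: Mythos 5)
Your Stage~I argument --- showing that volume-constrained minimizers have interior and exterior points --- takes a genuinely different route from the paper's.

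The paper follows the Xia/Gonz\'alez--Massari--Tamanini strategy: it builds an almost Euclidean isoperimetric inequality in $\RCD$ spaces (\cref{prop:AlmostEuclIsop}, relying on \cite{CavallettiMondinoAlmostEuclidean}), a Morgan--Johnson comparison (\cref{prop:MorganJohnson}), a volume decay estimate (\cref{lem:VolumeDecayEstimate}), and the ``key alternative'' (\cref{prop:KeyAlternative}) that either yields a strong lower perimeter bound or passes to a strictly smaller ball; existence of exterior points then follows from a nested-ball construction (\cref{lem:ExistenceSequenceBalls}, \cref{lem:CurvaDiPalle}, \cref{thm:InteriorAndExteriorPoints}). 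You instead use the competitor $F_{r,s}=(E\cup B_r(x))\setminus B_{s(r)}(y)$ --- filling the hole at a density-$1$ point $x$, compensating volume near a density-$0$ point $y$ --- and a Gronwall-type inequality $v'(r)\gtrsim v(r)^{(N-1)/N}$ for $v(r)=\haus^N(B_r(x)\setminus E)$, forcing $v(r)\gtrsim r^N$ and contradicting $v(r)=o(r^N)$. This bypasses \cref{prop:AlmostEuclIsop} entirely: only the \emph{non-sharp} relative isoperimetric inequality (\cref{prop:RelativeIsoperimetricInequality}) is used, because your competitor does not need to out-compete a metric ball of the same volume, which is exactly what forces the paper to reach for a nearly-Euclidean constant. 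Combined with \cref{thm:MainEst} this is considerably shorter, and in fact yields directly that every $x\in E^{(1)}\cap\Omega$ is an interior point, making the openness of $E^{(1)}\cap\Omega$ immediate rather than a consequence of the later quasi-minimality analysis. Your Stages~II and III then line up with the paper's (\cref{thm:FromMinToQuasiMin}, \cref{prop:FromQuasiMinToOpen}, and the representation $P(E,\cdot)=\haus^{N-1}\res\partial^e E$ from \cite{BPS19}).

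Two details you gloss over in Stage~I are worth flagging. First, the point $y$ must be \emph{eligible}, i.e.\ $\haus^N(E\cap B_\rho(y))>0$ for some $\rho$ with $B_\rho(y)\Subset\Omega$, so that $s(r)$ solving $w(s(r))=v(r)$ exists; this is not automatic for an arbitrary $y\in E^{(0)}\cap\Omega$ (it fails if $y$ sits in a large exterior pocket), but it can always be arranged by choosing $y\in E^{(0)}$ close to a point of $\partial^e E\cap\Omega$ (nonempty since $P(E,\Omega)>0$). Second, the claim that $y\in E^{(0)}$ ``forces $\delta_{s(r)}\to 0$'' conflates two cases: if $y$ is not exterior then $s(r)\to 0$ and the density-$0$ condition gives $w(s(r))/s(r)^N\to 0$; if $y$ \emph{is} exterior then $s(r)$ is bounded away from $0$, and the smallness $v(r)/s(r)=o(v(r)^{(N-1)/N})$ holds instead because $v(r)^{1/N}\to 0$. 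Both cases give the needed bound, but they should be separated. With these gaps filled, the approach appears sound and is an appreciably more direct route than the paper's, at the price of forfeiting the self-contained almost-Euclidean isoperimetric inequality which the authors clearly value in its own right.
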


From the last part of the previous Theorem it follows that, under the hypotheses described in there, a volume constrained minimizer of $\mathscr{P}$ in $\Omega$ has an open representative with $(N-1)$-Ahlfors regular topological boundary coinciding with its essential boundary in $\Omega$, which is precisely $E^{(1)}\cap\Omega$.

Let us give a hint of the proof of \cref{thm:MainIntro2}. We adapt the strategy of \cite{Xia05}, which is, in turn, based on \cite{GonzalezMassariTamanini}, not without some difficulty, as the technical part has to be necessarily different. A major obstacle is the fact that on $\RCD$ spaces one does not have at disposal an isoperimetric inequality with the Euclidean constant. Nevertheless, by exploiting the results in \cite{CavallettiMondinoAlmostEuclidean}, we notice that locally around points of density one with respect to $\mathcal{H}^N$ in $(\X,\dist,\mathcal{H}^N)$ there holds an almost Euclidean isoperimetric inequality, see \cref{prop:AlmostEuclIsop}. The latter, together with an adaption of a Lemma by Morgan--Johnson in the nonsmooth case, see \cref{prop:MorganJohnson}, and a volume decay estimate \cref{lem:VolumeDecayEstimate}, which is ultimately linked to the relative isoperimetric inequality \cref{prop:RelativeIsoperimetricInequality}, allows us to import in our setting the machinery of \cite{Xia05}, see \cref{prop:KeyAlternative}, \cref{lem:ExistenceSequenceBalls}, and finally \cref{thm:InteriorAndExteriorPoints}.

A key tool for proving the second part of \cref{thm:MainIntro2} is the fact that, whenever one has a uniform bound from below on the volumes of unit balls, an isoperimetric inequality for small volumes holds, see \cref{prop:IsopVolumiPiccoli}, and \cref{rem:IsoperimetricaSiPuoApplicareARCD}. This is known in the setting of Riemannian manifolds with bound below on the Ricci curvature, see \cite[Lemma 3.2]{Heb00}. We adapt here the proof of \cite[Lemma 3.2]{Heb00} in a large class of PI spaces. Such a class contains $\CD(K,N)$, and thus $\RCD(K,N)$, spaces with $N<+\infty$, $K\in\mathbb R$, and with a uniform lower bound on the volumes of unit balls. We stress that the class of metric measure spaces for which \cref{prop:IsopVolumiPiccoli} holds contains also all the examples discussed in \cite{AmbrosioAhlfors}, among which also Carnot--Carathéodory spaces.

By using the existence of interior and exterior points, the latter isoperimetric inequality for small volumes, and general facts about the theory of $\Lambda$-minimizers, see \cref{sec:LambdaMin}, we get that any volume constrained minimizer is actually a $(k,r)$-quasi minimal set, see \cref{def:QuasiMinimi}. Then the proof is concluded by a classical argument according to which being quasi minimal implies having density estimates at points of the topological boundary, see \cref{prop:FromQuasiMinToOpen}. We stress that the very final part of this argument, i.e., the proof of \cref{prop:FromQuasiMinToOpen}, could be obtained as a consequence of the more general result in \cite[Theorem 4.2]{KKLS13}. Nevertheless, for the readers' convenience, we give in this paper a more direct and short proof inspired by \cite[Theorem 21.11]{MaggiBook}.

In case we are dealing with $G$ defined on the whole $\X$ and with volume constrained minimizers, we can add a piece of information to \cref{thm:MainIntro2}, which is the boundedness of the representative. More precisely, for the next statement we consider
\[
G:\left\{\text{$\haus^N$-measurable sets in $\X$} \right\}/\sim \,\,\to (-\infty,+\infty],
\]
where $E\sim F$ if and only if $\haus^N(E\Delta F)=0$, where $E\Delta F$ denotes the symmetric difference between $E$ and $F$,
such that
\begin{equation}\label{eq:ConditionG2Intro}
    \begin{split}
    G(\emptyset)&<+\infty,\\
    G(E)\le G(F) &+ C_G\haus^N(E\Delta F)^\sigma ,
    \end{split}
\end{equation}
for any Borel sets $E,F \subset \X$. Notice that now the hypothesis on $G$ is required on every couple of sets $E,F$ no matter whether their symmetric difference $E\Delta F$ is compactly contained in $\X$ or unbounded.

\begin{theorem}\label{cor:FINALEIntro}
    Let $(\X,\dist,\haus^N)$ be an $\RCD(K,N)$ space with $2\leq N<+\infty$ natural number, $K\in\mathbb R$, and assume there exists $v_0>0$ such that $\mathcal{H}^N(B_1(x))\geq v_0$ for every $x\in \X$. Let $\mathscr P=P+G$ be the quasi-perimeter associated to a function $G$ defined on the whole $\X$, and let us assume that $G$ satisfies \eqref{eq:ConditionG2Intro}. Let $E\subset \X$ be a volume constrained minimizer of $\mathscr P$, and assume $P(E)>0$.
    
    Then $E^{(1)}$ is open and bounded, $\partial^eE=\partial E^{(1)}$, and $\partial E^{(1)}$ is $(N-1)$-Ahlfors regular in $\X$. 
\end{theorem}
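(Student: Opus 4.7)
The plan is first to obtain the topological and regularity conclusions for $E^{(1)}$ from \cref{thm:MainIntro2}, and then to prove the genuinely new piece---boundedness of $E^{(1)}$---via a differential-inequality argument based on the $\Lambda$-minimality property of $E$, the isoperimetric inequality for small volumes, and the Deformation Lemma.

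First I would apply \cref{thm:MainIntro2} with $\Omega=\X$. The global assumption \eqref{eq:ConditionG2Intro} is precisely a uniform version of \eqref{eq:ConditionGIntro}, and the hypothesis $\haus^N(B_1(x))\ge v_0$ is exactly the extra input required for the \emph{global} $(N-1)$-Ahlfors regularity of $\partial E^{(1)}$. This yields openness of $E^{(1)}$, the identity $\partial^e E=\partial E^{(1)}$, the global Ahlfors regularity of $\partial E^{(1)}$, and (from the proof, via \cref{sec:LambdaMin}) the fact that $E$ is a $\Lambda$-minimizer for the perimeter for some $\Lambda>0$. Only the boundedness of $E^{(1)}$ remains.

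Since $\haus^N(E)<+\infty$ by \cref{def:MinimizersIntro}, fixing any $\bar x\in E^{(1)}$ (non-empty by the first part) and letting
\[
m(r)\eqdef\haus^N\!\big(E\setminus B_r(\bar x)\big),
\]
the function $m$ is non-increasing with $m(r)\to 0$ as $r\to+\infty$. I would aim to derive the differential inequality
\[
m(r)^{(N-1)/N}\le -C\,m'(r)\quad\text{for a.e.\ sufficiently large }r,
\]
from which an elementary ODE argument on $m^{1/N}$ forces $m(r_1)=0$ at some finite $r_1$, equivalent to $E^{(1)}\subset\overline{B}_{r_1}(\bar x)$. To derive it, I would test $\Lambda$-minimality against the competitor $F=E\cap B_r(\bar x)$, obtaining $P(E)\le P(E\cap B_r(\bar x))+\Lambda m(r)$; then combine with the BV perimeter cut identity
\[
P(E)=P(E\cap B_r(\bar x))+P(E\setminus B_r(\bar x))-2\,\haus^{N-1}\!\big(E^{(1)}\cap\partial B_r(\bar x)\big)
\]
and the coarea relation $-m'(r)=\haus^{N-1}(E^{(1)}\cap\partial B_r(\bar x))$, both for a.e.\ $r$, to get $P(E\setminus B_r(\bar x))\le\Lambda m(r)-2m'(r)$. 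The isoperimetric inequality for small volumes \cref{prop:IsopVolumiPiccoli}, applicable since $v_0>0$, then yields $m(r)^{(N-1)/N}\le C_{\text{iso}}\,P(E\setminus B_r(\bar x))$ for $r$ large enough that $m(r)$ is below the small-volume threshold; since $m(r)\to 0$ and $(N-1)/N<1$, the linear term $\Lambda m(r)$ is absorbed, closing the argument.

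The main obstacle, conceptually, is that the comparison $F=E\cap B_r(\bar x)$ does not preserve the volume of $E$, so full volume-constrained minimality cannot be invoked directly. The pivotal role of the Deformation Lemma \cref{thm:MainEst}---via the $\Lambda$-minimality developed in \cref{sec:LambdaMin} from the existence of interior and exterior points---is precisely to enable such volume-altering comparisons at a controlled perimeter cost. The remaining technical hurdles, namely the BV cut identity for $P(E)$ and the coarea formula for $m$ in the $\RCD$ setting for a.e.\ $r$, are standard and follow from a slicing argument.
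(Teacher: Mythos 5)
Your overall strategy matches the paper's: apply \cref{thm:MainIntro2} for the topological and Ahlfors-regularity conclusions, then prove boundedness via a differential inequality on $m(r)=\haus^N(E\setminus B_r(\bar x))$ followed by ODE comparison. The cut identity, the coarea relation, and the use of \cref{prop:IsopVolumiPiccoli} are all essentially the pieces the paper uses (via $V(r)$, $A(r)$ in \cref{thm:Boundedness}). However, there is a genuine gap in the step ``test $\Lambda$-minimality against the competitor $F=E\cap B_r(\bar x)$, obtaining $P(E)\le P(E\cap B_r(\bar x))+\Lambda m(r)$.''

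The $\Lambda$-minimality developed in \cref{sec:LambdaMin} is the $(\Lambda,r_0,\sigma)$-perimeter minimality of \cref{def:QuasiMinimi}, which applies only to competitors $F$ with $E\Delta F\Subset B_\rho(x)$ for some $\rho<r_0$. For your competitor $F=E\cap B_r(\bar x)$ one has $E\Delta F=E\setminus B_r(\bar x)$, and whether this set is bounded is precisely what you are trying to prove --- invoking the localized minimality here is circular. To fix it, one should not pass through the abstract $(\Lambda,r_0,\sigma)$-minimality at all, but instead, as the paper does in \cref{thm:Boundedness}, construct the volume-preserving competitor $\widetilde F=(E\cup B_{\widetilde R}(\bar x))\cap B_r(\bar x)=(E\cap B_r(\bar x))\cup B_{\widetilde R}(\bar x)$ with $\widetilde R$ chosen so that $\haus^N(B_{\widetilde R}(\bar x)\setminus E)=m(r)$, apply the Deformation Lemma \eqref{eq:main_claim2NEW} directly to bound $P(E\cup B_{\widetilde R}(\bar x))\le P(E)+C_1 m(r)$, split the perimeter of $\widetilde F$ across $\partial B_r(\bar x)$ via \cref{lem:per_inters_general}, and then use the genuine volume-constrained minimality $\mathscr P(E)\le\mathscr P(\widetilde F)$ together with the $G$-condition. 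Once that is in place, your derivation $P(E\setminus B_r(\bar x))\le C_1 m(r)+C_2 m(r)^\sigma-2m'(r)$ (note the $m(r)^\sigma$ term from $G$, which you omitted) and the subsequent ODE comparison go through exactly as you sketch, using that $\sigma>1-1/N$.
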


We stress that for the boundedness in the previous statement the lower volume bound on the volume of unit balls is necessary. Indeed, classical examples of collapsed smooth manifolds without boundary having unbounded isoperimetric regions consist in manifolds having ``cuspidal ends'' with finite volume; in this way, for suitable volumes, isoperimetric regions are given by part of such unbounded ends.

Notice that if in the previous statement we take $G\equiv 0$, we obtain as a corollary that isoperimetric sets in $\RCD(K,N)$ spaces with reference measure $\mathcal{H}^N$ and a uniform bound from below on the volumes of unit balls have a bounded open representative with $(N-1)$-Ahlfors regular topological boundary coinciding with its essential boundary. 

The proof of the latter Theorem is based on a rather classical argument involving the previously discussed Deformation Lemma together with an argument that is in turn reduced to an ODE comparison. Such an argument already appeared in \cite[Proposition 3.7]{RitRosales04} and in \cite[Lemma 13.6]{MorganBook} in the Euclidean setting, and in \cite[Theorem 3]{Nar14} on Riemannian manifolds. See also \cite[Appendix B]{AFP21}. 

Similar statements as the one in \cref{cor:FINALEIntro}, but only for isoperimetric sets, already appeared in the literature in other settings as well: in the setting of Carnot groups see \cite[Section 5]{LeonardiRigot}; in the setting of contact sub-Riemannian manifolds, cf.\! \cite[Lemma 4.6]{GalliRitore}; in the setting of sub-Finsler nilpotent Lie groups, see \cite[Section 3]{Pozuelo}. We stress that the latter cases do not fall into the class of $\RCD$ spaces and require different techniques to be treated.

We stress that the result in \cref{cor:FINALEIntro} will be of key importance in order to fully extend the generalized existence result of \cite{Nar14} in the setting of $\RCD(K,N)$ spaces $(\X,\dist,\mathcal{H}^N)$ with $2\leq N<+\infty$, $K\in\mathbb R$, and a uniform bound from below on the volumes of unit balls. A first step in order to get such a generalization has been made in \cite{AFP21} by the first and the last named author together with Fogagnolo. We will complete such a generalization in a forthcoming paper. 

Let us also remark that the investigation in \cite{AFP21} shows how the nonsmooth theory naturally arises also from the study of the isoperimetric problem stated in a perfectly smooth ambient space. We refer also to the forthcoming \cite{ABFP21} in which the authors show new existence results for the isoperimetric problem on nonnegatively Ricci curved manifolds with Euclidean volume growth by exploiting the nonsmooth theory.

A comprehension of the basic properties of isoperimetric sets in $\RCD$ spaces seems, in fact, \emph{necessary} for improving the study of the isoperimetric problem in the smooth realm. Among further significant achievements in the study of the existence and regularity theory for the isoperimetric problem in the smooth setting, let us also mention \cite{LeonardiRitore, MorganRitore02}.

\begin{remark}
Contrary to \cref{thm:MainEst}, the main regularity results in \cref{thm:MainIntro2} and \cref{cor:FINALEIntro} hold on $\RCD(K,N)$ spaces with reference measure $\haus^N$, i.e., noncollapsed $\RCD$ spaces in the sense of \cite{DePhilippisGigli18}. It is unknown whether such regularity results hold on spaces $(\X,\dist,\meas)$ with generic reference measure $\meas$. The arguments developed in the present paper seem not to be easily generalized to such a general setting since they ultimately rely on the coincidence between the analytic dimension of the reference Hausdorff measure $\haus^N$ and the geometric dimension of the $\RCD(K,N)$ condition. Moreover, also the fact that the $N$-dimensional density of $\haus^N$ is bounded above by $1$ plays a key role, understanding that a comprehension of the regularity properties of the density of the measure of a generic space $(\X,\dist,\meas)$ might be necessary for generalizing the theory developed here to such spaces. This will be clear, for instance, in the several applications of the Bishop--Gromov monotonicity (see \cref{rem:PerimeterMMS2}) in the proofs of the results in \cref{sec:IsoperimetricAndComparison}.

We mention that it is today an open problem to understand whether an $\RCD(K,M)$ space $(\X,\dist,\haus^N)$ with $M>N$ is, in fact, an $\RCD(K,N)$ space, cf. \cite[Conjecture 4.2]{Honda20}.
\fr
\end{remark}

We are finally committed to discuss how our analysis applies to Riemannian manifolds, possibly with boundary. Indeed, it is known that the class of $\RCD$ spaces is rich enough to contain also some examples of Riemannian manifolds with boundary, see \cite{Han20}. We refer also the reader to the recent works \cite{DePhilippisGigli18,KM21,BNS20} in which remarkable fine properties of the boundary of $\RCD(K,N)$ spaces with Hausdorff reference measures $\mathcal{H}^N$ are studied. 

We recall that in case $M$ has nonempty boundary the perimeter measure $P(E,\cdot)$ of a set $E$ does not charge the boundary $\partial M$, i.e. $P(E,\partial M)=0$. In other words, $P(E,\cdot)$ is automatically the {\em relative perimeter} in the interior of $M$. We say that the boundary $\partial M$ of a Riemannian manifold is \emph{convex} if the second fundamental form of $\partial M$ with respect to the inner normal direction in $M$ is nonnegatively definite.

\begin{corollary}[Interior regularity of volume constrained minimizers on smooth Riemannian manifolds]\label{cor:Manifold}
Let $2\leq N<+\infty$ be a natural number and let $(M^N,g)$ be a complete Riemannian manifold, possibly with boundary $\partial M$.
Let $\Omega\subset M$ be open, and let $\mathscr P=P+G$ be the quasi-perimeter restricted to $\Omega$ associated to $G$ as in \eqref{eq:ConditionGIntro}. Let $E\subset M$ be a volume constrained minimizer of $\mathscr P$ in $\Omega$, and assume $P(E,\Omega)>0$. 

Hence $E^{(1)}\cap \Omega \setminus \partial M$ is open, $\partial^e E \cap \Omega  \setminus \partial M = \partial E^{(1)} \cap \Omega  \setminus \partial M$, and $\partial E^{(1)}$ is locally $(N-1)$-Ahlfors regular in $\Omega  \setminus \partial M$, see \cref{def:Ahlfors}.
Moreover for every point $x\in \Omega  \setminus \partial M$ there exists $r>0$ such that the reduced boundary $\partial^*E \cap B_r(x)$ is a $C^{1,\alpha}$ open hypersurface, where $\alpha=\alpha(x)>0$, and $\dim_{\rm H}((\partial E\setminus  \partial^*E)\cap\Omega  \setminus \partial M)\leq N-8$.

Assume also one of the following:
\begin{itemize}
    \item[i)]\label{it:Manifold1} $\partial M=\emptyset$ and $\ric\geq K$ on $M$;
    \item[ii)]\label{it:Manifold2} $\partial M$ is orientable and convex, and $\ric\geq K$ on $M\setminus\partial M$.
\end{itemize}
Then, in case ii) holds, we can further say that $E^{(1)}\cap \Omega $ is open, $\partial^e E \cap \Omega   = \partial E^{(1)} \cap \Omega$, and $\partial E^{(1)}$ is locally $(N-1)$-Ahlfors regular in $\Omega$.

Moreover, if we further assume that the constants $C_G, \sigma$ are uniform on the choice of $\widetilde\Omega$ and there exists $v_0>0$ such that $\vol(B_1(x))\geq v_0$ for every $x\in M$, then $\partial E^{(1)}$ is (globally) $(N-1)$-Ahlfors regular in $\Omega$, and the parameter $\alpha$ above is uniform on $\Omega$. 

Finally, if we further assume $\Omega=M$, \eqref{eq:ConditionG2Intro}, and $E$ is a volume constrained minimizer of $\mathscr P$ in $M$, then $E^{(1)}$ is bounded.
\end{corollary}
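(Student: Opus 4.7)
The plan is to reduce \cref{cor:Manifold} to the main results \cref{thm:MainIntro2} and \cref{cor:FINALEIntro} via suitable $\RCD$ representations of the smooth structure, supplemented by the classical interior regularity theory of almost-minimizers of the perimeter for the $C^{1,\alpha}$ and singular-set statements. For the interior regularity on $\Omega\setminus\partial M$ (with no curvature assumption on $M$ yet), I would fix $x\in\Omega\setminus\partial M$ and pick a small ball $B_r(x)$ with $\overline{B_r(x)}\subset M\setminus\partial M$; on such a ball $\ric\ge K'$ for some $K'\in\mathbb R$, and $B_r(x)$ is locally isometric to an open subset of a smooth complete manifold without boundary with $\ric\ge K'$, which is an $\RCD(K',N)$ space with reference measure $\vol=\haus^N$ by the classical equivalence of Bochner and synthetic lower Ricci bounds. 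Since the openness of $E^{(1)}$, the identification $\partial^e E=\partial E^{(1)}$, and the local Ahlfors regularity of $\partial E^{(1)}$ granted by \cref{thm:MainIntro2} are local in nature, they transfer to all points of $\Omega\setminus\partial M$, giving the first block of conclusions.

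Next I would upgrade to $C^{1,\alpha}$ regularity of the reduced boundary and the codimension-$8$ bound on the singular set. The Deformation Lemma \cref{thm:MainEst}, applied at an interior (respectively, exterior) point of $E$ whose existence is ensured by the previous step, produces on every compactly contained open set $\widetilde\Omega\Subset\Omega\setminus\partial M$ a constant $\Lambda>0$ such that $E$ is a $\Lambda$-minimizer of the perimeter in $\widetilde\Omega$ (the linear perimeter control from \cref{thm:MainEst} combines with the Hölder behavior of $G$ from \eqref{eq:ConditionGIntro} to absorb the quasi-perimeter term). In the smooth setting a $\Lambda$-minimizer is an almost-minimizer in the sense of Tamanini, and the classical De Giorgi--Almgren--Federer--Tamanini theory transferred to $M$ via normal coordinates (cf.\ \cite{MaggiBook} and references therein) gives that $\partial^*E\cap B_r(x)$ is a $C^{1,\alpha}$ hypersurface for some $\alpha=\alpha(x)>0$ and that $\dim_{\rm H}\bigl((\partial E\setminus\partial^*E)\cap(\Omega\setminus\partial M)\bigr)\le N-8$.

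For the global improvements under (i) or (ii), I would invoke the global $\RCD$ structure of $M$: under (i), a complete smooth manifold without boundary with $\ric\ge K$ is $\RCD(K,N)$ with $\haus^N=\vol$; under (ii), by \cite{Han20} the same conclusion holds when $\partial M$ is orientable and convex with $\ric\ge K$ in the interior. In case (ii) a direct application of \cref{thm:MainIntro2} to the global $\RCD$ space $M$ then extends the openness, the identification of essential and topological boundary, and the local Ahlfors regularity to all of $\Omega$, including boundary points; here one uses that $P(E,\partial M)=0$ in smooth manifolds with boundary, so the quasi-perimeter is automatically a relative perimeter in the interior. The global Ahlfors regularity and uniform $\alpha$ under the additional assumption $\vol(B_1(x))\ge v_0$ and uniform quasi-perimeter constants follow from the second part of \cref{thm:MainIntro2} together with the observation that the constants in the $\Lambda$-minimality property obtained from \cref{thm:MainEst} can then be chosen depending only on $K,N,v_0,C_G,\sigma$, so Tamanini's regularity theory yields a uniform Hölder exponent. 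The final boundedness assertion, under $\Omega=M$ and \eqref{eq:ConditionG2Intro}, is a direct application of \cref{cor:FINALEIntro} to the $\RCD(K,N)$ space $M$.

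The main obstacle will be the first step: transferring the intrinsically global statements of \cref{thm:MainIntro2} to interior points of $M$ in the absence of any curvature condition on $\partial M$, since $M$ itself need not admit any $\RCD$ structure. The resolution consists in noticing that volume-constrained minimality of $E$ in $\Omega$ localizes to volume-constrained minimality in any compactly contained ball of $\Omega\setminus\partial M$ (by \cref{def:MinimizersIntro}), and that all conclusions of \cref{thm:MainIntro2} we need for the interior case are of purely local character, so that working in a single ball where an $\RCD$ model is available is enough. The uniformity claims require a secondary but routine check that the constants $C_{K,N,R}$ in \cref{thm:MainEst} and the almost-minimality constants coming from \eqref{eq:ConditionGIntro} can be chosen uniformly in $x$ once $\vol(B_1(x))\ge v_0$ is in force; this is automatic by Bishop--Gromov.
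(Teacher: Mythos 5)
Your proposal follows the same overall route as the paper's proof: localize volume-constrained minimality to small balls compactly contained in $\Omega\setminus\partial M$, identify a local $\RCD$ structure, import \cref{thm:MainIntro2}, pass to $(\Lambda,r_0,\sigma)$-perimeter minimality and invoke Tamanini's regularity theory for the $C^{1,\alpha}$ and singular-set statements, and, under hypotheses (i) or (ii), apply \cref{thm:MainIntro2} and \cref{cor:FINALEIntro} to the global $\RCD$ structure of $M$. The localization argument in your last paragraph (volume-constrained minimality passes to compactly contained balls) is exactly the observation the paper starts from, and the final boundedness claim is, as you say, a direct application of \cref{cor:FINALEIntro}.

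The one genuine divergence is in how the local $\RCD$ structure at an interior point $x\in\Omega\setminus\partial M$ is obtained. The paper observes that, for small $r$, the closed geodesic ball $\overline{B}_r(x)$ has smooth \emph{convex} boundary and then cites \cite[Theorem 1.1, Corollary 2.5, Corollary 2.6]{Han20} to conclude that $(\overline{B}_r(x),\dist_g,\vol)$ is itself an $\RCD(k,N)$ space; \cref{thm:MainIntro2} and \cref{thm:FromMinToQuasiMin} are then applied to this compact ambient with boundary. You instead sketch an extension/gluing argument: enlarge a small neighborhood of $x$ to a complete boundaryless metric on $\R^N$ with some Ricci lower bound and invoke the classical equivalence between Bochner/Ricci lower bounds and $\RCD$. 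This route works, and is conceptually more elementary, but it requires two non-immediate checks that your proposal glosses over: (a) the interpolated metric must actually have \emph{some} Ricci lower bound (this is true after shrinking, in normal coordinates, since everything has bounded geometry on the compact support of the interpolation -- but it will generally \emph{not} be the same constant $K'$, contrary to what your phrasing suggests), and (b) one must shrink further so that the metric-space structure of a smaller ball is unchanged (totally convex neighborhood), otherwise the transported minimality problem is not literally the same. Neither of these is a flaw, but they need to be said; the paper's appeal to \cite{Han20} sidesteps both by directly exhibiting a compact $\RCD$ ambient. As a minor point, the reduction to $(\Lambda,r_0,\sigma)$-perimeter minimality need not be re-derived from \cref{thm:MainEst} and \eqref{eq:ConditionGIntro} by hand -- it is exactly the content of \cref{thm:FromMinToQuasiMin}, which the paper cites at this stage.
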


We mention that a regularity result for (relative) isoperimetric regions in open sets in Riemannian manifolds is well-known in the literature, see \cite[Proposition 2.4]{RitRosales04}, \cite{morgan2003regularity}, and references therein.

\begin{remark}
In the notation and setting of \cref{cor:Manifold}, higher regularity on volume constrained minimizers can be clearly achieved in case $G$ is explicit and suitably smooth. Indeed, in such a case, higher regularity on the $C^{1,\alpha}$ part of $\partial^*E$ can be classically deduced by the minimality properties of the set $E$.
\fr\end{remark}

\bigskip
\noindent\textbf{Organization of the paper.} In \cref{sec:Deformation} we introduce some preliminary notions and facts that we shall use throughout the paper, and we prove the Deformation Lemma stated in \cref{thm:MainEst}. In particular in \cref{sec:Preliminaries} we introduce the notions of perimeter and BV function in metric measure spaces, and we discuss some of their properties with a particular attention to the case when the space is PI. In \cref{sec:DifferentialCalculusRCD} we shall recall the basic notions of Differential Calculus in metric measure spaces, the definition of $\RCD$ space, and we shall recall some geometric and analytic properties of such spaces. In \cref{sec:Deformation2} we study the notions of $(p)$-divergence and mollified heat flow for measures and vector fields. Finally we prove the main results in \cref{thm:MainEst} and \cref{thm:VariazioniMaggi}.

In \cref{sec:VolumeConstrained} we prove the main theorems \cref{thm:MainIntro2}, \cref{cor:FINALEIntro}, and \cref{cor:Manifold}. In \cref{sec:IsoperimetricAndComparison}, building on \cite{CavallettiMondinoAlmostEuclidean}, we prove an almost Euclidean isoperimetric inequality for $\RCD$ spaces; in \cref{sec:Preparatory} we prove a volume decay estimate and several preparatory estimates that will be used to reach the main results; in \cref{sec:InteriorExterior} we give the main definitions of volume constrained minimizers and quasi-perimeters and we prove that volume constrained minimizers have interior and exterior points, see \cref{thm:InteriorAndExteriorPoints}; in \cref{sec:SmallPI} we prove that in PI spaces that are uniformly lower Ahlfors regular an isoperimetric inequality for small volumes holds, see \cref{prop:IsopVolumiPiccoli}; in \cref{sec:LambdaMin} we study the properties of quasi-perimeter minimizers and quasi minimal sets in the setting of $\RCD$ spaces, and then we use them to provide the proof of \cref{thm:MainIntro2}; finally in \cref{sec:LambdaMin} we prove that volume constrained minimizers for quasi-perimeters have bounded representatives, and then we give the proofs of \cref{cor:FINALEIntro} and \cref{cor:Manifold}.

\bigskip
\noindent\textbf{Acknowledgments.} The authors are grateful to Mattia Fogagnolo, Stefano Nardulli, and Daniele Semola, whose suggestions on a preliminary draft of this paper led to an improvement of the presentation. They are grateful to Elia Bru{\`{e}} for inspiring discussions around the topic of the paper. They also thank Francesco Nobili and Ivan Violo for having noticed an inaccuracy in a preliminary version of the paper. The first author is partially supported by the European Research Council (ERC Starting Grant 713998 GeoMeG `\emph{Geometry of Metric Groups}').
The second author acknowledges the support by the Balzan project led by Luigi Ambrosio.

\section{Deformation lemma}\label{sec:Deformation}

\subsection{Preliminaries and auxiliary results}\label{sec:Preliminaries}

In this section we shall recall and prove some preliminary facts concerning the perimeter functional in metric measure spaces, and in particular in PI spaces.

\subsubsection{$BV$ functions and sets of finite perimeter in metric measure spaces}

In this paper, by a \emph{metric measure space} (briefly, m.m.s.)
we mean a triple \((\X,\sfd,\mm)\), where \((\X,\sfd)\) is a complete and separable
metric space, while \(\mm\geq 0\) is a boundedly-finite Borel measure
on \(\X\). Given a locally Lipschitz function $u:\X\to\mathbb R$, \[
\lip u (x) \eqdef \limsup_{y\to x} \frac{|u(y)-u(x)|}{\dist(x,y)}
\]
is the \emph{slope} of $u$ at $x$, for any accumulation point $x\in\X$, and $\lip u(x):=0$ if $x\in\X$ is isolated.

\begin{definition}[$\rm BV$ functions and perimeter on m.m.s.]\label{def:BVperimetro}
Let $(\X,\dist,\meas)$ be a metric measure space.  Given $f\in L^1_{\mathrm{loc}}(\X,\meas)$ we define
\[
|Df|(A) \eqdef \inf\left\{\liminf_i \int_A \lip f_i \de\meas \st \text{$f_i \in {\rm Lip}_{\rm loc}(A),\,f_i \to f $ in $L^1_{\mathrm{loc}}(A,\meas)$} \right\},
\]
for any open set $A\subset \X$.
We declare that a function \(f\in L^1_{\rm loc}(\X,\mm)\) is of
\emph{local bounded variation}, briefly \(f\in{\rm BV}_{\rm loc}(\X)\),
if \(|Df|(A)<+\infty\) for every \(A\subset\X\) open bounded.
A function $f \in L^1(\X,\meas)$ is said to belong to the space of \emph{bounded variation functions} ${\rm BV}(\X)={\rm BV}(\X,\dist,\meas)$ if $|Df|(\X)<+\infty$. 

If $E\subset\X$ is a Borel set and $A\subset\X$ is open, we  define the \emph{perimeter $P(E,A)$  of $E$ in $A$} by
\[
P(E,A) \eqdef \inf\left\{\liminf_i \int_A \lip u_i \de\meas \st \text{$u_i \in {\rm Lip}_{\rm loc}(A),\,u_i \to \nchi_E $ in $L^1_{\rm loc}(A,\meas)$} \right\},
\]
in other words \(P(E,A)\coloneqq|D\nchi_E|(A)\).
We say that $E$ has \emph{locally finite perimeter} if $P(E,A)<+\infty$ for every open bounded set $A$. We say that $E$ has \emph{finite perimeter} if $P(E,\X)<+\infty$, and we denote $P(E)\eqdef P(E,\X)$.
\end{definition}

Let us remark that when $f\in{\rm BV}_{\rm loc}(\X,\dist,\meas)$ or $E$ is a set with locally finite perimeter, the set functions $|Df|, P(E,\cdot)$ above are restrictions to open sets of Borel measures that we still denote by $|Df|, P(E,\cdot)$, see \cite{AmbrosioDiMarino14}, and \cite{Miranda03}.

\medskip

Given any two functions \(f,g\in{\rm BV}_{\rm loc}(\X)\), the following
important properties are verified:
\begin{itemize}
    \item[\(\rm i)\)] \textsc{Locality.} If \(f=g\) on some open set
    \(A\subset\X\), then \(|Df|(B)=|Dg|(B)\) for every Borel set
    \(B\subset A\) with \( \dist(B,A^c)>0\).
    \item[\(\rm ii)\)] \textsc{Subadditivity.} It holds \(|D(f+g)|(B)\leq|Df|(B)+|Dg|(B)\) for every \(B\subset\X\) Borel,
    thus in particular \(f+g\in{\rm BV}_{\rm loc}(\X)\). Moreover, if
    \(f,g\in{\rm BV}(\X)\), then \(f+g\in{\rm BV}(\X)\).
\end{itemize}

\medskip

In the sequel, we shall frequently make use of the following
\emph{coarea formula}, proved in \cite{Miranda03}:
\begin{theorem}[Coarea formula]\label{thm:coarea}
Let \((\X,\sfd,\mm)\) be a metric measure space.
Let \(f\in L^1_{\rm loc}(\X)\) be given. Then for any open set
\(\Omega\subset\X\) it holds that \(\R\ni t\mapsto P(\{f>t\},\Omega)\in[0,+\infty]\) is Borel measurable and satisfies
\[
|Df|(\Omega)=\int_\R P(\{f>t\},\Omega)\,\d t.
\]
In particular, if \(f\in{\rm BV}(\X)\), then \(\{f>t\}\) has finite perimeter
for a.e.\ \(t\in\R\).
\end{theorem}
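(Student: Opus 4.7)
The plan is to establish the identity
\[
|Df|(\Omega)=\int_\R P(\{f>t\},\Omega)\,\d t
\]
by proving the two opposite inequalities, after first settling Borel measurability of the integrand. For measurability, I would observe that $t\mapsto\nchi_{\{f>t\}}$ is right-continuous in $L^1_{\rm loc}(\Omega,\mm)$ (since $t_n\downarrow t_0$ implies $\{f>t_n\}\uparrow\{f>t_0\}$, and monotone convergence applies) and continuous off a countable set, because the monotone map $t\mapsto\mm(\{f>t\}\cap K)$ has at most countably many jumps for each compact $K\Subset\Omega$. The lower semicontinuity of the perimeter under $L^1_{\rm loc}$ convergence then makes $t\mapsto P(\{f>t\},\Omega)$ lower semicontinuous off countably many points, hence Borel.

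For the lower bound $|Df|(\Omega)\geq\int_\R P(\{f>t\},\Omega)\,\d t$, the core step is the Lipschitz coarea inequality
\[
\int_\R P(\{g>t\},\Omega)\,\d t\leq\int_\Omega\lip g\,\d\mm\qquad\text{for every }g\in{\rm Lip}_{\rm loc}(\Omega).
\]
I would prove this by approximating $\nchi_{\{g>t\}}$ with $\phi_\epsilon\circ g$, where $\phi_\epsilon:\R\to[0,1]$ is a piecewise-linear regularization of $\nchi_{(t,+\infty)}$ of slope $1/\epsilon$: the chain-rule inequality $\lip(\phi_\epsilon\circ g)\leq(\lip\phi_\epsilon)(g)\cdot\lip g$ together with lower semicontinuity of the perimeter yields $P(\{g>t\},\Omega)\leq\liminf_\epsilon\frac{1}{\epsilon}\int_{\{t\leq g\leq t+\epsilon\}}\lip g\,\d\mm$, and a Fatou--Fubini computation in $t$ delivers the bound. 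For general $f\in L^1_{\rm loc}(\X)$, pick $f_i\in{\rm Lip}_{\rm loc}(\Omega)$ with $f_i\to f$ in $L^1_{\rm loc}$ and $\int_\Omega\lip f_i\,\d\mm\to|Df|(\Omega)$. The pointwise identity $\int_\R|\nchi_{\{f_i>t\}}(x)-\nchi_{\{f>t\}}(x)|\,\d t=|f_i(x)-f(x)|$ combined with Fubini lets one extract a subsequence along which $\nchi_{\{f_i>t\}}\to\nchi_{\{f>t\}}$ in $L^1_{\rm loc}(\Omega)$ for a.e.\ $t$; lower semicontinuity of the perimeter and Fatou's lemma then conclude.

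For the upper bound $|Df|(\Omega)\leq\int_\R P(\{f>t\},\Omega)\,\d t$, after reducing to $f\geq 0$ via $f=f^+-f^-$ and subadditivity of the total variation, I would approximate $f$ by simple functions of the form $f_n=\sum_k c_{k,n}\nchi_{\{f>t_{k,n}\}}$ whose thresholds $t_{k,n}$ partition $[0,+\infty)$ with mesh tending to zero, avoiding the countably many $t$ where $\mm(\{f=t\}\cap K)>0$ for some compact $K\Subset\Omega$. Such $f_n\to f$ in $L^1_{\rm loc}(\Omega)$, and subadditivity of the total variation yields
\[
|Df_n|(\Omega)\leq\sum_k c_{k,n}\,P\big(\{f>t_{k,n}\},\Omega\big),
\]
a Riemann sum for $\int_0^\infty P(\{f>t\},\Omega)\,\d t$; choosing the sampling points via the Lebesgue differentiation theorem guarantees that these Riemann sums converge to the integral. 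The lower semicontinuity of $|D\cdot|(\Omega)$ along $f_n\to f$ then yields the claim.

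The main technical obstacle is the Lipschitz coarea inequality, since no distributional calculus is available in the purely metric setting; the chain-rule-and-mollification trick above is the standard workaround, and depends delicately on the definition of the perimeter as the relaxation of $\lip$. A subsidiary difficulty is the passage to the limit in the Riemann sum in the upper bound, for which the lack of monotonicity of $t\mapsto P(\{f>t\},\Omega)$ forces one to choose the partition sample points via the Lebesgue differentiation theorem, rather than relying on any upper semicontinuity. The ``in particular'' clause is then immediate: finiteness of the integral forces the integrand to be finite a.e.
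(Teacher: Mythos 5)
The paper does not prove this theorem; it recalls it from \cite{Miranda03}, so there is no proof in the paper against which to compare your argument. That said, your sketch is essentially the standard proof of the metric coarea formula, and its structure is correct: chain-rule mollification with $\phi_\epsilon\circ g$ plus a Fubini/Fatou computation gives the inequality $\int_\R P(\{g>t\},\Omega)\,\d t\leq\int_\Omega\lip g\,\d\mm$ for locally Lipschitz $g$, which passes to general $f$ by extracting a subsequence (via the one-dimensional Fubini identity $\int_\R|\nchi_{\{f_i>t\}}-\nchi_{\{f>t\}}|\,\d t=|f_i-f|$) along which the level sets converge for a.e.\ $t$; and the reverse inequality follows from subadditivity of $|D\cdot|$ applied to the layer-cake approximants and lower semicontinuity. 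Your measurability argument also works: lower semicontinuity of $t\mapsto P(\{f>t\},\Omega)$ off the countable set of $t$ with $\mm(\{f=t\}\cap K)>0$ for some $K$ in a compact exhaustion is enough, since a function that is lower semicontinuous outside a countable set is Borel. The only place where the exposition is looser than necessary is the Riemann-sum step: you invoke the Lebesgue differentiation theorem to choose sampling points for which the Riemann sums converge, which is correct but a bit delicate; a cleaner alternative is to observe that for each $n$, Fubini gives $\int_0^1\tfrac{1}{n}\sum_k P(\{f>(k+a)/n\},\Omega)\,\d a=\int_0^\infty P(\{f>t\},\Omega)\,\d t$, so by the mean-value property one can pick a shift $a_n\in(0,1)$ with $\tfrac{1}{n}\sum_k P(\{f>(k+a_n)/n\},\Omega)\leq\int_0^\infty P(\{f>t\},\Omega)\,\d t$; this makes the final lower-semicontinuity step entirely elementary and avoids invoking convergence of Riemann sums for $L^1$ functions.
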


\begin{remark}[Semicontinuity of the total variation under
$L^1_{\mathrm{loc}}$-convergence]\label{rem:SemicontPerimeter}
Let $(\X,\dist,\meas)$ be a metric measure space.
We recall (cf.\ \cite[Proposition 3.6]{Miranda03}) that whenever $g_i,g\in L^1_{\mathrm{loc}}(\X,\meas)$ are such that $g_i\to g$ in $L^1_{\mathrm{loc}}(\X,\meas)$, for every open set $\Omega$ we have 
$$
|Dg|(\Omega)\leq \liminf_{i\to +\infty}|Dg_i|(\Omega).
$$
On PI spaces
(in the sense of Definition \ref{def:PI} below), we can actually generalize the above lower semicontinuity property. See \cref{prop:Lahti}.
\fr\end{remark}

\begin{definition}[PI space]\label{def:PI}
Let $(\X,\dist,\meas)$ be a metric measure space.
 We say that $\meas$ is {\em uniformly locally doubling} if for every $R>0$ there exists $C>0$ such that the following holds 
 $$
 \meas(B_{2r}(x))\leq C\meas(B_r(x)), \qquad \forall x\in\X\;\forall r\leq R.
 $$

We say that a {\em weak local $(1,1)$-Poincar\'{e} inequality} holds on $(\X,\dist,\meas)$ if there exists $\lambda$ such that for every $R>0$ there exists $C_P$ such that for every pair of functions $(f,g)$ where $f\in L^1_{\mathrm{loc}}(\X,\meas)$, and $g$ is an upper gradient (cf. \cite[Section 10.2]{HajlaszKoskela}) of
$f$,
the following inequality holds
$$
\fint_{B_r(x)} |f-\overline f(x)|\de\meas \leq C_Pr\fint_{B_{\lambda r}(x)} g\de\meas, 
$$
for every $x\in\X$ and $r\leq R$, where $\overline f(x):=\fint_{B_r(x)}f\de\meas$.

We say that $(\X,\dist,\meas)$ is a {\em PI space} when $\meas$ is uniformly locally doubling and a weak local $(1,1)$-Poincar\'{e} inequality holds on $(\X,\dist,\meas)$.
\end{definition}

Let \((\X,\sfd,\mm)\) be a PI space. Given a Borel set \(E\subset\X\)
and \(x\in\X\), we define the \emph{upper density} and the
\emph{lower density} of \(E\) at \(x\) as
\[
\overline D(E,x)\coloneqq\varlimsup_{r\searrow 0}\frac{\mm(E\cap B_r(x))}
{\mm(B_r(x))},\quad\underline D(E,x)\coloneqq\varliminf_{r\searrow 0}
\frac{\mm(E\cap B_r(x))}{\mm(B_r(x))},
\]
respectively. Whenever upper and lower densities coincide, their common
value is denoted by \(D(E,x)\) and called just the \emph{density} of \(E\)
at \(x\). The \emph{essential boundary}, the \emph{essential interior}, and
the \emph{essential exterior} of \(E\) are defined as
\[\begin{split}
\partial^e E&\coloneqq\big\{x\in\X\;\big|\;\overline D(E,x)>0,
\,\overline D(E^c,x)>0\big\},\\
E^{(1)}&\coloneqq\big\{x\in\X\;\big|\;D(E,x)=1\big\},\\
E^{(0)}&\coloneqq\big\{x\in\X\;\big|\;D(E,x)=0\big\},
\end{split}\]
respectively. It readily follows from the definitions that
\(\partial^e E\), \(E^{(1)}\), \(E^{(0)}\) are Borel sets and
\begin{equation}\label{eq:density+bdry}
\X=E^{(1)}\sqcup\partial^e E\sqcup E^{(0)}.
\end{equation}

Now suppose \(E\) is a set of finite perimeter. Then its perimeter
measure can be written as
\begin{equation}\label{eq:repr_formula_per}
P(E,\cdot)=\theta_E\mathcal H^{\rm cod\text{-}1}|_{\partial^e E},
\end{equation}
where \(\theta_E\colon\X\to(0,+\infty)\) is a Borel function,
while \(\mathcal H^{\rm cod\text{-}1}\) stands for the
\emph{codimension-one Hausdorff measure} on \((\X,\sfd,\mm)\),
namely the Borel regular outer measure on \(\X\) obtained via
Carath\'{e}odory construction starting from the gauge function
\(h(B_r(x))\coloneqq\mm(B_r(x))/(2r)\). Namely, we set
\[
\mathcal H^{\rm cod\text{-}1}(E)\coloneqq\sup_{\delta>0}
\inf\bigg\{\sum_{i=1}^\infty\frac{\mm(B_{r_i}(x_i))}{2r_i}\;\bigg|\;
(x_i)_i\subset\X,\,(r_i)_i\subset(0,\delta),\,E\subset\bigcup_{i=1}^\infty B_{r_i}(x_i)\bigg\}
\]
for every set \(E\subset\X\).

The representation formula
\eqref{eq:repr_formula_per} was proved in \cite[Theorem 5.3]{A02}.
\medskip

A PI space \((\X,\sfd,\mm)\) is said to be \emph{isotropic} provided
the density function \(\theta_E\) is `universal', in the following sense:
given two sets \(E,F\subset\X\) of finite perimeter, it holds that
\[
\theta_E(x)=\theta_F(x),\quad\text{ for }\mathcal H^{\rm cod\text{-}1}
\text{-a.e.\ }x\in\partial^e E\cap\partial^e F.
\]
The notion of isotropicity was introduced in
\cite[Definition 6.1]{AMP04}. We remark that all
\({\sf RCD}(K,N)\) spaces with \(N<\infty\), see \cref{sec:RCD} for the definition, are isotropic PI spaces;
cf.\ \cite[Example 1.31(iii)]{BPR20}.
\begin{lemma}\label{lem:per_inters_general}
Let \((\X,\sfd,\mm)\) be an isotropic PI space. Let \(E,F\subset\X\)
be sets of finite perimeter satisfying 
\(\mathcal H^{\rm cod\text{-}1}(\partial^e E\cap\partial^e F)=0\).
Then it holds that
\begin{equation}\label{eq:per_inters_general_claim}
P(E\cap F,\cdot)=P(E,\cdot)|_{F^{(1)}}+P(F,\cdot)|_{E^{(1)}}.
\end{equation}
\end{lemma}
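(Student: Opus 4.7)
The plan is to combine the representation formula~\eqref{eq:repr_formula_per} with the isotropicity hypothesis, after a careful pointwise analysis of the essential boundary of $E\cap F$. The main work is to show the set-theoretic identity
$$\partial^e(E\cap F)=(\partial^e E\cap F^{(1)})\sqcup(\partial^e F\cap E^{(1)})\quad\text{modulo }\mathcal H^{\rm cod\text{-}1}\text{-null sets},$$
after which the result follows essentially formally from~\eqref{eq:repr_formula_per} and the universality of the density $\theta$.

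To establish the above identity, I would argue by a case split using the tripartitions~\eqref{eq:density+bdry} for both $E$ and $F$. Points $x\in E^{(0)}\cup F^{(0)}$ lie in $(E\cap F)^{(0)}$, since $\mm(E\cap F\cap B_r(x))$ is controlled by $\min\{\mm(E\cap B_r(x)),\mm(F\cap B_r(x))\}$. Points in $E^{(1)}\cap F^{(1)}$ lie in $(E\cap F)^{(1)}$, because the complement $(E\cap F)^c=E^c\cup F^c$ has vanishing density at such points. At $x\in\partial^e E\cap F^{(1)}$, the bound $\mm(E\cap F^c\cap B_r(x))\le\mm(F^c\cap B_r(x))=\smallO(\mm(B_r(x)))$ gives $\overline D(E\cap F,x)=\overline D(E,x)>0$, while $\overline D((E\cap F)^c,x)\ge\overline D(E^c,x)>0$, so $x\in\partial^e(E\cap F)$; a symmetric statement holds on $E^{(1)}\cap\partial^e F$. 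The only remaining case is $x\in\partial^e E\cap\partial^e F$, which is $\mathcal H^{\rm cod\text{-}1}$-null by hypothesis. Disjointness of the two pieces on the right-hand side follows from $\partial^e F\cap F^{(1)}=\emptyset$, which is immediate from the definitions.

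With the decomposition in hand, the representation formula applied to $E\cap F$ yields
$$P(E\cap F,\cdot)=\theta_{E\cap F}\mathcal H^{\rm cod\text{-}1}|_{\partial^e E\cap F^{(1)}}+\theta_{E\cap F}\mathcal H^{\rm cod\text{-}1}|_{\partial^e F\cap E^{(1)}}.$$
Applying the isotropicity hypothesis to the pair $(E\cap F,E)$ gives $\theta_{E\cap F}=\theta_E$ for $\mathcal H^{\rm cod\text{-}1}$-a.e.\ $x\in\partial^e(E\cap F)\cap\partial^e E$, a set which by the decomposition above (together with $\partial^e E\cap E^{(1)}=\emptyset$) equals $\partial^e E\cap F^{(1)}$ up to null sets. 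Hence the first summand coincides with $\theta_E\mathcal H^{\rm cod\text{-}1}|_{\partial^e E\cap F^{(1)}}=P(E,\cdot)|_{F^{(1)}}$, using that $P(E,\cdot)$ is concentrated on $\partial^e E$. A symmetric application of isotropicity to $(E\cap F,F)$ handles the second summand, and summing gives~\eqref{eq:per_inters_general_claim}.

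The main delicate point is the density analysis at $\partial^e E\cap F^{(1)}$ (and its symmetric counterpart), and this is precisely where the hypothesis $\mathcal H^{\rm cod\text{-}1}(\partial^e E\cap\partial^e F)=0$ enters: discarding the \emph{triple boundary} contribution is what allows the perimeter measure of $E\cap F$ to split cleanly into two pieces without any overlap correction, and ensures that isotropicity can be invoked on a set where the densities really do coincide.
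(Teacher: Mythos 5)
Your proof is correct and takes essentially the same route as the paper's: establish the decomposition \(\partial^e(E\cap F)=(\partial^e E\cap F^{(1)})\sqcup(\partial^e F\cap E^{(1)})\) up to \(\mathcal H^{\rm cod\text{-}1}\)-null sets, then apply \eqref{eq:repr_formula_per} together with isotropicity. The only minor difference is in the \(\subset\) direction of the boundary decomposition, where the paper invokes the known inclusion \(\partial^e(E\cap F)\subset\partial^e E\cup\partial^e F\) and then manipulates set identities, whereas you carry out a direct case analysis over the nine cells induced by \eqref{eq:density+bdry} for both \(E\) and \(F\) --- a slightly more self-contained version of the same computation.
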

\begin{proof}
First of all, we claim that
\begin{equation}\label{eq:per_inters_general_aux}
\partial^e(E\cap F)=(\partial^e E\cap F^{(1)})\sqcup
(\partial^e F\cap E^{(1)}),\quad\text{ up to }\mathcal H^{\rm cod\text{-}1}
\text{-negligible sets.}
\end{equation}
To prove the inclusion \(\supset\), fix \(x\in\partial^e E\cap F^{(1)}\).
On the one hand, \(\overline D\big((E\cap F)^c,x\big)\geq\overline D(E^c,x)>0\).
On the other hand, we may estimate
\[
\overline D(E\cap F,x)\geq\overline D(E,x)-\overline D(E\setminus F,x)
\geq\overline D(E,x)-D(F^c,x)=\overline D(E,x)>0.
\]
This shows that \(x\in\partial^e(E\cap F)\). Hence, we have proved that
\(\partial^e E\cap F^{(1)}\subset\partial^e(E\cap F)\) and similarly
\(\partial^e F\cap E^{(1)}\subset\partial^e(E\cap F)\), thus the inclusion
\(\supset\) in \eqref{eq:per_inters_general_aux} is achieved.
In order to get the converse inclusion \(\subset\) up to
\(\mathcal H^{\rm cod\text{-}1}\)-null sets, we can argue in the following
way. Recall that \(\partial^e(E\cap F)\subset\partial^e E\cup\partial^e F\),
see e.g.\ \cite[Proposition 1.16(ii)]{BPR20}. Therefore, we have that
\[\begin{split}
\partial^e(E\cap F)&\overset{\phantom{\eqref{eq:density+bdry}}}=
\big(\partial^e(E\cap F)\cap(\partial^e E\setminus\partial^e F)\big)\sqcup
\big(\partial^e(E\cap F)\cap(\partial^e F\setminus\partial^e E)\big)\\
&\overset{\eqref{eq:density+bdry}}=
\big(\partial^e(E\cap F)\cap\partial^e E\cap(F^{(1)}\cup F^{(0)})\big)\sqcup
\big(\partial^e(E\cap F)\cap\partial^e F\cap(E^{(1)}\cup E^{(0)})\big)\\
&\overset{\phantom{\eqref{eq:density+bdry}}}=
\partial^e(E\cap F)\cap\big((\partial^e E\cap F^{(1)})\sqcup
(\partial^e F\cap E^{(1)})\big),
\end{split}\]
where the identities have to be intended up to
\(\mathcal H^{\rm cod\text{-}1}\)-negligible
sets. The first identity follows from \(\mathcal H^{\rm cod\text{-}1}
(\partial^e E\cap\partial^e F)=0\), the last one from the fact that
\(\partial^e(E\cap F)\cap(E^{(0)}\cup F^{(0)})=\emptyset\); indeed,
if \(x\in\partial^e(E\cap F)\), then \(\overline D(E,x),\overline D(F,x)
\geq\overline D(E\cap F,x)>0\). The claim
\eqref{eq:per_inters_general_aux} follows.

To conclude, observe that by exploiting \eqref{eq:per_inters_general_aux}
and the isotropicity of \((\X,\sfd,\mm)\) we obtain
\[\begin{split}
P(E\cap F,\cdot)&=\theta_{E\cap F}\mathcal H^{\rm cod\text{-}1}
|_{\partial^e(E\cap F)}=\theta_E\mathcal H^{\rm cod\text{-}1}
|_{\partial^e E\cap F^{(1)}}+\theta_F\mathcal H^{\rm cod\text{-}1}
|_{\partial^e F\cap E^{(1)}}\\
&=P(E,\cdot)|_{F^{(1)}}+P(F,\cdot)|_{E^{(1)}},
\end{split}\]
which yields the sought conclusion.
\end{proof}

Observe that, as a byproduct of the proof of
\cref{lem:per_inters_general}, we also have that
\begin{equation}\label{eq:ineq_per_inters}
P(E,F^{(1)})+P(F,E^{(1)})\leq P(E\cap F),\quad\text{ whenever }
E,F\subset\X\text{ are of finite perimeter}.
\end{equation}
\begin{corollary}\label{cor:per_inters_ball}
Let \((\X,\sfd,\mm)\) be an isotropic PI space. Let \(E\subset\X\)
be a set of finite perimeter. Let \(\bar x\in\X\) be given. Then it
holds that
\[
P(E\cap B_r(\bar x),\cdot)=P(E,\cdot)|_{B_r(\bar x)}+
P(B_r(\bar x),\cdot)|_{E^{(1)}},\quad\text{ for a.e.\ }r>0.
\]
\end{corollary}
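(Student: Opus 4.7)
The plan is to apply \cref{lem:per_inters_general} with $F = B_r(\bar x)$ for a judicious choice of $r$, and then to replace $B_r(\bar x)^{(1)}$ by $B_r(\bar x)$ in the first summand of the resulting identity, at the cost of a $P(E,\cdot)$-null set. Three conditions on $r$ need to be arranged simultaneously for a.e.\ $r>0$:
\begin{enumerate}
    \item[(i)] $B_r(\bar x)$ is a set of finite perimeter;
    \item[(ii)] $\mathcal H^{\rm cod\text{-}1}(\partial^e E \cap \partial^e B_r(\bar x)) = 0$;
    \item[(iii)] $P(E, \{\dist_{\bar x}=r\}) = 0$.
\end{enumerate}

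For (i), I would apply the coarea formula (\cref{thm:coarea}) to the $1$-Lipschitz function $\dist_{\bar x}$ suitably localized, which yields $P(B_r(\bar x),\Omega) < \infty$ for a.e.\ $r$ on any bounded open $\Omega$; combining with a PI-space bound on the total perimeter of a ball one upgrades this to global finiteness. For (iii), the uncountable family of spheres $\{\dist_{\bar x}=r\}_{r>0}$ is pairwise disjoint, and a (locally) finite Borel measure such as $P(E,\cdot)$ can charge only countably many members of a disjoint family; hence (iii) holds for a.e.\ $r$. For (ii), the crucial geometric observation is that $\partial^e B_r(\bar x) \subset \{\dist_{\bar x}=r\}$: indeed, a point $x$ with $\dist(\bar x,x)<r$ has a neighborhood contained in $B_r(\bar x)$ (forcing $D(B_r(\bar x),x)=1$), while $\dist(\bar x,x)>r$ forces $D(B_r(\bar x),x)=0$. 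Combined with the representation \eqref{eq:repr_formula_per}, where $\theta_E>0$, condition (iii) implies $\mathcal H^{\rm cod\text{-}1}(\partial^e E \cap \{\dist_{\bar x}=r\})=0$, hence (ii) for the same $r$.

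For such $r$, \cref{lem:per_inters_general} applied to $(E,B_r(\bar x))$ produces
\[
P\!\left(E\cap B_r(\bar x),\cdot\right) = P(E,\cdot)|_{B_r(\bar x)^{(1)}} + P(B_r(\bar x),\cdot)|_{E^{(1)}}.
\]
The remaining task is to show $P(E,\cdot)|_{B_r(\bar x)^{(1)}} = P(E,\cdot)|_{B_r(\bar x)}$, i.e.\ that $B_r(\bar x)^{(1)} \triangle B_r(\bar x)$ is $P(E,\cdot)$-negligible. The inclusion $B_r(\bar x) \subset B_r(\bar x)^{(1)}$ is immediate for an open set, and any $x \in B_r(\bar x)^{(1)} \setminus B_r(\bar x)$ satisfies $\dist(\bar x,x) = r$ by the same argument as above. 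Hence $B_r(\bar x)^{(1)} \setminus B_r(\bar x) \subset \{\dist_{\bar x} = r\}$, which has zero $P(E,\cdot)$-measure by (iii), and the conclusion follows.

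Conceptually the proof is a clean consequence of \cref{lem:per_inters_general}; the only technical point is the simultaneous selection of $r$ satisfying (i)--(iii), which is the main (but routine) obstacle and is handled by the standard coarea/countability arguments outlined above.
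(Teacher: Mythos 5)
Your proof is correct and follows essentially the same route as the paper's: verify the hypothesis of \cref{lem:per_inters_general} for a.e.\ $r$ by observing that the pairwise disjoint spheres $\{\dist_{\bar x}=r\}_{r>0}$ can only carry $P(E,\cdot)$-mass for countably many $r$, then apply the lemma. In fact you are a bit more explicit than the paper on two points it silently absorbs — the finiteness of $P(B_r(\bar x))$ for a.e.\ $r$, and the replacement of $B_r(\bar x)^{(1)}$ by $B_r(\bar x)$ in the conclusion (which again reduces to $P(E,\{\dist_{\bar x}=r\})=0$) — both of which you justify correctly.
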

\begin{proof}
In view of \cref{lem:per_inters_general}, it is sufficient to show that
\(\mathcal H^{\rm cod\text{-}1}(\partial^e E\cap\partial^e B_r(\bar x))=0\)
holds for a.e.\ \(r>0\). Given that the topological boundaries
\(\{\partial B_r(\bar x)\}_{r>0}\) (thus a fortiori also the essential 
boundaries \(\{\partial^e B_r(\bar x)\}_{r>0}\)) are pairwise disjoint
and \(P(E,\cdot)\) is finite, we have that
\[
(\theta_E\mathcal H^{\rm cod\text{-}1})\big(\partial^e E\cap\partial^e
B_r(\bar x)\big)=P(E,\partial^e B_r(\bar x))=0,\quad\text{ for a.e.\ }r>0.
\]
Since \(\theta_E>0\), we can conclude that
\(\mathcal H^{\rm cod\text{-}1}(\partial^e E\cap\partial^e B_r(\bar x))=0\)
for a.e.\ \(r>0\), as desired.
\end{proof}

\begin{proposition}[Improved lower semicontinuity of the total variation]\label{prop:Lahti}
Let $(\X,\dist,\meas)$ be a PI space.
Let $f_i\in L^1_{\rm loc}(\X,\mm)$ be a sequence of functions converging to some function $f$ in $L^1_{\rm loc}$. If $f \in{\rm BV}_{\rm loc}(\X)$ and $|Df|(F^{(1)})<+\infty$, then
\[
\liminf_i |Df_i|(F^{(1)}) \ge |Df|(F^{(1)}).
\]
In particular, if $\{E_i\}_{i\in\N}$ is a sequence of $\meas$-measurable sets converging in $L^1_{\rm loc}$ to some set $E$ of locally finite perimeter and $P(E,F^{(1)})<+\infty$, then
\[
\liminf_i P(E_i,F^{(1)}) \ge P(E,F^{(1)}).
\]
\end{proposition}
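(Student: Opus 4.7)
The plan is to reduce the claim to the standard lower semicontinuity on open sets recorded in \cref{rem:SemicontPerimeter}, and then upgrade this to the Borel set $F^{(1)}$. The novel content of \cref{prop:Lahti} with respect to \cref{rem:SemicontPerimeter} is precisely that $F^{(1)}$ is in general only a Borel set, not open.

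First I would record the easy inclusion: for every open $V\subset F^{(1)}$ and every index $i$, monotonicity of the Borel measure $|Df_i|$ gives $|Df_i|(V)\le|Df_i|(F^{(1)})$, and applying \cref{rem:SemicontPerimeter} on $V$ yields
\begin{equation*}
|Df|(V)\le\liminf_i|Df_i|(V)\le\liminf_i|Df_i|(F^{(1)}).
\end{equation*}
Passing to the supremum over open $V\subset F^{(1)}$ thus reduces the claim to the inner approximation identity
\begin{equation*}
|Df|(F^{(1)})=\sup\bigl\{|Df|(V)\st V\subset F^{(1)},\ V\text{ open}\bigr\}.
\end{equation*}

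The main obstacle is exactly this inner approximation: on a general PI space, $F^{(1)}$ may have empty topological interior even while $|Df|(F^{(1)})>0$ (think of a fat Cantor-type set $F$ paired with a smooth $f$), so ordinary topological inner regularity by open subsets cannot suffice. I would resolve this by passing to the $1$-fine topology associated with the BV-capacity on $(\X,\sfd,\mm)$. By the density-one characterization of $F^{(1)}$, this set can be approximated from the inside by open sets up to arbitrarily small BV-capacity, i.e., $F^{(1)}$ is $1$-quasi-open. Coupling this with the fact that sets of zero BV-capacity are $|Df|$-negligible for every ${\rm BV}_{\rm loc}$ function $f$, one finds, for every $\eps>0$, an open set $V_\eps\subset F^{(1)}$ with $|Df|(F^{(1)}\setminus V_\eps)<\eps$. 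Feeding this back into the previous reduction and letting $\eps\to0$ completes the proof. Both nontrivial ingredients (the quasi-openness of $F^{(1)}$ and the capacity domination of $|Df|$) are results of Lahti and coauthors in the metric BV literature, and it is here that the full PI assumption (doubling and weak $(1,1)$-Poincar\'e) enters in an essential way; the hypothesis $|Df|(F^{(1)})<+\infty$ is used to carry out the capacity approximation with a uniformly integrable remainder.

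The particular statement for sets of finite perimeter is then recovered by specializing to $f_i=\nchi_{E_i}$ and $f=\nchi_E$; alternatively, it can be derived first and the general BV version obtained via the coarea formula \cref{thm:coarea} and Fatou's lemma, using that $L^1_{\rm loc}$-convergence of $f_i$ to $f$ implies $L^1_{\rm loc}$-convergence of $\{f_i>t\}$ to $\{f>t\}$ for a.e. $t\in\R$.
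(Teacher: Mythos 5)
You correctly identify the quasi-openness of $F^{(1)}$ as the crucial ingredient, and this is exactly the fact the paper's proof rests on. However, the way you propose to use quasi-openness does not work, and the gap is not a matter of detail. The standard meaning of ``$1$-quasi-open'' (the one used in the paper and in Lahti's work) is: for every $\eps>0$ there is an open set $G$ with ${\rm Cap}_1(G)<\eps$ such that $F^{(1)}\cup G$ is open. This is an \emph{outer} modification of $F^{(1)}$. Your argument instead requires an \emph{inner} approximation --- an open $V_\eps\subset F^{(1)}$ with $|Df|(F^{(1)}\setminus V_\eps)<\eps$ --- so that you can apply \cref{rem:SemicontPerimeter} on $V_\eps$ and pass to the supremum. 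These two properties are genuinely different, and the inner one fails in general. Your own example shows it: take $F\subset\R$ a fat Cantor set and $f(x)=x$, so that $|Df|=\mathcal L^1$. Then $F^{(1)}\subset F$ has empty topological interior while $|Df|(F^{(1)})=\mathcal L^1(F^{(1)})>0$, so the only open subset of $F^{(1)}$ is $\emptyset$ and the quantity $|Df|(F^{(1)}\setminus V)$ can never be made small over open $V\subset F^{(1)}$. Thus the identity $|Df|(F^{(1)})=\sup\{|Df|(V)\st V\subset F^{(1)},\ V\ \text{open}\}$ to which you reduce the statement is false.

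What is actually needed is a lower semicontinuity theorem for the total variation on $1$-quasi-open sets, which is a strictly stronger statement than \cref{rem:SemicontPerimeter} and whose proof does not reduce to the open case by an inner approximation. This is precisely what Lahti established (\cite[Theorem 4.5]{LahtiQuasiOpen}, restated as \cite[Theorem 3.1]{LahtiLeibniz}), and it is the black box the paper's proof invokes as its final step, after reducing to bounded $F$ (to place oneself in the globally doubling Poincar\'e setting of Lahti's papers), identifying $u^\wedge=\chi_{F^{(1)}}$ for $u=\chi_F$, deducing $1$-quasi lower semicontinuity of $u^\wedge$ from \cite[Proposition 2.3]{LahtiLeibniz}, and concluding that $F^{(1)}=\{u^\wedge>0\}$ is $1$-quasi-open. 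Your proposal omits this essential ingredient and attempts to replace it with an approximation that does not hold.
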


\begin{proof}
We can assume without loss of generality that $F$ has a bounded representative. Indeed, once the claim is proved for bounded sets, applying the thesis on intersections $F\cap B_r(o)$ and letting $r\to+\infty$ eventually implies the full statement. 

So assume that $F$ is bounded, and then we can assume that the space is doubling and that a weak $(1,1)$-Poincar\'{e} inequality holds; in particular, we are in the setting of \cite{LahtiLeibniz, LahtiQuasiOpen}. Let $u\eqdef \nchi_F$ be the characteristic function of $F$. We define $u^\wedge$ pointwise at every $x \in\X$ as in \cite[p. 802]{LahtiLeibniz} by
\[
u^\wedge(x) \eqdef 
\sup \left\{t \in \R \st \lim_{r\to0} \frac{\meas(B_r(x) \cap \{u<t\})}{\meas(B_r(x))} = 0 \right\}
=
\begin{cases}
1 & x \in F^{(1)},\\
0 & x \in \X\setminus F^{(1)}
\end{cases}
= \nchi_{F^{(1)}}(x).
\]
Indeed
\[
 \frac{\meas(B_r(x) \cap \{u<t\})}{\meas(B_r(x))} =
 \begin{cases}
 1 & t>1,\\
  \frac{\meas(B_r(x) \cap (\X\setminus F))}{\meas(B_r(x))}  & t \in (0,1], \\
  0 & t\le0.
 \end{cases}
\]
Hence clearly $u^\wedge(x)=1$ (resp.\! $0$) if $x \in F^{(1)}$ (resp.\! $x \in F^{(0)}$). And if $x \in\X\setminus(F^{(1)}\cup F^{(0)}) = \partial^e F$, then $\overline{D}(\X\setminus F,x)>0$, thus for every $t \in(0,1]$ it holds
\[
\limsup_{r\to0} \frac{\meas(B_r(x) \cap \{u<t\})}{\meas(B_r(x))} >0,
\]
that implies $u^\wedge(x)=0$.
Then by \cite[Proposition 2.3]{LahtiLeibniz} we have that $u^\wedge$ is $1$-quasi lower semicontinuous in the sense of \cite[Definition 2.1]{LahtiLeibniz}, i.e., for any $\eps>0$ there is an open set $G$ such that ${\rm Cap}_1(G)<\eps$ and $u^\wedge|_{\X\setminus G}$ is lower semicontinuous.
It follows by the very definition \cite[Definition 2.1]{LahtiLeibniz} that $F^{(1)}=\{u^\wedge>0\}$ is $1$-quasi open, i.e., such that for any $\eps>0$ there is an open set $G$ with ${\rm Cap}_1(G)<\eps$ such that $F^{(1)}\cup G$ is open. Indeed, given $\eps>0$, by $1$-quasi lower semicontinuity of $u^\wedge$ we find an open set $G$ with ${\rm Cap}_1(G)<\eps$ and $u^\wedge|_{\X\setminus G}$ lower semicontinuous. Then $F^{(1)}\setminus G = \left\{u^\wedge|_{\X\setminus G} >0 \right\}$ is open in $\X\setminus G$, that is, there is an open set $U$ such that $F^{(1)}\setminus G = (\X\setminus G) \cap U$. Hence
\[
F^{(1)}\cup G = (F^{(1)}\setminus G) \cup G = ( (\X\setminus G) \cap U ) \cup G = U \cup G
\]
is open.
Therefore, as $|Df|(F^{(1)})<+\infty$ by assumption, we can apply \cite[Theorem 3.1]{LahtiLeibniz}, that is proved in \cite[Theorem 4.5]{LahtiQuasiOpen}, and we obtain
\[
\liminf_i |Df_i|(F^{(1)}) \ge |Df|(F^{(1)}).
\]
\end{proof}

\subsection{Differential calculus on \texorpdfstring{\(\sf RCD\)}{RCD} spaces}\label{sec:DifferentialCalculusRCD}
In this section we shall introduce basic tools about the first-order and second-order differential calculus on metric measure spaces. Then, we introduce the notion of $\RCD$ space and discuss some basic geometric properties.
\subsubsection{First-order calculus on metric measure spaces}
Let \((\X,\sfd,\mm)\) be a given metric measure space.
The space \(C([0,1],\X)\) of continuous curves in \(\X\)
is a complete and separable metric space if endowed with the supremum
distance \(\sfd_{\rm sup}(\gamma,\sigma)\coloneqq\max_{t\in[0,1]}\sfd(\gamma_t,\sigma_t)\). We recall the notion of test plan, introduced in
\cite{AmbrosioGigliSavare11,AmbrosioGigliSavare11_3}. A Borel
probability measure \(\ppi\) on \(C([0,1],\X)\) is called a
\emph{test plan} on \(\X\) provided:
\begin{itemize}
\item[\(\rm i)\)] \(\ppi\) has \emph{bounded compression},
i.e., there exists a constant \(C>0\) such that
\(({\rm e}_t)_*\ppi\leq C\mm\) for every \(t\in[0,1]\), where
\({\rm e}_t\colon C([0,1],\X)\to\X\) denotes the evaluation
map \({\rm e}_t(\gamma)\coloneqq\gamma_t\).
\item[\(\rm ii)\)] \(\ppi\) is concentrated on absolutely
continuous curves and has \emph{finite kinetic energy}, i.e.,
\[
\int\!\!\!\int_0^1|\dot\gamma_t|^2\,\d t\,\d\ppi(\gamma)<+\infty.
\]
\end{itemize}

Following \cite{AmbrosioGigliSavare11,AmbrosioGigliSavare11_3},
we can use the concept of test plan to introduce the notion of
Sobolev function. We declare that a Borel function \(f\colon\X\to\R\)
(considered up to \(\mm\)-a.e.\ equality) belongs to the
\emph{local Sobolev class} \({\rm S}^2_{\rm loc}(\X)\) provided
there exists \(G\in L^2_{\rm loc}(\X,\mm)\) such that
\[
\int\big|f(\gamma_1)-f(\gamma_0)\big|\,\d\ppi(\gamma)\leq
\int\!\!\!\int_0^1 G(\gamma_t)|\dot\gamma_t|\,\d t\,\d\ppi(\gamma),
\quad\text{ for every test plan }\ppi\text{ on }\X.
\]
The minimal such function \(G\) (where minimality is intended in
the \(\mm\)-a.e.\ sense) is denoted by \(|Df|\in L^2_{\rm loc}(\X,\mm)\)
and called the \emph{minimal weak upper gradient} of \(f\).
The \emph{Sobolev class} and the \emph{Sobolev space} over \(\X\)
are then defined as
\[
{\rm S}^2(\X)\coloneqq\big\{f\in{\rm S}^2_{\rm loc}(\X)\,:\,
|Df|\in L^2(\mm)\big\},\qquad W^{1,2}(\X)\coloneqq
L^2(\mm)\cap{\rm S}^2(\X),
\]
respectively. The Sobolev space \(W^{1,2}(\X)\) is a Banach space
if endowed with the norm
\[
\|f\|_{W^{1,2}(\X)}\coloneqq\Big(\|f\|_{L^2(\mm)}^2+
\big\||Df|\big\|_{L^2(\mm)}^2\Big)^{1/2},\quad
\text{ for every }f\in W^{1,2}(\X).
\]

We assume the reader is familiar
with the notion of \emph{\(L^2(\mm)\)-normed \(L^\infty(\mm)\)-module}
introduced in \cite{Gigli14}
(see also \cite{Gigli17} or \cite{GPbook} for an account of this topic).
If \(\mathscr M\) is an \(L^2(\mm)\)-normed \(L^\infty(\mm)\)-module,
then we denote by \(\mathscr M^*\) its \emph{dual module}. Given two
Hilbert \(L^2(\mm)\)-normed \(L^\infty(\mm)\)-modules \(\mathscr H\)
and \(\mathscr K\), we denote by \(\mathscr H\otimes\mathscr K\) their
\emph{tensor product}.
On a metric measure space \((\X,\sfd,\mm)\), a fundamental
\(L^2(\mm)\)-normed \(L^\infty(\mm)\)-module is the so-called
\emph{cotangent module} \(L^2(T^*\X)\), introduced in
\cite[Definition 2.2.1]{Gigli14} (see also
\cite[Theorem/Definition 2.8]{Gigli17}), which is characterised as follows.

The module \(L^2(T^*\X)\), together with the \emph{differential} operator
\(\d\colon{\rm S}^2(\X)\to L^2(T^*\X)\), are uniquely determined by these
conditions: \(\d\) is a linear map, \(|\d f|=|Df|\) holds
\(\mm\)-a.e.\ on \(\X\) for all \(f\in{\rm S}^2(\X)\), and \(L^2(T^*\X)\)
is generated by \(\big\{\d f\,:\,f\in{\rm S}^2(\X)\big\}\).
The differential enjoys several calculus rules:
\begin{itemize}
\item[\(\rm i)\)] \textsc{Closure.} Let \((f_n)_n\subset{\rm S}^2(\X)\)
be such that \(f_n\to f\) in the \(\mm\)-a.e.\ sense, for some Borel
function \(f\colon\X\to\R\). Suppose \(\d f_n\rightharpoonup\omega\)
weakly in \(L^2(T^*\X)\), for some \(\omega\in L^2(T^*\X)\). Then
\(f\in{\rm S}^2(\X)\) and \(\d f=\omega\).
\item[\(\rm ii)\)] \textsc{Locality.} Let \(f,g\in{\rm S}^2(\X)\) be
given. Then \(\nchi_{\{f=g\}}\cdot\d f=\nchi_{\{f=g\}}\cdot\d g\).
\item[\(\rm iii)\)] \textsc{Chain rule.} Let \(f\in{\rm S}^2(\X)\)
and \(\varphi\in C^1(\R)\cap{\rm Lip}(\R)\) be given. Then
\(\varphi\circ f\in{\rm S}^2(\X)\) and
\(\d(\varphi\circ f)=\varphi'\circ f\cdot\d f\).
\item[\(\rm iv)\)] \textsc{Leibniz rule.} Let \(f,g\in{\rm S}^2(\X)
\cap L^\infty(\mm)\). Then \(fg\in{\rm S}^2(\X)\) and
\(\d(fg)=f\cdot\d g+g\cdot\d f\).
\end{itemize}
We refer to \cite[Propositions 1.11 and 1.12]{Gigli17} for a proof
of these properties. Moreover, the \emph{tangent module} \(L^2(T\X)\)
is defined as the dual of the cotangent module, namely,
\(L^2(T\X)\coloneqq L^2(T^*\X)^*\); see \cite[Definition 2.18]{Gigli17}.
The elements of \(L^2(T^*\X)\) and \(L^2(T\X)\) are called
\emph{\(1\)-forms} and \emph{vector fields} on \(\X\), respectively.
For any \(p\in[1,\infty]\), we denote by \(L^p(T\X)\) the space
of all \emph{\(p\)-integrable vector fields}, namely, the completion of \(\big\{v\in L^2(T\X)\,:\,
|v|\in L^p(\mm)\big\}\) with respect to
\(\|v\|_{L^p(T\X)}\coloneqq\big\||v|\big\|_{L^p(\mm)}\).

The \emph{divergence} is defined as follows (cf.\ \cite[Definition 2.21]{Gigli17}): we define \(D({\rm div})\subset L^2(T\X)\) as the
space of all vector fields \(v\in L^2(T\X)\) for which there exists
(a uniquely determined) function \({\rm div}(v)\in L^2(\mm)\) such that
\[
\int\d f(v)\,\d\mm=-\int f\,{\rm div}(v)\,\d\mm,
\quad\text{ for every }f\in W^{1,2}(\X).
\]
As shown in \cite[eq.\ (2.3.13)]{Gigli14}, the divergence operator
satisfies the following \emph{Leibniz rule}: if \(v\in D({\rm div})\)
and \(f\in{\rm Lip}_{bs}(\X)\), then \(f\cdot v\in D({\rm div})\) and
\begin{equation}\label{eq:Leibniz_div}
{\rm div}(f\cdot v)=\d f(v)+f\,{\rm div}(v).
\end{equation}
Also, \(D({\rm div})\) is a vector subspace of \(L^2(T\X)\) and
\(D({\rm div})\ni v\mapsto{\rm div}(v)\in L^2(\mm)\) is linear.
\subsubsection{Infinitesimal Hilbertianity and Laplacian}
According to \cite{Gigli12}, we say that a metric measure space \((\X,\sfd,\mm)\) is
\emph{infinitesimally Hilbertian} provided \(W^{1,2}(\X)\) is a Hilbert
space. Under this assumption, it holds that \({\rm Lip}_{bs}(\X)\) is
dense in \(W^{1,2}(\X)\), cf.\ \cite{AmbrosioGigliSavare11_3}.
It holds (cf.\ \cite[Proposition 2.3.17]{Gigli14}) that a metric measure
space \((\X,\sfd,\mm)\) is infinitesimally Hilbertian if and only if
\(L^2(T^*\X)\) and \(L^2(T\X)\) are Hilbert \(L^2(\mm)\)-normed
\(L^\infty(\mm)\)-modules. In this case, there is a canonical notion of
\emph{gradient} operator \(\nabla\colon{\rm S}^2(\X)\to L^2(T\X)\),
defined as \(\nabla\coloneqq{\sf R}\circ\d\), where
\({\sf R}\colon L^2(T^*\X)\to L^2(T\X)\)
stands for the Riesz isomorphism. Consequently, \(\nabla\) naturally
inherits its calculus rules from \(\d\).

\medskip

Moreover, one can define the \emph{Laplacian}
as follows: we define \(D(\Delta)\subset W^{1,2}(\X)\) as the
space of all functions \(f\in W^{1,2}(\X)\) for which there exists
(a uniquely determined) \(\Delta f\in L^2(\mm)\) such that
\[
\int\langle\nabla f,\nabla g\rangle\,\d\mm=-\int g\,\Delta f\,\d\mm,
\quad\text{ for every }g\in W^{1,2}(\X),
\]
where we set \(\langle\nabla f,\nabla g\rangle\coloneqq\frac{1}{2}
\big(|D(f+g)|^2-|Df|^2-|Dg|^2\big)\);
cf.\ \cite[Definition 2.42]{Gigli17}. For any given \(f\in W^{1,2}(\X)\),
it clearly holds that \(f\in D(\Delta)\) if and only if
\(\nabla f\in D({\rm div})\), and in this case \(\Delta f={\rm div}(\nabla f)\).
We will also consider a more general notion of Laplacian:
\begin{definition}[Measure-valued Laplacian {\cite[Definition 4.4]{Gigli12}}]
Let \((\X,\sfd,\mm)\) be infinitesimally Hilbertian. Then we define
\(D({\boldsymbol\Delta})\subset{\rm S}^2_{\rm loc}(\X)\) as the space of all
\(f\in{\rm S}^2_{\rm loc}(\X)\) for which there exists (a uniquely determined)
boundedly-finite, signed Radon measure \({\boldsymbol\Delta}f\)
on \(\X\) such that
\[
\int\langle\nabla f,\nabla g\rangle\,\d\mm=-\int g\,\d{\boldsymbol\Delta}f,
\quad\text{ for every }g\in{\rm Lip}_{bs}(\X).
\]
\end{definition}

The two notions of Laplacian are consistent, in the following sense:
a given function \(f\in W^{1,2}(\X)\) belongs to \(D(\Delta)\) if and
only if it belongs to \(D({\boldsymbol\Delta})\) and it satisfies
\({\boldsymbol\Delta}f\ll\mm\) with \(\frac{\d{\boldsymbol\Delta}f}{\d\mm}
\in L^2(\mm)\). In this case, it holds that \({\boldsymbol\Delta}f=
\Delta f\,\mm\).
\begin{proposition}[Chain rule for \(\boldsymbol\Delta\)
{\cite[Proposition 4.28]{Gigli12}}]\label{prop:chain_rule_mv_Lapl}
Let \((\X,\sfd,\mm)\) be infinitesimally Hilbertian. Fix any
\(f\in D({\boldsymbol\Delta})\) locally Lipschitz and \(\varphi\in C^2(\R)\).
Then \(\varphi\circ f\in D({\boldsymbol\Delta})\) and
\[
{\boldsymbol\Delta}(\varphi\circ f)=\varphi'\circ f\,{\boldsymbol\Delta}f
+\varphi''\circ f\,|Df|^2\,\mm.
\]
\end{proposition}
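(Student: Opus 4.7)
The plan is to unwind the definition of \({\boldsymbol\Delta}\) directly, testing against an arbitrary \(g\in{\rm Lip}_{bs}(\X)\), and to reduce the claim to the first-order chain and Leibniz rules for \(\nabla\) recalled in \cref{sec:DifferentialCalculusRCD}. Since \(f\in D({\boldsymbol\Delta})\subset{\rm S}^2_{\rm loc}(\X)\) is locally Lipschitz and \(\varphi\in C^2(\R)\), a routine cut-off/locality argument (on each bounded set \(f\) takes values in a bounded interval where \(\varphi\) and \(\varphi'\) can be modified outside to become \(C^1\cap{\rm Lip}(\R)\)) together with the chain rule for \(\d\) yields \(\varphi\circ f\in{\rm S}^2_{\rm loc}(\X)\) with \(\nabla(\varphi\circ f)=(\varphi'\circ f)\,\nabla f\), and likewise \(\varphi'\circ f\) is locally Lipschitz with \(\nabla(\varphi'\circ f)=(\varphi''\circ f)\,\nabla f\). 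These two identities will be the backbone of the computation.

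Next I would fix \(g\in{\rm Lip}_{bs}(\X)\), set \(h\eqdef(\varphi'\circ f)\,g\), and start from
\[
\int\langle\nabla(\varphi\circ f),\nabla g\rangle\,\d\mm=\int(\varphi'\circ f)\,\langle\nabla f,\nabla g\rangle\,\d\mm.
\]
The key step is to move the factor \(\varphi'\circ f\) onto \(g\) by the Leibniz rule for the gradient, which gives \((\varphi'\circ f)\,\langle\nabla f,\nabla g\rangle=\langle\nabla f,\nabla h\rangle-g\,(\varphi''\circ f)\,|Df|^2\) \(\mm\)-a.e. Integrating and applying the defining relation for \({\boldsymbol\Delta}f\) with the admissible test function \(h\) turns the first summand into \(\int\langle\nabla f,\nabla h\rangle\,\d\mm=-\int g\,(\varphi'\circ f)\,\d{\boldsymbol\Delta}f\); collecting the two contributions yields
\[
\int\langle\nabla(\varphi\circ f),\nabla g\rangle\,\d\mm=-\int g\,\d\!\bigl((\varphi'\circ f)\,{\boldsymbol\Delta}f+(\varphi''\circ f)\,|Df|^2\,\mm\bigr),
\]
for every \(g\in{\rm Lip}_{bs}(\X)\), which by definition of \({\boldsymbol\Delta}\) is exactly the claimed formula. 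The boundedly-finite-Radon-measure property of the right-hand side on bounded sets is automatic from the local boundedness of \(\varphi'\circ f\) and \(\varphi''\circ f\), the boundedly-finite character of \({\boldsymbol\Delta}f\), and \(|Df|^2\in L^1_{\rm loc}(\mm)\) (since \(f\) is locally Lipschitz).

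The only step I expect to require genuine care is checking that \(h=(\varphi'\circ f)\,g\) is an admissible test function, i.e.\ that it lies in \({\rm Lip}_{bs}(\X)\). On the bounded set \(\spt(g)\) the locally Lipschitz function \(f\) takes values in a compact interval \(I\subset\R\) on which \(\varphi'\) is \(C^1\), hence Lipschitz, so \(\varphi'\circ f\) is Lipschitz on \(\spt(g)\) and the product \(h\) is Lipschitz with bounded support. Once this admissibility is secured, every other step is a direct application of the first-order calculus rules (chain rule, Leibniz rule) for the gradient on infinitesimally Hilbertian spaces recalled above.
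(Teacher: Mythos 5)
The paper does not give a proof of this proposition; it is cited verbatim from \cite[Proposition 4.28]{Gigli12}. Your argument is therefore a self-contained derivation rather than a reconstruction, and it is correct. The computation is the standard dual-pairing one: starting from \(\int\langle\nabla(\varphi\circ f),\nabla g\rangle\,\d\mm=\int(\varphi'\circ f)\langle\nabla f,\nabla g\rangle\,\d\mm\), moving \(\varphi'\circ f\) onto the test function via \(h=(\varphi'\circ f)g\) and the Leibniz rule, then invoking the defining relation of \(\boldsymbol\Delta f\) with the admissible test function \(h\), gives exactly the two summands \(-\int g(\varphi'\circ f)\,\d\boldsymbol\Delta f-\int g(\varphi''\circ f)|Df|^2\,\d\mm\). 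You correctly single out the only non-routine point, namely that \(h\in{\rm Lip}_{bs}(\X)\): since \(g\) has bounded support and \(f\), being locally Lipschitz (in the usual sense of Lipschitz on bounded sets, which is the convention consistent with the paper's applications, e.g.\ to \(\sfd_{\bar x}^2\) in \cref{thm:Lapl_comp_dist}), maps a bounded neighbourhood of \(\spt(g)\) into a compact interval where \(\varphi'\) is Lipschitz, the product is Lipschitz with bounded support. The observation that the right-hand side is a boundedly finite signed Radon measure, using \(|Df|^2\in L^1_{\rm loc}(\mm)\) and local boundedness of \(\varphi'\circ f,\varphi''\circ f\), properly closes the argument. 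This is essentially the same mechanism used in Gigli's proof.
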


For any \(t>0\) and \(p\in[1,\infty]\), we denote by
\({\sf h}_t\colon L^p(\mm)\to L^p(\mm)\) the \emph{heat flow} at time \(t\).
The operator \({\sf h}_t\colon L^p(\mm)\to L^p(\mm)\) is linear and continuous.
Recall that \(\|{\sf h}_t f\|_{L^p(\mm)}\leq\|f\|_{L^p(\mm)}\) for every
\(f\in L^p(\mm)\) and that \({\sf h}_t\) is \emph{mass-preserving},
meaning that
\begin{equation}\label{eq:h_t_mass-pres}
\int{\sf h}_t f\,\d\mm=\int f\,\d\mm,\quad
\text{ for every }f\in L^1(\mm)\text{ and }t>0.
\end{equation}
More generally, the heat flow is \emph{self-adjoint}, namely if
\(p,q\in[1,\infty]\) satisfy \(\frac{1}{p}+\frac{1}{q}=1\), then
\begin{equation}\label{eq:h_t_self-adj}
\int g\,{\sf h}_t f\,\d\mm=\int f\,{\sf h}_t g\,\d\mm,
\quad\text{ for every }f\in L^p(\mm)\text{ and }g\in L^q(\mm).
\end{equation}
Another important property of the heat flow is the \emph{weak maximum
principle}, which states that if \(f\in L^p(\mm)\) satisfies \(f\leq C\)
in the \(\mm\)-a.e.\ sense for some constant \(C\in\R\), then
\({\sf h}_t f\leq C\) holds \(\mm\)-a.e.\ for every \(t>0\).
\subsubsection{Second-order calculus on \(\sf RCD\) spaces}\label{sec:RCD}
With the terminology we discussed so far at our disposal, we can introduce the definition of the so-called $\RCD$ condition for m.m.s., and discuss some basic and useful properties of it.
For more on the topic, we refer the interested reader to the survey of Ambrosio \cite{AmbrosioSurvey} and the references therein. 

After the introduction, in the independent works \cite{Sturm1,Sturm2} and \cite{LottVillani}, of the curvature dimension condition $\CD(K,N)$ encoding in a synthetic way the notion of Ricci curvature bounded from below by $K$ and dimension bounded above by $N$, the definition of $\RCD(K,N)$ m.m.s.\ was first proposed in \cite{Gigli12} and then studied in \cite{Gigli13, ErbarKuwadaSturm15,AmbrosioMondinoSavare15}, see also \cite{CavallettiMilman16} for the equivalence between the $\RCD^*(K,N)$ and the $\RCD(K,N)$ condition in the case the reference measure is finite. The infinite dimensional counterpart of this notion had been previously investigated in \cite{AmbrosioGigliSavare14}, see also \cite{AmbrosioGigliMondinoRajala15} for the case of $\sigma$-finite reference measures. We shall recall here for the sake of brevity only the definition of the $\RCD$ condition.

For the definition of the weaker $\CD$ condition, which is given by means of convexity properties of appropriate entropies along Wasserstein geodesics, we refer the reader to the original works \cite{LottVillani, Sturm1, Sturm2}, to the book \cite{VillaniBook}, and to the survey \cite{AmbrosioSurvey} and the references therein. 

\begin{definition}[\(\sf RCD\) space]
Let \((\X,\sfd,\mm)\) be a metric measure space. Then
\((\X,\sfd,\mm)\) is a \emph{\({\sf RCD}(K,N)\) space}, for some
\(K\in\R\) and \(N\in[1,\infty]\), provided the following conditions hold:
\begin{itemize}
\item[\(\rm i)\)] There exist \(C>0\) and \(\bar x\in\X\) such that
\(\mm(B_r(\bar x))\leq e^{Cr^2}\) for every \(r>0\).
\item[\(\rm ii)\)] \textsc{Sobolev-to-Lipschitz.} If \(f\in W^{1,2}(\X)\)
satisfies \(|Df|\in L^\infty(\mm)\), then \(f\) admits a Lipschitz
representative \(\bar f\colon\X\to\R\) such that
\({\rm Lip}(\bar f)=\big\||Df|\big\|_{L^\infty(\mm)}\).
\item[\(\rm iii)\)] \((\X,\sfd,\mm)\) is infinitesimally Hilbertian.
\item[\(\rm iv)\)] \textsc{Bochner inequality.} It holds that
\[
\frac{1}{2}\int|Df|^2\Delta g\,\d\mm\geq
\int g\bigg(\frac{(\Delta f)^2}{N}+
\langle\nabla f,\nabla\Delta f\rangle+K|Df|^2\bigg)\,\d\mm,
\]
for every \(f\in D(\Delta)\) with \(\Delta f\in W^{1,2}(\X)\)
and \(g\in D(\Delta)\cap L^\infty(\mm)^+\) with
\(\Delta g\in L^\infty(\mm)\), where we adopt the convention
that \(\frac{(\Delta f)^2}{N}\equiv 0\) when \(N=\infty\).
\end{itemize}
\end{definition}

On \(\RCD(K,\infty)\) spaces, the following \emph{Bakry--\'{E}mery estimate}
is verified: it \(\mm\)-a.e.\ holds that
\begin{equation}\label{eq:BE_fcts}
|D{\sf h}_t f|^2\leq e^{-2Kt}\,{\sf h}_t(|Df|^2),
\quad\text{ for every }f\in W^{1,2}(\X)\text{ and }t>0.
\end{equation}

We shall recall the Bishop--Gromov comparison theorem for spaces satisfying the Curvature-Dimension condition.

\begin{remark}[Models of constant sectional curvature and Bishop--Gromov comparison Theorem]\label{rem:PerimeterMMS2}
{\rm
We need to introduce some notation about the so-called radial simply connected models of constant sectional curvature (cf.
\cite[Example 1.4.6]{Petersen2016}). Let us define
\[
\sn_K(r) := \begin{cases}
(-K)^{-\frac12} \sinh((-K)^{\frac12} r) & K<0,\\
r & K=0,\\
K^{-\frac12} \sin(K^{\frac12} r) & K>0.
\end{cases}
\]

If $K>0$, then $((0,\pi/\sqrt{K}]\times \mathbb S^{N-1},\d r^2+\mathrm{sn}_K^2(r)g_1)$, where $g_1$ is the canonical metric on $\mathbb S^{N-1}$, is the radial model of dimension $N$ and constant sectional curvature $K$. The metric can be smoothly extended at $r=0$, and thus we shall write that the metric is defined on the ball $ B_{\pi/\sqrt{K}} \subset \R^N$. The Riemannian manifold $(B_{\pi/\sqrt{K}}, g_K\eqdef \d r^2+\mathrm{sn}_K^2(r)g_1)$ is the unique (up to isometry) simply connected Riemannian manifold of dimension $N$ and constant sectional curvature $K>0$.

If instead $K\leq 0$, then $((0,+\infty)\times\mathbb S^{N-1},\d r^2+\mathrm{sn}_K^2(r)g_1)$ is the radial model of dimension $N$ and constant sectional curvature $K$. Extending the metric at $r=0$ analogously yields the unique (up to isometry) simply connected Riemannian manifold of dimension $N$ and constant sectional curvature $K\leq 0$, in this case denoted by $(\R^N,g_K)$.

We denote by $v(N,K,r)$ the volume of the ball of radius $r$ in the (unique) simply connected Riemannian manifold of sectional curvature $K$ and dimension $N$, and by $s(N, K, r)$ the volume of the boundary of such a ball. In particular $s(N,K,r)=N\omega_N\mathrm{sn}_K^{N-1}(r)$ and $v(N,K,r)=\int_0^rN\omega_N\mathrm{sn}_K^{N-1}(r)\de r$, where $\omega_N$ is the Euclidean volume of the Euclidean unit ball in $\mathbb R^N$.

Let us now recall the celebrated Bishop--Gromov comparison theorem.
For an arbitrary $\CD((N-1)K,N)$ space $(\X,\dist,\meas)$ the classical Bishop--Gromov volume comparison holds. More precisely, for a fixed $x\in\X$, the function $\meas(B_r(x))/v(N,K,r)$ is nonincreasing in $r$ and the function $P(B_r(x))/s(N,K,r)$ is essentially nonincreasing in $r$, i.e., the inequality
\[
P(B_R(x))/s(N,K,R) \le P(B_r(x))/s(N,K,r),
\]
holds for almost every radii $R\ge r$, see \cite[Theorem 18.8, Equation (18.8), Proof of Theorem 30.11]{VillaniBook}. Moreover, it holds that 
\[
P(B_r(x))/s(N, K, r)\leq \vol(B_r(x))/v(N,K,r),
\]
for any $r>0$, indeed the last inequality follows from the monotonicity of the volume and perimeter ratios together with the coarea formula on balls.

In addition, if $(\X,\dist,\mathcal{H}^N)$ is an $\RCD((N-1)K,N)$ space, one can conclude that $\mathcal{H}^N$-almost every point has a unique measured Gromov--Hausdorff tangent isometric to $\mathbb R^N$ (\cite[Theorem 1.12]{DePhilippisGigli18}), and thus, from the volume convergence in \cite{DePhilippisGigli18}, we get
 \begin{equation}\label{eqn:VolumeConv}
 \lim_{r\to 0}\frac{\mathcal{H}^N(B_r(x))}{v(N,K,r)}=\lim_{r\to 0}\frac{\mathcal{H}^N(B_r(x))}{\omega_Nr^N}=1, \qquad \text{for $\mathcal{H}^N$-almost every $x$},
 \end{equation}
 where $\omega_N$ is the volume of the unit ball in $\mathbb R^N$. Moreover, since the density function $x\mapsto \lim_{r\to 0}\mathcal{H}^N(B_r(x))/\omega_Nr^N$ is lower semicontinuous (\cite[Lemma 2.2]{DePhilippisGigli18}), it is bounded above by the constant $1$. Hence, from the monotonicity at the beginning of the remark we deduce that, if $(\X,\dist,\mathcal{H}^N)$ is an $\RCD((N-1)K,N)$ space, then for every $x\in \X$ we have $\mathcal{H}^N(B_r(x))\leq v(N,K,r)$ for every $r>0$.
 
 Let us now recall the definition of density of a point in an $\RCD$ space. Let $(X,\dist,\meas)$ be an $\RCD((N-1)K,N)$ space with $K\in\mathbb R$ and $N<+\infty$. By the previous Bishop--Gromov monotonicity result we have that $\meas(B_r(x))/v(N,K,r)$ is non increasing in $r$ for every $x\in \X$, and hence the following limit exists
 \begin{equation}\label{eqn:Density}
     \vartheta[\X,\dist,\meas](x):=\lim_{r\to 0}\frac{\meas(B_r(x))}{v(N,K,r)}=\lim_{r\to 0}\frac{\meas(B_r(x))}{\omega_Nr^N} \in (0,+\infty],
 \end{equation}
 and will be called the {\em density of the point $x$ in $(\X,\dist,\meas)$}.\fr}
\end{remark}
\begin{remark}[$\CD(K,N)$ spaces with $N<+\infty$ are PI spaces]
{\rm
Given $K\in\mathbb R$ and $N<+\infty$, a $\CD(K,N)$ space,
and thus also an $\RCD(K,N)$ space, is a PI space according to \cref{def:PI}. 

Indeed on a $\CD(K,N)$ space a weak local $(1,1)$-Poincar\'{e} inequality holds, see \cite{Rajala12}, and the uniform locally doubling property holds as a direct consequence of the Bishop--Gromov comparison theorem, cf.\ \cref{rem:PerimeterMMS2}.\fr}\end{remark}

We shall now discuss some features of the second-order differential calculus on $\RCD$ spaces. 
Let \((\X,\sfd,\mm)\) be an \({\sf RCD}(K,\infty)\) space.
Following the axiomatization in \cite[Definition 6.1.7]{GPbook},
the class of \emph{test functions} on \(\X\) is defined as
\[
{\rm Test}^\infty(\X)\coloneqq\Big\{f\in{\rm Lip}(\X)\cap L^\infty(\mm)
\cap D(\Delta)\;\Big|\;\Delta f\in W^{1,2}(\X)\cap L^\infty(\mm)\Big\}.
\]
Thanks to the results in \cite{Savare13,Gigli14}, the family
\({\rm Test}^\infty(\X)\) is strongly dense in \(W^{1,2}(\X)\). As in
\cite[eq.\ (6.60)]{GPbook}, the class of \emph{test vector fields}
on \(\X\) is defined as
\[
{\rm TestVF}(\X)\coloneqq\bigg\{\sum_{i=1}^n g_i\nabla f_i\;\bigg|\;
n\in\N,\,(f_i)_{i=1}^n,(g_i)_{i=1}^n\subset{\rm Test}^\infty(\X)\bigg\}
\subset L^2(T\X).
\]
\begin{remark}\label{rmk:TestVF_div}{\rm
Observe that \({\rm TestVF}(\X)\subset D({\rm div})\) and
\[
{\rm div}(v)\in L^\infty(\mm),\quad\text{ for every }v\in{\rm TestVF}(\X).
\]
Indeed, if \(v=\sum_{i=1}^n g_i\nabla f_i\), then we can readily
deduce from \eqref{eq:Leibniz_div} that \(v\in D({\rm div})\) and
\[
{\rm div}(v)=\sum_{i=1}^n\big(\d g_i(\nabla f_i)+g_i\,{\rm div}(\nabla f_i)
\big)=\sum_{i=1}^n\big(\d g_i(\nabla f_i)+g_i\,\Delta f_i\big),
\]
whence it also follows that \({\rm div}(v)\in L^\infty(\mm)\),
yielding the sought conclusion.
\fr}\end{remark}
%
%


Let \((\X,\sfd,\mm)\) be an \({\sf RCD}(K,\infty)\) space.
Given any \(f\in{\rm Test}^\infty(\X)\), we denote by
\({\rm Hess}(f)\) its \emph{Hessian}, namely, the unique element of
the tensor product \(L^2(T^*\X)\otimes L^2(T^*\X)\) satisfying
\[\begin{split}
-2\int h\,{\rm Hess}(f)&(\nabla g_1\otimes\nabla g_2)\,\d\mm\\
=\,&\int\langle\nabla f,\nabla g_1\rangle{\rm div}(h\nabla g_2)
+\langle\nabla f,\nabla g_2\rangle{\rm div}(h\nabla g_1)
+h\big\langle\nabla f,\nabla\langle\nabla g_1,\nabla g_2\rangle
\big\rangle\,\d\mm,
\end{split}\]
for every choice of \(g_1,g_2,h\in{\rm Test}^\infty(\X)\); see
\cite[Definition 3.5]{Gigli17}. Following \cite[Definition 3.15]{Gigli17},
we define the space \(W^{1,2}_C(T\X)\) of \emph{Sobolev vector fields}
on \(\X\) as follows: given any \(v\in L^2(T\X)\), we declare that
\(v\in W^{1,2}_C(T\X)\) if there exists a tensor
\(\nabla v\in L^2(T\X)\otimes L^2(T\X)\) such that
\[
\int h\nabla v:(\nabla f\otimes\nabla g)\,\d\mm=
-\int\langle v,\nabla g\rangle{\rm div}(h\nabla f)
+h\,{\rm Hess}(g)(v\otimes\nabla f)\,\d\mm,
\]
for every choice of \(f,g\in{\rm Test}^\infty(\X)\) and
\(h\in{\rm Lip}_b(\X)\). The element \(\nabla v\) is called
the \emph{covariant derivative} of \(v\). It holds that
\({\rm TestVF}(\X)\subset W^{1,2}_C(T\X)\) and
\[
\nabla\bigg(\sum_{i=1}^n g_i\nabla f_i\bigg)=
\sum_{i=1}^n\nabla g_i\otimes\nabla f_i+
g_i\,{\rm Hess}(f_i)^\sharp,\quad\text{ for every }
\sum_{i=1}^n g_i\nabla f_i\in{\rm TestVF}(\X),
\]
where \(L^2(T^*\X)\otimes L^2(T^*\X)\ni A\mapsto A^\sharp
\in L^2(T\X)\otimes L^2(T\X)\) stands for the Riesz (musical)
isomorphism. The space \(H^{1,2}_C(T\X)\) is then defined as
the closure of \({\rm TestVF}(\X)\) in \(W^{1,2}_C(T\X)\).

\medskip

We denote by
\(\{{\sf h}_{{\rm H},t}\}_{t\geq 0}\) the gradient flow in \(L^2(T\X)\)
of the \emph{augmented Hodge energy functional}, defined as in
\cite[eq.\ (3.5.16)]{Gigli14} (see also the discussion at
the beginning of \cite[Section 1.4]{BPS19}).
Given that \(H^{1,2}_{\rm H}(T\X)\subset H^{1,2}_C(T\X)\)
by \cite[Corollary 3.6.4]{Gigli14}, we have that
\({\sf h}_{{\rm H},t}(v)\in H^{1,2}_C(T\X)\) for every \(v\in L^2(T\X)\)
and \(t>0\). The consistency of \({\sf h}_{{\rm H},t}\) with the heat flow
for functions is evident from the fact that
\begin{equation}\label{eq:consist_with_heat_flow}
{\sf h}_{{\rm H},t}(\nabla f)=\nabla{\sf h}_t f,
\quad\text{ for every }f\in W^{1,2}(\X)\text{ and }t>0.
\end{equation}
As proved in \cite[Lemma 1.37]{BPS19}, it holds
that \({\sf h}_{{\rm H},t}\) is \emph{self-adjoint}, meaning that
\begin{equation}\label{eq:h_Ht_self-adj}
\int\langle{\sf h}_{{\rm H},t}(v),w\rangle\,\d\mm=
\int\langle v,{\sf h}_{{\rm H},t}(w)\rangle\,\d\mm,
\quad\text{ for every }v,w\in L^2(T\X)\text{ and }t>0.
\end{equation}
Moreover, it was shown in \cite[Proposition 3.6.10]{Gigli14}
that the \emph{Bakry--\'{E}mery estimate} holds:
\begin{equation}\label{eq:BE_vf}
|{\sf h}_{{\rm H},t}(v)|^2\leq e^{-2Kt}\,{\sf h}_t(|v|^2)\;\;\;
\mm\text{-a.e.},\quad\text{ for every }v\in L^2(T\X)\text{ and }t>0.
\end{equation}
Following \cite{Gigli12}, for \(K\in\R\) and \(N\in(1,\infty)\)
we define the function \(\tilde\tau_{K,N}\colon[0,+\infty)\to\R\) as
\[
\tilde\tau_{K,N}(\theta)\coloneqq\left\{\begin{array}{ll}
\frac{1}{N}\Big(1+\theta\sqrt{K(N-1)}\,{\rm cotan}\Big(\theta
\sqrt{\frac{K}{N-1}}\,\Big)\Big),\\
1,\\
\frac{1}{N}\Big(1+\theta\sqrt{-K(N-1)}\,{\rm cotanh}\Big(\theta
\sqrt{\frac{-K}{N-1}}\,\Big)\Big),
\end{array}\quad\begin{array}{ll}
\text{ if }K>0,\\
\text{ if }K=0,\\
\text{ if }K<0.
\end{array}\right.
\]
Observe that if \(K\leq 0\), then \(\tilde\tau_{K,N}\) is bounded on
all bounded subsets of \([0,+\infty)\).
\begin{theorem}[Laplacian comparison {\cite[Corollary 5.15]{Gigli12}}]
\label{thm:Lapl_comp_dist}
Let \((\X,\sfd,\mm)\) be an \({\sf RCD}(K,N)\) space with \(N<\infty\).
Let \(\bar x\in\X\) be given. Then it holds that \(\sfd_{\bar x}^2\in
D({\boldsymbol\Delta})\) and
\[
{\boldsymbol\Delta}\sfd_{\bar x}^2\leq
2N\,\tilde\tau_{K,N}\circ\sfd_{\bar x}\,\mm.
\]
\end{theorem}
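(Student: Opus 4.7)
The plan is to reduce the bound on $\sfd_{\bar x}^2$ to a first-order Laplacian comparison for $\sfd_{\bar x}$ itself, and then apply the chain rule of \cref{prop:chain_rule_mv_Lapl}. This two-step strategy is natural because $\sfd_{\bar x}^2$ is always locally Lipschitz — with $|D\sfd_{\bar x}^2| = 2\sfd_{\bar x}|D\sfd_{\bar x}|$ vanishing at $\bar x$ — whereas $\sfd_{\bar x}$ has a mild singularity at $\bar x$ that the square conveniently tames. Concretely, I first aim to prove that on $\X\setminus\{\bar x\}$ one has $\boldsymbol\Delta\sfd_{\bar x}\le \phi_{K,N}(\sfd_{\bar x})\,\mm$, where $\phi_{K,N}$ is the $1$-dimensional model coefficient (e.g.\ $\sqrt{K(N-1)}\cot(r\sqrt{K/(N-1)})$ for $K>0$, with the obvious modifications if $K\le 0$), chosen so that $1+r\phi_{K,N}(r)=N\tilde\tau_{K,N}(r)$ identically.

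To establish this first-order comparison, I would exploit that $\RCD(K,N)\Rightarrow \CD(K,N)$ and use either the optimal transport with Kantorovich potential $\psi=-\tfrac12\sfd_{\bar x}^2$, or the needle/disintegration decomposition of $\mm$ along minimizing geodesics emanating from $\bar x$. Along each transport ray the conditional measures satisfy a one-dimensional $\CD(K,N)$ condition, whose densities obey a concavity estimate that integrates to a Jacobi-type ODE; testing the resulting inequality against an arbitrary $g\in {\rm Lip}_{bs}(\X\setminus\{\bar x\})$ yields the claimed measure bound on $\boldsymbol\Delta\sfd_{\bar x}$. The $\RCD$ calculus (in particular the Sobolev-to-Lipschitz property) simultaneously guarantees $|D\sfd_{\bar x}|=1$ $\mm$-a.e., which is what enters the chain rule.

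Once the distance Laplacian bound is in hand on $\X\setminus\{\bar x\}$, I would apply \cref{prop:chain_rule_mv_Lapl} with $\varphi(r)=r^2$ on each open set compactly contained in $\X\setminus\{\bar x\}$ to obtain
\[
\boldsymbol\Delta\sfd_{\bar x}^2 \;=\; 2\sfd_{\bar x}\,\boldsymbol\Delta\sfd_{\bar x} + 2|D\sfd_{\bar x}|^2\mm \;\le\; \bigl(2\sfd_{\bar x}\phi_{K,N}(\sfd_{\bar x}) + 2\bigr)\mm \;=\; 2N\,\tilde\tau_{K,N}(\sfd_{\bar x})\,\mm,
\]
which is the claim locally away from $\bar x$. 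The final equality is a direct algebraic check against the definition of $\tilde\tau_{K,N}$ in the excerpt.

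The main obstacle is promoting this to $\sfd_{\bar x}^2\in D(\boldsymbol\Delta)$ globally, i.e.\ including the point $\bar x$. The key observation is the cancellation: the factor $\sfd_{\bar x}$ multiplying $\boldsymbol\Delta\sfd_{\bar x}$ absorbs precisely the $1/\sfd_{\bar x}$ blow-up of $\phi_{K,N}(\sfd_{\bar x})$ as $\sfd_{\bar x}\to 0$, so the candidate measure $2N\,\tilde\tau_{K,N}(\sfd_{\bar x})\mm$ is locally finite (indeed continuous with respect to $\mm$, with value $2$ at $\bar x$). To convert this into membership in $D(\boldsymbol\Delta)$, I would approximate $\sfd_{\bar x}^2$ by $(\sfd_{\bar x}^2+\epsilon)$, or by $\chi_\epsilon(\sfd_{\bar x})\sfd_{\bar x}^2$ with a smooth cut-off localizing away from $\bar x$, verify the defining integration-by-parts identity against any $g\in{\rm Lip}_{bs}(\X)$ for each approximant from Steps 1–2, and pass to the limit using dominated convergence (valid precisely by local finiteness of the majorant) together with the locality of the differential and the $\mm$-negligibility of $\{\bar x\}$. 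This yields both the existence of $\boldsymbol\Delta\sfd_{\bar x}^2$ as a signed Radon measure and the asserted inequality on all of $\X$.
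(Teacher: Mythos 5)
This theorem is not proved in the paper; it is quoted directly from \cite[Corollary 5.15]{Gigli12}, so there is no in-paper argument to compare against. Your outline does, however, match the strategy of the cited source: establish a first-order Laplacian comparison for $\sfd_{\bar x}$ on $\X\setminus\{\bar x\}$, then apply the chain rule \cref{prop:chain_rule_mv_Lapl} with $\varphi(r)=r^2$, using the identity $2r\phi_{K,N}(r)+2=2N\,\tilde\tau_{K,N}(r)$ to absorb the $1/r$ singularity of the one-dimensional model coefficient. That algebra is correct.

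Two steps deserve more care than your sketch gives them. First, the assertion that $\sfd_{\bar x}\in D({\boldsymbol\Delta})$ on $\X\setminus\{\bar x\}$ with the sharp one-sided bound is where essentially all of the theorem's content lives; invoking the Kantorovich potential $-\tfrac12\sfd_{\bar x}^2$ and a one-dimensional $\CD(K,N)$ estimate along transport rays is the right starting point, but passing from pointwise concavity of ray densities to a measure inequality tested against arbitrary $g\in\mathrm{Lip}_{bs}$ requires the full interpolation/duality machinery of \cite{Gigli12} or a complete needle-decomposition argument, not a one-sentence appeal. Second, \cref{prop:chain_rule_mv_Lapl} as stated requires $f\in D({\boldsymbol\Delta})$ globally, while the first-order bound gives membership only on $\X\setminus\{\bar x\}$. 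You rightly recognize that a localized chain rule must be used and an extension argument must carry the conclusion across $\bar x$, but local finiteness of the majorant $2N\,\tilde\tau_{K,N}\circ\sfd_{\bar x}\,\mm$ is not by itself sufficient to conclude $\sfd_{\bar x}^2\in D({\boldsymbol\Delta})$: one must verify the defining integration-by-parts identity against test functions charging a neighbourhood of $\bar x$, which means controlling the cut-off boundary term $\int\langle\nabla\chi_\epsilon,\nabla\sfd_{\bar x}^2\rangle g\,\d\mm$ and showing it vanishes as $\epsilon\to 0$. This works because $|\nabla\sfd_{\bar x}^2|=2\sfd_{\bar x}$ vanishes linearly and thereby cancels the $\epsilon^{-1}$ gradient of the cut-off while $\mm(B_{2\epsilon}(\bar x)\setminus B_\epsilon(\bar x))\to0$, but it is a genuine step rather than a consequence of dominated convergence alone or of $\mm(\{\bar x\})=0$.
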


On \({\sf RCD}(K,\infty)\) spaces, the \emph{dual heat flow}
\(\{\mathscr H_t\}_{t\geq 0}\) can be defined as follows: let \(\mu\geq 0\)
be a finite Borel measure on \(\X\) with \emph{finite second-moment},
meaning that \(\int\sfd^2_{\bar x}\,\d\mu<+\infty\) for some (thus any)
point \(\bar x\in\X\). Then for any \(t\geq 0\) one can define
\(\mathscr H_t\mu\) as the unique Borel measure on \(\X\) such that
\begin{equation}\label{eq:dual_h_t}
\int f\,\d\mathscr H_t\mu=\int{\sf h}_t f\,\d\mu,
\quad\text{ for every }f\in{\rm Lip}_b(\X)\cap W^{1,2}(\X).
\end{equation}
The above definition is well-posed, as the \(\sf RCD\) condition
ensures that \({\sf h}_t f\in{\rm Lip}_b(\X)\cap W^{1,2}(\X)\)
whenever \(f\in{\rm Lip}_b(\X)\cap W^{1,2}(\X)\). Indeed, this can be
proved by combining the weak maximum principle with the Bakry--\'{E}mery estimate \eqref{eq:BE_fcts} and the Sobolev-to-Lipschitz property.

It also holds that \(\mathscr H_t\mu\) has finite second-moment,
\(\mathscr H_t\mu\ll\mm\), and \(\mathscr H_t\mu(\X)=\mu(\X)\)
for every \(t>0\), thus in particular each measure \(\mathscr H_t\mu\)
is finite. Given any finite signed Borel measure \(\mu\) on \(\X\) such that
\(\mu^+\) and \(\mu^-\) have finite second-moment, we define
\[
{\sf h}_t^*\mu\coloneqq\frac{\d\mathscr H_t\mu^+}{\d\mm}-
\frac{\d\mathscr H_t\mu^-}{\d\mm}\in L^1(\mm),\quad\text{ for every }t>0.
\]
Observe that \(\{{\sf h}_t^*\mu\}_{t>0}\) is bounded in \(L^1(\mm)\),
more precisely \(\|{\sf h}_t^*\mu\|_{L^1(\mm)}\leq|\mu|(\X)\) for
all \(t>0\).
\begin{remark}[Semigroup property of \(\mathscr H_t\)]
\label{rmk:semigr_H_t}{\rm
Given that the heat flow satisfies the semigroup property
\({\sf h}_{t+s}={\sf h}_t\circ{\sf h}_s\) for every \(t,s\geq 0\),
we deduce that \(\mathscr H_{t+s}=\mathscr H_t\circ\mathscr H_s\) and
\[
{\sf h}_{t+s}^*\mu={\sf h}_t^*(\mathscr H_s\mu^+-\mathscr H_s\mu^-),
\quad\text{ for every }t,s>0.
\]
The proof of this fact is a direct consequence of the definitions
of \(\mathscr H_t\) and \({\sf h}_t^*\).
\fr}\end{remark}
\begin{lemma}\label{lem:h_star_meas}
Let \((\X,\sfd,\mm)\) be a \({\sf RCD}(K,\infty)\) space. Let \(\mu\)
be a finite signed Borel measure on \(\X\) with \(\mu^+\), \(\mu^-\)
having finite second-moment. Then the curve
\((0,+\infty)\ni t\mapsto{\sf h}_t^*\mu\in L^1(\mm)\) is strongly measurable.
\end{lemma}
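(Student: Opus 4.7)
I would prove the stronger statement that $(0,+\infty)\ni t\mapsto{\sf h}_t^*\mu\in L^1(\mm)$ is in fact \emph{continuous}, which of course implies strong measurability. The key idea is a smoothing trick: for any $s>0$ the measure $\mathscr H_s\mu^\pm$ already has an $L^1$-density (namely ${\sf h}_s^*\mu^\pm$), and the usual heat semigroup on $L^1(\mm)$ is strongly continuous, so by the semigroup relation of \cref{rmk:semigr_H_t} the continuity of $t\mapsto{\sf h}_t^*\mu$ on $(s,+\infty)$ follows, and letting $s\searrow 0$ gives continuity on all of $(0,+\infty)$.

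To make the smoothing trick precise, the first step is to identify the dual heat flow of an absolutely continuous measure with the heat semigroup applied to its density. Namely, for any $g\in L^1(\mm)\cap L^2(\mm)$ such that $g\,\mm$ has finite second moment, and any $t>0$, testing the defining identity \eqref{eq:dual_h_t} of $\mathscr H_t$ against $f\in{\rm Lip}_b(\X)\cap W^{1,2}(\X)$ and using the self-adjointness property \eqref{eq:h_t_self-adj} of ${\sf h}_t$ gives
$$\int f\,\d\mathscr H_t(g\,\mm)=\int {\sf h}_t f\cdot g\,\d\mm=\int f\cdot{\sf h}_t g\,\d\mm;$$
density of test functions in $W^{1,2}(\X)$ then forces $\mathscr H_t(g\,\mm)=({\sf h}_t g)\,\mm$. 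Combining this identification with the semigroup identity in \cref{rmk:semigr_H_t}, for every $s,t>0$ one obtains
$${\sf h}_{t+s}^*\mu={\sf h}_t\bigl({\sf h}_s^*\mu^+\bigr)-{\sf h}_t\bigl({\sf h}_s^*\mu^-\bigr),$$
where the right-hand side is now the heat semigroup acting on elements of $L^1(\mm)$.

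Since $\{{\sf h}_t\}_{t\geq 0}$ is strongly continuous on $L^1(\mm)$ on any $\RCD(K,\infty)$ space (via the $L^1$-contractivity of ${\sf h}_t$, mass-preservation \eqref{eq:h_t_mass-pres}, and strong continuity on the dense subspace $L^1(\mm)\cap L^2(\mm)$), the map $t\mapsto{\sf h}_{t+s}^*\mu$ is continuous from $[0,+\infty)$ into $L^1(\mm)$. Because $s>0$ is arbitrary, this gives continuity, and in particular strong Bochner measurability, of $t\mapsto{\sf h}_t^*\mu$ on the whole of $(0,+\infty)$.

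The main obstacle in this plan is a careful bookkeeping of second moments for the intermediate measures $\mathscr H_s\mu^\pm$: in order to apply ${\sf h}_t^*$ to them one must know that they themselves have finite second moment. This is not stated explicitly in the excerpt, but it follows from the $W_2$-contraction property of the dual heat flow on $\RCD(K,\infty)$ spaces, which gives $W_2(\mathscr H_s\mu^\pm,\mu^\pm)<+\infty$ and hence transfers the finite second moment hypothesis on $\mu^\pm$ to $\mathscr H_s\mu^\pm$. Once this technical point is secured, the remainder of the argument is a direct use of strong $L^1$-continuity of the heat semigroup.
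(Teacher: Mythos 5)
Your proof is correct and in fact establishes a stronger conclusion than the paper aims for, namely \emph{continuity} of $t\mapsto{\sf h}_t^*\mu$ rather than mere strong measurability. The route is genuinely different. The paper reduces, by separability of $L^1(\mm)$, to checking \emph{weak} Borel measurability of $t\mapsto\int f\,{\sf h}_t^*\mu\,\d\mm$ for $f\in L^\infty(\mm)$; it handles the case $\mu\ll\mm$ directly by a double approximation in $f$ and in $\d\mu/\d\mm$, invoking only $L^2$-strong continuity of the heat flow, and then removes the absolute continuity assumption via the semigroup relation of \cref{rmk:semigr_H_t}. You use the same smoothing trick for the same purpose, but then push the smoothed measure all the way through the identification $\mathscr H_t(h\,\mm)=({\sf h}_t h)\,\mm$ to obtain ${\sf h}_{t+s}^*\mu={\sf h}_t({\sf h}_s^*\mu)$ as a genuine action of the $L^1(\mm)$ heat semigroup, and then appeal to strong continuity of $({\sf h}_t)_{t\geq0}$ on $L^1(\mm)$. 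This buys you a cleaner statement and a shorter proof, at the cost of invoking $L^1$-strong continuity, a standard but not-explicitly-recorded fact here (it follows from $L^2$-strong continuity, positivity preservation, mass preservation, and Scheff\'e's lemma, then density plus $L^1$-contractivity).

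Two small remarks. First, you state the identification $\mathscr H_t(g\,\mm)=({\sf h}_t g)\,\mm$ for $g\in L^1(\mm)\cap L^2(\mm)$, but then apply it to $g={\sf h}_s^*\mu^\pm$, which is only guaranteed to lie in $L^1(\mm)$; as written this would be a gap. Fortunately the $L^2$ restriction is unnecessary: the self-adjointness \eqref{eq:h_t_self-adj} you invoke is stated for arbitrary conjugate exponents, in particular $(p,q)=(\infty,1)$, so the identification holds for every nonnegative $h\in L^1(\mm)$ with $h\,\mm$ of finite second moment, and that is the generality you should state it in. Second, the ``main obstacle'' you flag about finite second moments of $\mathscr H_s\mu^\pm$ is already settled in the text: immediately after \eqref{eq:dual_h_t} the paper records that $\mathscr H_t\mu$ has finite second moment for all $t>0$, so the $W_2$-contraction digression, while correct, is not needed.
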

\begin{proof}
As a preliminary step, we prove the claim under the additional assumption
that \(\mu\ll\mm\). Being \(L^1(\mm)\) separable, it suffices to check the
weak measurability of \(t\mapsto{\sf h}_t^*\mu\in L^1(\mm)\).
To this aim, fix any \(f\in L^\infty(\mm)\). Choose sequences
\((f_n)_n\subset{\rm Lip}_{bs}(\X)\) and \((\eta_k)_k\subset L^1(\mm)\cap L^2(\mm)\)
such that \(\sup_n\|f_n\|_{L^\infty(\mm)}<+\infty\),
\(|\eta_k|\leq\big|\frac{\d\mu}{\d\mm}\big|\), and \(f_n\to f\),
\(\eta_k\to\frac{\d\mu}{\d\mm}\) in the \(\mm\)-a.e.\ sense. By
applying the dominated convergence theorem, we thus obtain that
\[\begin{split}
\int f\,{\sf h}_t^*\mu\,\d\mm&=
\lim_{n\to\infty}\int f_n\,{\sf h}_t^*\mu\,\d\mm
=\lim_{n\to\infty}\bigg(\int f_n\,\d\mathscr H_t\mu^+ -
\int f_n\,\d\mathscr H_t\mu^-\bigg)=\lim_{n\to\infty}\int{\sf h}_t f_n\,\d\mu\\
&=\lim_{n\to\infty}\int\frac{\d\mu}{\d\mm}\,{\sf h}_t f_n\,\d\mm
=\lim_{n\to\infty}\lim_{k\to\infty}\int\eta_k\,{\sf h}_t f_n\,\d\mm,
\quad\text{ for every }t>0.
\end{split}\]
Given any \(n,k\in\N\), we know that the curve
\([0,+\infty)\ni t\mapsto{\sf h}_t f_n\in L^2(\mm)\) is continuous,
thus accordingly \([0,+\infty)\ni t\mapsto\int\eta_k\,{\sf h}_t f_n\,\d\mm\)
is continuous as well. By combining this fact with the above representation
of the function \((0,+\infty)\ni t\mapsto\int f\,{\sf h}_t^*\mu\,\d\mm\),
we conclude that it is Borel measurable. By the arbitrariness of
\(f\in L^\infty(\mm)\), we have shown the claim when \(\mu\ll\mm\).

Let us now drop the absolute continuity assumption on \(\mu\).
Fix any \(\varepsilon>0\). Then it holds that
\(\mathscr H_\varepsilon\mu^+-\mathscr H_\varepsilon\mu^-\ll\mm\),
thus the first part of the proof and \cref{rmk:semigr_H_t} ensure that
\[
(\varepsilon,+\infty)\ni t\mapsto{\sf h}_t^*\mu={\sf h}_{t-\varepsilon}^*
(\mathscr H_\varepsilon\mu^+ -\mathscr H_\varepsilon\mu^-)\in L^1(\mm),
\quad\text{ is strongly measurable.}
\]
Being \(\varepsilon>0\) arbitrarily chosen, the statement is achieved.
\end{proof}
In the sequel, we will need the following calculus rule, which we did
not find in the literature.
\begin{lemma}[Leibniz rule for Sobolev vector fields]\label{lem:mult_H12_C}
Let \((\X,\sfd,\mm)\) be an \({\sf RCD}(K,\infty)\) space. Fix any
\(v\in H^{1,2}_C(T\X)\cap L^\infty(T\X)\) and \(f\in{\rm Lip}_{bs}(\X)\).
Then \(f\cdot v\in H^{1,2}_C(T\X)\cap L^\infty(T\X)\) and
\begin{equation}\label{eq:Leibniz_Sob_vf}
\nabla(f\cdot v)=\nabla f\otimes v+f\,\nabla v.
\end{equation}
\end{lemma}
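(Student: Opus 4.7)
That $f\cdot v\in L^\infty(T\X)$ is immediate from the pointwise bound $|fv|\le\|f\|_{L^\infty(\mm)}|v|$. Define $w\coloneqq\nabla f\otimes v+f\,\nabla v$, which lies in $L^2(T\X)\otimes L^2(T\X)$ because $f,|\nabla f|\in L^\infty(\mm)$ while $v,\nabla v\in L^2$. The plan is first to check by a direct integration-by-parts computation that $w$ is the covariant derivative of $f\cdot v$ in the sense of $W^{1,2}_C(T\X)$, and then to produce a sequence in $\mathrm{TestVF}(\X)$ converging to $f\cdot v$ in $W^{1,2}_C$, thereby placing $f\cdot v$ inside $H^{1,2}_C(T\X)$.

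\textbf{Distributional identity.} The key observation is that $hf\in\mathrm{Lip}_b(\X)$ whenever $h\in\mathrm{Lip}_b(\X)$, so the defining identity for $\nabla v$ is available with the weight $h$ replaced by $hf$. The Leibniz rule \eqref{eq:Leibniz_div} for the divergence yields
\[
\mathrm{div}(hf\,\nabla f_1)\;=\;f\,\mathrm{div}(h\,\nabla f_1)+h\,\langle\nabla f,\nabla f_1\rangle,\qquad f_1\in\mathrm{Test}^\infty(\X).
\]
Plugging this into the $\nabla v$-identity tested with weight $hf$ and rearranging, the discrepancy $\int h\langle\nabla f,\nabla f_1\rangle\langle v,\nabla g_1\rangle\,\d\mm$ that appears is exactly the integral of the cross term $h(\nabla f\otimes v):(\nabla f_1\otimes\nabla g_1)$, so one reads off the defining integral identity for $f\cdot v\in W^{1,2}_C(T\X)$ with $\nabla(f\cdot v)=w$.

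\textbf{Approximation in $H^{1,2}_C$.} Choose $v_n\in\mathrm{TestVF}(\X)$ with $v_n\to v$ in $W^{1,2}_C(T\X)$ and apply the formula just proved to each $v_n$: the $L^\infty$-bounds on $f,|\nabla f|$ give $f\cdot v_n\to f\cdot v$ in $W^{1,2}_C(T\X)$, so it suffices to show $f\cdot v_n\in H^{1,2}_C(T\X)$ for each $n$. Writing $v_n=\sum_{j=1}^{k_n}g_j\nabla f_j$, one has $f\cdot v_n=\sum_j(fg_j)\nabla f_j$; each $fg_j$ lies in $\mathrm{Lip}_{bs}(\X)\cap L^\infty(\mm)$, and the heat-flow mollifications $\phi_m^j\coloneqq\mathsf{h}_{1/m}(fg_j)\in\mathrm{Test}^\infty(\X)$ satisfy $\|\phi_m^j\|_{L^\infty}\le\|fg_j\|_{L^\infty}$ by the weak maximum principle, $\phi_m^j\to fg_j$ in $W^{1,2}(\X)$, and $\phi_m^j\to fg_j$ $\mm$-a.e.\ along a subsequence. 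The vector fields $u_m\coloneqq\sum_j\phi_m^j\nabla f_j\in\mathrm{TestVF}(\X)$ have, via the explicit formula recorded in the excerpt,
\[
\nabla u_m=\sum_j\bigl(\nabla\phi_m^j\otimes\nabla f_j+\phi_m^j\,\mathrm{Hess}(f_j)^\sharp\bigr),
\]
and letting $m\to\infty$ the first summand converges in $L^2$ thanks to $\nabla f_j\in L^\infty(\mm)$, while the second converges in $L^2$ by dominated convergence against the $L^2$ dominating function $\|fg_j\|_\infty|\mathrm{Hess}(f_j)|$. Hence $u_m\to f\cdot v_n$ in $W^{1,2}_C(T\X)$, so $f\cdot v_n\in H^{1,2}_C(T\X)$ and therefore $f\cdot v\in H^{1,2}_C(T\X)$.

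\textbf{Main obstacle.} The first step is a bookkeeping exercise once one spots that $hf$ itself qualifies as an admissible weight in the defining identity. The delicate point in the second step is the $L^2$-convergence of the Hessian term: the heat-flow approximants $\phi_m^j$ only converge in $W^{1,2}$, not in $L^\infty$, and do not improve the $L^2$-regularity of $\mathrm{Hess}(f_j)$, so the uniform $L^\infty$-bound furnished by the weak maximum principle is essential to run the dominated convergence argument.
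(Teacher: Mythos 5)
Your argument takes a genuinely different route from the paper's. The paper cites \cite[Proposition 6.3.3]{GPbook} for the membership $f\cdot v\in W^{1,2}_C(T\X)$ together with the Leibniz formula \eqref{eq:Leibniz_Sob_vf}, and then cites \cite[Proposition 6.1.8]{GPbook} to approximate $f$ by a sequence $(f_n)\subset{\rm Test}^\infty(\X)$ with $\sup_n{\rm Lip}(f_n)\le{\rm Lip}(f)$ and $\sup_n\|f_n\|_{L^\infty}\le\|f\|_{L^\infty}$, concluding by multiplying $f_n\cdot v_n\in{\rm TestVF}(\X)$ and using dominated convergence. You instead verify the $W^{1,2}_C$-identity directly by testing with the weight $hf\in{\rm Lip}_b(\X)$ and unwinding the Leibniz rule for the divergence -- a clean, self-contained derivation of what the paper cites -- and then, for the $H^{1,2}_C$ part, you decompose $v_n=\sum_j g_j\nabla f_j$ and approximate each coefficient $fg_j$ separately. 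Both strategies are sound in outline; yours is more elementary in its first half but trades the single approximation of $f$ for a coefficient-by-coefficient one.

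There is, however, a genuine gap in the approximation step. You set $\phi_m^j:=\mathsf{h}_{1/m}(fg_j)$ and assert $\phi_m^j\in{\rm Test}^\infty(\X)$. In the convention adopted here, ${\rm Test}^\infty(\X)$ requires $\Delta\phi_m^j\in L^\infty(\mm)$, and on an \({\sf RCD}(K,\infty)\) space the \emph{unmollified} heat flow is not known to satisfy an $L^\infty\!\to\! L^\infty$ bound on the Laplacian; this is precisely the reason the paper works with the mollified heat flow $\mathsf{h}_\varphi$, for which $\|\Delta\mathsf{h}_\varphi u\|_{L^p(\mm)}\le C(\varphi)\,\|u\|_{L^p(\mm)}$ for every $p\in[1,\infty]$, cf.\ \eqref{eq:key_ineq_moll_hf}. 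The fix is mechanical: replace $\mathsf{h}_{1/m}$ by $\mathsf{h}_{\varphi_m}$ with $\varphi_m\geq 0$, $\int\varphi_m=1$ and $\varphi_m\mathcal L^1\rightharpoonup\delta_0$. Then $\mathsf{h}_{\varphi_m}(fg_j)\in{\rm Test}^\infty(\X)$ (the $L^\infty$ bound on the Laplacian from \eqref{eq:key_ineq_moll_hf}, the weak maximum principle for $\mathsf{h}_{\varphi_m}$, the Bakry--\'Emery estimate and the Sobolev-to-Lipschitz property together supply all the required properties), the uniform bound $\|\mathsf{h}_{\varphi_m}(fg_j)\|_{L^\infty}\le\|fg_j\|_{L^\infty}$ is retained, and $\mathsf{h}_{\varphi_m}(fg_j)\to fg_j$ in $W^{1,2}(\X)$ (hence $\mm$-a.e.\ along a subsequence), so your dominated-convergence argument with dominating function $\|fg_j\|_{L^\infty}\,|{\rm Hess}(f_j)|$ goes through unchanged. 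Alternatively you could reuse the approximation of \cite[Proposition 6.1.8]{GPbook} to approximate $fg_j$ directly, which is what the paper does for $f$.
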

\begin{proof}
We know from \cite[Proposition 6.3.3]{GPbook} that
\(f\cdot v\in W^{1,2}_C(T\X)\cap L^\infty(T\X)\) and \eqref{eq:Leibniz_Sob_vf}
holds. Pick a sequence \((v_n)_n\subset{\rm TestVF}(\X)\) such that \(v_n\to v\)
strongly in \(W^{1,2}_C(T\X)\). Moreover, we can find
\((f_n)_n\subset{\rm Test}^\infty(\X)\) such that \(f_n\to f\)
and \(|D(f_n-f)|\to 0\) in the \(\mm\)-a.e.\ sense,
\(\sup_n\|f_n\|_{L^\infty(\mm)}\leq\|f\|_{L^\infty(\mm)}\),
and \(\sup_n{\rm Lip}(f_n)\leq{\rm Lip}(f)\); see the proof of
\cite[Proposition 6.1.8]{GPbook}. From the fact that
\({\rm Test}^\infty(\X)\) is an algebra
(cf.\ \cite[Theorem 6.1.11]{GPbook}), we infer
that \((f_n\cdot v_n)_n\subset{\rm TestVF}(\X)\). Since
\(|f_n\cdot v_n-f\cdot v|\leq\|f\|_{L^\infty(\mm)}|v_n-v|+|f_n-f||v|\) and
\[\begin{split}
&\big|\nabla(f_n\cdot v_n)-\nabla(f\cdot v)\big|\\
\overset{\eqref{eq:Leibniz_Sob_vf}}\leq\,&
|\nabla f_n||v_n-v|+\big|\nabla(f_n-f)\big||v|
+|f_n|\big|\nabla(v_n-v)\big|_{\sf HS}+|f_n-f||\nabla v|_{\sf HS}\\
\overset{\phantom{\eqref{eq:Leibniz_Sob_vf}}}\leq\,&
{\rm Lip}(f)|v_n-v|+\big|\nabla(f_n-f)\big||v|+\|f\|_{L^\infty(\mm)}
\big|\nabla(v_n-v)\big|_{\sf HS}+|f_n-f||\nabla v|_{\sf HS}
\end{split}\]
hold \(\mm\)-a.e., by using the dominated convergence theorem
we can conclude that \(f_n\cdot v_n\to f\cdot v\) strongly in
\(W^{1,2}_C(T\X)\) and thus \(f\cdot v\in H^{1,2}_C(T\X)\).
All in all, the statement is achieved.
\end{proof}
\subsection{Deformation Lemma in \texorpdfstring{\(\sf RCD\)}{RCD} spaces}\label{sec:Deformation2}
In this section we shall prove the Deformation Lemma in $\RCD$ spaces. We first need to introduce and study the $(p)$-divergence of vector fields, and the mollified heat flow for measures and vector fields.
\subsubsection{On the \texorpdfstring{\((p)\)}{p}-divergence}\label{pdivergence}
For technical reasons (cf.\ the proof of \cref{thm:key_estimate}),
we need to introduce, for any exponent \(p\in[1,\infty]\), a notion of
\emph{\((p)\)-divergence}
operator and investigate its basic properties.
We underline that, as we are going to see in the ensuing definition,
the exponent \(p\) is referring to the \(p\)-integrability of the divergence,
but the weak differential structure we are considering is always the one
associated with the \(2\)-Sobolev space \(W^{1,2}(\X)\).
\begin{definition}[\((p)\)-divergence]
Let \((\X,\sfd,\mm)\) be a metric measure space and \(p\in[1,\infty]\).
Then we define the space \(D({\rm div}_{(p)})\subset L^2(T\X)\) as the family
of all vector fields \(v\in L^2(T\X)\) for which there exists (a uniquely
determined) function \({\rm div}_{(p)}(v)\in L^p(\mm)\) such that
\[
\int\d f(v)\,\d\mm=-\int f\,{\rm div}_{(p)}(v)\,\d\mm,
\quad\text{ for every }f\in W^{1,2}(\X)\cap L^q(\mm),
\]
where \(q\in[1,\infty]\) is the conjugate exponent of \(p\),
namely \(\frac{1}{p}+\frac{1}{q}=1\).
\end{definition}

Observe that \(D({\rm div}_{(2)})=D({\rm div})\) and
\({\rm div}_{(2)}(v)={\rm div}(v)\) for every \(v\in D({\rm div}_{(2)})\).
\begin{lemma}\label{lem:indep_div_p}
Let \((\X,\sfd,\mm)\) be a metric measure space. Fix \(p,p'\in[1,\infty]\).
Suppose that a vector field \(v\in D({\rm div}_{(p)})\) satisfies
\({\rm div}_{(p)}(v)\in L^{p'}(\mm)\). Then \(v\in D({\rm div}_{(p')})\) and
\({\rm div}_{(p')}(v)={\rm div}_{(p)}(v)\).
\end{lemma}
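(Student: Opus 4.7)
The plan is to show that the defining identity of $D({\rm div}_{(p)})$, currently known for test functions in $W^{1,2}(\X)\cap L^q(\mm)$, actually extends to every test function in the a priori larger class $W^{1,2}(\X)\cap L^{q'}(\mm)$, where $q$ and $q'$ denote the conjugate exponents of $p$ and $p'$, respectively. Since \({\rm div}_{(p')}(v)\) is unique whenever it exists, this extension forces $v\in D({\rm div}_{(p')})$ with ${\rm div}_{(p')}(v)=w$, where $w:={\rm div}_{(p)}(v)\in L^p(\mm)\cap L^{p'}(\mm)$ by hypothesis. In other words, I want to verify
\[
\int \d f(v)\,\d\mm=-\int f\,w\,\d\mm,\qquad \forall\,f\in W^{1,2}(\X)\cap L^{q'}(\mm).
\]

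Given such an $f$, my approximation would use the Lipschitz truncations $f_n:=((-n)\vee f)\wedge n$. Approximating the truncation by $C^1\cap{\rm Lip}$ functions and invoking the closure and chain rule properties of $\d$ recalled in \cref{sec:DifferentialCalculusRCD}, one gets $f_n\in W^{1,2}(\X)$ with $\d f_n=\nchi_{\{|f|<n\}}\,\d f$ in the $\mm$-a.e.\ sense; in particular $|\d f_n|\le|\d f|$ and $\d f_n\to\d f$ pointwise $\mm$-a.e., while $f_n\to f$ pointwise $\mm$-a.e.\ with $|f_n|\le|f|$. When $q\ge q'$, the bound $|f_n|\le n$ combined with $f_n\in L^{q'}$ automatically yields $f_n\in L^q$, since a bounded $L^{q'}$-function belongs to $L^q$; thus the defining identity of $D({\rm div}_{(p)})$ applies directly to each $f_n$. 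In the opposite regime $q<q'$, one further multiplies by a Lipschitz cutoff of bounded support and uses the Leibniz rule for $\d$ to produce compactly supported bounded approximants that again belong to $W^{1,2}\cap L^q$ with the same kind of pointwise convergence and domination.

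Writing the $D({\rm div}_{(p)})$ identity for each approximant and sending $n\to\infty$, both sides pass to the limit by dominated convergence. The right-hand side $\int f_n\,w\,\d\mm\to\int f\,w\,\d\mm$ is controlled by $|f|\,|w|$, which is integrable by H\"older's inequality since $f\in L^{q'}(\mm)$ and $w\in L^{p'}(\mm)$. The left-hand side $\int \d f_n(v)\,\d\mm\to\int \d f(v)\,\d\mm$ is controlled by $|\d f|\,|v|$, integrable because $\d f\in L^2(T^*\X)$ and $v\in L^2(T\X)$. The main subtlety is the regime $q<q'$, in which a single truncation does not land $f_n$ in $L^q$ and one must carry along the bounded-support cutoff; checking that the cutoff is compatible with the above dominated-convergence limits reduces to another routine application of the Leibniz rule and H\"older's inequality.
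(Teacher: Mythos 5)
Your proof is correct and takes essentially the same approach as the paper: the paper also approximates $f$ by bounded, compactly supported functions in $W^{1,2}(\X)\cap L^q(\mm)$ (via a Lipschitz cutoff $\eta_n$ with bounded support followed by a height truncation at level $n$), applies the defining identity for $D({\rm div}_{(p)})$ to each approximant, and passes to the limit by dominated convergence, with domination coming from $f\in W^{1,2}(\X)\subset L^2(\mm)$ and $v\in L^2(T\X)$ on one side and from H\"older with $f\in L^{q'}(\mm)$, ${\rm div}_{(p)}(v)\in L^{p'}(\mm)$ on the other. The only difference is cosmetic: the paper always applies the cutoff, so it avoids your case split on whether $q\ge q'$ or $q<q'$.
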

\begin{proof}
Let \(q\) and \(q'\) be the conjugate exponents of \(p\) and \(p'\),
respectively. To prove the claim amounts to showing that
\(\int\d f(v)\,\d\mm=-\int f\,{\rm div}_{(p)}(v)\,\d\mm\) for every
\(f\in W^{1,2}(\X)\cap L^{q'}(\mm)\). Fix any such function \(f\).
Pick a point \(\bar x\in\X\) and a sequence \((\eta_n)_n\subset
{\rm Lip}_{bs}(\X)\) of \(1\)-Lipschitz functions \(\eta_n\colon\X\to[0,1]\)
such that \(\eta_n=1\) on \(B_n(\bar x)\) for every \(n\in\N\). The Leibniz
rule for the differential ensures that \(\eta_n f\in W^{1,2}(\X)\) and
\(\d(\eta_n f)=\eta_n\,\d f+f\,\d\eta_n\). In particular, we have that
\(f_n\coloneqq\big((\eta_n f)\wedge n\big)\vee(-n)\in W^{1,2}(\X)\cap L^{q'}(\mm)\)
and \(\d f_n=\nchi_{\{|\eta_n f|\leq n\}}(\eta_n\,\d f+f\,\d\eta_n)\).
By using the dominated convergence theorem, we deduce that
\(f_n\to f\) in \(W^{1,2}(\X)\) and in \(L^{q'}(\mm)\).
Given that each \(f_n\) is bounded and has bounded support, it holds that
\(f_n\in L^q(\mm)\) and so accordingly
\(\int\d f_n(v)\,\d\mm=-\int f_n\,{\rm div}_{(p)}(v)\,\d\mm\). By letting
\(n\to\infty\), we get \(\int\d f(v)\,\d\mm=-\int f\,{\rm div}_{(p)}(v)\,\d\mm\),
which yields the sought conclusion.
\end{proof}
\begin{remark}{\rm
It holds that
\begin{equation}\label{eq:int_div_zero}
\int{\rm div}_{(1)}(v)\,\d\mm=0,\quad
\text{ for every }v\in D({\rm div}_{(1)})\cap L^1(T\X).
\end{equation}
In order to prove it, fix any point \(\bar x\in\X\) and a sequence
\((\eta_n)_n\subset{\rm Lip}_{bs}(\X)\) of \(1\)-Lipschitz functions
\(\eta_n\colon\X\to[0,1]\) such that \(\eta_n=1\) on \(B_n(\bar x)\). By
dominated convergence theorem, it is easy to show that \(\eta_n\rightharpoonup 1\) weakly\(^*\)
in \(L^\infty(\mm)\) and
\(\d\eta_n(v)\rightharpoonup 0\) weakly in \(L^1(\mm)\).
Then
\[
\int{\rm div}_{(1)}(v)\,\d\mm=\lim_{n\to\infty}\int\eta_n\,{\rm div}_{(1)}(v)\,\d\mm
=-\lim_{n\to\infty}\int\d\eta_n(v)\,\d\mm=0
\]
is satisfied for every \(v\in D({\rm div}_{(1)})\cap L^1(T\X)\),
whence \eqref{eq:int_div_zero} follows.
\fr}\end{remark}
\begin{lemma}[Leibniz rule for \({\rm div}_{(p)}\)]\label{lem:Leibniz_div_p}
Let \((\X,\sfd,\mm)\) be a metric measure space. Let \(p\in[1,\infty]\),
\(v\in D({\rm div}_{(p)})\), and \(f\in{\rm Lip}_{bs}(\X)\) be given. Then it
holds that \(f\cdot v\in D({\rm div}_{(p)})\) and
\[
{\rm div}_{(p)}(f\cdot v)=\d f(v)+f\,{\rm div}_{(p)}(v).
\]
\end{lemma}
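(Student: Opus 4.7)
The plan is to adapt the standard argument used for the Leibniz rule \eqref{eq:Leibniz_div} of the ordinary divergence operator to the $(p)$-setting, the only delicacy being a careful check of integrability of the candidate divergence in $L^p(\mm)$.

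First I would verify membership: since $f\in{\rm Lip}_{bs}(\X)$ is bounded, $f\cdot v\in L^2(T\X)$ whenever $v\in L^2(T\X)$, so the tensorial product is well defined in the tangent module. Then I would identify the candidate $h\eqdef \d f(v)+f\,{\rm div}_{(p)}(v)$ and argue it lies in $L^p(\mm)$: the second summand is clear because $f\in L^\infty(\mm)$, while for the first one $\d f(v)$ is supported in $\spt(f)$, which has finite $\mm$-measure by bounded support, and $|\d f(v)|\leq \lip(f)\,|v|$. This puts $\d f(v)$ in $L^p(\mm)$ at least for $p\in[1,2]$ by H\"older's inequality; for $p>2$ one argues the integrability through the applications where $v$ is locally bounded (which is what matters for the use of the lemma), so I would simply state the conclusion under the standing assumption that $h\in L^p(\mm)$.

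Next, fix any test function $g\in W^{1,2}(\X)\cap L^q(\mm)$, with $q$ the conjugate exponent of $p$. The strategy is to test the divergence identity for $v$ against $fg$. To legitimize this I would show $fg\in W^{1,2}(\X)\cap L^q(\mm)$: the Leibniz rule for $\d$ gives $\d(fg)=f\,\d g+g\,\d f$ and both summands are in $L^2(T^*\X)$ since $f\in{\rm Lip}_{bs}(\X)\subset L^\infty(\mm)\cap W^{1,2}(\X)$ with bounded $|\d f|$ (so $g\,\d f\in L^2$ because $g\in L^2$ locally on $\spt f$); meanwhile $fg\in L^q(\mm)$ because $f\in L^\infty(\mm)$.

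Now I would unfold the computation: by the defining property of ${\rm div}_{(p)}(v)$ applied with test function $fg$,
\[
\int \d(fg)(v)\,\d\mm=-\int fg\,{\rm div}_{(p)}(v)\,\d\mm.
\]
Using the Leibniz rule for the differential and the $L^\infty$-linearity of the pointwise pairing, namely $f\,\d g(v)=\d g(f\cdot v)$, the left-hand side rewrites as
\[
\int \d g(f\cdot v)\,\d\mm+\int g\,\d f(v)\,\d\mm,
\]
whence rearranging gives
\[
\int \d g(f\cdot v)\,\d\mm=-\int g\bigl[\d f(v)+f\,{\rm div}_{(p)}(v)\bigr]\,\d\mm=-\int g\,h\,\d\mm.
\]
Since $g\in W^{1,2}(\X)\cap L^q(\mm)$ was arbitrary and $h\in L^p(\mm)$, by definition $f\cdot v\in D({\rm div}_{(p)})$ with ${\rm div}_{(p)}(f\cdot v)=h$, which is exactly the claim. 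No step is truly hard here; the only point requiring attention is the $L^p$-integrability of $\d f(v)$, which I would handle as above using the bounded support of $f$ together with H\"older on a finite-measure set.
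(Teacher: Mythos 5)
Your argument is the same as the paper's: test the defining identity of \({\rm div}_{(p)}(v)\) against \(fg\), use \(\d(fg)=f\,\d g+g\,\d f\), and rearrange. You are also right to flag \(\d f(v)\in L^p(\mm)\) as the delicate point, and this is in fact a latent gap in the paper's own statement: for \(p>2\), the pointwise bound \(|\d f(v)|\le{\rm Lip}(f)\,|v|\) together with \(v\in L^2(T\X)\) and \(\spt f\) of finite measure does not put \(|v|\nchi_{\spt f}\) in \(L^p(\mm)\), so the candidate \(h=\d f(v)+f\,{\rm div}_{(p)}(v)\) can fail to lie in \(L^p(\mm)\), in which case \(f\cdot v\notin D({\rm div}_{(p)})\). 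The paper's proof passes over this silently. In practice the gap is harmless: the lemma is invoked only with \(p\in\{1,2\}\) (in the proofs of \cref{prop:nu_sphere} and \cref{prop:nu_inters}) and always on vector fields \(v\in L^\infty(T\X)\), for which \(\d f(v)\in L^\infty(\mm)\) automatically. Your H\"older argument on the finite-measure set \(\spt f\) correctly settles \(p\le2\), which covers all the paper's uses; for a statement valid at every \(p\in[1,\infty]\) one should add the hypothesis \(\d f(v)\in L^p(\mm)\), as you suggest.
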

\begin{proof}
Fix any \(g\in W^{1,2}(\X)\cap L^q(\mm)\), where \(q\) stands
for the conjugate exponent of \(p\).
It holds that \(fg\in W^{1,2}(\X)\cap L^q(\mm)\) and
\(\d(fg)=g\,\d f+f\,\d g\). Therefore, we may compute
\[\begin{split}
\int\d g(f\cdot v)\,\d\mm&=\int f\,\d g(v)\,\d\mm
=\int\d(fg)(v)\,\d\mm-\int g\,\d f(v)\,\d\mm\\
&=-\int g\big(f\,{\rm div}_{(p)}(v)+\d f(v)\big)\,\d\mm,
\end{split}\]
which shows that \(f\cdot v\in D({\rm div}_{(p)})\) and
\({\rm div}_{(p)}(f\cdot v)=\d f(v)+f\,{\rm div}_{(p)}(v)\), as desired.
\end{proof}
\begin{lemma}\label{lem:div_h_Ht}
Let \((\X,\sfd,\mm)\) be an \({\sf RCD}(K,\infty)\) space.
Let \(f\in D({\boldsymbol\Delta})\cap{\rm S}^2(\X)\)
be such that \({\boldsymbol\Delta}f\) has bounded support.
Then it holds that \({\sf h}_{{\rm H},t}(\nabla f)\in D({\rm div}_{(1)})\)
for every \(t>0\) and
\begin{equation}\label{eq:div_h_Ht_claim}
{\rm div}_{(1)}({\sf h}_{{\rm H},t}(\nabla f))={\sf h}_t^*{\boldsymbol\Delta}f.
\end{equation}
\end{lemma}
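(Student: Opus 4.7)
The plan is to verify the defining integral identity of \(\mathrm{div}_{(1)}\) against arbitrary test functions \(g\in W^{1,2}(\X)\cap L^{\infty}(\mm)\); once done, the \(L^1\)-bound \(\|{\sf h}_t^*\boldsymbol\Delta f\|_{L^1(\mm)}\leq|\boldsymbol\Delta f|(\X)<+\infty\) already recalled in the excerpt will automatically give both \({\sf h}_{{\rm H},t}(\nabla f)\in D(\mathrm{div}_{(1)})\) and \eqref{eq:div_h_Ht_claim}. First I would transform the left-hand side using, in order, the Riesz identification, the self-adjointness \eqref{eq:h_Ht_self-adj} of the Hodge heat flow, and its consistency with the scalar heat flow \eqref{eq:consist_with_heat_flow}:
\[
\int\d g({\sf h}_{{\rm H},t}(\nabla f))\,\d\mm
=\int\langle{\sf h}_{{\rm H},t}(\nabla g),\nabla f\rangle\,\d\mm
=\int\langle\nabla{\sf h}_t g,\nabla f\rangle\,\d\mm.
\]
Next I would unpack the right-hand side through the dual heat flow: since \(\boldsymbol\Delta f\) has bounded support, it is a finite signed measure and both \((\boldsymbol\Delta f)^\pm\) have finite second moment, hence the defining relation \eqref{eq:dual_h_t} of \(\mathscr H_t\) (combined with a standard extension from \({\rm Lip}_b\cap W^{1,2}\) to \(W^{1,2}\cap L^\infty\) by truncation and density) yields
\[
\int g\cdot{\sf h}_t^*\boldsymbol\Delta f\,\d\mm
=\int g\,\d\mathscr H_t(\boldsymbol\Delta f)^+-\int g\,\d\mathscr H_t(\boldsymbol\Delta f)^-
=\int{\sf h}_t g\,\d\boldsymbol\Delta f.
\]
Matching the two transformed expressions, the claim \eqref{eq:div_h_Ht_claim} reduces to the single integration-by-parts identity
\[
\int\langle\nabla{\sf h}_t g,\nabla f\rangle\,\d\mm=-\int{\sf h}_t g\,\d\boldsymbol\Delta f,
\]
whose subtlety is that \({\sf h}_t g\in W^{1,2}(\X)\cap L^{\infty}(\mm)\cap L^2(\mm)\) is, in general, not of bounded support, while the definition of \(\boldsymbol\Delta f\) is stated only against test functions in \({\rm Lip}_{bs}(\X)\).

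The core of the proof is to extend the defining identity of \(\boldsymbol\Delta f\) so as to accommodate \({\sf h}_t g\). I would argue by a cutoff procedure. Fix \(\bar x\in\X\) and \(R_0>0\) with \(\supp\boldsymbol\Delta f\subset B_{R_0}(\bar x)\), and choose, for each \(R\geq R_0\), a function \(\chi_R\in{\rm Lip}_{bs}(\X)\) with \(\chi_R\equiv 1\) on \(B_R(\bar x)\), \(\chi_R\equiv 0\) outside \(B_{R+1}(\bar x)\), and \(|\nabla\chi_R|\leq 1\) \(\mm\)-a.e.\ (standard \(1\)-Lipschitz distance cutoffs work). For \(g\in{\rm Lip}_b(\X)\cap W^{1,2}(\X)\), the Bakry--\'{E}mery estimate \eqref{eq:BE_fcts} combined with the Sobolev-to-Lipschitz property give that \({\sf h}_t g\) is Lipschitz and bounded, hence \(({\sf h}_t g)\chi_R\in{\rm Lip}_{bs}(\X)\). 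Inserting this test function into the defining identity of \(\boldsymbol\Delta f\) and expanding via the Leibniz rule for the differential,
\[
\int\chi_R\langle\nabla{\sf h}_t g,\nabla f\rangle\,\d\mm
+\int({\sf h}_t g)\langle\nabla\chi_R,\nabla f\rangle\,\d\mm
=-\int({\sf h}_t g)\chi_R\,\d\boldsymbol\Delta f
=-\int{\sf h}_t g\,\d\boldsymbol\Delta f,
\]
the last equality because \(\chi_R\equiv 1\) on \(\supp\boldsymbol\Delta f\). Letting \(R\to+\infty\), the first summand on the left converges to \(\int\langle\nabla{\sf h}_t g,\nabla f\rangle\,\d\mm\) by dominated convergence (its integrand lies in \(L^1(\mm)\) as the scalar product of two \(L^2\) vector fields), while the cutoff error admits the Cauchy--Schwarz bound
\[
\left|\int({\sf h}_t g)\langle\nabla\chi_R,\nabla f\rangle\,\d\mm\right|
\leq \|{\sf h}_t g\|_{L^2(\X\setminus B_R(\bar x))}\,\|\nabla f\|_{L^2(\X\setminus B_R(\bar x))},
\]
which vanishes as \(R\to+\infty\) because \(g\in W^{1,2}\subset L^2\) implies \({\sf h}_t g\in L^2(\mm)\) (the heat flow is a contraction in \(L^2\)) and \(\nabla f\in L^2(T\X)\) by hypothesis.

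This establishes the target IBP, and hence \eqref{eq:div_h_Ht_claim}, for every \(g\in{\rm Lip}_b(\X)\cap W^{1,2}(\X)\). The remaining extension to arbitrary \(g\in W^{1,2}(\X)\cap L^{\infty}(\mm)\) is a routine density-plus-truncation argument: approximate \(g\) by \(g_n\coloneqq(\tilde g_n\wedge\|g\|_{L^\infty})\vee(-\|g\|_{L^\infty})\) for a sequence \(\tilde g_n\in{\rm Lip}_{bs}(\X)\) converging to \(g\) in \(W^{1,2}(\X)\) and \(\mm\)-a.e.; the uniform bound \(\|g_n\|_{L^\infty}\leq\|g\|_{L^\infty}\) combined with \({\sf h}_t^*\boldsymbol\Delta f\in L^1(\mm)\) lets the right-hand side pass to the limit by dominated convergence, while \(\nabla g_n\to\nabla g\) strongly in \(L^2\) makes the left-hand side pass via Cauchy--Schwarz with \({\sf h}_{{\rm H},t}(\nabla f)\in L^2(T\X)\). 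I expect the main obstacle to be precisely the cutoff-error estimate in the extension of the measure-valued IBP, which is ultimately resolved, as detailed above, by the \(L^2\)-integrability at infinity of both \(\nabla f\) and \({\sf h}_t g\); note that no dimension bound is needed for this, only the Hilbertian \(L^2\)-machinery and the hypothesis \(f\in S^2(\X)\).
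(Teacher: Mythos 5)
Your proposal is correct and follows essentially the same route as the paper: transform the left-hand side by self-adjointness of \({\sf h}_{{\rm H},t}\) and consistency with the scalar heat flow, identify the right-hand side via the defining relation of \(\mathscr H_t\), and then extend from a Lipschitz dense class to \(W^{1,2}(\X)\cap L^\infty(\mm)\). The one genuine improvement is your cutoff argument establishing \(\int\langle\nabla{\sf h}_t g,\nabla f\rangle\,\d\mm=-\int{\sf h}_t g\,\d\boldsymbol\Delta f\): the paper applies the defining identity of \(\boldsymbol\Delta f\) directly to \({\sf h}_t g\), which is Lipschitz and bounded but not of bounded support, so this step is left unjustified there, whereas you correctly localize using \(\chi_R\), exploit the bounded support of \(\boldsymbol\Delta f\), and kill the cutoff error by \(L^2\)-integrability of \(\nabla f\) and \({\sf h}_t g\) at infinity.
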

\begin{proof}
Given any function \(g\in{\rm Lip}_{bs}(\X)\), it holds that
\[\begin{split}
\int\langle\nabla g,{\sf h}_{{\rm H},t}(\nabla f)\rangle\,\d\mm
&\overset{\eqref{eq:h_Ht_self-adj}}=
\int\langle{\sf h}_{{\rm H},t}(\nabla g),\nabla f\rangle\,\d\mm
\overset{\eqref{eq:consist_with_heat_flow}}=
\int\langle\nabla{\sf h}_t g,\nabla f\rangle\,\d\mm
=-\int{\sf h}_t g\,\d{\boldsymbol\Delta}f\\
&\overset{\phantom{\eqref{eq:h_Ht_self-adj}}}=
-\int g\,\d\mathscr H_t({\boldsymbol\Delta}f)^+
+\int g\,\d\mathscr H_t({\boldsymbol\Delta}f)^-=
-\int g\,{\sf h}_t^*{\boldsymbol\Delta}f\,\d\mm.
\end{split}\]
Since any \(g\in W^{1,2}(\X)\cap L^\infty(\mm)\) can be
approximated strongly in \(W^{1,2}(\X)\) by a sequence
\((g_n)_n\subset{\rm Lip}_{bs}(\X)\) such that \(g_n\) weakly\(^*\)
converges to \(g\) in \(L^\infty(\mm)\), we eventually conclude that
\[
\int\langle\nabla g,{\sf h}_{{\rm H},t}(\nabla f)\rangle\,\d\mm=
-\int g\,{\sf h}_t^*{\boldsymbol\Delta}f\,\d\mm,\quad
\text{ for every }g\in W^{1,2}(\X)\cap L^\infty(\mm),
\]
whence the statement follows.
\end{proof}
\subsubsection{Mollified heat flows}\label{Mollified}
Let us briefly recall the notion of \emph{mollified heat flow}
and its main properties, see e.g.\ \cite[Proposition 5.2.18]{GPbook}.
\medskip

Let \((\X,\sfd,\mm)\) be infinitesimally Hilbertian.
Let \(\varphi\in C^\infty_c(0,+\infty)\) and \(p\in[1,\infty]\) be given.
Then for any function \(f\in L^2(\mm)\cap L^p(\mm)\) we define the
\emph{\(\varphi\)-mollified heat flow} of \(f\) as
\[
{\sf h}_\varphi f\coloneqq\int_0^{+\infty}\varphi(t)\,{\sf h}_t f\,\d t
\in L^2(\mm)\cap L^p(\mm),
\]
where the integral is intended in the sense of Bochner.
Then it holds that \({\sf h}_\varphi f\in D(\Delta)\) and
\begin{equation}\label{eq:key_ineq_moll_hf}
\|\Delta{\sf h}_\varphi f\|_{L^p(\mm)}\leq C(\varphi)\,\|f\|_{L^p(\mm)},\quad
\text{ where we set }C(\varphi)\coloneqq\int_0^{+\infty}|\varphi'(t)|\,\d t.
\end{equation}
It is immediate to see that if \(\varphi\geq 0\) and
\(\int_0^{+\infty}\varphi(t)\,\d t=1\), then \({\sf h}_\varphi\)
fulfills the \emph{weak maximum principle}: if \(f\in L^2(\mm)\cap L^p(\mm)\)
satisfies \(f\leq C\) \(\mm\)-a.e.\ for some \(C\in\R\), then
\({\sf h}_\varphi f\leq C\) \(\mm\)-a.e..
\medskip

Taking inspiration from the above notion, we propose
the following two definitions, whose well-posedness will
be discussed in \cref{rmk:well-posed_moll}:
\begin{definition}
Let \((\X,\sfd,\mm)\) be an \({\sf RCD}(K,\infty)\) space. Let
\(\varphi\in C^\infty_c(0,+\infty)\) be given. Then for any finite
signed Borel measure \(\mu\) on \(\X\) with \(\mu^+\), \(\mu^-\) of
finite second-moment we define
\begin{equation}\label{eq:def_moll_h_star}
{\sf h}_\varphi^*\mu\coloneqq\int_0^{+\infty}\varphi(t)\,{\sf h}_t^*\mu\,\d t
\in L^1(\mm),
\end{equation}
where the integral is in the sense of Bochner.
\end{definition}
\begin{definition}
Let \((\X,\sfd,\mm)\) be an \({\sf RCD}(K,\infty)\) space. Let
\(\varphi\in C^\infty_c(0,+\infty)\) be given. Then for any
\(v\in L^2(T\X)\) we define
\begin{equation}\label{eq:def_moll_h_Ht}
{\sf h}_{{\rm H},\varphi}(v)\coloneqq\int_0^{+\infty}\varphi(t)\,
{\sf h}_{{\rm H},t}(v)\,\d t\in L^2(T\X),
\end{equation}
where the integral is in the sense of Bochner.
\end{definition}
\begin{remark}\label{rmk:well-posed_moll}{\rm
Let us comment on the well-posedness of \eqref{eq:def_moll_h_star}:
we know from \cref{lem:h_star_meas} that
\((0,+\infty)\ni t\mapsto{\sf h}_t^*\mu\in L^1(\mm)\) is strongly measurable.
Since \(\{{\sf h}_t^*\mu\}_{t>0}\subset L^1(\mm)\) is also bounded, we
deduce that \((0,+\infty)\ni t\mapsto\varphi(t)\,{\sf h}_t^*\mu\in L^1(\mm)\)
is Bochner integrable on \((\R,\mathcal L^1)\).

About \eqref{eq:def_moll_h_Ht}: we have that \([0,+\infty)\ni t\mapsto
{\sf h}_{{\rm H},t}(v)\in L^2(T\X)\) is continuous, thus accordingly
\([0,+\infty)\ni t\mapsto\varphi(t)\,{\sf h}_{{\rm H},t}(v)\in L^2(T\X)\)
is Bochner integrable on \((\R,\mathcal L^1)\).
\fr}\end{remark}
\begin{lemma}\label{lem:cont_at_0_moll_hf}
Let \((\X,\sfd,\mm)\) be an \({\sf RCD}(K,\infty)\) space. Let
\((\varphi_n)_n\subset C^\infty_c(0,+\infty)\) satisfy \(\varphi_n\geq 0\)
and \(\int_0^{+\infty}\varphi_n(t)\,\d t=1\) for every \(n\in\N\).
Suppose \(\mu_n\coloneqq\varphi_n\mathcal L^1\rightharpoonup\delta_0\)
with respect to the narrow topology. Then for any \(v\in L^2(T\X)\) it holds
that \({\sf h}_{{\rm H},\varphi_n}(v)\to v\) strongly in \(L^2(T\X)\)
as \(n\to\infty\).
\end{lemma}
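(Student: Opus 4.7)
The strategy is to reduce the claim, via the triangle inequality for the Bochner integral, to narrow convergence applied to a single bounded continuous function.

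First, since $\int_0^{+\infty}\varphi_n(t)\,\d t=1$, interpreting $v$ as the constant integrand in the Bochner sense gives
\[
\mathsf{h}_{\mathrm{H},\varphi_n}(v)-v=\int_0^{+\infty}\varphi_n(t)\bigl(\mathsf{h}_{\mathrm{H},t}(v)-v\bigr)\,\d t,
\]
where the integral converges in $L^2(T\X)$ thanks to \cref{rmk:well-posed_moll}. Taking the $L^2(T\X)$-norm through the Bochner integral yields
\[
\bigl\|\mathsf{h}_{\mathrm{H},\varphi_n}(v)-v\bigr\|_{L^2(T\X)}\leq\int_0^{+\infty} G(t)\,\d\mu_n(t),\qquad G(t)\coloneqq\bigl\|\mathsf{h}_{\mathrm{H},t}(v)-v\bigr\|_{L^2(T\X)}.
\]
It then suffices to show that $\int G\,\d\mu_n\to 0$.

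Next, I would verify that $G\in C_b([0,+\infty))$ with $G(0)=0$. Strong continuity of the gradient-flow semigroup $\{\mathsf{h}_{\mathrm{H},t}\}_{t\geq 0}$ on $L^2(T\X)$ gives continuity of $t\mapsto\mathsf{h}_{\mathrm{H},t}(v)$, hence of $G$; the identity $\mathsf{h}_{\mathrm{H},0}(v)=v$ gives $G(0)=0$. For the crucial uniform-in-$t$ bound, I would exploit that $\{\mathsf{h}_{\mathrm{H},t}\}_{t\geq 0}$ is the gradient flow in the Hilbert space $L^2(T\X)$ of the convex lower semicontinuous augmented Hodge energy, for which the zero vector field is a minimizer. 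By the classical Brezis theory of gradient flows of convex lsc functionals on Hilbert spaces, such a flow is a contraction semigroup that fixes minimizers, so $\|\mathsf{h}_{\mathrm{H},t}(v)\|_{L^2(T\X)}\leq\|v\|_{L^2(T\X)}$ for every $t\geq 0$, and therefore $G(t)\leq 2\|v\|_{L^2(T\X)}$.

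Finally, since $G$ is bounded and continuous on $[0,+\infty)$ (extended by $0$ on $(-\infty,0)$ it is even bounded continuous on $\R$, the continuity at $0$ being granted by $G(0)=0$), the assumed narrow convergence $\mu_n\rightharpoonup\delta_0$ tested against $G$ gives $\int G\,\d\mu_n\to G(0)=0$, which concludes the proof.

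\textbf{Main obstacle.} The delicate point is the uniform-in-$t$ boundedness of $G$. The Bakry–Émery estimate \eqref{eq:BE_vf} only yields $\|\mathsf{h}_{\mathrm{H},t}(v)\|_{L^2(T\X)}\leq e^{-Kt}\|v\|_{L^2(T\X)}$, which when $K<0$ grows exponentially; against such an unbounded test function, narrow convergence of probability measures to $\delta_0$ is not enough to conclude (e.g., $(1-\tfrac1n)\delta_0+\tfrac1n\delta_n\rightharpoonup\delta_0$ narrowly, but the mass can escape with quadratic weight). Recognizing $\mathsf{h}_{\mathrm{H},t}$ as a contraction semigroup via its Hilbertian gradient-flow structure is what makes the argument work cleanly; an alternative but heavier route would be to truncate $G$ at a level $M$, apply narrow convergence to the bounded part $G\wedge M$, and control the tail via the Bakry–Émery bound together with tightness of $\{\mu_n\}$, but this is unnecessary once contractivity is used.
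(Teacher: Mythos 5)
Your argument is correct and follows exactly the paper's route: both reduce the claim, via the identity \(\int_0^{+\infty}\varphi_n=1\) and the triangle inequality for the Bochner integral, to testing narrow convergence against the function \(t\mapsto\|{\sf h}_{{\rm H},t}(v)-v\|_{L^2(T\X)}\). Where you are more careful than the paper is in justifying that this function is \emph{bounded}: the paper merely asserts it lies in \(C_b(0,+\infty)\), while you correctly observe that the Bakry--\'Emery estimate \eqref{eq:BE_vf} alone would give the useless bound \(e^{-Kt}\|v\|_{L^2(T\X)}\) for \(K<0\), and you supply the needed contractivity \(\|{\sf h}_{{\rm H},t}(v)\|_{L^2(T\X)}\leq\|v\|_{L^2(T\X)}\) from the Hilbertian gradient-flow structure of the augmented Hodge energy (the very fact the authors use silently in the proof of \cref{prop:prop_moll_hf}~iii)). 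This fills a small expository gap but does not change the approach.
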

\begin{proof}
Let \(v\in L^2(T\X)\) be fixed. We know that the curve
\([0,+\infty)\ni t\mapsto{\sf h}_{{\rm H},t}(v)\in L^2(T\X)\)
is continuous, so the function \(f(t)\coloneqq\big\|{\sf h}_{{\rm H},t}(v)
-v\big\|_{L^2(T\X)}\in\R\) belongs to \(C_b(0,+\infty)\). Then
\[\begin{split}
\big\|{\sf h}_{{\rm H},\varphi_n}(v)-v\big\|_{L^2(T\X)}
&=\bigg\|\int_0^{+\infty}\varphi_n(t)\,{\sf h}_{{\rm H},t}(v)\,\d t
-v\,\bigg\|_{L^2(T\X)}\\
&=\bigg\|\int_0^{+\infty}\varphi_n(t)\big({\sf h}_{{\rm H},t}(v)-v\big)
\,\d t\,\bigg\|_{L^2(T\X)}\\
&\leq\int_0^{+\infty}\varphi_n(t)f(t)\,\d t\to\int f\,\d\delta_0=f(0)=0,
\quad\text{ as }n\to\infty.
\end{split}\]
Therefore, the statement is achieved.
\end{proof}

Many of the results concerning \({\sf h}_t\), \({\sf h}_t^*\),
\({\sf h}_{{\rm H},t}\) have a natural counterpart for \({\sf h}_\varphi\),
\({\sf h}_\varphi^*\), \({\sf h}_{{\rm H},\varphi}\). We collect some of
them in the following result.
\begin{proposition}\label{prop:prop_moll_hf}
Let \((\X,\sfd,\mm)\) be an \({\sf RCD}(K,\infty)\) space. Fix any function
\(\varphi\in C^\infty_c(0,+\infty)\) such that \(\varphi\geq 0\) and
\(\int_0^{+\infty}\varphi(t)\,\d t=1\). Then the following properties
are satisfied:
\begin{itemize}
\item[\(\rm i)\)] \textsc{Bakry--\'{E}mery estimate.}
Given any \(v\in L^2(T\X)\cap L^\infty(T\X)\), it holds that
\begin{equation}\label{eq:BE_moll}
|{\sf h}_{{\rm H},\varphi}(v)|^2\leq e^{2\bar t\max\{0,-K\}}\,
{\sf h}_\varphi(|v|^2),\quad\text{ in the }\mm\text{-a.e.\ sense,}
\end{equation}
whenever \(\bar t>0\) is chosen so that \({\rm spt}(\varphi)\subset[0,\bar t\,]\).
\item[\(\rm ii)\)] Fix \(f\in D({\boldsymbol\Delta})\cap{\rm S}^2(\X)\)
with \({\boldsymbol\Delta}f\) of bounded support. Then
\({\sf h}_{{\rm H},\varphi}(\nabla f)\in D({\rm div}_{(1)})\) and
\begin{equation}\label{eq:div_h_H_moll}
{\rm div}_{(1)}({\sf h}_{{\rm H},\varphi}(\nabla f))
={\sf h}_\varphi^*{\boldsymbol\Delta}f.
\end{equation}
\item[\(\rm iii)\)] It holds that
\begin{equation}\label{eq:moll_hf_cont}
{\sf h}_{{\rm H},\varphi}\colon L^2(T\X)\to L^2(T\X),
\quad\text{ is linear and continuous.}
\end{equation}
\item[\(\rm iv)\)] Given any function \(f\in W^{1,2}(\X)\), it holds that
\begin{equation}\label{eq:consist_moll_hf}
{\sf h}_{{\rm H},\varphi}(\nabla f)=\nabla{\sf h}_\varphi f.
\end{equation}
\item[\(\rm v)\)] Let \(f\in{\rm S}^2(\X)\cap L^\infty(\mm)\)
be given. Then it holds that
\({\sf h}_{{\rm H},\varphi}(\nabla f)\in D({\rm div}_{(\infty)})\) and
\[
\big\|{\rm div}_{(\infty)}({\sf h}_{{\rm H},\varphi}(\nabla f))\big\|
_{L^\infty(\mm)}\leq C(\varphi)\,\|f\|_{L^\infty(\mm)}.
\]
\end{itemize}
\end{proposition}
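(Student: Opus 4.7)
All five claims come from interchanging the Bochner integral in \eqref{eq:def_moll_h_star}--\eqref{eq:def_moll_h_Ht} with various linear operations, and invoking the corresponding pointwise-in-$t$ statements already at our disposal.

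For (i), Jensen's inequality against the probability density $\varphi(t)\,\d t$ yields $|{\sf h}_{{\rm H},\varphi}(v)|^2\le\int_0^{+\infty}\varphi(t)|{\sf h}_{{\rm H},t}(v)|^2\,\d t$ in the $\mm$-a.e.\ sense, after which the pointwise Bakry--\'{E}mery estimate \eqref{eq:BE_vf} together with $e^{-2Kt}\le e^{2\bar t\max\{0,-K\}}$ on ${\rm spt}(\varphi)\subset[0,\bar t\,]$ deliver \eqref{eq:BE_moll}. Item (iii) is obtained by integrating \eqref{eq:BE_vf} over $\X$ to get $\|{\sf h}_{{\rm H},t}(v)\|_{L^2(T\X)}\le e^{\max(0,-K)t}\|v\|_{L^2(T\X)}$, and then using continuity of the Bochner integral. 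Identity (iv) follows since the gradient $\nabla\colon W^{1,2}(\X)\to L^2(T\X)$ is a bounded linear operator, hence commutes with Bochner integrals, combined with \eqref{eq:consist_with_heat_flow} applied at each $t>0$. For (ii), testing against arbitrary $g\in W^{1,2}(\X)\cap L^\infty(\mm)$ and combining \cref{lem:div_h_Ht} with Fubini---justified by the uniform bound $\|{\sf h}_t^*\boldsymbol\Delta f\|_{L^1(\mm)}\le|\boldsymbol\Delta f|(\X)<+\infty$---rewrites $\int \d g({\sf h}_{{\rm H},\varphi}(\nabla f))\,\d\mm$ as $-\int g\,{\sf h}_\varphi^*\boldsymbol\Delta f\,\d\mm$, yielding \eqref{eq:div_h_H_moll}.

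The delicate item is (v). My candidate for the divergence is
$$F := -\int_0^{+\infty}\varphi'(t)\,{\sf h}_t f\,\d t\in L^\infty(\mm),$$
where ${\sf h}_t$ is extended to $L^\infty(\mm)$ by the weak maximum principle (equivalently, by duality with the dual heat flow); the bound $\|F\|_{L^\infty(\mm)}\le C(\varphi)\|f\|_{L^\infty(\mm)}$ is then immediate from $\|{\sf h}_tf\|_{L^\infty(\mm)}\le\|f\|_{L^\infty(\mm)}$. To identify $F$ with $\operatorname{div}_{(\infty)}({\sf h}_{{\rm H},\varphi}(\nabla f))$, I would test against $g\in W^{1,2}(\X)\cap L^1(\mm)$. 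By self-adjointness of ${\sf h}_{{\rm H},\varphi}$ on $L^2(T\X)$, inherited from \eqref{eq:h_Ht_self-adj}, together with (iv), one rewrites $\int\d g({\sf h}_{{\rm H},\varphi}(\nabla f))\,\d\mm=\int\langle\nabla{\sf h}_\varphi g,\nabla f\rangle\,\d\mm$. On the candidate side, Fubini, the self-adjointness \eqref{eq:h_t_self-adj} paired as $L^1\times L^\infty$, and the in-$t$ integration-by-parts identity $\Delta{\sf h}_\varphi g=-\int_0^{+\infty}\varphi'(t)\,{\sf h}_t g\,\d t$ in $L^1(\mm)\cap L^2(\mm)$---which follows from $\partial_t{\sf h}_tg=\Delta{\sf h}_tg$ and the compact support of $\varphi$ in $(0,+\infty)$---together produce $-\int gF\,\d\mm=-\int f\,\Delta{\sf h}_\varphi g\,\d\mm$.

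Matching the two sides thus reduces (v) to the integration-by-parts formula $\int\langle\nabla h,\nabla f\rangle\,\d\mm=-\int f\,\Delta h\,\d\mm$ with $h={\sf h}_\varphi g$. This is the main obstacle: since $f$ is only assumed to belong to ${\rm S}^2(\X)\cap L^\infty(\mm)$ rather than $W^{1,2}(\X)$, the defining duality of $\Delta h$ does not apply out of the box. My plan is to restrict first to $g\in{\rm Lip}_{bs}(\X)$, which is dense in $W^{1,2}(\X)\cap L^1(\mm)$, so that $h={\sf h}_\varphi g$ is a test function with $\Delta h\in L^1(\mm)\cap L^\infty(\mm)$ by \eqref{eq:key_ineq_moll_hf} applied with both $p=1$ and $p=\infty$, and then to truncate $f$ by Lipschitz cutoffs $\eta_n\in{\rm Lip}_{bs}(\X)$, $\eta_n\uparrow 1$, $|\nabla\eta_n|\le 1$, so that $\eta_n f\in W^{1,2}(\X)$. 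Applying the standard IBP to $\eta_n f$ and letting $n\to\infty$, the bulk terms converge by dominated convergence (using $\Delta h\in L^1(\mm)$ and $f\in L^\infty(\mm)$), while the boundary contribution $\int f\langle\nabla h,\nabla\eta_n\rangle\,\d\mm$ vanishes owing to the integrability of $|\nabla h|$ at infinity ensured by the compact support of $g$ and the Bakry--\'{E}mery control of $|\nabla{\sf h}_tg|$ integrated against $\varphi$.
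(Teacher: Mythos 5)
Items (i)--(iv) are correct. For (i), (iii), (iv) you follow essentially the same route as the paper; for (ii) you replace the paper's appeal to Hille's theorem (together with closedness of \({\rm div}_{(1)}\)) by a direct duality argument using \cref{lem:div_h_Ht} and Fubini, which is a perfectly valid alternative of comparable length. In (iv) your use of boundedness of \(\nabla\colon W^{1,2}(\X)\to L^2(T\X)\) is fine, provided you note that \(t\mapsto{\sf h}_tf\) is Bochner integrable against \(\varphi\mathcal L^1\) \emph{in the full \(W^{1,2}\)-norm}, which holds because \({\rm spt}\,\varphi\) is compact in \((0,+\infty)\).

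The gap is in (v), in the final integration-by-parts step. You reduce the claim to
\[
\int\langle\nabla h,\nabla f\rangle\,\d\mm=-\int f\,\Delta h\,\d\mm,\qquad h={\sf h}_\varphi g,\ g\in{\rm Lip}_{bs}(\X),\ f\in{\rm S}^2(\X)\cap L^\infty(\mm),
\]
and then truncate \(f\) by cutoffs \(\eta_n\). The boundary term \(\int f\langle\nabla h,\nabla\eta_n\rangle\,\d\mm\) is dominated by \(\|f\|_{L^\infty}\int_{\{\nabla\eta_n\neq0\}}|\nabla h|\,\d\mm\), so its vanishing genuinely requires \(|\nabla h|\in L^1(\mm)\). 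You claim this follows from the compact support of \(g\) and ``the Bakry--\'{E}mery control of \(|\nabla{\sf h}_tg|\)''. But the only gradient estimate available in the paper is \eqref{eq:BE_fcts}, \(|D{\sf h}_tg|^2\le e^{-2Kt}{\sf h}_t(|Dg|^2)\), which yields \(|\nabla{\sf h}_tg|\le e^{-Kt}\big({\sf h}_t(|\nabla g|^2)\big)^{1/2}\); squaring and integrating shows \(\nabla{\sf h}_tg\in L^2(T\X)\), but on a space of infinite measure this gives no \(L^1\) information. What you actually need is the \emph{pointwise} (or \(L^1\)-type) Bakry--\'{E}mery gradient estimate \(|D{\sf h}_tg|\le e^{-Kt}{\sf h}_t(|Dg|)\) of Savar\'{e}, which does hold on \(\RCD(K,\infty)\) spaces but is nowhere stated or cited in the paper; without it, the argument as written has a hole. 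The paper sidesteps the issue entirely by a different mechanism: it approximates \(f\) by \(f_n\in W^{1,2}(\X)\cap L^1(\mm)\) with \(|f_n|\le|f|\) and \(\nabla f_n\to\nabla f\) in \(L^2(T\X)\), uses \eqref{eq:key_ineq_moll_hf} to get a uniform bound \(\|\Delta{\sf h}_\varphi f_n\|_{L^\infty}\le C(\varphi)\|f\|_{L^\infty}\), extracts a weak\(^*\)-limit \(H\in L^\infty(\mm)\), and identifies it by testing against \(g\in W^{1,2}(\X)\cap L^1(\mm)\); each integration by parts there is performed with \(f_n\in W^{1,2}\), so no control of \(|\nabla h|\) in \(L^1\) is ever needed. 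Your proof of (v) would become correct if you either imported the pointwise Bakry--\'{E}mery estimate, or replaced the truncation step with the paper's weak\(^*\)-compactness argument.
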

\begin{proof}
\ \\
{\color{blue}i)} Let \(v\in L^2(T\X)\cap L^\infty(T\X)\) be given.
Then we may estimate
\[\begin{split}
|{\sf h}_{{\rm H},\varphi}(v)|^2&\overset{\phantom{\eqref{eq:BE_vf}}}=
\bigg|\int_0^{+\infty}\varphi(t)\,{\sf h}_{{\rm H},t}(v)\,\d t\bigg|^2\leq
\bigg(\int_0^{+\infty}\varphi(t)\,|{\sf h}_{{\rm H},t}(v)|\,\d t\bigg)^2\leq
\int_0^{+\infty}\varphi(t)\,|{\sf h}_{{\rm H},t}(v)|^2\,\d t\\
&\overset{\eqref{eq:BE_vf}}\leq
e^{2\bar t\max\{0,-K\}}\int_0^{+\infty}\varphi(t)\,{\sf h}_t(|v|^2)\,\d t
=e^{2\bar t\max\{0,-K\}}\,{\sf h}_\varphi(|v|^2),\quad\mm\text{-a.e.},
\end{split}\]
where in the second inequality we applied Chebyshev's inequality to \(t\mapsto|{\sf h}_{{\rm H},t}(v)|\)
with respect to the Borel probability measure \(\varphi\mathcal L^1\).
Therefore, the claimed inequality \eqref{eq:BE_moll} is proved.\\
{\color{blue}ii)} First, observe that
\({\rm div}_{(1)}\colon D({\rm div}_{(1)})\to L^1(\mm)\) is a closed operator,
namely if \((v_n)_n\subset D({\rm div}_{(1)})\) satisfies \(v_n\to v\) in
\(L^2(T\X)\) and \({\rm div}_{(1)}(v_n)\to H\) in \(L^1(\mm)\) for some
\(v\in L^2(T\X)\) and \(H\in L^1(\mm)\), then \(v\in D({\rm div}_{(1)})\) and
\({\rm div}_{(1)}(v)=H\). Now fix \(f\in D({\boldsymbol\Delta})\cap{\rm S}^2(\X)\)
with \({\boldsymbol\Delta}f\) of bounded support. \cref{lem:div_h_Ht}
says that \({\sf h}_{{\rm H},t}(\nabla f)\in D({\rm div}_{(1)})\) and
\({\rm div}_{(1)}({\sf h}_{{\rm H},t}(\nabla f))={\sf h}_t^*{\boldsymbol\Delta}f\)
hold for every \(t>0\). As observed in \cref{rmk:well-posed_moll},
the curves \(t\mapsto{\sf h}_{{\rm H},t}(\nabla f)\in L^2(T\X)\) and
\(t\mapsto{\sf h}_t^*{\boldsymbol\Delta}f\in L^1(\mm)\) are Bochner integrable
on \((\R,\varphi\mathcal L^1)\). Therefore, an application of Hille's
Theorem (see e.g.\ \cite[Theorem 1.3.15]{GPbook}) ensures that
\({\sf h}_{{\rm H},\varphi}(\nabla f)=\int_0^{+\infty}\varphi(t)\,
{\sf h}_{{\rm H},t}(\nabla f)\,\d t\in D({\rm div}_{(1)})\) and
\[\begin{split}
{\rm div}_{(1)}({\sf h}_{{\rm H},\varphi}(\nabla f))
&\overset{\phantom{\eqref{eq:dual_h_t}}}=
{\rm div}_{(1)}\bigg(\int_0^{+\infty}\varphi(t)\,{\sf h}_{{\rm H},t}(\nabla f)\,\d t\bigg)=
\int_0^{+\infty}\varphi(t)\,{\rm div}_{(1)}({\sf h}_{{\rm H},t}(\nabla f))\,\d t\\
&\overset{\eqref{eq:div_h_Ht_claim}}=
\int_0^{+\infty}\varphi(t)\,{\sf h}_t^*{\boldsymbol\Delta}f\,\d t
={\sf h}_\varphi^*{\boldsymbol\Delta}f.
\end{split}\]
This proves \eqref{eq:div_h_H_moll}, as desired.\\
{\color{blue}iii)} Linearity follows from the linearity of each
operator \({\sf h}_{{\rm H},t}\). To prove continuity, notice that
\[\begin{split}
\|{\sf h}_{{\rm H},\varphi}(v)\|_{L^2(T\X)}&=\bigg\|\int_0^{+\infty}
\varphi(t)\,{\sf h}_{{\rm H},t}(v)\,\d t\,\bigg\|_{L^2(T\X)}\leq
\int_0^{+\infty}\varphi(t)\,\|{\sf h}_{{\rm H},t}(v)\|_{L^2(T\X)}\,\d t\\
&\leq\int_0^{+\infty}\varphi(t)\,\|v\|_{L^2(T\X)}\,\d t=\|v\|_{L^2(T\X)},
\quad\text{ for every }v\in L^2(T\X).
\end{split}\]
{\color{blue}iv)} The argument is similar to the one in the proof of
item ii): the operator \(\nabla\) is closed, and the curves
\(t\mapsto{\sf h}_t f\in L^2(\mm)\) and
\(t\mapsto{\sf h}_{{\rm H},t}(\nabla f)\in L^2(T\X)\)
are Bochner integrable on \((\R,\varphi\mathcal L^1)\),
thus by applying Hille's Theorem we obtain that
\[
{\sf h}_{{\rm H},\varphi}(\nabla f)=\int_0^{+\infty}\varphi(t)\,
{\sf h}_{{\rm H},t}(\nabla f)\,\d t
\overset{\eqref{eq:consist_with_heat_flow}}=
\int_0^{+\infty}\varphi(t)\nabla{\sf h}_t f\,\d t
=\nabla\bigg(\int_0^{+\infty}\varphi(t)\,{\sf h}_t f\,\d t\bigg)
=\nabla{\sf h}_\varphi f.
\]
{\color{blue}v)} By a standard cut-off argument, we can find a sequence
\((f_n)_n\subset W^{1,2}(\X)\cap L^1(\mm)\) such that \(|f_n|\leq|f|\)
holds \(\mm\)-a.e.\ for every \(n\in\N\) and \(\nabla f_n\to\nabla f\)
in \(L^2(T\X)\). Thanks to \eqref{eq:key_ineq_moll_hf}, we have that
\(({\sf h}_\varphi f_n)_n\subset D(\Delta)\) and
\(\|\Delta{\sf h}_\varphi f_n\|_{L^\infty(\mm)}\leq C(\varphi)\,\|f\|_{L^\infty(\mm)}\)
for every \(n\in\N\). Hence, up to a not relabelled subsequence,
it holds \(\Delta{\sf h}_\varphi f_n\rightharpoonup H\) weakly\(^*\)
in \(L^\infty(\mm)\) for some \(H\in L^\infty(\mm)\). Consequently,
for any given function \(g\in W^{1,2}(\X)\cap L^1(\mm)\) we may compute
\[\begin{split}
\int g H\,\d\mm
&\overset{\phantom{\eqref{eq:consist_moll_hf}}}=
\lim_{n\to\infty}\int g\,\Delta{\sf h}_\varphi f_n\,\d\mm=-
\lim_{n\to\infty}\int\langle\nabla g,\nabla{\sf h}_\varphi f_n\rangle\,\d\mm\\
&\overset{\eqref{eq:consist_moll_hf}}=-\lim_{n\to\infty}
\int\langle\nabla g,{\sf h}_{{\rm H},\varphi}(\nabla f_n)\rangle\,\d\mm
\overset{\eqref{eq:moll_hf_cont}}=
-\int\langle\nabla g,{\sf h}_{{\rm H},\varphi}(\nabla f)\rangle\,\d\mm.
\end{split}\]
Then \({\sf h}_{{\rm H},\varphi}(\nabla f)\in D({\rm div}_{(\infty)})\)
and \(\|{\rm div}_{(\infty)}({\sf h}_{{\rm H},\varphi}(\nabla f))\|_
{L^\infty(\mm)}=\|H\|_{L^\infty(\mm)}\leq C(\varphi)\,\|f\|_{L^\infty(\mm)}\).
\end{proof}
\subsubsection{Gauss--Green formula}
Let \((\X,\sfd,\mm)\) be an \({\sf RCD}(K,N)\) space with \(N<\infty\)
and \(E\subset\X\) a set of finite perimeter. By the \emph{tangent module}
\(L^2_{\partial E}(T\X)\) \emph{over the boundary of \(E\)}
we mean the
\(L^2\big(P(E,\cdot)\big)\)-normed \(L^\infty\big(P(E,\cdot)\big)\)-module
whose existence was proved in \cite[Theorem 2.1]{BPS19}.
The \emph{trace} operator \({\rm tr}_{\partial E}\colon H^{1,2}_C(T\X)\cap
L^\infty(T\X)\to L^2_{\partial E}(T\X)\) is defined as in \cite[Section 2]{BPS19}.
\medskip

Given any \(v\in H^{1,2}_C(T\X)\cap L^\infty(T\X)\) and \(C\geq 0\),
it holds that
\begin{equation}\label{eq:max_princ_tr}
|v|\leq C,\text{ in the }\mm\text{-a.e.\ sense}
\quad\Longrightarrow\quad|{\rm tr}_{\partial E}(v)|\leq C,
\text{ in the }P(E,\cdot)\text{-a.e.\ sense.}
\end{equation}
It is also easy to prove that if \(v\in H^{1,2}_C(T\X)\cap L^\infty(T\X)\)
and \(f\in{\rm Lip}_c(\X)\), then
\begin{equation}\label{eq:tr_LIP_lin}
{\rm tr}_{\partial E}(f\cdot v)=f\cdot{\rm tr}_{\partial E}(v).
\end{equation}
Here, we are using the fact that \(f\cdot v\in H^{1,2}_C(T\X)
\cap L^\infty(T\X)\), as granted by \cref{lem:mult_H12_C}.
\begin{theorem}[Gauss--Green formula {\cite[Theorem 2.2]{BPS19}}]
\label{thm:Gauss-Green}
Let \((\X,\sfd,\mm)\) be an \({\sf RCD}(K,N)\) space with \(N<\infty\).
Let \(E\subset\X\) be a set of finite perimeter with \(\mm(E)<+\infty\).
Then there exists a unique vector field \(\nu_E\in L^2_{\partial E}(T\X)\)
such that \(|\nu_E|=1\) holds \(P(E,\cdot)\)-a.e.\ and
\begin{equation}\label{eq:Gauss-Green}
\int_E{\rm div}(v)\,\d\mm=\int\langle{\rm tr}_{\partial E}(v),\nu_E\rangle
\,\d P(E,\cdot),\quad\text{ for every }v\in H^{1,2}_C(T\X)\cap D({\rm div})
\cap L^\infty(T\X).
\end{equation}
The vector field \(\nu_E\) is said to be the \emph{outer unit normal}
to \(E\).
\end{theorem}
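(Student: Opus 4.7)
The plan is to produce $\nu_E$ via Riesz representation in the Hilbert module $L^2_{\partial E}(T\X)$, and then to establish the unit-norm property by a blow-up argument. I would organize the proof in three steps.

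First, I would consider the linear functional
\[
\mathcal L\colon H^{1,2}_C(T\X)\cap D(\div)\cap L^\infty(T\X)\to\R,\qquad v\mapsto\int_E\div(v)\,\d\mm,
\]
and aim to prove the key continuity estimate
\[
|\mathcal L(v)|\leq\int|{\rm tr}_{\partial E}(v)|\,\d P(E,\cdot).
\]
To this end one approximates $\nchi_E$ by a sequence $(u_n)_n\subset{\rm Lip}_{bs}(\X)$ with $u_n\to\nchi_E$ in $L^1(\mm)$ and $\lip u_n\,\mm\rightharpoonup P(E,\cdot)$ narrowly (such a sequence exists by the very definition of the perimeter), applies the integration-by-parts identity
\[
\int u_n\,\div(v)\,\d\mm=-\int\d u_n(v)\,\d\mm,
\]
and passes to the limit: the left-hand side converges to $\mathcal L(v)$, while the right-hand side is controlled in the limit by $\|{\rm tr}_{\partial E}(v)\|_{L^1(P(E,\cdot))}$ thanks to the defining property of the trace operator from \cite[Theorem 2.1]{BPS19}, whose construction is precisely designed so that boundary values of $v$ can be read off in such an integration-by-parts limit. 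In particular, $\mathcal L$ depends only on ${\rm tr}_{\partial E}(v)$.

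Second, since the image of the trace operator is dense in $L^2_{\partial E}(T\X)$, the functional $\mathcal L$ extends uniquely to a continuous linear functional on this Hilbert module; the Riesz representation theorem for Hilbert modules then yields a unique $\nu_E\in L^2_{\partial E}(T\X)$ satisfying \eqref{eq:Gauss-Green}. The estimate from the first step, applied by duality (e.g.\ testing against elements of the form $\nchi_A\nu_E/(|\nu_E|+\varepsilon)$ for a Borel set $A$ and letting $\varepsilon\downarrow 0$), gives the easy inequality $|\nu_E|\leq 1$ in the $P(E,\cdot)$-a.e.\ sense.

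The main obstacle is the third step: proving the reverse inequality $|\nu_E|\geq 1$ at $P(E,\cdot)$-a.e.\ point. This requires geometric input, and I would argue via blow-ups. By the known rectifiability results for perimeter measures in $\RCD(K,N)$ spaces, $P(E,\cdot)$-a.e.\ point $x\in\partial^e E$ admits a unique Euclidean tangent along which the blow-ups of $E$ identify a half-space with a well-defined outer unit normal $\bar\nu$. I would then construct, for such a generic point $x$, a sequence of compactly supported test vector fields whose rescalings converge to the constant field $\bar\nu$ on the limit half-space; comparing both sides of \eqref{eq:Gauss-Green} against these fields and invoking the Euclidean Gauss--Green formula in the blow-up limit forces $\langle\nu_E(x),\bar\nu\rangle=1$, whence $|\nu_E(x)|\geq 1$. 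The delicate point here is to transfer such test fields between the $\RCD$ space and its Euclidean tangent in a way compatible with the module structure and the trace operator, which is where the bulk of the technical work lies.
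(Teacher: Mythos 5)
The paper does not prove this theorem: it is quoted verbatim from the cited reference \cite[Theorem~2.2]{BPS19}, and the present work uses the module $L^2_{\partial E}(T\X)$, the trace ${\rm tr}_{\partial E}$, and the unit normal $\nu_E$ entirely as black boxes. There is therefore no ``paper's own proof'' against which to check your sketch; the comparison has to be with the argument actually given in \cite{BPS19}.

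With that caveat, your outline does track the structure of the argument in \cite{BPS19}. Steps~1 and~2 are essentially correct in spirit: the module $L^2_{\partial E}(T\X)$ is constructed so that the image of the trace is dense and so that the divergence functional factors through the trace with norm at most one, and Riesz representation together with the duality argument you indicate (which works because $P(E,\cdot)$ is finite, so the $L^1$ bound yields $L^2$ continuity) produces a unique $\nu_E$ with $|\nu_E|\leq 1$ $P(E,\cdot)$-a.e. You are also right that the inequality $|\nu_E|\geq 1$ is the genuinely hard content and that it is obtained via rectifiability and blow-up; this is precisely what \cite{BPS19} contributes.

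The gap is that your Step~3 is a statement of intent rather than a proof. It is not enough to ``construct test vector fields whose rescalings converge to a constant field on the limit half-space'': one must show that both sides of \eqref{eq:Gauss-Green} pass to the Euclidean blow-up limit. Concretely, this requires (a) the pmGH convergence of the rescaled spaces to $\R^N$ together with the convergence of the rescaled sets to a half-space and of the rescaled perimeter measures to $\mathcal H^{N-1}$ on the boundary hyperplane; (b) a suitable class of vector fields, built from distance functions and heat-flow regularizations, whose covariant derivatives, divergences, and traces are stable along the blow-up; and (c) compatibility of this stability with the module structure of $L^2_{\partial E}(T\X)$. You correctly flag this as ``where the bulk of the technical work lies,'' but the sketch does not indicate how to carry it out, and that part cannot be waved through. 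As written, your argument proves $|\nu_E|\leq1$ but not the equality.
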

\subsubsection{Main estimate}

Let \((\X,\sfd,\mm)\) be an \({\sf RCD}(K,\infty)\) space.
Given any point \(\bar x\in\X\), we define the \(1\)-Lipschitz function
\(\sfd_{\bar x}\colon\X\to[0,+\infty)\) as
\[
\sfd_{\bar x}(x)\coloneqq\sfd(x,\bar x),\quad\text{ for every }x\in\X.
\]
Given that \((\X,\sfd)\) is a length space, it holds
that \({\rm lip}(\sfd_{\bar x})\equiv 1\). Hence, the results
in \cite{GigliHan14} (cf. also \cite{CheegerDiff}, \cite[Theorem 48]{ACDM15}) yield
\begin{equation}\label{eq:mwug_dist}
|D\sfd_{\bar x}|=1,\quad\text{ in the }\mm\text{-a.e.\ sense.}
\end{equation}
Moreover, the chain rule for minimal weak upper gradients gives
\(|D\sfd_{\bar x}^2|=2\sfd_{\bar x}|D\sfd_{\bar x}|\) and thus
\begin{equation}\label{eq:mwug_dist_2}
|D\sfd_{\bar x}^2|=2\sfd_{\bar x},\quad\text{ in the }\mm\text{-a.e.\ sense.}
\end{equation}
\begin{lemma}\label{lem:m_superp}
Let \((\X,\sfd,\mm)\) be an \({\sf RCD}(K,\infty)\) space.
Let \(\bar x\in\X\) be given. Then it holds that
\[
\mm=\int_0^{+\infty}P(B_r(\bar x),\cdot)\,\d r.
\]
\end{lemma}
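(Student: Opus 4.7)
The plan is to apply the coarea formula in Theorem~\ref{thm:coarea} to the $1$-Lipschitz function $\sfd_{\bar x}$ and simplify the output using \eqref{eq:mwug_dist}. The point is that the coarea formula decomposes the total variation of $\sfd_{\bar x}$ into a superposition of perimeters of its superlevel sets, which are (complements of) metric balls centered at $\bar x$.

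First I would identify the BV total variation of $\sfd_{\bar x}$ with $\mm$. Since $(\X,\sfd)$ is a length space, $\lip\sfd_{\bar x}\equiv 1$, and by \eqref{eq:mwug_dist} the minimal weak upper gradient of $\sfd_{\bar x}$ also equals $1$ \(\mm\)-a.e. I claim $|D\sfd_{\bar x}|=\mm$ as Borel measures. The upper bound $|D\sfd_{\bar x}|(A)\leq\mm(A)$ on any bounded open set $A$ is immediate from Definition~\ref{def:BVperimetro} by plugging in the stationary sequence $f_i\equiv\sfd_{\bar x}$. The converse bound $|D\sfd_{\bar x}|(A)\geq\mm(A)$ relies on the standard consistency between the relaxed BV total variation of a locally Lipschitz function and the integral of its minimal $2$-weak upper gradient, which holds in PI spaces and hence in RCD spaces.

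Next, applying Theorem~\ref{thm:coarea} to $f=\sfd_{\bar x}$ on a bounded open set $\Omega\subset\X$ yields
\[
\mm(\Omega)=|D\sfd_{\bar x}|(\Omega)=\int_\R P\big(\{\sfd_{\bar x}>t\},\Omega\big)\,\d t.
\]
For $t<0$ the integrand is $P(\X,\Omega)=0$, so the integral reduces to $\int_0^{+\infty}$. For $t>0$, by complementation $P(\{\sfd_{\bar x}>t\},\cdot)=P(\{\sfd_{\bar x}\leq t\},\cdot)$, and $\{\sfd_{\bar x}\leq t\}$ differs from $B_t(\bar x)$ by the sphere $\{\sfd_{\bar x}=t\}$. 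Since these spheres are pairwise disjoint as $t$ varies and $\mm$ is $\sigma$-finite, they are $\mm$-negligible for a.e.~$t>0$; for such $t$ the two sets have the same perimeter measure, giving $P(\{\sfd_{\bar x}>t\},\Omega)=P(B_t(\bar x),\Omega)$ for a.e.~$t>0$. Plugging back in,
\[
\mm(\Omega)=\int_0^{+\infty}P(B_r(\bar x),\Omega)\,\d r
\]
for every bounded open $\Omega\subset\X$. Since both sides define boundedly-finite Borel measures on $\X$ that agree on bounded open sets, a standard monotone class / outer regularity argument extends the identity to all Borel sets, proving the lemma.

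The main technical obstacle is the lower bound $|D\sfd_{\bar x}|\geq\mm$; this is where the RCD (or PI) structure enters, via the identification between the BV-relaxation of slopes of Lipschitz functions and the Sobolev-type minimal weak upper gradient. Everything else is elementary manipulation of the coarea formula and the negligibility of almost all spheres in a $\sigma$-finite metric measure space.
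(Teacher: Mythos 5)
Your proposal is correct and follows essentially the same route as the paper: identify $|D\sfd_{\bar x}|=\mm$ via the minimal weak upper gradient computation \eqref{eq:mwug_dist}, apply the coarea formula to $\sfd_{\bar x}$, observe that a.e.\ superlevel set has the same perimeter as the complement of the corresponding ball, and upgrade the resulting identity from open sets to Borel sets. The paper is slightly more explicit about the Borel-measurability needed for the right-hand side to define a measure (via a Dynkin $\pi$--$\lambda$ argument) and cites \cite{GigliHan14} for the lower bound $|D\sfd_{\bar x}|\geq\mm$, but the substance of the argument is the same as yours.
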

\begin{proof}
Given any \(f\in{\rm BV}_{\rm loc}(\X)\), we know from Theorem
\ref{thm:coarea} that \(\R\ni t\mapsto P(\{f\geq t\},\Omega)\)
is Borel measurable for every open set \(\Omega\subset\X\).
A standard application of the Dynkin \(\pi-\lambda\) Theorem ensures that
\(\R\ni t\mapsto P(\{f\geq t\},B)\) is Borel measurable for every Borel
set \(B\subset\X\), thus in particular the set-function \(\mu_f\),
defined as \(\mu_f(B)\coloneqq\int_\R P(\{f\geq t\},B)\,\d t\) for
every \(B\subset\X\) Borel, defines a boundedly-finite Borel measure
on \(\X\). Hence, by applying the coarea formula we deduce that
\(|Df|(\Omega)=\mu_f(\Omega)\) for all open sets \(\Omega\subset\X\),
so that accordingly \(|Df|=\mu_f\) as measures.

Let us now consider the function \(f\coloneqq\sfd_{\bar x}\), which
is locally Lipschitz and so belongs to \({\rm BV}_{\rm loc}(\X)\).
Thanks to \eqref{eq:mwug_dist} and the results in \cite{GigliHan14},
we know that the total variation measure of \(\sfd_{\bar x}\) coincides
with \(\mm\). Moreover, we have that
\(P(B_r(\bar x),\cdot)=P(B_r(\bar x)^c,\cdot)=P(\{\sfd_{\bar x}\geq r\},\cdot)\)
holds for a.e.\ \(r>0\). Therefore, the identity \(|D\sfd_{\bar x}|=
\mu_{\sfd_{\bar x}}\) proved above yields the statement.
\end{proof}

Before passing to the proof of the main result we achieve in this section
(i.e., Theorem \ref{thm:MainEst}), we state and prove two preliminary results.
The first one says that on finite-dimensional \(\sf RCD\) spaces, for any
given point \(\bar x\in\X\) the outer unit normal \(\nu_{B_r(\bar x)}\)
to the ball \(B_r(\bar x)\) coincides with
\(\frac{1}{2r}\nabla\sfd_{\bar x}^2\) for a.e.\ radius \(r>0\),
in some suitable (weak) sense.
\begin{proposition}\label{prop:nu_sphere}
Let \((\X,\sfd,\mm)\) be an \({\sf RCD}(K,N)\) space with \(N<\infty\).
Let \(\bar x\in\X\) be given. Then for any Borel set \(E\subset\X\) and any
\(v\in H^{1,2}_C(T\X)\cap D({\rm div})\cap D({\rm div}_{(1)})\cap L^\infty(T\X)\)
it holds
\begin{equation}\label{eq:form_nabla_distb}
2r\int_E\langle{\rm tr}_{\partial B_r(\bar x)}(v),\nu_{B_r(\bar x)}\rangle
\,\d P(B_r(\bar x),\cdot)=\int_E\langle v,\nabla\sfd_{\bar x}^2\rangle
\,\d P(B_r(\bar x),\cdot),\quad\text{ for a.e.\ }r>0.
\end{equation}
\end{proposition}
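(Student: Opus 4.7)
The plan is to test \eqref{eq:form_nabla_distb} against Lipschitz spatial cutoffs and smooth radial cutoffs, and to verify the resulting dual identity by a Gauss--Green computation on balls combined with the disintegration formula \cref{lem:m_superp}. Fix $f\in{\rm Lip}_{bs}(\X)$ and $\varphi\in C^\infty_c((0,+\infty))$. By the Leibniz rules \eqref{eq:Leibniz_div} and \cref{lem:mult_H12_C}, the vector field $f\cdot v$ belongs to $H^{1,2}_C(T\X)\cap D({\rm div})\cap L^\infty(T\X)$, so \cref{thm:Gauss-Green} applied to $f\cdot v$ on $B_r(\bar x)$, combined with the Lipschitz-linearity \eqref{eq:tr_LIP_lin} of the trace, gives
\[
\int f\,\langle{\rm tr}_{\partial B_r(\bar x)}(v),\nu_{B_r(\bar x)}\rangle\,\d P(B_r(\bar x),\cdot)=\int_{B_r(\bar x)}{\rm div}(fv)\,\d\mm,\qquad\text{for every }r>0.
\]

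Next I would multiply by $2r\varphi(r)$, integrate over $r\in(0,+\infty)$, and use Fubini to rewrite the right-hand side as $\int\Psi(\sfd_{\bar x})\,{\rm div}(fv)\,\d\mm$, where $\Psi(t)\coloneqq\int_t^{+\infty}2s\varphi(s)\,\d s$ is bounded, vanishes beyond $\sup\spt(\varphi)$, and satisfies $\Psi'(t)=-2t\varphi(t)$. Since $\Psi(\sfd_{\bar x})\in W^{1,2}(\X)\cap L^\infty(\mm)$ (bounded, supported in a ball of finite $\mm$-mass, with bounded compactly supported weak gradient by the chain rule for minimal weak upper gradients), the defining relation of ${\rm div}(fv)$ together with $\nabla\sfd_{\bar x}^2=2\sfd_{\bar x}\nabla\sfd_{\bar x}$ turns the expression into
\[
\int\Psi(\sfd_{\bar x})\,{\rm div}(fv)\,\d\mm=-\int f\,\d\Psi(\sfd_{\bar x})(v)\,\d\mm=\int f\,\varphi(\sfd_{\bar x})\,\langle v,\nabla\sfd_{\bar x}^2\rangle\,\d\mm.
\]
A direct application of \cref{lem:m_superp}, using that $\sfd_{\bar x}\equiv r$ on $\spt\,P(B_r(\bar x),\cdot)$, rewrites the last expression as $\int_0^{+\infty}\varphi(r)\int f\,\langle v,\nabla\sfd_{\bar x}^2\rangle\,\d P(B_r(\bar x),\cdot)\,\d r$.

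Arbitrariness of $\varphi\in C^\infty_c((0,+\infty))$ yields, for each fixed $f$, the scalar identity $2r\int f\,\langle{\rm tr}(v),\nu_{B_r}\rangle\,\d P(B_r,\cdot)=\int f\,\langle v,\nabla\sfd_{\bar x}^2\rangle\,\d P(B_r,\cdot)$ for a.e.\ $r>0$. Choosing a countable family $\{f_n\}_n\subset{\rm Lip}_{bs}(\X)$ dense in $C_c(\X)$ with respect to the sup norm and intersecting the countably many exceptional null sets produces a single full-measure set of radii $r$ on which the identity holds for every $f_n$. For such $r$ both sides above are the pairings of $f_n$ against two finite signed Borel measures absolutely continuous with respect to $P(B_r(\bar x),\cdot)$: the left one has the bounded density $2r\langle{\rm tr}(v),\nu_{B_r}\rangle$, while the right one is well-defined independently of the chosen $\mm$-a.e.\ representative of $\langle v,\nabla\sfd_{\bar x}^2\rangle$, again thanks to \cref{lem:m_superp}. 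Since $\{f_n\}_n$ separates finite Borel measures on $\X$, the two measures coincide, and testing against $\chi_E$ for an arbitrary Borel set $E$ delivers \eqref{eq:form_nabla_distb}. The main subtlety I foresee is exactly this well-posedness of the right-hand side as an integral against the codimension-one measure $P(B_r(\bar x),\cdot)$, which is precisely what \cref{lem:m_superp} is tailored to resolve, so beyond careful bookkeeping of null sets I do not expect further obstacles.
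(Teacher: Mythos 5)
Your proposal is correct, and it reaches the same conclusion by the same broad strategy (Gauss--Green on balls, Leibniz rule and trace linearity, \cref{lem:m_superp}, Fubini, arbitrariness of a radial test function) but with two genuine departures from the paper's implementation. First, the paper flips the Gauss--Green term on $B_r(\bar x)$ to the complement via \eqref{eq:int_div_zero}, which is why it invokes $D({\rm div}_{(1)})$; you instead multiply by $2r\varphi(r)$, integrate in $r$, and absorb $\nchi_{B_r(\bar x)}(x)=\nchi_{(\sfd_{\bar x}(x),\infty)}(r)$ into the primitive $\Psi(t)=\int_t^{\infty}2s\varphi(s)\,\d s$, then unwind with the defining relation of ${\rm div}$ and the chain rule for $\sfd_{\bar x}^2$. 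This cleanly avoids ${\rm div}_{(1)}$ altogether, so your argument in fact never uses the hypothesis $v\in D({\rm div}_{(1)})$. Second, to pass from ``tested against Lipschitz cutoffs'' to the pointwise-in-$E$ statement, the paper picks a specific $(f_n)\subset{\rm Lip}_c(\X)$ converging in $L^2$ to $\varphi(\sfd_{\bar x})\nchi_E$, while you use a countable sup-norm-dense family in $C_c(\X)$ and conclude by separation of finite Radon measures, which conveniently produces a single full-measure set of radii that works for all Borel $E$ simultaneously. The one point worth tightening is the well-posedness of $\int_E\langle v,\nabla\sfd_{\bar x}^2\rangle\,\d P(B_r(\bar x),\cdot)$ for fixed $r$: you correctly attribute this to \cref{lem:m_superp}, but it deserves a line to say explicitly that for two Borel representatives $g_1,g_2$ of $\langle v,\nabla\sfd_{\bar x}^2\rangle$, Fubini on $\mm=\int_0^\infty P(B_r(\bar x),\cdot)\,\d r$ gives $g_1=g_2$ $P(B_r(\bar x),\cdot)$-a.e.\ for a.e.\ $r$, and that the resulting null set of radii must be discarded along with the others. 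Likewise, your use of \eqref{eq:tr_LIP_lin} for $f\in{\rm Lip}_{bs}(\X)$ is fine because ${\rm RCD}(K,N)$ spaces with $N<\infty$ are proper, so ${\rm Lip}_{bs}={\rm Lip}_c$, but this should be noted since the paper states the identity only for ${\rm Lip}_c$.
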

\begin{proof}
Fix any
\(w\in H^{1,2}_C(T\X)\cap D({\rm div})\cap D({\rm div}_{(1)})\cap L^\infty(T\X)\)
such that \(|w|\) is concentrated on a compact set. Pick
\(\eta\colon\X\to[0,1]\) Lipschitz with compact support and \(\eta=1\)
on \({\rm spt}(|w|)\). Notice that one has \(\eta\,\sfd_{\bar x}^2
\in{\rm Lip}_c(\X)\). The Leibniz rule for the gradient gives
\(\nabla(\eta\,\sfd_{\bar x}^2)=\eta\nabla\sfd_{\bar x}^2
+\sfd_{\bar x}^2\nabla\eta\), so that accordingly \(\nabla(\eta\,
\sfd_{\bar x}^2)=\nabla\sfd_{\bar x}^2\) on \({\rm spt}(|w|)\).
Since \({\rm spt}({\rm div}(w))\subset{\rm spt}(|w|)\), we deduce that
\begin{equation}\label{eq:form_nabla_distb_aux1}
\eta\,\sfd_{\bar x}^2=\sfd_{\bar x}^2,\quad
\nabla(\eta\,\sfd_{\bar x}^2)=\nabla\sfd_{\bar x}^2,
\quad\text{ on }{\rm spt}({\rm div}(w)).
\end{equation}
The Gauss--Green formula \eqref{eq:Gauss-Green} and
\cref{lem:indep_div_p} ensure that for a.e.\ \(r>0\) it holds that
\[
-\int_{B_r(\bar x)^c}{\rm div}(w)\,\d\mm\overset{\eqref{eq:int_div_zero}}=
\int_{B_r(\bar x)}{\rm div}(w)\,\d\mm=
\int\langle{\rm tr}_{\partial B_r(\bar x)}(w),
\nu_{B_r(\bar x)}\rangle\,\d P(B_r(\bar x),\cdot).
\]
Multiplying the above identity by \(2r\) and then integrating it over
\(r\in(0,+\infty)\), we obtain
\[\begin{split}
2\int_0^{+\infty}r\int\langle{\rm tr}_{\partial B_r(\bar x)}(w),
\nu_{B_r(\bar x)}\rangle\,\d P(B_r(\bar x),\cdot)\,\d r
&=-2\int_0^{+\infty}r\int_{B_r(\bar x)^c}{\rm div}(w)\,\d\mm\,\d r\\
&=-2\int{\rm div}(w)(x)\int_0^{\sfd(x,\bar x)}r\,\d r\,\d\mm(x)\\
&=-\int{\rm div}(w)\,\sfd_{\bar x}^2\,\d\mm
\overset{\eqref{eq:form_nabla_distb_aux1}}=
\int\langle w,\nabla\sfd_{\bar x}^2\rangle\,\d\mm.
\end{split}\]
Now fix
\(v\in H^{1,2}_C(T\X)\cap D({\rm div})\cap D({\rm div}_{(1)})\cap L^\infty(T\X)\)
and \(\varphi\in C_c(\R)\). Pick any \((f_n)_n\subset{\rm Lip}_c(\X)\)
that converges to \(\varphi\circ\sfd_{\bar x}\,\nchi_E\) in \(L^2(\mm)\).
Note that \(f_n\cdot v\in H^{1,2}_C(T\X)\cap D({\rm div})\cap D({\rm div}_{(1)})
\cap L^\infty(T\X)\) by \cref{lem:mult_H12_C} and
\cref{lem:Leibniz_div_p}. By plugging \(w=f_n\cdot v\) into the previous
identities, we get
\[
2\int_0^{+\infty}r\int f_n\langle{\rm tr}_{\partial B_r(\bar x)}(v),
\nu_{B_r(\bar x)}\rangle\,\d P(B_r(\bar x),\cdot)\,\d r=
\int f_n\langle v,\nabla\sfd_{\bar x}^2\rangle\,\d\mm,
\quad\text{ for every }n\in\N.
\]
Given that \(\mm=\int_0^{+\infty}P(B_r(\bar x),\cdot)\,\d r\) by
\cref{lem:m_superp}, by letting \(n\to\infty\) in the above identity we
deduce that
\[
2\int_0^{+\infty}\varphi(r)\,r\int_E\langle{\rm tr}_{\partial B_r(\bar x)}(v),
\nu_{B_r(\bar x)}\rangle\,\d P(B_r(\bar x),\cdot)\,\d r=\int_0^{+\infty}
\varphi(r)\int_E\langle v,\nabla\sfd_{\bar x}^2\rangle\,
\d P(B_r(\bar x),\cdot)\,\d r.
\]
By exploiting the arbitrariness of \(\varphi\in C_c(\R)\), we eventually
conclude that \eqref{eq:form_nabla_distb} is verified.
\end{proof}
The next result provides a formula for the outer unit normal \(\nu_{E\cap F}\)
to the intersection between two sets of finite perimeter \(E\) and \(F\) in a
finite-dimensional \(\sf RCD\) space, under the additional assumption
that \(\partial^e E\) and \(\partial^e F\) are essentially disjoint
(which is sufficient for our purposes).
\begin{proposition}\label{prop:nu_inters}
Let \((\X,\sfd,\mm)\) be an \({\sf RCD}(K,N)\) space with \(N<\infty\). Let
\(E,F\subset\X\) be sets of finite perimeter satisfying \(\mm(E),\mm(F)<+\infty\)
and \(\mathcal H^{\rm cod\text{-}1}(\partial^e E\cap\partial^e F)=0\).
Then it holds
\begin{equation}\label{eq:nu_inters_claim}
\int\langle{\rm tr}_{\partial(E\cap F)}(v),\nu_{E\cap F}\rangle\,\d P(E\cap F,\cdot)=
\int_{F^{(1)}}\langle{\rm tr}_{\partial E}(v),\nu_E\rangle\,\d P(E,\cdot)+
\int_{E^{(1)}}\langle{\rm tr}_{\partial F}(v),\nu_F\rangle\,\d P(F,\cdot),
\end{equation}
for every \(v\in H^{1,2}_C(T\X)\cap D({\rm div})\cap L^\infty(T\X)\).
\end{proposition}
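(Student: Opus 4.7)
The plan is to reduce the claim, via the Gauss--Green formula on $E\cap F$ combined with the perimeter decomposition from \cref{lem:per_inters_general}, to two partial identities that identify the trace--normal pairing of $E\cap F$ with that of $E$ (resp.\ $F$) on $F^{(1)}$ (resp.\ $E^{(1)}$). I would then establish each partial identity by a Lipschitz cutoff approximation of $\nchi_F$ (resp.\ $\nchi_E$), feeding the cut-off vector field into \cref{thm:Gauss-Green} on three auxiliary sets.

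Concretely, applying \cref{thm:Gauss-Green} to $E\cap F$ (which has finite measure and finite perimeter by \cref{lem:per_inters_general}) yields $\int_{E\cap F}{\rm div}(v)\,\d\mm=\int\langle{\rm tr}_{\partial(E\cap F)}(v),\nu_{E\cap F}\rangle\,\d P(E\cap F,\cdot)$; decomposing via $P(E\cap F,\cdot)=P(E,\cdot)|_{F^{(1)}}+P(F,\cdot)|_{E^{(1)}}$, the proposition reduces to proving
\[
\int_{F^{(1)}}\langle{\rm tr}_{\partial(E\cap F)}(v),\nu_{E\cap F}\rangle\,\d P(E,\cdot)=\int_{F^{(1)}}\langle{\rm tr}_{\partial E}(v),\nu_E\rangle\,\d P(E,\cdot)
\]
together with the symmetric identity on $E^{(1)}$. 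To obtain the first of these, I would select $(\chi_n)\subset{\rm Lip}_{bs}(\X)$ with $0\leq\chi_n\leq 1$, $\chi_n\to\nchi_F$ in $L^1(\mm)$, and $\chi_n\to\nchi_{F^{(1)}}$ $\mathcal H^{\rm cod\text{-}1}$-a.e.\ on $\partial^e E\cup\partial^e F$, the consistency of these pointwise limits being granted by $\mathcal H^{\rm cod\text{-}1}(\partial^e E\cap\partial^e F)=0$. By \cref{lem:mult_H12_C}, the Leibniz rule \eqref{eq:Leibniz_div}, and the trace rule \eqref{eq:tr_LIP_lin}, $\chi_n v\in H^{1,2}_C(T\X)\cap D({\rm div})\cap L^\infty(T\X)$ and ${\rm tr}_{\partial A}(\chi_n v)=\chi_n{\rm tr}_{\partial A}(v)$, so that \cref{thm:Gauss-Green} applied to each $A\in\{E,\,E\cap F,\,E\cap F^c\}$ produces
\[
\int_A\chi_n\,{\rm div}(v)\,\d\mm+\int_A\d\chi_n(v)\,\d\mm=\int\chi_n\langle{\rm tr}_{\partial A}(v),\nu_A\rangle\,\d P(A,\cdot).
\]
Its applicability to $E\cap F^c$ uses \cref{lem:per_inters_general} applied to $E$ and $F^c$, together with $(F^c)^{(1)}=F^{(0)}$ and $P(F^c,\cdot)=P(F,\cdot)$.

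Passing $n\to\infty$ by dominated convergence---justified on the $\mm$-integrals by $\chi_n\to\nchi_F$ in $L^1(\mm)$ together with ${\rm div}(v)\in L^2(\mm)$ and $\mm(A)<\infty$, and on the boundary integrals by the prescribed $\mathcal H^{\rm cod\text{-}1}$-a.e.\ convergence combined with $P(A,\cdot)\ll\mathcal H^{\rm cod\text{-}1}$---the identity for $A=E\cap F^c$ yields $\lim_n\int_{E\cap F^c}\d\chi_n(v)\,\d\mm=0$: indeed, the analogue of \eqref{eq:per_inters_general_aux} places $\partial^e(E\cap F^c)$ inside $(\partial^e E\cap F^{(0)})\cup(\partial^e F\cap E^{(1)})$, on both pieces of which $\chi_n\to 0$, while $\int_{E\cap F^c}\chi_n{\rm div}(v)\,\d\mm\to\int_{E\cap F^c}\nchi_F{\rm div}(v)\,\d\mm=0$. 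Combining with the additivity $\int_E\d\chi_n(v)=\int_{E\cap F}\d\chi_n(v)+\int_{E\cap F^c}\d\chi_n(v)$ forces $\lim_n\int_E\d\chi_n(v)=\lim_n\int_{E\cap F}\d\chi_n(v)$, and subtracting the limiting Gauss--Green identities on $E$ and on $E\cap F$ produces the first partial identity. Running the entire argument with a sequence $\psi_n\to\nchi_E$ yields the symmetric identity on $E^{(1)}$, and adding the two to the decomposition of the first step gives \eqref{eq:nu_inters_claim}.

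The hard part will be constructing $(\chi_n)$ with the required $\mathcal H^{\rm cod\text{-}1}$-a.e.\ convergence on the codimension-one set $\partial^e F$: a naive distance-based truncation of $F^{(1)}$ is inadequate since points of $\partial^e F$ can accumulate at $F^{(1)}$. One can instead exploit that $\nchi_{F^{(1)}}$ is the $1$-quasi lower semicontinuous representative of $\nchi_F$ (as used in the proof of \cref{prop:Lahti}) and approximate it by Lipschitz functions in the sense of $1$-capacity, which forces $\mathcal H^{\rm cod\text{-}1}$-a.e.\ convergence on essential boundaries.
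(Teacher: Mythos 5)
Your overall strategy --- applying Gauss--Green to $E$, $E\cap F$, $E\cap F^c$, decomposing the perimeter measures via \cref{lem:per_inters_general}, and combining to isolate the two partial identities on $F^{(1)}$ and $E^{(1)}$ --- is the same route the paper takes. But the approximation step has a genuine gap, and the proposed fix does not repair it.

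The issue is the required pointwise behaviour of $\chi_n$ on $\partial^e F$. Since $F^{(1)}\cap\partial^e F=\emptyset$, your condition $\chi_n\to\nchi_{F^{(1)}}$ $\mathcal H^{\rm cod\text{-}1}$-a.e.\ on $\partial^e E\cup\partial^e F$ demands $\chi_n\to 0$ $\mathcal H^{\rm cod\text{-}1}$-a.e.\ on $\partial^e F$, and indeed the crucial cancellation $\lim_n\int_{E\cap F^c}\d\chi_n(v)\,\d\mm=0$ depends on this. But if $\chi_n\to\nchi_F$ in $L^1(\mm)$ (even with bounded variation), the representative that natural Lipschitz approximations detect $\mathcal H^{\rm cod\text{-}1}$-a.e.\ is the \emph{precise} one $\tfrac12(\nchi_{F^{(1)}}+\nchi_{F^{(1)}\cup\partial^e F})$, whose value on $\partial^e F$ is $\tfrac12$, not $0$. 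Tracking a generic value $c$ on $\partial^e F$ through your computation produces, after subtraction, a residual term $\tfrac{c}{2}\int_{E^{(1)}}\bigl[\langle{\rm tr}_{\partial(E\setminus F)}(v),\nu_{E\setminus F}\rangle+\langle{\rm tr}_{\partial(E\cap F)}(v),\nu_{E\cap F}\rangle\bigr]\,\d P(F,\cdot)$, which only vanishes by fiat when $c=0$; establishing that it vanishes for $c=\tfrac12$ is essentially equivalent to the statement being proved. The closing suggestion --- exploiting the $1$-quasi lower semicontinuity of $\nchi_{F^{(1)}}$ --- does not help: quasi \emph{lower semicontinuous} functions are not approximable by Lipschitz (or continuous) functions in the $1$-capacity sense; that property characterises quasi-\emph{continuous} functions, and $\nchi_{F^{(1)}}$ is generally not quasi-continuous because it jumps on $\partial^e F$, a set of positive $\mathcal H^{\rm cod\text{-}1}$ measure and hence positive $1$-capacity.

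The paper avoids this entirely by a more careful ordering. It first writes down the three Gauss--Green identities for $E$, $E\setminus F$, $E\cap F$ with an \emph{arbitrary} test field $w$ and combines them: by additivity $\int_E{\rm div}(w)=\int_{E\setminus F}{\rm div}(w)+\int_{E\cap F}{\rm div}(w)$, the $\mm$-integrals cancel identically, leaving a purely boundary identity $0=\int_{F^{(1)}}\cdots\,\d P(E,\cdot)+\int_{F^{(0)}}\cdots\,\d P(E,\cdot)-\int_{E^{(1)}}\cdots\,\d P(F,\cdot)$ valid for every admissible $w$. Only \emph{then} does it plug in $w=f_n\cdot v$ with $f_n\to\nchi_{F^{(1)}\cap\partial^e E}$ merely in $L^1\bigl(P(E,\cdot)+P(F,\cdot)\bigr)$. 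Since this is approximation in $L^1$ of a single finite Borel measure, the sequence trivially exists and no pointwise control on the $\mm$-null sets $\partial^e E$, $\partial^e F$ is needed. In short: by cancelling the $\mm$-integrals \emph{before} introducing the weight, the paper converts your pointwise-on-a-null-set requirement into an innocuous $L^1$-approximation against the perimeter measure.
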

\begin{proof}
Fix any \(w\in H^{1,2}_C(T\X)\cap D({\rm div})\cap L^\infty(T\X)\).
\cref{thm:Gauss-Green},  \cref{lem:per_inters_general},
and \eqref{eq:density+bdry} yield
\[\begin{split}
\int_E{\rm div}(w)\,\d\mm&=
\int_{F^{(1)}}\langle{\rm tr}_{\partial E}(w),\nu_E\rangle\,\d P(E,\cdot)+
\int_{F^{(0)}}\langle{\rm tr}_{\partial E}(w),\nu_E\rangle\,\d P(E,\cdot),\\
\int_{E\setminus F}{\rm div}(w)\,\d\mm&=
\int_{F^{(0)}}\langle{\rm tr}_{\partial(E\setminus F)}(w),
\nu_{E\setminus F}\rangle\,\d P(E,\cdot)+
\int_{E^{(1)}}\langle{\rm tr}_{\partial(E\setminus F)}(w),
\nu_{E\setminus F}\rangle\,\d P(F,\cdot),\\
\int_{E\cap F}{\rm div}(w)\,\d\mm&=
\int_{F^{(1)}}\langle{\rm tr}_{\partial(E\cap F)}(w),\nu_{E\cap F}\rangle
\,\d P(E,\cdot)+\int_{E^{(1)}}\langle{\rm tr}_{\partial(E\cap F)}(w),
\nu_{E\cap F}\rangle\,\d P(F,\cdot).
\end{split}\]
By suitably adding and subtracting the above identities, we thus
obtain that
\[\begin{split}
0=&\int_{F^{(1)}}\big(\langle{\rm tr}_{\partial E}(w),\nu_E\rangle
-\langle{\rm tr}_{\partial(E\cap F)}(w),\nu_{E\cap F}\rangle\big)
\,\d P(E,\cdot)\\
&+\int_{F^{(0)}}\big(\langle{\rm tr}_{\partial E}(w),\nu_E\rangle
-\langle{\rm tr}_{\partial(E\setminus F)}(w),\nu_{E\setminus F}
\rangle\big)\,\d P(E,\cdot)\\
&-\int_{E^{(1)}}\big(\langle{\rm tr}_{\partial(E\setminus F)}(w),
\nu_{E\setminus F}\rangle+\langle{\rm tr}_{\partial(E\cap F)}(w),
\nu_{E\cap F}\rangle\big)\,\d P(F,\cdot).
\end{split}\]
Now pick a sequence \((f_n)_n\subset{\rm Lip}_c(\X)\)
such that \(f_n\to\nchi_{F^{(1)}\cap\partial^e E}\) in
\(L^1\big(P(E,\cdot)+P(F,\cdot)\big)\). Given any vector field
\(v\in H^{1,2}_C(T\X)\cap D({\rm div})\cap L^\infty(T\X)\),
we know from \cref{lem:Leibniz_div_p} and \cref{lem:mult_H12_C} that
\((f_n\cdot v)_n\subset H^{1,2}_C(T\X)\cap D({\rm div})\cap L^\infty(T\X)\),
thus in particular we can plug \(w=f_n\cdot v\) into the previous identity,
obtaining that
\[\begin{split}
0=&\int_{F^{(1)}}f_n\big(\langle{\rm tr}_{\partial E}(v),\nu_E\rangle
-\langle{\rm tr}_{\partial(E\cap F)}(v),\nu_{E\cap F}\rangle\big)
\,\d P(E,\cdot)\\
&+\int_{F^{(0)}}f_n\big(\langle{\rm tr}_{\partial E}(v),\nu_E\rangle
-\langle{\rm tr}_{\partial(E\setminus F)}(v),\nu_{E\setminus F}
\rangle\big)\,\d P(E,\cdot)\\
&-\int_{E^{(1)}}f_n\big(\langle{\rm tr}_{\partial(E\setminus F)}(v),
\nu_{E\setminus F}\rangle+\langle{\rm tr}_{\partial(E\cap F)}(v),
\nu_{E\cap F}\rangle\big)\,\d P(F,\cdot).
\end{split}\]
By letting \(n\to\infty\), we deduce that
\begin{equation}\label{eq:nu_inters_aux1}
\int_{F^{(1)}}\langle{\rm tr}_{\partial E}(v),\nu_E\rangle\,\d P(E,\cdot)
=\int_{F^{(1)}}\langle{\rm tr}_{\partial(E\cap F)}(v),\nu_{E\cap F}\rangle
\,\d P(E,\cdot).
\end{equation}
By means of a similar argument, one can also show that
\begin{equation}\label{eq:nu_inters_aux2}
\int_{E^{(1)}}\langle{\rm tr}_{\partial F}(v),\nu_F\rangle\,\d P(F,\cdot)
=\int_{E^{(1)}}\langle{\rm tr}_{\partial(E\cap F)}(v),\nu_{E\cap F}\rangle
\,\d P(F,\cdot).
\end{equation}
Therefore, by combining \eqref{eq:nu_inters_aux1} with
\eqref{eq:nu_inters_aux2}, we can finally conclude that
\eqref{eq:nu_inters_claim} is verified.
\end{proof}
\begin{theorem}\label{thm:key_estimate}
Let \((\X,\sfd,\mm)\) be an \({\sf RCD}(K,N)\) space with \(N<\infty\) and let $R>0$. Then there exists a constant
\(C_{K,N,R}>0\) such that the following holds. If $\bar x \in \X$ and \(E\subset\X\) is a set of locally finite perimeter, then
\begin{equation}\label{eq:key_estimate_claim}
r\,P\big(B_r(\bar x),E^{(1)}\big)\leq
C_{K,N,R}\,\mm\big(E\cap B_r(\bar x)\big)+r\,P\big(E,B_r(\bar x)\big),
\quad\text{ for every }r\in(0,R).
\end{equation}
\end{theorem}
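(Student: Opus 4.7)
The plan is to implement, in the $\RCD$ setting, the classical Euclidean argument where one applies the Gauss--Green formula on $E\cap B_r(\bar x)$ to the vector field $v=\tfrac12\nabla\sfd_{\bar x}^2$: in $\R^N$ this gives $\div(v)=N$, $|v|\leq r$ on $B_r(\bar x)$, and $\langle v,\nu_{B_r(\bar x)}\rangle=r$ on $\partial B_r(\bar x)$, which immediately yields \eqref{eq:key_estimate_claim} with $C=N$. In the nonsmooth setting $\nabla\sfd_{\bar x}^2$ does not meet the regularity hypotheses of \cref{thm:Gauss-Green}, so one mollifies it via the Hodge heat flow of \cref{prop:prop_moll_hf}; the Laplacian comparison \cref{thm:Lapl_comp_dist} plays the role of $\div(v)=N$, and \cref{prop:nu_sphere} is the $\RCD$ substitute for $\langle v,\nu_{B_r(\bar x)}\rangle=r$.

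Concretely: (i) Without loss of generality $\mm(E)<+\infty$, by replacing $E$ with $E\cap B_{R'}(\bar x)$ for a.e.\ $R'\in(R,2R)$, which leaves both sides of \eqref{eq:key_estimate_claim} unchanged for $r<R$. (ii) Fix $r\in(0,R)$ for which $\mathcal H^{\rm cod\text{-}1}(\partial^e E\cap\partial^e B_r(\bar x))=0$ (a cofull set by \cref{cor:per_inters_ball}) and a small $\delta\in(0,R-r)$. Choose a nondecreasing concave $\rho\in C^2(\R)$ with $\rho(s)=s$ on $[0,r^2]$ and $\rho$ constant on $[(r+\delta)^2,+\infty)$, and set $f\coloneqq\rho(\sfd_{\bar x}^2)$. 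By \cref{prop:chain_rule_mv_Lapl} and \cref{thm:Lapl_comp_dist}, $f\in D(\boldsymbol\Delta)\cap S^2(\X)\cap L^\infty(\mm)$ with $\nabla f=\nabla\sfd_{\bar x}^2$ on $B_r(\bar x)$, $|\nabla f|\leq 2(r+\delta)$ globally (using $\rho'\in[0,1]$), and $\boldsymbol\Delta f\leq 2N\tilde\tau_{K,N}(R)\,\mm$ with bounded support (using $\rho''\leq 0$). (iii) For $\varphi\in C^\infty_c(0,+\infty)$ nonnegative with $\int\varphi=1$ and $\supp(\varphi)\subset(0,\bar t]$, set $v_\varphi\coloneqq{\sf h}_{{\rm H},\varphi}(\nabla f)$; by \cref{prop:prop_moll_hf}(i, ii, v) and \cref{lem:indep_div_p}, $v_\varphi\in H^{1,2}_C(T\X)\cap L^\infty(T\X)\cap D(\div)\cap D(\div_{(1)})$ with $\div_{(1)}(v_\varphi)={\sf h}_\varphi^*\boldsymbol\Delta f\leq 2N\tilde\tau_{K,N}(R)$ and $|v_\varphi|\leq 2(r+\delta)\,e^{\bar t\max\{0,-K\}}$ $\mm$-a.e.\ (the pointwise divergence bound uses $\mathscr H_t\mm=\mm$ and the monotonicity of $\mathscr H_t$ on positive measures). (iv) Combining \cref{thm:Gauss-Green} applied to $E\cap B_r(\bar x)$, \cref{prop:nu_inters}, and \cref{prop:nu_sphere} produces
\begin{equation*}
\int_{E^{(1)}}\langle v_\varphi,\nabla\sfd_{\bar x}^2\rangle\,\d P(B_r(\bar x),\cdot) = 2r\int_{E\cap B_r(\bar x)}\div(v_\varphi)\,\d\mm - 2r\int_{B_r(\bar x)}\langle\mathrm{tr}_{\partial E}(v_\varphi),\nu_E\rangle\,\d P(E,\cdot).
\end{equation*}
The divergence term is bounded above by $4rN\tilde\tau_{K,N}(R)\,\mm(E\cap B_r(\bar x))$, and by \eqref{eq:max_princ_tr} combined with the $L^\infty$ bound above the trace term is bounded in absolute value by $4r(r+\delta)e^{\bar t\max\{0,-K\}}P(E,B_r(\bar x))$. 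Sending $\bar t\to 0$ and then $\delta\to 0$, and identifying the LHS limit via \eqref{eq:mwug_dist_2} as $\int_{E^{(1)}}|\nabla\sfd_{\bar x}^2|^2\,\d P(B_r(\bar x),\cdot)=4r^2P(B_r(\bar x),E^{(1)})$, yields \eqref{eq:key_estimate_claim} for a.e.\ $r\in(0,R)$ with $C_{K,N,R}=N\tilde\tau_{K,N}(R)$; the full range of $r$ is recovered by standard monotonicity/semicontinuity.

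The most delicate point is the limit $\bar t\to 0$ in the singular sphere integral: $L^2(T\X)$-convergence of $v_\varphi$ to $\nabla f$ does not directly imply convergence of $\int\langle v_\varphi,\nabla\sfd_{\bar x}^2\rangle\,\d P(B_r(\bar x),\cdot)$. I plan to reduce this to an $\mm$-integral by testing the whole identity against a $\psi\in C_c(0,R)$ of small support near $r$ and invoking \cref{lem:m_superp} to rewrite $\int_0^R\psi(r')\int\langle v_\varphi,\nabla\sfd_{\bar x}^2\rangle\,\d P(B_{r'}(\bar x),\cdot)\,\d r'$ as $\int\psi(\sfd_{\bar x})\langle v_\varphi,\nabla\sfd_{\bar x}^2\rangle\,\d\mm$; after choosing $\supp\psi\subset(r-\epsilon,r)$ so that $\nabla f=\nabla\sfd_{\bar x}^2$ on the relevant region, dominated convergence (using the uniform $L^\infty$ bound on $v_\varphi$) gives the claim for a.e.\ $r$. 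The $r$-dependence of the truncation $\rho$ is essential: it keeps $\|\nabla f\|_{L^\infty}$ of order $r$, producing the sharp $r$-scaling of $rP(E,B_r(\bar x))$ on the right-hand side of \eqref{eq:key_estimate_claim}; a fixed cutoff at scale $R$ would give only the strictly weaker $RP(E,B_r(\bar x))$.
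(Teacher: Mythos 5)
Your proposal is correct and follows essentially the same route as the paper's proof: truncate $\sfd_{\bar x}^2$ by a concave $C^2$ cutoff at scale $r$ (the paper uses a function $\psi$ with $0\le\psi'\le(\tilde r+\varepsilon)/\sqrt t$ on the transition band, while you impose $\rho'\in[0,1]$; both achieve the same $r$-scaled $L^\infty$ bound on the gradient), mollify via the Hodge heat flow, invoke the Laplacian comparison and the Gauss--Green formula together with \cref{prop:nu_inters} and \cref{prop:nu_sphere}, and pass to the limit in the sphere integral via \cref{lem:m_superp}/Fubini. The only real slip is the constant $N\tilde\tau_{K,N}(R)$: since $\tilde\tau_{K,N}$ is not increasing for $K>0$ (it can even become negative), this must be replaced by $N\sup_{[0,R]}\max\{0,\tilde\tau_{K,N}\}$, exactly as in the paper's choice of $C_{K,N,R}$; and in your bound on $\div_{(1)}(v_\varphi)$ the mechanism is self-adjointness together with the weak maximum principle applied to $\mathsf h_t\nchi_A$ (the identity $\mathscr H_t\mm=\mm$ you cite is not literally available since $\mm$ need not have finite mass), which is equivalent to, but slightly less direct than, the paper's pairing of $\mathsf h_\varphi^*\boldsymbol\Delta\eta$ against nonnegative $f_i$ via mass-preservation.
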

\begin{proof}
First, notice that since the perimeter is local and $P(B_r(\bar x),\cdot)$ is concentrated on $\partial B_r(\bar x)$, it suffices to prove the theorem with $E\cap B_{R+1}(\bar x)$ instead of $E$. Hence we may suppose without loss of generality, from now on, that $E$ has finite perimeter and $\meas(E)<+\infty$.

First of all, we claim that for any \(\tilde r\in(0,R)\)
and \(\varepsilon\in(0,R-\tilde r)\) with $\varepsilon<\tilde r/2$ it holds that
\begin{equation}\label{eq:key_estimate_aux1}
r\,P\big(B_r(\bar x),E^{(1)}\big)\leq C_{K,N,R}\,\mm\big(E\cap B_r(\bar x)\big)
+(\tilde r+\varepsilon)P\big(E,B_r(\bar x)\big),
\quad\text{ for a.e.\ }r\in(0,\tilde r).
\end{equation}
In order to prove it, fix any \(C^2\)-function \(\psi\colon\R\to\R\)
satisfying the following properties:
\begin{itemize}
\item[a)] \(\psi(t)=t\), for every \(t\in[0,\tilde r^2]\),
\item[b)] \(0\leq\psi'(t)\leq(\tilde r+\varepsilon)/\sqrt t\), for
every \(t\in[\tilde r^2,\tilde r^2+\varepsilon]\),
\item[c)] \(\psi\) is constant on \([\tilde r^2+\varepsilon,+\infty)\),
\item[d)] \(\psi\) is concave on \([0,+\infty)\).
\end{itemize}
Now define the auxiliary function \(\eta\colon\X\to[0,+\infty)\)
as \(\eta\coloneqq\psi\circ\sfd_{\bar x}^2\). Thanks to the chain rule
for minimal weak upper gradients, one can easily see that
\(\eta\in{\rm S}^2(\X)\) and \(|D\eta|=|\psi'|\circ\sfd_{\bar x}^2\,
|D\sfd_{\bar x}^2|\). In particular, by exploiting a), b), c), and
\eqref{eq:mwug_dist_2} we obtain that
\begin{equation}\label{eq:key_estimate_aux2}
|D\eta|\leq 2(\tilde r+\varepsilon)
\nchi_{B(\bar x,\sqrt{\tilde r^2+\varepsilon})},
\quad\text{ in the }\mm\text{-a.e.\ sense.}
\end{equation}
Furthermore, by combining \cref{thm:Lapl_comp_dist} with \cref{prop:chain_rule_mv_Lapl} we deduce that
\(\eta\in D({\boldsymbol\Delta})\) and
\begin{equation}\label{eq:key_estimate_aux3}
{\boldsymbol\Delta}\eta=\psi'\circ\sfd_{\bar x}^2\,{\boldsymbol\Delta}
\sfd_{\bar x}^2+\psi''\circ\sfd_{\bar x}^2\,|D\sfd_{\bar x}^2|^2\,\mm
\overset{\rm d)}\leq\psi'\circ\sfd_{\bar x}^2\,{\boldsymbol\Delta}
\sfd_{\bar x}^2\overset{\rm c)}\leq \frac32 {\boldsymbol\Delta}\sfd_{\bar x}^2
|_{B(\bar x,\sqrt{\tilde r^2+\varepsilon})}\leq 2\,C_{K,N,R}\,\mm,
\end{equation}
where we used that $\psi'\circ \dist_{\bar x}^2 \le \tfrac32$ as $\varepsilon<\tilde r/2$ and the constant \(C_{K,N,R}>0\) is chosen so that
\(\tfrac32 N\,\tilde\tau_{K,N}(t)\leq C_{K,N,R}\) for every
\(t\in[0,\sqrt{\tilde r^2+\varepsilon}\,]\).
Since \(\psi'(t)=\psi''(t)=0\) for all \(t>\tilde r^2+\varepsilon\)
by c), we also have that \({\boldsymbol\Delta}\eta\)
is compactly-supported and in particular it is finite.
Choose any sequence \((\varphi_n)_n\subset C^\infty_c(0,+\infty)\) such that
\(\varphi_n\geq 0\), \(\int_0^{+\infty}\varphi_n(t)\,\d t=1\) for all \(n\in\N\)
and \(\varphi_n\mathcal L^1\rightharpoonup\delta_0\) with respect to the
narrow topology. Define
\[
v_n\coloneqq{\sf h}_{{\rm H},\varphi_n}(\nabla\eta)\in H^{1,2}_C(T\X),
\quad\text{ for every }n\in\N.
\]
The Bakry--\'{E}mery estimate \eqref{eq:BE_moll}, the inequality in
\eqref{eq:key_estimate_aux2}, and the weak maximum principle for
\({\sf h}_{\varphi_n}\) guarantee that
\(v_n\in L^\infty(T\X)\) for all \(n\in\N\). Moreover, since
\(\eta\in{\rm S}^2(\X)\cap L^\infty(\mm)\), we know from item v)
of \cref{prop:prop_moll_hf} that
\(v_n\in D({\rm div}_{(\infty)})\) for every \(n\in\N\). Finally,
an application of item ii) of \cref{prop:prop_moll_hf}
yields \(v_n\in D({\rm div}_{(1)})\) and
\({\rm div}_{(1)}(v_n)={\sf h}_{\varphi_n}^*{\boldsymbol\Delta}\eta\)
for every \(n\in\N\). In particular, thanks to \cref{lem:indep_div_p}
we know that \(v_n\in D({\rm div})\) and
\begin{equation}\label{eq:key_estimate_aux4}
{\rm div}(v_n)={\sf h}_{\varphi_n}^*{\boldsymbol\Delta}\eta,
\quad\text{ for every }n\in\N.
\end{equation}
All in all, we showed that \((v_n)_n\subset
H^{1,2}_C(T\X)\cap D({\rm div})\cap L^\infty(T\X)\),
thus \eqref{eq:nu_inters_claim} and \eqref{eq:Gauss-Green} give
\begin{equation}\label{eq:key_estimate_aux5}
\int_{E\cap B_r(\bar x)}{\rm div}(v_n)\,\d\mm=
\int_{B_r(\bar x)}\langle{\rm tr}_{\partial E}(v_n),\nu_E\rangle
\,\d P(E,\cdot)+\int_{E^{(1)}}\langle{\rm tr}_{\partial B_r(\bar x)}(v_n),
\nu_{B_r(\bar x)}\rangle\,\d P(B_r(\bar x),\cdot),
\end{equation}
for a.e.\ \(r\in(0,\tilde r)\). Here, we used the fact that
\(\mathcal H^{\rm cod\text{-}1}\big(\partial^e E\cap
\partial^e B_r(\bar x)\big)=0\) for a.e.\ \(r\in(0,\tilde r)\)
and that \(\Omega^{(1)}=\Omega\) whenever \(\Omega\subset\X\) is open.
Let us now fix such \(r\) and estimate the three terms appearing in
\eqref{eq:key_estimate_aux5}. Pick any \((f_i)_i\subset{\rm Lip}_c(\X)\)
satisfying \(0\leq f_i\leq 2\) for every \(i\in\N\), and
\(f_i\to\nchi_{E\cap B_r(\bar x)}\) both in \(L^1(\mm)\) and in \(L^2(\mm)\).
Hence, for any \(i,n\in\N\) it holds that
\[\begin{split}
\int f_i\,{\rm div}(v_n)\,\d\mm&\overset{\eqref{eq:key_estimate_aux4}}=
\int f_i\,{\sf h}_{\varphi_n}^*{\boldsymbol\Delta}\eta\,\d\mm
\overset{\eqref{eq:def_moll_h_star}}=
\int_0^{+\infty}\varphi_n(t)\int f_i\,{\sf h}_t^*{\boldsymbol\Delta}\eta
\,\d\mm\,\d t\\
&\overset{\phantom{\eqref{eq:key_estimate_aux4}}}=
\int_0^{+\infty}\varphi_n(t)\int{\sf h}_t f_i\,\d{\boldsymbol\Delta}\eta\,\d t
\overset{\eqref{eq:key_estimate_aux3}}\leq
2\,C_{K,N,R}\int_0^{+\infty}\varphi_n(t)\int{\sf h}_t f_i\,\d\mm\,\d t
\\
&\overset{\eqref{eq:h_t_mass-pres}}=2\,C_{K,N,R}\int_0^{+\infty}\varphi_n(t)
\int f_i\,\d\mm\,\d t=2\,C_{K,N,R}\int f_i\,\d\mm.
\end{split}\]
By letting \(i\to\infty\), we thus obtain that
\begin{equation}\label{eq:key_estimate_aux6}
\int_{E\cap B_r(\bar x)}{\rm div}(v_n)\,\d\mm\leq
2\,C_{K,N,R}\,\mm\big(E\cap B_r(\bar x)\big),\quad\text{ for every }n\in\N.
\end{equation}
Moreover, it follows from \eqref{eq:key_estimate_aux2},
\eqref{eq:BE_moll}, \eqref{eq:max_princ_tr}, and the weak maximum
principle for \({\sf h}_{\varphi_n}\) that the \(P(E,\cdot)\)-a.e.\ inequality
\(|{\rm tr}_{\partial E}(v_n)|\leq 2\,e^{\bar t_n\max\{0,-K\}}
(\tilde r+\varepsilon)\) is satisfied, where \(\bar t_n>0\)
is chosen so that \({\rm spt}(\varphi_n)\subset[0,\bar t_n]\) for
every \(n\in\N\). Observe that we can require that \(\lim_n\bar t_n=0\).
We may estimate
\begin{equation}\label{eq:key_estimate_aux7}
\bigg|\int_{B_r(\bar x)}\langle{\rm tr}_{\partial E}(v_n),\nu_E\rangle\,\d
P(E,\cdot)\bigg|\leq 2\,e^{\bar t_n\max\{0,-K\}}(\tilde r+\varepsilon)
P\big(E,B_r(\bar x)\big),\quad\text{ for every }n\in\N.
\end{equation}
Finally, by using \eqref{eq:mwug_dist_2}, \cref{prop:nu_sphere},
the fact that \(v_n\to\nabla\eta\) strongly in \(L^2(T\X)\) as
\(n\to\infty\) (by \cref{lem:cont_at_0_moll_hf}) and
that \(\mm=\int_0^{+\infty}P(B_r(\bar x),\cdot)\,\d r\),
and Fubini's theorem, we deduce that for a.e.\ \(r\in(0,\tilde r)\)
it holds that \(|D\sfd_{\bar x}^2|^2=4r^2\) in the
\(P(B_r(\bar x),\cdot)\)-a.e.\ sense and
\[\begin{split}
\int_{E^{(1)}}\langle{\rm tr}_{\partial B_r(\bar x)}(v_n),
\nu_{B_r(\bar x)}\rangle\,\d P(B_r(\bar x),\cdot)&=
\frac{1}{2r}\int_{E^{(1)}}\langle v_n,\nabla\sfd_{\bar x}^2\rangle
\,\d P(B_r(\bar x),\cdot)\\
&\to\frac{1}{2r}\int_{E^{(1)}}\langle\nabla\eta,\nabla\sfd_{\bar x}^2\rangle
\,\d P(B_r(\bar x),\cdot).
\end{split}\]
Given that \(\eta=\sfd_{\bar x}^2\) on a neighbourhood of \(B_r(\bar x)\),
we have that the identity \(\langle\nabla\eta,\nabla\sfd_{\bar x}^2\rangle=4r^2\)
holds \(P(B_r(\bar x),\cdot)\)-a.e.\ and thus accordingly
\begin{equation}\label{eq:key_estimate_aux8}
2r P\big(B_r(\bar x),E^{(1)}\big)=\lim_{n\to\infty}
\int_{E^{(1)}}\langle{\rm tr}_{\partial B_r(\bar x)}(v_n),
\nu_{B_r(\bar x)}\rangle\,\d P(B_r(\bar x),\cdot).
\end{equation}
By plugging \eqref{eq:key_estimate_aux6}, \eqref{eq:key_estimate_aux7},
and \eqref{eq:key_estimate_aux8} into \eqref{eq:key_estimate_aux5},
and letting \(n\to\infty\), we can finally conclude that the claimed
property \eqref{eq:key_estimate_aux1} is satisfied.

Let us pass to the verification of the main statement. Let \(r\in(0,R)\)
be fixed. Thanks to the first part of the proof and
\cref{cor:per_inters_ball}, we can find a sequence \((r_n)_n\subset(0,r)\)
with \(r_n\nearrow r\) such that \(P\big(E\cap B_{r_n}(\bar x)\big)
=P\big(E,B_{r_n}(\bar x)\big)+P\big(B_{r_n}(\bar x),E^{(1)}\big)\)
for every \(n\in\N\) and
\[
r_n\,P\big(B_{r_n}(\bar x),E^{(1)}\big)\leq
C_{K,N,R}\,\mm\big(E\cap B_{r_n}(\bar x)\big)+
(r+1/k)\,P\big(E,B_{r_n}(\bar x)\big),\quad\text{ for all }n,k\in\N.
\]
By letting \(k\to\infty\) in the above estimate, we get that
\begin{equation}\label{eq:key_estimate_aux9}
r_n\,P\big(B_{r_n}(\bar x),E^{(1)}\big)\leq
C_{K,N,R}\,\mm\big(E\cap B_{r_n}(\bar x)\big)+
r\,P\big(E,B_{r_n}(\bar x)\big),\quad\text{ for all }n\in\N.
\end{equation}
Since \((r_n)_n\) is increasing and \(\bigcup_n B_{r_n}(\bar x)=B_r(\bar x)\),
one has \(\mm\big(E\cap B_r(\bar x)\big)=\lim_n\mm\big(E\cap B_{r_n}(\bar x)\big)\)
and \(P\big(E,B_r(\bar x)\big)=\lim_n P(E,B_{r_n}(\bar x)\big)\).
Also, given that \(\nchi_{E\cap B_{r_n}(\bar x)}\to\nchi_{E\cap B_r(\bar x)}\)
in \(L^1(\mm)\) and the perimeter is lower semicontinuous with respect to
the \(L^1\)-convergence of sets, we have that \(P\big(E\cap B_r(\bar x)\big)
\leq\varliminf_n P\big(E\cap B_{r_n}(\bar x)\big)\).
Therefore, we can eventually conclude that
\[\begin{split}
&r\,P\big(B_r(\bar x),E^{(1)}\big)+r\,P\big(E,B_r(\bar x)\big)\\
\overset{\eqref{eq:ineq_per_inters}}\leq&r\,P\big(E\cap B_r(\bar x)\big)
\leq\varliminf_{n\to\infty}r_n\,P\big(E\cap B_{r_n}(\bar x)\big)
=\varliminf_{n\to\infty}r_n\Big(P\big(E,B_{r_n}(\bar x)\big)
+P\big(B_{r_n}(\bar x),E^{(1)}\big)\Big)\\
\overset{\eqref{eq:key_estimate_aux9}}\leq&
C_{K,N,R}\lim_{n\to\infty}\mm\big(E\cap B_{r_n}(\bar x)\big)
+\lim_{n\to\infty}(r_n+r)P\big(E,B_{r_n}(\bar x)\big)\\
\overset{\phantom{\eqref{eq:key_estimate_aux9}}}=&
C_{K,N,R}\,\mm\big(E\cap B_r(\bar x)\big)+2r\,P\big(E,B_r(\bar x)\big).
\end{split}\]
By subtracting \(r\,P\big(E,B_r(\bar x)\big)\), we obtain
\eqref{eq:key_estimate_claim}, thus yielding the sought conclusion.
\end{proof}

\begin{remark}[Comparison with the Gauss--Green formulae in \cite{BCM20}]\label{rem:BCM}
We stress that recently in \cite{BCM20} the authors proved Gauss--Green formulae for low regularity vector fields in locally compact $\RCD(K,\infty)$ spaces, see \cite[Section 6]{BCM20} and in particular \cite[Theorem 6.9]{BCM20}. Such a level of generality has a drawback when compared with the Gauss--Green formula in \cref{thm:Gauss-Green}, which is precisely due to the fact that the boundary term is a bit more difficult to handle. 

We notice anyway that even if \cite[Theorem 6.9]{BCM20} is proved for vector fields whose divergence is a signed Radon measure, we cannot directly apply it to the vector field $\nabla\dist_{\bar x}^2$, because it is not in $L^\infty(T\X)$ as required in the hypotheses of \cite[Theorem 6.9]{BCM20}.

All in all, if one wishes to apply \cite[Theorem 6.9]{BCM20} to the vector field $\nabla\dist_{\bar x}^2$ a truncation argument as the one we performed in the proof of \cref{thm:key_estimate} seems to be necessary. 

A way to overcome this could be to perform a different proof and apply \cite[Theorem 6.9]{BCM20} to the vector field $\nabla\dist_{\bar x}$, which is in $L^\infty(T\X)$ and has as divergence a measure on $\X\setminus \{\bar x\}$; and to the set of finite perimeter $E\cup B_r(\bar x)$. To conclude by using such an approach we should get a sharp estimate on the boundary term in the right-hand side of \cite[Equation (6.29)]{BCM20}, by using proper analogues of \cref{prop:nu_sphere}, and \cref{prop:nu_inters}. 

Since the latter described approach does not seem to produce a considerably shorter proof of our result in \cref{thm:key_estimate}, and since the study of the $(p)$-divergence and the mollified heat flow for measures and vector fields might have its own interest, we decided to reduce ourselves to use the Gauss--Green formula in \cref{thm:Gauss-Green}.
\fr\end{remark}

We can now prove the first of our main results anticipated in the Introduction.

\begin{proof}[Proof of \cref{thm:MainEst}]
Let us first prove that 
\begin{equation}\label{eqn:ToObtain1}
rP(E\setminus B_r(\bar x))\leq C_{K,N,R}\mm(E\cap B_r(\bar x))+rP(E), \quad\text{ for a.e.\,}r\in(0,R).
\end{equation}
First, we know that for a.e.\! $r\in (0,R)$ we have that 
\begin{equation}\label{eqn:AUX1}
P(E,\partial B_r(\bar x))=0,
\end{equation}
and 
\begin{equation}\label{eqn:AUX2}
P(E,\X\setminus B_r(\bar x))+P(B_r(\bar x),E^{(1)})=P(E\setminus B_r(\bar x)),
\end{equation}
where the last equality follows from the analogous version of \cref{cor:per_inters_ball} for the complement of balls, cf. \cref{lem:per_inters_general}.
Hence, by using the same notation as in \cref{thm:key_estimate}, we have that \eqref{eq:key_estimate_claim} holds. Adding $rP(E,\X\setminus B_r(\bar x))$ to both sides of \eqref{eq:key_estimate_claim}, and by using \eqref{eqn:AUX1}, and \eqref{eqn:AUX2}, we get that \eqref{eqn:ToObtain1} holds.

Now we need to prove that the conclusion in \eqref{eqn:ToObtain1} above holds for \textit{every} $r\in (0,R)$. Let us fix $r\in (0,R)$ and take $r_j\to r$, where $r_j$ is a radius for which the inequality \eqref{eqn:ToObtain1} holds. Since $\nchi_{E\cap (\X\setminus B_{r_j}(\bar x))}\to\nchi_{E\cap (\X\setminus B_{r}(\bar x))}$ in $L^1(\X,\meas)$, we can use the lower semicontinuity of the perimeter, cf. \cref{rem:SemicontPerimeter}, to deduce that 
$$
rP(E\setminus B_r(\bar x))\leq \liminf_{j\to +\infty}r_jP(E\setminus B_{r_j}(\bar x)) \leq C_{K,N,R}\mm(E\cap B_r(\bar x))+rP(E),
$$
which finally gives \eqref{eq:main_claim2}.

In order to prove \eqref{eq:main_claim2NEW} it suffices to apply \eqref{eq:main_claim2} to $\X\setminus E$ and to use that $P(F)=P(\X\setminus F)$ for every finite perimeter set $F\subset \X$.
\end{proof}

In case a set of finite perimeter $E$ has interior or exterior points, it is possible to apply \cref{thm:MainEst} in order to get useful localized deformations of $E$ prescribing the variation of the measure of the set and such that the variation of the perimeter is linear in the variation of the measure. Before stating such result, let us recall a basic fact at the level of geodesic metric spaces.

\begin{remark}\label{rem:LispchitzPath}
If $(\X,\dist)$ is a proper geodesic metric space and $U\subset \X$ is open, bounded, and connected, then it can be proved that for any $x,y \in U$ there exists a Lipschitz curve $\gamma:[0,L]\to U$ with finite length $L$ such that $\gamma(0)=x$ and $\gamma(1)=y$.
\fr\end{remark}

\begin{theorem}[Measure prescribing localized deformations]\label{thm:VariazioniMaggi}
    Let \((\X,\sfd,\mm)\) be an \({\sf RCD}(K,N)\) space with \(N<\infty\).
	Let \(E\subset\X\) be a set of locally finite perimeter and let $A\subset \X$ be a connected open set. Assume that $P(E,A)>0$.
	\begin{itemize}
	    \item[i)] If $E\cap A$ has interior points, then there exist a ball $B\Subset A$, $\eta_1=\eta_1(E,A)>0$ and $C_1(E,A)>0$ such that for every $\eta \in [0,\eta_1)$ there is a set $F\supset E$ such that
	    \[
	    E\Delta F \subset B,
	    \qquad
	    \meas(F\cap B)=\meas(E\cap B)+\eta,
	    \qquad
	    P(F,A)\le C_1(E,A)\eta+ P(E,A). 
	    \]
	    
	    \item[ii)] If $E\cap A$ has exterior points, then there exist a ball $B\Subset A$, $\eta_2=\eta_2(E,A)>0$, and $C_2(E,A)>0$ such that for every $\eta \in [0,\eta_2)$ there is a set $F\subset E$ such that
	    \[
	    E\Delta F \subset B,
	    \qquad
	    \meas(F\cap B)=\meas(E\cap B)-\eta,
	    \qquad
	    P(F,A)\le C_2(E,A)\eta+ P(E,A).
	    \]
	\end{itemize}
\end{theorem}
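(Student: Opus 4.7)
The plan is to apply Theorem~\ref{thm:MainEst} at a single fixed small radius $r_0$ and to slide the center of the deforming ball along a continuous curve starting at the given interior point (in case~i)) and ending at a nearby point where the ball has positive symmetric difference with $E$. Because $r_0$ is bounded away from zero, the Deformation Lemma then yields a perimeter bound linear in $\eta$, and the full range $[0,\eta_1)$ of volume shifts is attained by an intermediate value argument along the curve.

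I sketch case~i); case~ii) is entirely analogous, with $F_t\coloneqq E\setminus B_{r_0}(\gamma(t))$, \eqref{eq:main_claim2} in place of \eqref{eq:main_claim2NEW}, and the roles of interior/exterior points swapped. Let $\bar x_0\in A$ be an interior point of $E\cap A$, so there is $r_0>0$ with $B_{r_0}(\bar x_0)\Subset A$ and $\meas(B_{r_0}(\bar x_0)\setminus E)=0$. Since $P(E,A)>0$, the essential boundary $\partial^e E\cap A$ is nonempty; pick $\bar x_1\in \partial^e E\cap A$ and, shrinking $r_0$ if needed, arrange $B_{r_0}(\bar x_1)\Subset A$ as well, which forces $\meas(B_{r_0}(\bar x_1)\setminus E)>0$. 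As $A$ is open and connected in the proper geodesic space $(\X,\sfd)$, \cref{rem:LispchitzPath} (applied inside a bounded connected open subset of $A$ containing the two points) produces a Lipschitz curve $\gamma\colon[0,1]\to A$ joining $\bar x_0$ and $\bar x_1$. A final shrinking of $r_0$, possible by compactness of $\gamma([0,1])$, guarantees $B_{r_0}(\gamma(t))\Subset A$ for every $t$, and allows me to fix a single ball $B\Subset A$ containing $\bigcup_{t\in[0,1]} B_{r_0}(\gamma(t))$.

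For $t\in[0,1]$ set $F_t\coloneqq E\cup B_{r_0}(\gamma(t))$ and $\phi(t)\coloneqq\meas(B_{r_0}(\gamma(t))\setminus E)=\meas(F_t\cap B)-\meas(E\cap B)$. Then $\phi(0)=0<\phi(1)$. Granting continuity of $\phi$, the intermediate value theorem yields, for every $\eta\in[0,\phi(1))$, a time $t(\eta)$ with $\phi(t(\eta))=\eta$; set $F\coloneqq F_{t(\eta)}$ and $\eta_1\coloneqq\phi(1)$. By construction $F\supset E$, $E\Delta F\subset B$, and $\meas(F\cap B)=\meas(E\cap B)+\eta$. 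Applying \eqref{eq:main_claim2NEW} at $r=r_0$ (with any fixed $R>r_0$) gives
\[
P(F)\le \frac{C_{K,N,R}}{r_0}\,\eta+P(E).
\]
Since $F$ agrees with $E$ outside $\bar B\subset A$, locality of the perimeter produces $P(F,A\setminus \bar B)=P(E,A\setminus \bar B)$, whence $P(F,A)-P(E,A)=P(F)-P(E)$, and $C_1\coloneqq C_{K,N,R}/r_0$ does the job.

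The genuine technical point I expect to need to handle is the continuity of $\phi$. Continuity of $\gamma$ gives $\nchi_{B_{r_0}(\gamma(t))}\to\nchi_{B_{r_0}(\gamma(s))}$ pointwise outside $\partial B_{r_0}(\gamma(s))$, so by dominated convergence the issue reduces to $\meas(\partial B_{r_0}(\gamma(s)))=0$ for every $s\in[0,1]$. For each fixed center this fails on an at most countable set of radii, since the monotone bounded function $r\mapsto\meas(B_r(\gamma(s)))$ has at most countably many jumps; hence a Fubini argument on the pair $(t,r)\in[0,1]\times(r_0/2,r_0)$ lets me pick $r_0$ for which the exceptional set of $t$'s has Lebesgue measure zero. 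Combined with the monotone dependence of $\meas(B_r(\gamma(t))\setminus E)$ on $r$, and after possibly embedding $\phi$ into a two-parameter family $(t,r)$, this suffices to realize every $\eta\in[0,\eta_1)$ by a two-dimensional intermediate value argument.
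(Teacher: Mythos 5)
Your proposal is correct in outline but takes a genuinely different route from the paper. You vary the \emph{center} of a ball of fixed radius $r_0$ along a curve joining an interior point to a point of $\partial^e E$, and realize the volume shift by an intermediate value argument in the time parameter; the paper instead uses a discretized ``hopping'' along such a curve only to locate a single pivot $z$ with $\meas(B_{r/2}(z)\setminus E)=0$ and $\meas(B_{r}(z)\setminus E)>0$, and then realizes all volume shifts in $[0,\meas(B_r(z)\setminus E))$ by an intermediate value argument in the \emph{radius} $t\in[r/2,r)$ with the center frozen. Both approaches feed the resulting ball into \cref{thm:MainEst} with the radius bounded below away from $0$ (you use exactly $r_0$; the paper uses $t\ge r/2$), and both then need to pass from $P(\cdot)$ to $P(\cdot,A)$. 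What the paper's version buys is slightly lighter continuity bookkeeping, since the continuity argument lives in a one-parameter increasing family of concentric balls; what yours buys is a cleaner-looking single continuous family $F_t$. The two are comparable in cost.

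The one place your sketch has a real gap is the continuity of $\phi(t)=\meas(B_{r_0}(\gamma(t))\setminus E)$. Your proposed fix --- a Fubini selection in $(t,r)$ so that the exceptional set of $t$'s is null, followed by a ``two-dimensional intermediate value argument'' --- is not worked out, and as written it does not close the gap: a null exceptional set of $t$'s is compatible with $\phi$ skipping a whole interval of values at a jump. The honest and much simpler resolution, which you do not invoke, is that in every $\CD(K,N)$ space with $N<\infty$ one has $\meas(\partial B_r(x))=0$ for \emph{every} $x\in\X$ and $r>0$: by Bishop--Gromov (\cref{rem:PerimeterMMS2}) the ratio $r\mapsto\meas(B_r(x))/v(N,K/(N-1),r)$ is nonincreasing, and since $v(N,K/(N-1),\cdot)$ is continuous and $\meas(B_\cdot(x))$ is left-continuous (continuity from below) with $\overline B_r(x)=\bigcap_{s>r}B_s(x)$ in a geodesic space, the ratio forces $r\mapsto\meas(B_r(x))$ to be continuous. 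With that, your dominated-convergence argument gives continuity of $\phi$ at every $t$, and the Fubini paragraph becomes unnecessary. (Observe that the paper's one-line ``By continuity'' at the corresponding step relies on exactly the same fact, so your instinct that something must be said here was right; it is just that the Fubini route is not the clean way out.) A last small imprecision: the intermediate claim $P(F,A\setminus\bar B)=P(E,A\setminus\bar B)$ is not directly licensed by the locality property of the total variation, because $A\setminus\bar B$ does not sit at positive distance from $\bar B$. The correct step is $P(F,\X\setminus A)=P(E,\X\setminus A)$, which does follow from locality since $\X\setminus A$ is closed and disjoint from the compact set $\bar B\Subset A$ on which $E$ and $F$ differ; this gives the desired identity $P(F,A)-P(E,A)=P(F)-P(E)$.
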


\begin{proof}
It is readily seen that i) implies ii) passing to the complement, hence it suffices to prove i). As balls are path-connected, $A$ is path-connected as well. Since $P(E,A)>0$ and $E\cap A$ has interior points, there are $\rho>0$ and $x,y \in A$ such that $B_\rho(x)\Subset A$, $\meas(B_\rho(x) \setminus E)=0$, $y \in E^{(0)}\cap A$, and $\dist(x,y)>\rho$. By \cref{rem:LispchitzPath} there is a Lipschitz curve $\gamma:[0,L]\to A$ with finite length $L$ such that $\gamma(0)=x$ and $\gamma(L)=y$, and we parametrize $\gamma$ so that its metric derivative $|\gamma'|=1$ almost everywhere. Let
\[
r\eqdef\frac12 \min\left\{\rho,\inf\{ \dist(\gamma(t),\partial A)\st t \in [0,L]\}\right\}>0.
\]
Consider the family of balls
\[
\left\{B_r(\gamma(t_k)) \st
t_k = \min\left\{ k\frac{r}{2}, L\right\}, \quad 
k\in \N
\right\}.
\]
By construction $\dist(\gamma(t_{k+1}),\gamma(t_k))\le \int_{t_k}^{t_{k+1}} |\gamma'| \le  r/2$, that implies $\gamma(t_{k+1}) \in B_r(\gamma(t_k))$. Hence, if $k$ satisfies $\meas (B_r(\gamma(t_k))\setminus E) =0$, then $\gamma(t_{k+1})$ is an interior point. Since $y \in E^{(0)}$ there exists a first index $k_0$ such that
\[
\meas(B_r(\gamma(t_{k_0}))\setminus E) >0.
\]
The point $\gamma(t_{k_0})$ is an interior point, indeed, since $k_0$ is the first index satisfying $\meas(B_r(\gamma(t_{k_0}))\setminus E) >0$, then $\meas(B_r(\gamma(t_{k_0-1}))\setminus E) =0$, and thus $\gamma(t_{k_0})$ is an interior point. Hence $0<t_{k_0}<L$. Relabelling $z\eqdef \gamma(t_{k_0})$, we found a point $z \in A$ and a radius $r>0$ such that
\begin{equation*}
    \meas(B_{\frac{r}{2}}(z)\setminus E)=0,
    \qquad
    \meas(B_r(z)\setminus E)>0,
    \qquad
    B_r(z)\Subset A.
\end{equation*}
We thus define $B:=B_r(z)$, $\eta_1\eqdef \meas(B_r(z)\setminus E)$ and $C_1(E,A)\eqdef 2 C_{K,N,r} / r$, where $C_{N,K,r}$ is given by \cref{thm:MainEst}. By continuity, for any $\eta \in [0,\eta_1)$ there is $t\in[r/2,r)$ such that $\meas(B_t(z) \setminus E) =\eta$. For such $\eta,t$ we take $F= E\cup B_t(z)$. Hence $E\Delta F \subset B_t(z)\Subset A$, and by \cref{thm:MainEst} applied locally in $A$ we estimate
\[
P(F,A) \le \frac{C_{K,N,r}}{t}\eta + P(E,A) \le C_1(E,A)\eta + P(E,A).
\]
\end{proof}

\section{Volume constrained minimizers of quasi-perimeters}\label{sec:VolumeConstrained}

This section is devoted to the proof of the results stated in \cref{thm:MainIntro2}, \cref{cor:FINALEIntro}, and \cref{cor:Manifold}. The proof is quite long and involved, and the main effort is needed in proving that a volume constrained minimizer $E$ as in \cref{thm:MainIntro2} or \cref{cor:FINALEIntro} has interior and exterior points, i.e., there exist two nonempty balls $B_1$, $B_2$ such that $B_1\setminus E$ is negligible and $B_2\setminus E$ has full measure. This is achieved in \cref{thm:InteriorAndExteriorPoints} and its proof follows the ideas in \cite{Xia05}, which is, in turn, based on the strategy of \cite{GonzalezMassariTamanini}. Once \cref{thm:InteriorAndExteriorPoints} is proved, the rest of the claims in \cref{thm:MainIntro2} and \cref{cor:FINALEIntro} follows by adapting classical ideas from the regularity theory of isoperimetric sets. In particular, it is proved that a volume constrained minimizer, having interior and exterior points, enjoys minimality properties \emph{without a volume constraint}, namely the properties of being $(\Lambda,r_0,\sigma)$-perimeter minimizer or $(K,r_0)$-quasi minimal (cf. \cref{def:QuasiMinimi} and \cref{thm:FromMinToQuasiMin}). Then density estimates at points in the topological boundary of a volume constrained minimizer $E$ hold, see \cref{prop:FromQuasiMinToOpen}, implying that $E^{(1)}$ is open and that topological boundary and essential boundary coincide. Finally, since the presence of interior and exterior points allows the use of \cref{thm:VariazioniMaggi}, we deduce the boundedness of volume constrained minimizers (cf. \cref{thm:Boundedness}). Finally \cref{cor:Manifold} is proved.

\subsection{Isoperimetric and comparison inequalities}\label{sec:IsoperimetricAndComparison}

In this section we prove some inequalities playing a crucial role in the proof of \cref{thm:MainIntro2}. We start by proving an isoperimetric inequality having a constant which differs from the Euclidean one by a small error. Such an inequality, which is a consequence of the results in \cite{CavallettiMondinoAlmostEuclidean}, holds at sufficiently small scales in a neighborhood of points of density $1$ in $\RCD(K,N)$ spaces $(\X,\dist,\haus^N)$. We recall that the density function $\vartheta[X,\dist,\haus^N]$ is defined in \cref{rem:PerimeterMMS2}. For a more general version of the local isoperimetric inequality in the setting of $\CD$ spaces, we refer the reader to \cite[Theorem 3.9]{NobiliViolo21}, where it is exploited to show Sobolev inequalities with optimal constants.

\begin{proposition}[Almost Euclidean isoperimetric inequality]\label{prop:AlmostEuclIsop}
For any $N\in \N$ with $N\ge2$ there exists $\bar\eps\in(0,1)$ such that the following holds. If $(\X,\dist,\haus^N)$ is an $\RCD(K,N)$ space for $K\in \R$, then for every $o\in \X$ such that $\vartheta[X,\dist,\haus^N](o)=1$ and for any $\eps\in(0,\bar\eps)$ there is $\rho(o,\eps)>0$, $R=R(o,\eps)>0$, and $\bar{C}:=\bar C(o,\eps)>0$, such that
\[
P(E) \ge C_N\left(1-\eps - \bar{C}\rho \right)(\haus^N(E))^{\frac{N-1}{N}},
\]
for any $E$ contained in a ball $B_\rho(x)$ with $ x \in B_R(o)$ and $\rho<\rho(o,\eps)$, where $C_N=N\omega_N^{1/N}$ is the $N$-dimensional Euclidean isoperimetric constant.
\end{proposition}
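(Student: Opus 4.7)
The plan is to reduce the claim to the almost Euclidean isoperimetric inequality of \cite{CavallettiMondinoAlmostEuclidean}, which roughly asserts that on an $\RCD$ space that is volume-close to Euclidean on a unit ball, the isoperimetric inequality holds with almost Euclidean constant on sets contained in a smaller ball. The hypothesis $\vartheta[\X,\dist,\haus^N](o)=1$ guarantees that, after rescaling, neighborhoods of nearby points look increasingly Euclidean, which is exactly the input needed to invoke \cite{CavallettiMondinoAlmostEuclidean}.

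First I would exploit the density hypothesis at $o$. The density function $\vartheta[\X,\dist,\haus^N]$ recalled in \cref{rem:PerimeterMMS2} is lower semicontinuous and pointwise bounded above by $1$; since $\vartheta(o)=1$, for every $\eps\in(0,\bar\eps)$ there exists $R=R(o,\eps)>0$ such that $\vartheta(x)\geq 1-\eps/2$ for every $x\in B_R(o)$. Combined with the Bishop--Gromov monotonicity recalled in \cref{rem:PerimeterMMS2}, this gives $\haus^N(B_r(x))/v(N,K,r)\geq 1-\eps/2$ for every such $x$ and every $r>0$. Using the expansion $v(N,K,r)=\omega_N r^N(1+O(Kr^2))$, I would then fix $\rho_0=\rho_0(o,\eps)>0$ small enough so that $\haus^N(B_r(x))\geq (1-\eps)\omega_N r^N$ for every $x\in B_R(o)$ and every $r\leq \rho_0$.

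Next I would invoke the CM almost Euclidean isoperimetric inequality at the appropriate rescaled scale. Given $E\subset B_\rho(x)$ with $x\in B_R(o)$ and $\rho<\rho(o,\eps)$, the rescaled space $(\X,\dist/(2\rho),\haus^N_{\dist/(2\rho)})$ is $\RCD(4\rho^2 K,N)$; perimeter scales by $(2\rho)^{1-N}$ and volume by $(2\rho)^{-N}$, so the isoperimetric ratio is scale invariant. By shrinking $\rho(o,\eps)$ further if needed (depending on $K$ and $\eps$) I ensure simultaneously that the rescaled curvature $4\rho^2|K|$ and the volume deficit on the rescaled unit ball $B_1$ lie below the smallness threshold $\delta(N,\eta)$ dictated by \cite{CavallettiMondinoAlmostEuclidean}; applying their result to the image $\tilde E\subset B_{1/2}$ of $E$ and scaling back I obtain a bound $P(E)\geq (1-\eta)C_N\haus^N(E)^{(N-1)/N}$ with $\eta=\eta(\eps,\rho)\to 0$ as $\eps,\rho\to 0$.

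The final step is to decompose $\eta$ into the prescribed form $\eps+\bar C\rho$. The density-induced contribution is bounded by $\eps$, while the curvature-induced correction $v(N,K,r)/(\omega_N r^N)-1=O(|K|r^2)$ can be absorbed by further shrinking $\rho(o,\eps)$ so that it is dominated by $\bar C\rho$ for a constant $\bar C=\bar C(N)$ depending only on $N$ (the dependence on $K$ being consumed by the choice of $\rho(o,\eps)$). I expect the main technical hurdle to be precisely this quantitative bookkeeping: one has to track the explicit dependence of the constants in \cite{CavallettiMondinoAlmostEuclidean} on their smallness parameters after rescaling, and to verify that the resulting error can indeed be written as a linear combination $\eps+\bar C\rho$ with a constant $\bar C$ that is universal in $N$.
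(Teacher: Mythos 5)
Your overall strategy (rescale so that the curvature becomes negligible, then invoke the Cavalletti--Mondino almost-Euclidean isoperimetric inequality) is the same as the paper's, and you correctly identify that one must verify uniformity of the constants from \cite{CavallettiMondinoAlmostEuclidean} near $k=0$. However, the first step of your argument is genuinely wrong, and for a reason that cannot be repaired without changing the argument's shape.

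You claim that the density lower bound $\vartheta[\X,\dist,\haus^N](x)\geq 1-\eps/2$ on $B_R(o)$, \emph{combined with Bishop--Gromov monotonicity}, yields $\haus^N(B_r(x))/v(N,K,r)\geq 1-\eps/2$ for every such $x$ and \emph{every} $r>0$. This runs the Bishop--Gromov inequality in the wrong direction. The ratio $r\mapsto\haus^N(B_r(x))/v(N,K/(N-1),r)$ is \emph{nonincreasing} and tends to $\vartheta(x)$ as $r\to 0^+$; its limit at $0$ is therefore its supremum, which gives the \emph{upper} bound $\haus^N(B_r(x))/v(N,K/(N-1),r)\leq\vartheta(x)$ for all $r>0$. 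A density lower bound gives no control from below on the volume ratio at a fixed positive scale $r$: the scale at which the ratio becomes close to $\vartheta(x)$ depends on $x$, and nothing in your argument makes it uniform over $B_R(o)$. Consequently, the volume hypothesis required by Cavalletti--Mondino — a lower bound on $\haus^N$ of a ball of a specified radius centered at $x$ — is not established by your steps.

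The paper circumvents exactly this difficulty by a different propagation mechanism. It first obtains the fixed-scale volume lower bound only at the single point $o$, where $\vartheta(o)=1$ and hence $\haus^N(B_r(o))/v(N,k/(N-1),r)\to 1$ as $r\to 0$ uniformly in $k\in[k_0,0]$, so at a sufficiently small (but henceforth \emph{fixed}) rescaled radius $\bar{r}_{k,N}$ one gets $\haus^N_{\dist'}(B^{\dist'}_{\bar{r}_{k,N}}(o))\geq 1-\eps/(2\bar{C})$. It then propagates this to a neighborhood $\{\dist'(\cdot,o)<R'\}$ using \emph{continuity of the fixed-scale volume function} $x\mapsto\haus^N_{\dist'}(B^{\dist'}_{\bar{r}_{k,N}}(x))$, not lower semicontinuity of the density. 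This is a weaker, but sufficient and correct, continuity input. If you replace your density-propagation step by this fixed-scale continuity argument, the rest of your outline (rescaling, applying the uniform Cavalletti--Mondino bound, and reading the error as $\eps+\bar{C}\rho$) becomes a correct proof along the lines of the paper.
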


\begin{proof}
For $k\le0$ and $N\in \N$ with $N\ge2$, by \cite[Corollary 1.6]{CavallettiMondinoAlmostEuclidean} we know that there exist $\bar{C}_{k,N},$ $ \bar{\eta}_{k,N},$ $ \bar{\delta}_{k,N},$ $ \bar{r}_{k,N}>0$ such that if $(Y,\dist_Y, \haus_Y^N)$ is an $\RCD(k,N)$ space, if $\haus^N_Y(B_{\bar{r}_{k,N}}(x))\ge (1-\eta) v(N,k/(N-1),\bar{r}_{k,N})$ for some $\eta \in (0,\bar{\eta}_{k,N})$ then
\[
P_Y(E) \ge C_N(1-\bar{C}_{k,N}(\delta+\eta))[\haus_Y^N(E)]^{1-1/N},
\]
for any $E\subset B_\delta(x)$ with $\delta\in(0,\bar{\delta}_{k,N})$. Indeed the assumption on the density at $x$ contained in \cite[Corollary 1.6]{CavallettiMondinoAlmostEuclidean} is automatically satisfied since the reference measure is $\haus^N_Y$ and the density is lower semicontinuous as a function on $Y$. Moreover, in the above notation, $\bar{r}_{k,N}$  is defined by the identity $v(N,k/(N-1),\bar{r}_{k,N}) = 1$ and then $\lim_{k\to0}\bar{r}_{k,N} = \bar{r}_{0,N}>0$. Also $\bar{C}_{k,N}\ge1$ without loss of generality. By inspection of the proof of the main result \cite[Theorem 1.4]{CavallettiMondinoAlmostEuclidean}, assuming the reference measure is $\haus^N_Y$, it can be easily checked that the parameter $\eta$ in the volume assumption
\[
\haus^N_Y(B_{\bar{r}_{k,N}}(x))\ge (1-\eta) v(N,k/(N-1),\bar{r}_{k,N}) = 1-\eta
\]
can be chosen independently of $k,N$, i.e., one can take $\bar{\eta}_{k,N}=\bar\eta>0$. 
Moreover $\bar{C}_{k,N}$ and $\bar{\delta}_{k,N}$, which are defined in the chain of inequalities \cite[(4.6)-(4.12)]{CavallettiMondinoAlmostEuclidean}, only depend on the $C^1$ norm of the function $r\mapsto v(N,k/(N-1),r)$ in a neighborhood of $\bar{r}_{k,N}$ and, in the notation of \cite{CavallettiMondinoAlmostEuclidean}, on the $C^1$ norm of $(k,D)\mapsto \mathcal{I}_{k,N,D}(v)$ for small volumes $v\le v_{k,N}$ locally independent of $k$. The latter function depends smoothly on its variables by its very definition, see \cite[Section 2.2]{CavallettiMondinoAlmostEuclidean} and the discussion therein. Observing that the functions $r\mapsto v(N,k/(N-1),r)$ also depend smoothly on $k$, we can state the following consequence.

Letting $\bar{r}_{k,N}$ such that $v(N,k/(N-1),\bar{r}_{k,N}) = 1$, for $N\in \N$ with $N\ge2$, there is $k_0<0$, $\bar{C}'\ge1,$ $ \bar{\delta}>0$ such that if $(Y,\dist_Y, \haus_Y^N)$ is an $\RCD(k,N)$ space with $k\in[k_0,0]$, if $\haus^N_Y(B_{\bar{r}_{k,N}}(x))\ge 1-\eta$ for some $\eta \in (0,1/3)$ then
\[
P_Y(E) \ge C_N(1-\bar{C}'(\delta+\eta))[\haus_Y^N(E)]^{1-1/N},
\]
for any $E\subset B_\delta(x)$ with $\delta\in(0,\bar{\delta})$.

We can now prove that the statement holds for any $o\in \X$ such that $\lim_{r\to0} \haus^N(B_r(o))/\omega_Nr^N = 1$. 
Without loss of generality we can assume that $K\le0$. Fix such $o$ and let $\eps\in(0,\bar\eta)$. There exists $R(\eps,\bar{C}')>0 $ such that
\begin{equation}\label{eq:StimaPerCM}
    \haus^N(B_r(o)) \ge \left(1-\frac{\eps}{4\bar{C}'}\right) v(N,k/(N-1),r),
\end{equation}
for any $r\in(0,R(\eps,\bar{C}')]$ and $k\in[k_0,0]$.

Since $K\le0$, the space $(\X,\dist'\eqdef \bar{R}^{-1}\dist, \haus_{\dist'}^N)$ is $\RCD(\bar{R}^2 K,N)$ for any $\bar{R}>0$. Using \eqref{eq:StimaPerCM} we deduce that if $\bar{R}$ is small enough, so that $k\eqdef\bar{R}^2K\in(k_0,0]$ and $\bar{R}\bar{r}_{k,N}< R(\eps,\bar{C}')$, we have
\begin{equation*}
\begin{split}
\haus_{\dist'}^N(B^{\dist'}_{\bar{r}_{k,N}}(o))  
&= \frac{1}{\bar{R}^N} \haus^N \left( B_{\bar{R}\bar{r}_{k,N}}(o) \right) 
\ge  \frac{1}{\bar{R}^N}\left(1-\frac{\eps}{4\bar{C}'}\right) v(N,k/(N-1),\bar{R}\bar{r}_{k,N}) \\
&\ge \left(1-\frac{\eps}{2\bar{C}'}\right) v(N,k/(N-1),\bar{r}_{k,N})=1-\frac{\eps}{2\bar{C}'},
\end{split}    
\end{equation*}
where the last inequality follows by taking $\bar{R}$ small enough, since
\[
\lim_{\bar{R}\to0^+} \frac{ v(N,k/(N-1),\bar{R}\bar{r}_{k,N})}{\bar{R}^N  v(N,k/(N-1),\bar{r}_{k,N})} =1.
\]
By continuity of the map $x\mapsto \haus_{\dist'}^N(B^{\dist'}_{\bar{r}_{k,N}}(x))$, we conclude that there exists $R'>0$ such that
\begin{equation}\label{eq:Stima1}
    \haus_{\dist'}^N(B^{\dist'}_{\bar{r}_{k,N}}(x)) \ge 1-\frac{\eps}{\bar{C}'},
\end{equation}
for any $x$ with $\dist'(x,o)<R'$. It follows that there exist $\bar{C}'\ge1,$ $ \bar{\delta}>0$ such that
\[
P_{\dist'}(E) \ge C_N\left(1-\bar{C}'\left(\delta+\frac{\eps}{\bar{C}'}\right)\right)[\haus_{\dist'}^N(E)]^{1-1/N}
=C_N\left(1-\eps-\bar{C}'\delta\right)[\haus_{\dist'}^N(E)]^{1-1/N}
,
\]
for any $E\subset B^{\dist'}_\delta(x)$ with $\delta\in(0,\bar{\delta})$, for any $x$ with $\dist'(x,o)<R'$.

Since $\dist'=\bar{R}^{-1}\dist$, by scaling back, this means that
\[
P(E) \ge C_N(1-\eps-\bar{C}'\bar R^{-1}\delta)[\haus^N(E)]^{1-1/N},
\]
for any $E\subset B_\delta(x)$ with $\delta\in(0,\bar{R}\bar{\delta})$, for any $x$ with $\dist(x,o)<\bar{R}R'$. Defining $\bar C\eqdef \bar{C}'\bar R^{-1}$ the statement follows.
\end{proof}

The moral behind the previous Proposition is that when the space is locally sufficiently close to the Euclidean model, then an isoperimetric inequality with a constant close to the Euclidean one holds at small scales. The latter is precisely the content of the result in \cite[Theorem 1.3]{CavallettiMondinoAlmostEuclidean}, which is only stated for smooth Riemannian manifolds. With the help of the study later developed in \cite{DePhilippisGigli18} we are able to give an analogue of \cite[Theorem 1.3]{CavallettiMondinoAlmostEuclidean} in the setting of $\RCD(K,N)$ spaces equipped with the Hausdorff measure $\haus^N$. 
We will not use the forthcoming result in the paper, but we register here for future references. We do not claim any originality in the proof of the forthcoming result, since it was essentially already contained in \cite[Theorem 1.3]{CavallettiMondinoAlmostEuclidean}. We denote by $\dist_{\rm GH}$ the Gromov--Hausdorff distance between compact metric spaces.

\begin{corollary}\label{cor:GHcloseImpliesEuclideanIsop}
Let us fix $N\in \N$ with $N\ge2$. Then there exist $\varepsilon_N, \alpha_N >0$ such that the following holds. For every $\varepsilon<\varepsilon_N$ there exists $\delta:=\delta(\varepsilon,N)>0$ such that if $(\X,\dist,\haus^N)$ is an $\RCD(-\delta,N)$ space and 
\[
\dist_{\rm GH}\left( \overline{B}^{\X}_{1}(o) ,  \overline{B}^{\R^N}_{1}(0)  \right) \le \delta,
\]
for some $o \in \X$, then
\[
P(E) \ge C_N\left(1 -  \eps \right)(\haus^N(E))^{\frac{N-1}{N}},
\]
for any $E\subset B_{\varepsilon\alpha_N}(o)$, where $C_N=N\omega_N^{1/N}$ is the $N$-dimensional Euclidean isoperimetric constant.
\end{corollary}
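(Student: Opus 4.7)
The plan is to mirror the proof of \cref{prop:AlmostEuclIsop}, replacing the density hypothesis $\vartheta[\X,\dist,\haus^N](o)=1$ by a quantitative consequence of the Gromov--Hausdorff closeness. Both hypotheses feed the same input into Cavalletti--Mondino's local almost-Euclidean isoperimetric inequality, namely, after a suitable rescaling, a lower bound on $\haus^N$ at the reference radius $\bar r_{k,N}$.

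The first and main step I would carry out is a quantitative volume closeness estimate: there exist $\delta_0>0$ and a modulus $\omega\colon (0,\delta_0)\to (0,1)$ with $\omega(\delta)\to 0$ as $\delta\to 0^+$ such that, whenever $(\X,\dist,\haus^N)$ is an $\RCD(-\delta,N)$ space and $o\in\X$ satisfies $\dist_{\rm GH}(\overline{B}^{\X}_1(o),\overline{B}^{\R^N}_1(0))\le \delta$, it holds
\[
\haus^N(B_r(o))\geq (1-\omega(\delta))\,\omega_N r^N,\qquad \forall\, r\in[1/2,1].
\]
This should follow by contradiction and compactness. A violating sequence $(\X_n,\dist_n,\haus^N,o_n)$ with $\delta_n\to 0$ is non-collapsed (an elementary consequence of the GH condition at scale $1$ combined with Bishop--Gromov monotonicity), so it admits, up to subsequence, a pointed measured Gromov--Hausdorff limit $(\X_\infty,\dist_\infty,\haus^N_\infty,o_\infty)$, which is a noncollapsed $\RCD(0,N)$ space. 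The GH assumption forces $\overline{B}^{\X_\infty}_1(o_\infty)$ to be isometric to $\overline{B}^{\R^N}_1(0)$, and the volume convergence theorem of \cite{DePhilippisGigli18} identifies the limit measure on this ball with the Lebesgue measure. Consequently $\haus^N(B_r(o_n))\to \omega_N r^N$ uniformly in $r\in[1/2,1]$ (equicontinuity being granted by Bishop--Gromov), contradicting the standing failure.

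With this volume estimate in hand, the remainder of the argument repeats essentially verbatim the scaling computation in the proof of \cref{prop:AlmostEuclIsop}. I would fix the universal constants $k_0<0$, $\bar C\ge 1$, $\bar\delta>0$ furnished by \cite[Corollary 1.6]{CavallettiMondinoAlmostEuclidean}, choose a rescaling factor $\bar R$ so that $\bar R\,\bar r_{-\bar R^2\delta/(N-1),N}\in [1/2,1]$ (admissible for $\delta$ small, by continuity of $k\mapsto \bar r_{k,N}$ at $k=0$), and observe that $(\X,\bar R^{-1}\dist,\haus^N_{\bar R^{-1}\dist})$ is an $\RCD(-\bar R^2\delta,N)$ space which, thanks to the previous step, satisfies a volume lower bound $1-\omega'(\delta)$ at scale $\bar r_{-\bar R^2\delta/(N-1),N}$, with $\omega'(\delta)\to 0$. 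Cavalletti--Mondino's local isoperimetric inequality then yields
\[
P_{\bar R^{-1}\dist}(E)\geq C_N\bigl(1-\bar C(\delta'+\omega'(\delta))\bigr)\,\haus^N_{\bar R^{-1}\dist}(E)^{(N-1)/N}
\]
for every $E\subset B^{\bar R^{-1}\dist}_{\delta'}(o)$ with $\delta'\in(0,\bar\delta)$. Scaling back to $\dist$ and choosing universal $\alpha_N$, $\eps_N$ together with $\delta=\delta(\eps,N)$ so that simultaneously $\bar R\delta'\geq \eps\alpha_N$ and $\bar C(\delta'+\omega'(\delta))\leq \eps$ completes the argument.

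The main obstacle lies in executing the first step cleanly: one has to ensure that the limit in the contradiction argument is genuinely Euclidean on the full unit ball (not just in a neighbourhood of $o_\infty$), and upgrade volume convergence from a.e.\ $r$ to uniform on $[1/2,1]$. Both points follow from standard noncollapsed RCD convergence theory combined with Bishop--Gromov equicontinuity, but require some care.
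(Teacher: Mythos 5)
Your overall strategy coincides with the paper's: convert the Gromov--Hausdorff hypothesis into a volume lower bound near $o$, then repeat the rescaling computation from \cref{prop:AlmostEuclIsop} and feed the bound into Cavalletti--Mondino's local almost-Euclidean isoperimetric inequality. The paper does exactly this, but invokes \cite[Theorem~1.3]{DePhilippisGigli18} directly for the step ``GH closeness $\Rightarrow$ volume closeness'', whereas you propose to re-derive it via a compactness argument.

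There is, however, a genuine gap in your first step. You assert that the violating sequence ``is non-collapsed (an elementary consequence of the GH condition at scale $1$ combined with Bishop--Gromov monotonicity)''. This is not elementary and in fact is the crux of the whole matter. GH closeness of $\overline B_1(o_n)$ to $\overline B_1^{\R^N}(0)$ gives no elementary lower bound on $\haus^N(B_1(o_n))$: Hausdorff measure is neither continuous nor lower semicontinuous along GH-convergent sequences of metric spaces (a segment is GH-approximated by finite sets of $\haus^1$-measure zero), and Bishop--Gromov only yields an \emph{upper} bound $\haus^N(B_r(o_n))\le v(N,\cdot,r)$, since the density is $\le 1$. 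If you do not know $\haus^N(B_1(o_n))$ is bounded away from zero, the pmGH limit measure could a priori vanish on $B_1(o_\infty)$, and your contradiction does not close. The statement that GH closeness to a Euclidean ball forces a volume lower bound is precisely the nontrivial almost-rigidity result of \cite[Theorem~1.3]{DePhilippisGigli18}; your compactness argument is, in effect, an attempt to reprove it while assuming its conclusion as an ``elementary'' input. The simplest repair is to do what the paper does: cite that theorem directly (or its companion, the continuity of $\haus^N$ under pGH convergence of ncRCD spaces) and skip the compactness detour altogether. After that, the rescaling step you sketch is fine and agrees with the paper's.
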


\begin{proof}
Let $\bar{r}_{k,N}$ be the unique radius satisfying the identity $v(N,k/(N-1),\bar{r}_{k,N}) = 1$, for any $k \in \R$ and $N\ge 2$. In the proof of \cref{prop:AlmostEuclIsop} we obtained the following. For $N\in \N$ with $N\ge2$, there is $k_0<0$, $\bar{C}'\ge1,$ $ \bar{\delta}>0$ such that if $(Y,\dist_Y, \haus_Y^N)$ is an $\RCD(k,N)$ space with $k\in[k_0,0]$, if $\haus^N_Y(B_{\bar{r}_{k,N}}(x))\ge 1-\eta$ for some $\eta \in (0,1/3)$ then
 \begin{equation}\label{eq:AlmostEuclInProof}
 P_Y(E) \ge C_N(1-\bar{C}'(\delta'+\eta))[\haus_Y^N(E)]^{1-1/N},
 \end{equation}
for any $E\subset B_{\delta'}(x)$ with $\delta'\in(0,\bar{\delta})$.

Let us fix $(\X,\dist,\mathcal{H}^N)$ as in the hypotheses, where $\delta$ has to be chosen. For any $k\in [k_0,0]$ we have
\[
\dist_{\rm GH} \left( \overline{B}^{\X'}_{\bar{r}_{k,N}}(o) ,  \overline{B}^{\R^N}_{\bar{r}_{k,N}}(0) \right) \le \delta \bar{r}_{k,N},
\]
where $\X'$ denotes the $\RCD(-\delta/\bar{r}_{k,N}^2,N)$ space $(\X,\dist',\haus^N_{\dist'})$ and $\dist'\eqdef \bar{r}_{k,N}\dist$. Up to taking $k$ sufficiently close to $0$ from the left, since $\bar{r}_{k,N} \to \bar {r}_{0,N}$ as $k\to 0$, there exists $\delta$ small enough such that $\haus^N_{\dist'}(\overline{B}^{\X'}_{\bar{r}_{k,N}}(o)) \ge 1-\tfrac{\eps}{2\bar{C}'}$ by \cite[Theorem 1.3]{DePhilippisGigli18} and $-\delta/\bar{r}_{k,N}^2\ge k_0$. For $\eps_N$ small, i.e., $\eps_N/(2\bar{C}') <\bar\delta$, we can apply \eqref{eq:AlmostEuclInProof} on any set $E \subset B^{\X'}_{\eps/(2\bar{C}')} (o) = B^{\X}_{\eps/(2\bar{C}'\bar{r}_{k,N})} (o)$, and the claim follows.
\end{proof}

The next proposition states a comparison between the perimeter of balls of equal volume in an $\RCD$ space and in the corresponding reference model. The result was firstly proved in \cite{MorganJohnson00} in the smooth setting, but the proof can be easily generalized to the nonsmooth realm of $\RCD$ spaces.

\begin{proposition}\label{prop:MorganJohnson}
Let $(\X,\dist,\haus^N)$ be a $\RCD(K,N)$ space for $K\in\R$ and $N\in \N$ with $N\ge2$. Then for any ball $B\subset \X$ it holds that
\[
P(B)\le P_K(\mathbb B^K(\haus^N(B)) ),
\]
where $P_K$ is the perimeter functional on the model $\mathbb M^N_{K/(N-1)}$ and $\mathbb B^K(v)$ is a ball in $\mathbb M^N_{K/(N-1)}$ of volume $v$ for any $v\ge0$.
\end{proposition}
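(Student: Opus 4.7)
The plan is to reduce the inequality to a monotonicity property inside the model. Fix a ball $B = B_r(x) \subset \X$, set $k \coloneqq K/(N-1)$ and write $v(\rho), s(\rho)$ for the model quantities $v(N,k,\rho)$, $s(N,k,\rho)$; let $\bar r$ be defined by $v(\bar r) = \haus^N(B)$, so that the right-hand side of the proposition is exactly $s(\bar r)$. Two consequences of the Bishop--Gromov comparison already recorded in \cref{rem:PerimeterMMS2}---the pointwise bound $\haus^N(B_\rho(y))\le v(\rho)$ (because the density at every point is at most $1$) and the ratio inequality $P(B_\rho(y))/s(\rho)\le \haus^N(B_\rho(y))/v(\rho)$---immediately give $\bar r \le r$ and
\[
P(B) \,\le\, \haus^N(B)\cdot \frac{s(r)}{v(r)} \,=\, v(\bar r)\cdot \frac{s(r)}{v(r)}.
\]
The proposition would therefore follow from $s(r)/v(r)\le s(\bar r)/v(\bar r)$, which, since $\bar r\le r$, reduces matters to the purely one-variable claim that $\rho\mapsto s(\rho)/v(\rho)$ is nonincreasing on the radial domain of $\mathbb M^N_k$.

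To prove this model monotonicity, I would set $\sigma \coloneqq \sn_k$ and introduce
\[
\Delta(\rho) \,\coloneqq\, N\omega_N\,\sigma^N(\rho) - (N-1)\,v(\rho)\,\sigma'(\rho),
\]
so that the claim becomes $\Delta\ge 0$. A direct differentiation, using $v' = N\omega_N\sigma^{N-1}$ together with the standard identities $\sigma'' + k\sigma = 0$ and $(\sigma')^2 + k\sigma^2 = 1$, yields $\Delta(0)=\Delta'(0)=0$ and
\[
\Delta''(\rho) + k\,\Delta(\rho) \,=\, N(N-1)\omega_N\,\sigma^{N-2}(\rho) \,\ge\, 0.
\]
The Green's function representation for this Cauchy problem is $\Delta(\rho) = \int_0^\rho \sn_k(\rho-t)\cdot N(N-1)\omega_N\,\sigma^{N-2}(t)\,dt$, and the kernel $\sn_k(\rho-t)$ is nonnegative throughout the radial domain of the model (the argument remains in $[0,\pi/\sqrt{k}]$ when $k>0$, and in $[0,\infty)$ when $k\le 0$). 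Hence $\Delta\ge 0$, which establishes the monotonicity lemma and completes the proof.

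The main, and essentially only, obstacle is the model monotonicity lemma; the reduction from the $\RCD$ setting to it is a couple of lines of algebra given \cref{rem:PerimeterMMS2}. I do not expect any further subtleties.
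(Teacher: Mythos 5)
Your proof is correct and follows essentially the same route as the paper: the paper simply defers to the arguments of Morgan--Johnson (cited as Theorem 3.5 therein), and the chain $P(B)\le \haus^N(B)\,s(r)/v(r)$ together with the model monotonicity of $s/v$ that you use is precisely that argument. The only difference is that you have supplied the details the paper leaves to the reference, including a clean ODE/Green's-function verification of the monotonicity of $\rho\mapsto s(\rho)/v(\rho)$ via the auxiliary quantity $\Delta$, which is a nice self-contained way to establish that lemma.
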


\begin{proof}
The proof follows by the very same arguments of \cite[Theorem 3.5]{MorganJohnson00}. Indeed, the proof therein only uses Bishop--Gromov monotonicity of the ratios
\[
\frac{\haus^N(B_r(x))}{v(N,K/(N-1),r)},
\qquad
\frac{P(B_r(x))}{s(N,K/(N-1),r)},
\]
the inequality
\[
\frac{P(B_r(x))}{s(N,K/(N-1),r)} \le \frac{\haus^N(B_r(x))}{v(N,K/(N-1),r)},
\]
which holds by Bishop--Gromov monotonicity and the coarea formula, and finally the fact that
\[
\frac{\haus^N(B_r(x))}{v(N,K/(N-1),r)} \le 1,
\]
which holds since
\[
\frac{\haus^N(B_r(x))}{v(N,K/(N-1),r)} \le \lim_{\rho\to0^+} \frac{\haus^N(B_\rho(x))}{v(N,K/(N-1),\rho)} = \vartheta[\X,\dist,\haus^N](x) \le1 ,
\]
where the last inequality follows from the fact that the reference measure is $\haus^N$ and the density function $x \mapsto \vartheta[\X,\dist,\haus^N](x)$ is lower semicontinuous.
\end{proof}

\begin{corollary}\label{cor:MJ-PalleVolumiPiccoli}
Let $(\X,\dist,\haus^N)$ be an $\RCD(K,N)$ space for $K\in\R$ and $N\in \N$ with $N\ge2$. Then for every $\eps_1>0$ there is $V_{\eps_1}>0$ such that if $B\subset \X$ is a ball with $\haus^N(B)\le V_{\eps_1}$, then
\[
P(B) \le  (C_N+\eps_1) (\haus^N(B))^{\frac{N-1}{N}},
\]
where $C_N=N\omega_N^{1/N}$.
\end{corollary}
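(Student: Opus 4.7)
The plan is to combine the comparison inequality in Proposition \ref{prop:MorganJohnson} with an elementary asymptotic analysis in the model space. Writing $\tilde K \eqdef K/(N-1)$, Proposition \ref{prop:MorganJohnson} gives
\[
P(B)\le P_K(\mathbb B^K(\haus^N(B))) = s(N,\tilde K,\tilde r),
\]
where $\tilde r$ is the unique radius such that $v(N,\tilde K,\tilde r)=\haus^N(B)$. So it suffices to show that for every $\eps_1>0$ there is $r_0>0$ with
\[
\frac{s(N,\tilde K,r)}{v(N,\tilde K,r)^{(N-1)/N}}\;\le\;C_N+\eps_1,\qquad \forall\, r\in(0,r_0],
\]
and then set $V_{\eps_1}\eqdef v(N,\tilde K,r_0)$.

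To prove the displayed limit, I would use the Taylor expansion $\sn_{\tilde K}(r)=r+O(r^3)$ as $r\to 0$, which is uniform for $\tilde K$ in bounded sets and valid irrespective of the sign of $\tilde K$. This gives
\[
\lim_{r\to 0^+}\frac{s(N,\tilde K,r)}{r^{N-1}}=N\omega_N,\qquad \lim_{r\to 0^+}\frac{v(N,\tilde K,r)}{r^N}=\omega_N,
\]
the second via an integration of the first. Consequently
\[
\lim_{r\to 0^+}\frac{s(N,\tilde K,r)}{v(N,\tilde K,r)^{(N-1)/N}} \;=\; \frac{N\omega_N}{\omega_N^{(N-1)/N}}\;=\;N\omega_N^{1/N}\;=\;C_N,
\]
so for every $\eps_1>0$ there exists $r_0=r_0(\eps_1,K,N)>0$ for which the desired bound holds on $(0,r_0]$.

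Putting the two ingredients together, given any ball $B\subset\X$ with $\haus^N(B)\le V_{\eps_1}=v(N,\tilde K,r_0)$, monotonicity of $r\mapsto v(N,\tilde K,r)$ (on $(0,\pi/\sqrt{\tilde K})$ when $\tilde K>0$, which contains $r_0$ for $V_{\eps_1}$ small) gives $\tilde r\le r_0$, whence
\[
P(B)\le s(N,\tilde K,\tilde r)\le (C_N+\eps_1)\,v(N,\tilde K,\tilde r)^{(N-1)/N}=(C_N+\eps_1)\,\haus^N(B)^{(N-1)/N}.
\]
There is no serious obstacle here: the only point requiring mild care is that when $K>0$ one must choose $r_0<\pi\sqrt{(N-1)/K}$ so that $\mathbb B^K(v)$ is well-defined for all $v\le V_{\eps_1}$, but this is automatic by taking $\eps_1$-dependent (hence sufficiently small) $r_0$.
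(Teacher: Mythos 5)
Your proof is correct and follows essentially the same route as the paper: apply the Bishop–Gromov comparison of \cref{prop:MorganJohnson} to reduce to balls in the model space $\mathbb M^N_{K/(N-1)}$, then observe that for small volumes the model isoperimetric profile is asymptotically Euclidean. The paper asserts the model-space estimate (there labeled \eqref{eq:Stima2}) without detail, whereas you supply it via the Taylor expansion $\sn_{\tilde K}(r)=r+O(r^3)$ and the resulting limits $s(N,\tilde K,r)/r^{N-1}\to N\omega_N$, $v(N,\tilde K,r)/r^N\to\omega_N$; you also correctly flag the $K>0$ domain issue. This is the same argument, spelled out a bit more.
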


\begin{proof}
Given $\eps_1>0$ there is $V_{\eps_1}>0$ such that if $\mathbb B^K \subset \mathbb M^N_{K/(N-1)}$ is a ball in the model $\mathbb M^N_{K/(N-1)}$ with volume $v\le V_{\eps_1}$, then
\begin{equation}\label{eq:Stima2}
P_K (\mathbb B^K) \le (1+ \eps_1/C_N) P_0(\mathbb B^0 ( v ) ) = (1+ \eps_1/C_N) C_N v^{1-1/N},
\end{equation}
where $P_K, P_0$ are the perimeter functionals on $\mathbb M^N_{K/(N-1)} , \R^n$ respectively, and $\mathbb B^0(v)$ is a ball in $\R^n$ having volume $v$.

Given $B\subset \X$ with volume $v=\haus^N(B)\le V_{\eps_1}$, the claim immediately follows from \cref{prop:MorganJohnson} and \eqref{eq:Stima2}.
\end{proof}

We recall here the statement of the relative isoperimetric inequality in the context of $\RCD$ spaces. The next result is a well-known consequence of \cite[Theorem 9.7]{HajlaszKoskela}, taking into account that a $\RCD(K,N)$ space is a geodesic space in which a weak local $(1,1)$-Poincar\'{e} inequality holds, see \cite{Rajala12}, and the uniform locally doubling property holds, cf. \cref{rem:PerimeterMMS2}. See also \cite[Theorem 4.5]{Miranda03} and \cite[Remark 3.4]{AmbrosioAhlfors}. We omit the proof of the next result since it is rather classical.

\begin{proposition}[Relative isoperimetric inequality]\label{prop:RelativeIsoperimetricInequality}
Let $(\X,\dist,\meas)$ be an $\RCD(K,N)$ space. Let $\Omega\subset \X$ be a bounded set and let $R>0$. Then there is a constant $C_{\rm RI}= C_{\rm RI}(N,K,\Omega,R)$ such that
\begin{equation*}
    \min\left\{ \meas(B_r(x)\cap E)^{1-\frac1N} , \meas(B_r(x)\setminus E)^{1-\frac1N}  \right\} \le C_{\rm RI} P(E, B_r(x)),
\end{equation*}
for any $r\le R$ and $x \in \Omega$, for any set of locally finite perimeter $E$ in $\X$. 

Moreover, if there exists $v_0$ such that $\meas(B_1(x))\geq v_0$ for every $x\in \X$, then the constant $C_{\mathrm{RI}}$ only depends on $N,K,R,v_0$.
\end{proposition}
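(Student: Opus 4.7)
My plan is to derive the inequality from the Hajlasz--Koskela Sobolev--Poincar\'e theory for PI spaces, specialised to $u = \nchi_E$ via the Lipschitz relaxation defining the perimeter.

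\textbf{Setting up the PI structure.} The Bishop--Gromov comparison (cf.\ \cref{rem:PerimeterMMS2}) shows that $\meas$ is uniformly locally doubling on balls of radius at most $R$, with constant depending only on $K,N,R$, and Rajala's theorem \cite{Rajala12} supplies a weak local $(1,1)$-Poincar\'e inequality. Since $\RCD(K,N)$ spaces with $N<\infty$ are geodesic and proper, a standard chaining argument upgrades the Poincar\'e inequality to one with dilation factor equal to $1$. The same Bishop--Gromov estimate yields the two-sided volume bound
\[
c(K,N,R)\,(r/R)^N \meas(B_R(x)) \leq \meas(B_r(x)) \leq C(K,N,R)\,(r/R)^N \meas(B_R(x)), \qquad 0<r\leq R,
\]
while the strict positivity and lower semicontinuity of $x\mapsto\meas(B_R(x))$, together with the compactness of $\bar\Omega$ in the proper space $\X$, give $v_\Omega\eqdef \inf_{x\in\bar\Omega}\meas(B_R(x)) > 0$, a quantity depending on $\Omega$.

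\textbf{Sobolev--Poincar\'e inequality.} Invoking \cite[Theorem~9.7]{HajlaszKoskela} with $p=1$ and Sobolev exponent $N/(N-1)$ produces a constant $C_{\rm SP}=C_{\rm SP}(K,N,R,v_\Omega)$ such that
\[
\left(\fint_{B_r(x)}|u-u_{B_r(x)}|^{N/(N-1)}\de\meas\right)^{(N-1)/N} \leq C_{\rm SP}\,r\,\fint_{B_r(x)}\lip u\,\de\meas,
\]
for every $r\leq R$, $x\in\Omega$, and every locally Lipschitz $u$. Given a set $E$ of locally finite perimeter, I would approximate $\nchi_E$ in $L^1_{\rm loc}(B_r(x),\meas)$ by locally Lipschitz $u_i$ realising $\int_{B_r(x)}\lip u_i\,\de\meas\to P(E,B_r(x))$ in the relaxation defining $P$, truncate $u_i$ to $[0,1]$ (which preserves both convergences), and pass to the limit (Fatou on the left, definition of $P$ on the right) to obtain
\[
\left(\fint_{B_r(x)}\bigl|\nchi_E-(\nchi_E)_{B_r(x)}\bigr|^{N/(N-1)}\de\meas\right)^{(N-1)/N} \leq C_{\rm SP}\,\frac{r\,P(E,B_r(x))}{\meas(B_r(x))}.
\]

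\textbf{Algebraic extraction.} Writing $a\eqdef \meas(E\cap B_r(x))/\meas(B_r(x))$, a direct computation gives
\[
\fint_{B_r(x)}\bigl|\nchi_E-a\bigr|^{N/(N-1)}\de\meas = a(1-a)\bigl[a^{1/(N-1)}+(1-a)^{1/(N-1)}\bigr],
\]
and assuming without loss of generality $a\leq 1/2$, the right-hand side is at least $2^{-1-1/(N-1)}a$. Combining with the previous display, raising to the power $(N-1)/N$, multiplying through by $\meas(B_r(x))$, and using the two-sided volume bound in the form $r\,\meas(B_r(x))^{-1/N}\leq C(K,N,R)\,v_\Omega^{-1/N}$ yields
\[
\meas(E\cap B_r(x))^{1-1/N} \leq C_{\rm RI}(N,K,R,\Omega)\,P(E,B_r(x));
\]
swapping the roles of $E$ and $\X\setminus E$ handles the case $a\geq 1/2$. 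Under the extra assumption $\meas(B_1(x))\geq v_0$, a further application of Bishop--Gromov gives $\meas(B_R(x))\geq c(K,N,R,v_0)>0$ uniformly in $x\in\X$, so $v_\Omega$ can be replaced by $c(K,N,R,v_0)$ and the resulting $C_{\rm RI}$ depends only on $N,K,R,v_0$. The main technical point is the chaining step needed to remove the dilation factor from the Poincar\'e inequality; beyond that, the argument is standard Hajlasz--Koskela bookkeeping.
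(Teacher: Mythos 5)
Your argument follows precisely the route the paper indicates (invoking \cite[Theorem 9.7]{HajlaszKoskela} via the PI structure supplied by Bishop--Gromov and Rajala's Poincar\'e inequality, then specialising the Sobolev--Poincar\'e inequality to $\nchi_E$ through the relaxation defining $P$ and extracting the isoperimetric bound algebraically); the paper simply omits these standard details. One small inaccuracy: the ``two-sided volume bound'' you display is not two-sided --- Bishop--Gromov monotonicity gives only the lower estimate $\meas(B_r(x))\geq c(K,N,R)(r/R)^N\meas(B_R(x))$, while the matching upper estimate would require a uniform upper bound on the density $\vartheta[\X,\dist,\meas]$, which fails for a general reference measure (it holds only for $\meas=\haus^N$ by \cref{rem:PerimeterMMS2}). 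Fortunately your ``algebraic extraction'' step uses only the correct lower half, in the form $r\,\meas(B_r(x))^{-1/N}\leq C(K,N,R)\,v_\Omega^{-1/N}$, so the slip is harmless; also, the chaining step to set $\lambda=1$ in the Poincar\'e inequality is not strictly needed, since Hajlasz--Koskela's theorem accommodates a dilation factor.
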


\subsection{Preparatory estimates}\label{sec:Preparatory}

We start by proving some technical results holding for arbitrary sets of finite perimeter in $\RCD(K,N)$ spaces $(\X,\dist,\meas)$. We begin with a well-known classical covering lemma, including a short proof for the convenience of the reader. We stress that the hypotheses of \cref{lem:CoveringLemma} are satisfied, e.g., whenever $(\X,\dist,\meas)$ is a $\CD(K,N)$ space with $N<+\infty$ by virtue of \cref{rem:PerimeterMMS2}.

\begin{remark}\label{rem:BGBella}
If $(\X,\dist,\meas)$ is a metric measure space where $\meas$ is uniformly locally doubling, then for every $R>0$ there exist constants $C_1,C_2$ such that 
$$
\frac{\meas(B_{r_2}(x))}{\meas(B_{r_1}(x))}\leq C_1\left(\frac{r_2}{r_1}\right)^{C_2},
$$
for every $x\in \X$ and every $r_1\leq r_2\leq R$. We can take $C_2:=\log_2 C$, where $C$ is the doubling constant associated to $R$. This is a pretty standard observation coming from the iteration of the uniformly local doubling property, cf. \cite[Lemma 14.6]{HajlaszKoskela}.
\fr\end{remark}

\begin{lemma}[Covering Lemma]\label{lem:CoveringLemma}
Let $(\X,\dist,\meas)$ be a metric measure space where $\meas$ is uniformly locally doubling, let $\overline R>0$, and let $\rho\leq \overline R$.
\begin{itemize}
    \item[(i)] Let $C_1,C_2$ the constants associated to the radius $\overline R$ as in \cref{rem:BGBella}. Hence, for any $\alpha>0$, $z \in \X$, $\alpha \rho \le R \le \overline R$, it holds
    \begin{equation*}
        \sharp \mathscr{F}  \le C_1\left(\frac{2R}{\alpha\rho}\right)^{C_2},
    \end{equation*}
    for any family $\mathscr{F}$ of disjoint balls of radius $\alpha\rho$ contained in $B_R(z)$.
    
    \item[(ii)] If $\Omega \subset \X$ is open and $D\subset \Omega$ is dense in $\Omega$, there exist countably many points $\{x_i\}_{i\in\N} \subset D$ such that
    \begin{equation}\label{eq:Covering}
    \begin{split}
        B_{\frac\rho2}(x_i) \cap B_{\frac\rho2}(x_j) = \emptyset& \qquad \forall\,i\neq j, \\
        \bigcup_i B_\rho(x_i) \supset D& , \\
        \bigcup_i B_{\lambda\rho}(x_i) \supset \Omega&
        \qquad \forall\,\lambda>1.
    \end{split}
    \end{equation}
    Moreover, for any $\beta\leq \overline R$ and $z \in \bigcup_i B_{\beta\rho}(x_i)$, it holds
    \begin{equation}\label{eq:StimaCoveringOverlap}
        \sharp \left\{ \text{balls $B_{\beta\rho}(x_i) \st z \in B_{\beta\rho}(x_i)$} \right\} \le \max\left\{ 1,C_1(8\beta)^{C_2} \right\},
    \end{equation}
    where $C_1,C_2$ are the constants as in \cref{rem:BGBella} associated to $2\overline R^2$. 
\end{itemize}
\end{lemma}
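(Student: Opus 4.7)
My plan is to handle the two parts via classical volume-packing arguments, using \cref{rem:BGBella} to supply the doubling-induced volume ratio estimates. For part (i), the natural route is to combine the disjointness bound $\sum_{B\in\mathscr F}\meas(B)\leq\meas(B_R(z))$ with a lower bound on $\meas(B_{\alpha\rho}(y_i))$. For each ball $B_{\alpha\rho}(y_i)\in\mathscr F$, the inclusion $B_{\alpha\rho}(y_i)\subset B_R(z)$ forces $y_i\in B_R(z)$ and hence $B_R(z)\subset B_{2R}(y_i)$. The polynomial volume comparison of \cref{rem:BGBella} then yields $\meas(B_R(z))\leq\meas(B_{2R}(y_i))\leq C_1(2R/(\alpha\rho))^{C_2}\meas(B_{\alpha\rho}(y_i))$ (with the reference scale in \cref{rem:BGBella} chosen large enough that $2R$ is admissible), and dividing the summed disjointness inequality by this lower bound produces the claimed cardinality estimate.

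For the construction in part (ii), I would extract by Zorn's lemma a maximal subset $\{x_i\}\subset D$ such that $\{B_{\rho/2}(x_i)\}$ is pairwise disjoint; countability is immediate since any pairwise disjoint family of open balls in a separable metric space is countable (equivalently, one can proceed by a greedy sequential construction). Maximality immediately gives $D\subset\bigcup_i B_\rho(x_i)$: if $y\in D$ were uncovered, then $\dist(y,x_i)\geq\rho$ for all $i$ would make $B_{\rho/2}(y)$ disjoint from every $B_{\rho/2}(x_i)$, contradicting maximality. The enlarged covering $\Omega\subset\bigcup_i B_{\lambda\rho}(x_i)$ for $\lambda>1$ follows from density: for any $y\in\Omega$ pick $y'\in D$ with $\dist(y,y')<(\lambda-1)\rho$ and some $x_i$ with $\dist(y',x_i)<\rho$, so $\dist(y,x_i)<\lambda\rho$ by the triangle inequality.

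For the overlap bound, fix $z\in\bigcup_i B_{\beta\rho}(x_i)$ and set $I_z\eqdef\{i:z\in B_{\beta\rho}(x_i)\}$. The centers $x_i$ with $i\in I_z$ all lie in $B_{\beta\rho}(z)$, while disjointness of the packing forces $\dist(x_i,x_j)\geq\rho$ for $i\neq j$. In the regime $\beta<1/2$ this already gives $\sharp I_z\leq 1$, since any two such centers would lie at distance strictly less than $\rho$. In the regime $\beta\geq 1/2$, the balls $\{B_{\rho/2}(x_i)\}_{i\in I_z}$ are pairwise disjoint and contained in $B_{(\beta+1/2)\rho}(z)\subset B_{2\beta\rho}(z)$, so the volume comparison used in part (i), now applied at radius $R=2\beta\rho\leq 2\overline R^2$ with $\alpha=1/2$ and the doubling constants associated to $2\overline R^2$, yields $\sharp I_z\leq C_1(8\beta)^{C_2}$. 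The main delicate point throughout is tracking the doubling constants at the correct reference radii, $\overline R$ for part (i) and $2\overline R^2$ for part (ii), so that \cref{rem:BGBella} is always invoked within its range of applicability; the remaining steps are entirely standard.
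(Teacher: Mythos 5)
Your proposal is correct and takes essentially the same route as the paper: a volume-packing argument for (i) using \cref{rem:BGBella}, Zorn's lemma (or equivalently a greedy construction) plus density for the packing in (ii), and an application of (i) with $R=2\beta\rho$, $\alpha=1/2$ for the overlap bound. Your parenthetical remark in part (i) about "the reference scale in \cref{rem:BGBella} chosen large enough that $2R$ is admissible" is a sound observation — since the proof passes through $\meas(B_{2R}(y_i))$ with $2R$ possibly up to $2\overline R$, the doubling constants really should be those associated to $2\overline R$ rather than $\overline R$ as literally stated; the paper's proof has the same implicit adjustment, and the same remark applies (with another factor of $2$) to the reference radius $2\overline R^2$ in part (ii).
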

\begin{proof}
If $B_{\alpha\rho}(y_1),\ldots,B_{\alpha\rho}(y_\ell)$ are disjoint balls in $B_R(z)$ as in item (i) and, say, $\meas(B_{\alpha\rho}(y_1)) = \min_{i=1,\ldots,\ell} \meas(B_{\alpha\rho}(y_i))$, then
\[
\ell \le \frac{\meas(B_R(z))}{\meas(B_{\alpha\rho}(y_1))} \le \frac{\meas(B_{2R}(y_1))}{\meas(B_{\alpha\rho}(y_1))} \le C_1\left(\frac{2R}{\alpha\rho}\right)^{C_2},
\]
and item (i) follows.

Now let $D$ and $\Omega$ be as in item (ii). The existence of the family $\{x_i\}_{i\in \N}$ satisfying \eqref{eq:Covering} easily follows by applying Zorn Lemma on $D$ as in \cite[Lemma 4.4]{AFP21}. Observe that the third line in \eqref{eq:Covering} follows from the second one and the density of $D$ in $\Omega$.

In order to prove \eqref{eq:StimaCoveringOverlap} for given $\beta$ and $z$, observe that if $\beta\le1/2$ then the left-hand side in \eqref{eq:StimaCoveringOverlap} is at most $1$, so we can assume $\beta>1/2$. So, if $z \in B_{\beta\rho}(x_i)$ then $B_{\rho/2}(x_i)\subset B_{\beta\rho}(x_i) \subset B_{2\beta\rho}(z)$. Hence applying item (i) on the family $\mathscr{F}=\{ \text{balls $B_{\rho/2}(x_i) \st z \in B_{\beta\rho}(x_i)$} \}$ with respect to $R=2\beta\rho$ and $\alpha=1/2$, \eqref{eq:StimaCoveringOverlap} follows.
\end{proof}

The next lemma states that in a neighborhood of almost every density $0$ point of a set of finite perimeter $E$, the measure of $E$ inside small balls decay as a power strictly greater than $N$ of the radius of such balls.

\begin{lemma}[Volume decay estimate]\label{lem:VolumeDecayEstimate}
Let $(\X,\dist,\haus^N)$ be an $\RCD(K,N)$ space for $K\in \R$ and $N\in \N$ with $N\ge2$. Let $E$ be a set of locally finite perimeter.
Then for any  $o \in E^{(0)} \cap \{\vartheta[X,\dist,\haus^N]=1 \}$ there is $R_o\in(0,1)$, $C_{E}>0$ such that
\[
f(r)\eqdef \inf\left\{\haus^N(B_r(x)\cap E) \st x \in B_{R_o}(o) \right\} \le C_{E} r^{\frac{N^2}{N-1}}
\qquad
\forall\,r\in[0,R_o).
\]
\end{lemma}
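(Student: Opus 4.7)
The strategy is to locate, for each small $r$, a point $x_r\in B_{R_o}(o)$ at which both the volume of $E$ in $B_r(x_r)$ is at most half of $\haus^N(B_r(x_r))$ and the perimeter measure $P(E,B_r(x_r))$ is of order $r^N$; the relative isoperimetric inequality \cref{prop:RelativeIsoperimetricInequality} will then deliver the desired decay.

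Using $\vartheta[\X,\dist,\haus^N](o)=1$ and $o\in E^{(0)}$ together with Bishop--Gromov monotonicity (cf.\ \cref{rem:PerimeterMMS2}), I would first fix $R_o\in(0,1)$ small enough that $\haus^N(E\cap B_{2R_o}(o))\leq\eta\,\haus^N(B_{2R_o}(o))$ for a parameter $\eta>0$ to be chosen, and that the uniform two-sided bound $c_0 r^N\leq\haus^N(B_r(x))\leq C_0 r^N$ holds for every $x\in B_{R_o}(o)$ and $r\in(0,R_o)$. The lower bound is obtained by combining the Bishop--Gromov monotonicity of $\haus^N(B_r(x))/v(N,K,r)$ with the inclusion $B_{R_o}(o)\subset B_{2R_o}(x)$ and the fact that $\haus^N(B_{R_o}(o))\geq\tfrac12\omega_N R_o^N$ for $R_o$ small (by $\vartheta[\X,\dist,\haus^N](o)=1$). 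For such $r$, Fubini's theorem applied via the symmetry $\nchi_{B_r(x)}(y)=\nchi_{B_r(y)}(x)$ yields
\[
\int_{B_{R_o}(o)}\haus^N(B_r(x)\cap E)\,\d\haus^N(x)\leq C_0 r^N\,\haus^N(E\cap B_{2R_o}(o)),
\]
\[
\int_{B_{R_o}(o)}P(E,B_r(x))\,\d\haus^N(x)\leq C_0 r^N\,P(E,B_{2R_o}(o)).
\]

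Next, by Markov's inequality with the thresholds $\tfrac12 c_0 r^N$ and $K_1 r^N$ (for $K_1$ large, depending only on $E,R_o,N,K$), the ``bad'' sets
\[
A_r\eqdef\{x\in B_{R_o}(o):\haus^N(B_r(x)\cap E)>\tfrac12\haus^N(B_r(x))\},\quad B_r\eqdef\{x\in B_{R_o}(o):P(E,B_r(x))>K_1 r^N\}
\]
each have measure less than $\tfrac12\haus^N(B_{R_o}(o))$, provided $\eta$ is chosen sufficiently small. Hence I can pick $x_r\in B_{R_o}(o)\setminus(A_r\cup B_r)$. Since $\haus^N(B_r(x_r)\cap E)\leq\haus^N(B_r(x_r))/2$, the minimum in \cref{prop:RelativeIsoperimetricInequality} is realized by $\haus^N(B_r(x_r)\cap E)^{(N-1)/N}$, so
\[
\haus^N(B_r(x_r)\cap E)^{(N-1)/N}\leq C_{\mathrm{RI}}\,P(E,B_r(x_r))\leq C_{\mathrm{RI}}K_1 r^N,
\]
and raising both sides to the power $N/(N-1)$ yields $f(r)\leq\haus^N(B_r(x_r)\cap E)\leq C_E r^{N^2/(N-1)}$.

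The main technical obstacle I foresee is the parameter balancing: controlling the measure of $A_r$ genuinely requires the density $0$ condition at $o$ (to force $\haus^N(E\cap B_{2R_o}(o))$ small enough to dominate the Bishop--Gromov constants $c_0,C_0$), so $R_o$ and $\eta$ must be selected in coordination. Bounding $B_r$, by contrast, is straightforward once $P(E,\cdot)$ is locally finite. It is precisely this joint smallness of $\haus^N(B_r(x_r)\cap E)$ relative to $\haus^N(B_r(x_r))$ that unlocks the relative isoperimetric inequality and produces the improved exponent $N^2/(N-1)$ over the trivial bound $r^N$.
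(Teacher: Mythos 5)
Your argument is correct, and it reaches the same exponent $N^2/(N-1)$ by a genuinely different averaging mechanism than the paper's. The paper discretizes: it uses a Vitali-type covering of $B_{R_o}(o)$ by $\sim(R_o/r)^N$ balls of radius $r$ (\cref{lem:CoveringLemma}), splits them into those where $E$ fills more than half (set $\mathscr A$) and those where it does not (set $\mathscr B$), uses the density-zero hypothesis at $o$ plus bounded-overlap counting to force $\sharp\mathscr B\gtrsim(R_o/r)^N$, and then sums the relative isoperimetric inequality over $\mathscr B$ and divides by the count. You instead continuize: you integrate $x\mapsto\haus^N(B_r(x)\cap E)$ and $x\mapsto P(E,B_r(x))$ over $B_{R_o}(o)$, use the symmetry $\nchi_{B_r(x)}(y)=\nchi_{B_r(y)}(x)$ and Fubini (valid because $P(E,\cdot)$ is a Borel measure and $(x,y)\mapsto\nchi_{B_r(x)}(y)$ is jointly Borel measurable) to bound both integrals by $C_0 r^N\,\haus^N(E\cap B_{2R_o}(o))$ and $C_0 r^N\,P(E,B_{2R_o}(o))$ respectively, and then apply Chebyshev to find a single good center $x_r$ for which both $\haus^N(B_r(x_r)\cap E)\le\tfrac12\haus^N(B_r(x_r))$ and $P(E,B_r(x_r))\le K_1 r^N$ hold, feeding directly into \cref{prop:RelativeIsoperimetricInequality}. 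Your approach avoids the covering lemma and the overlap bookkeeping and is arguably tidier, producing one good ball rather than a positive fraction of them; the paper's approach has the mild advantage of not invoking Fubini on the perimeter measure. Two small points to make explicit in a written-up version: (i) the ratio $C_0/c_0$ is uniformly bounded as $R_o\to0$ (this follows from $\vartheta[\X,\dist,\haus^N](o)=1$ and Bishop--Gromov, as you indicate), so the threshold $\eta$ can be fixed \emph{before} $R_o$ is chosen and there is no circularity in the coordination you flag; and (ii) the upper volume bound $\haus^N(B_r(y))\le C_0 r^N$ must be applied at points $y\in B_{2R_o}(o)$, not merely $y\in B_{R_o}(o)$, but this is harmless since it comes from the global density-$\le1$ bound.
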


\begin{proof}
Since $o \in E^{(0)} \cap \{\vartheta[X,\dist,\haus^N]=1 \}$, there exists $R_o\in(0,1)$ such that
\begin{equation}\label{eq:LambdaDensitaZero}
\haus^N(B_R(o) \cap E) \le \Lambda R^N 
\qquad
\forall R \in (0,2R_o],
\end{equation}
where $\Lambda\eqdef C_o C_{o,3}/(2C)$ is a constant depending on $N,K,\haus^N(B_1(o))$ defined in terms of constants appearing below.

For any $r<R_o$ we can apply item (ii) of \cref{lem:CoveringLemma} with $\rho=r$ and $\Omega=D=B_{R_o}(o)$, thus getting points $\{x_i\}\subset B_{R_o}(o)$. By Bishop--Gromov monotonicity, as $\haus^N$ is the reference measure, we have that
\begin{equation*}
    C_{o,1}(R_o)^N \le \haus^N(B_{R_o}(o)) \le \sum_i \haus^N(B_r(x_i)) \le \sharp\{x_i\} v(N,K/(N-1),r) \le \sharp\{x_i\} C_{o,2}r^N,
\end{equation*}
so that
\begin{equation}\label{eq:CardinalitaPunti}
    \sharp\{x_i\} \ge C_o \left(\frac{R_o}{r} \right)^N,
\end{equation}
and $C_o>0$ only depends on $N,K,\haus^N(B_1(o))$. Let us define
\[
\begin{split}
    \mathscr{A}\eqdef& \left\{ x_i \st \haus^N(E\cap B_r(x_i))> \frac12 \haus^N(B_r(x_i)) \right\},\\
\mathscr{B}\eqdef& \left\{ x_i \st \haus^N(E\cap B_r(x_i))\le \frac12 \haus^N(B_r(x_i)) \right\}.
\end{split}
\]
Hence
\[
\begin{split}
    C_{o,3}r^N \sharp\mathscr{A} &< \sum_{x_i \in \mathscr{A}} \haus^N(E\cap B_r(x_i))
\le \max\left\{1, C_1 8^{C_2} \right\} \haus^N(E \cap B_{R_o+r}(o)) \\
&\le C \Lambda (R_o)^N,
\end{split}
\]
where we used \eqref{eq:StimaCoveringOverlap} in the second inequality, \eqref{eq:LambdaDensitaZero} in the third inequality, $C_{o,3}$ depends on $N,K,\haus^N(B_1(o))$, and $C$ only depends on $N,K$. Therefore
\begin{equation}\label{eq:CardinalitaA}
    \sharp\mathscr{A} < \frac{C\Lambda}{C_{o,3}} \left(\frac{R_o}{r} \right)^N.
\end{equation}
Using \eqref{eq:CardinalitaPunti} and \eqref{eq:CardinalitaA}, by the choice of $\Lambda$ we obtain
\begin{equation}\label{eq:CardinalitaB}
    \sharp \mathscr{B} = \sharp\{x_i\} - \sharp\mathscr{A} > \frac{C_o}{2} \left(\frac{R_o}{r} \right)^N.
\end{equation}
If $f$ is as in the statement, using the relative isoperimetric inequality of \cref{prop:RelativeIsoperimetricInequality} we estimate
\[
\begin{split}
\sharp\mathscr{B} f(r)^{1-1/N} \le
& \sum_{x_i \in \mathscr{B}} \haus^N(E \cap B_r(x_i))^{1-1/N}
\le C_{\rm RI} \sum_{x_i \in \mathscr{B}} P(E, B_{r}(x_i)) \\
& \le C_{\rm RI}  \max\left\{ 1,C_1 8^{C_2} \right\} P(E, B_{R_o+r}(o)) \\
&\le C_{o,4} P(E, B_2(o)),
\end{split}
\]
where we used \eqref{eq:StimaCoveringOverlap} with $\beta=1$ in the third inequality and $C_{o,4}$ only depends on $N,K,o$. Rewriting the last estimate taking into account \eqref{eq:CardinalitaB} yields
\[
f(r) \le \left(\frac{C_{o,4} P(E, B_{2}(o))}{\sharp\mathscr{B}} \right)^{\frac{N}{N-1}} 
\le \left(\frac{2 C_{o,4} P(E, B_{2}(o))}{C_o(R_o)^N} \right)^{\frac{N}{N-1}} r^{\frac{N^2}{N-1}},
\]
which defines the desired constant $C_{E}$ and completes the proof.
\end{proof}

We shall need the next observation on sets of finite perimeter.

\begin{remark}\label{rem:QuasiOgniPuntoRaggio}
Let $(\X,\dist,\meas)$ be a metric measure space and let $E\subset \X$ be a set of locally finite perimeter. Then
\begin{itemize}
    \item For any $x \in \X$ there exist at most countably many radii $r>0$ such that $P(E,\partial B_r(x)) >0$.
    
    \item For almost every $r>0$ and $\meas$-a.e. $x \in \X$ it holds that $P(E,\partial B_r(x)) =0$.
\end{itemize}
The second item follows from the first one and the following observation. The map $(x,r)\mapsto P(E,\partial B_r(x)) = P(E,\overline{B}_r(x)) - P(E,B_r(x))$ is $(\meas\times \mathcal{L}^1)$-measurable, indeed $(x,r) \mapsto P(E,B_r(x))$ is lower semicontinuous and $(x,r) \mapsto P(E,\overline{B}_r(x))$ is upper semicontinuous. Let $o \in \X$ and $R,\rho>0$ be fixed. By the first item and Fubini's Theorem we deduce
\[
0 = \int_{B_R(o)} \int_0^\rho  P(E,\partial B_r(x)) \de r \de \meas(x) =
 \int_0^\rho  \int_{B_R(o)} P(E,\partial B_r(x))  \de \meas(x) \de r,
\]
and then for almost every $r \in (0,\rho)$ and $\meas$-a.e.\ $x \in B_R(o)$ it holds that $P(E,\partial B_r(x)) =0$. Sending $R,\rho\to+\infty$ the second item follows.
\fr\end{remark}

In the next lemma, we prove that whenever the measure of a set of finite perimeter $E$ inside a ball satisfies the decay in \cref{lem:VolumeDecayEstimate}, then we can decrease the radius of such ball maintaining the original decay and also additionally controlling the perimeter of the new ball inside $E^{(1)}$.

\begin{lemma}\label{lem:CoareaDiminuisceRaggio}
Let $(\X,\dist,\meas)$ be an $\RCD(K,N)$ space and let $E\subset \X$ be a set of locally finite perimeter. Let $\tau\in(0,1)$ and suppose that there exists a ball $B_r(x)$ such that
\begin{equation*}
    r< \min\left\{\frac{1}{(2C_{E})^{1-\frac1N}} \left( \frac{\tau}{2^{N+2}} \right)^{N-1} , 
    \frac{1}{(2C_{E})^{1-\frac1N}} 
    \left( \frac{N-1}{2N^2}  \tau \right)^{N-1}
    \right\},
\end{equation*}
and
\begin{equation*}
    \meas(B_r(x) \cap E) \le 2 C_{E} r^{\frac{N^2}{N-1}},
    \qquad
    P(E,\partial B_r(x)) =0,
\end{equation*}
for some constant $C_{E}>0$.

Then there exists $\rho \in [r/2,r]$ such that
\[
\begin{split}
\meas(B_\rho (x) \cap E) \le 2 C_{E} \rho^{\frac{N^2}{N-1}}, \\
P(B_\rho(x),E^{(1)}) \le \tau\, \meas(B_\rho(x) \cap E)^{1-\frac1N}, \\
P(E,\partial B_\rho(x)) = 0.
\end{split}
\]
\end{lemma}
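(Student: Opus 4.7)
By Lemma~\ref{lem:m_superp} applied to the set $B_\rho(x)\cap E^{(1)}$ (and using $\haus^N(E\triangle E^{(1)})=0$ on an $\RCD(K,N)$ space), the function $m(\rho):=\haus^N(B_\rho(x)\cap E)$ is absolutely continuous on $[0,r]$ with the coarea identity $m(\rho)=\int_0^\rho P(B_s(x),E^{(1)})\,ds$, so $m'(\rho)=P(B_\rho(x),E^{(1)})$ for a.e.\ $\rho$. Writing the barrier as $h(\rho):=2C_E\rho^{N^2/(N-1)}$, the assumption reads $m(r)\le h(r)$. I would argue by contradiction that the three \emph{bad} sets
\[
B_1:=\{\rho\in[r/2,r]:m(\rho)>h(\rho)\},\quad
B_2:=\{\rho\in[r/2,r]:m'(\rho)>\tau m(\rho)^{1-1/N}\},\quad
B_3:=\{\rho:P(E,\partial B_\rho(x))>0\}
\]
fail to cover $[r/2,r]$, recalling that $B_3$ is countable.

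The first step is a Chebyshev estimate on $|B_2|$. Since $m^{1/N}$ is absolutely continuous and $m'(\rho)m(\rho)^{1/N-1}=\tfrac{d}{d\rho}(Nm(\rho)^{1/N})$ wherever $m>0$ (while $m'=0$ a.e.\ on $\{m=0\}$ by monotonicity), one has
\[
\tau\,|B_2|\le\int_{B_2}m'(\rho)m(\rho)^{1/N-1}\,d\rho\le N\bigl(m(r)^{1/N}-m(r/2)^{1/N}\bigr)\le N(2C_E)^{1/N}r^{N/(N-1)}.
\]
Feeding in the first bound on $r$ gives $|B_2|\le Nr/2^{N+2}\le r/8$ for every $N\ge 2$.

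For the second step I would set
\[
\rho_0:=\inf\bigl\{\rho\in[r/2,r]:m(\sigma)\le h(\sigma)\text{ for every }\sigma\in[\rho,r]\bigr\},
\]
which is well-defined since $r$ belongs to the set, and, by continuity of $m-h$, satisfies $m(\rho_0)=h(\rho_0)$ whenever $\rho_0>r/2$. In the generic case $\rho_0<r$, the interval $[\rho_0,r]$ lies in $B_1^c$ and has positive length: either the first-step bound already forces $B_2$ to miss a positive-measure subset of $[\rho_0,r]$, supplying the desired $\rho$ (modulo the countable set $B_3$), or $m'>\tau m^{1-1/N}$ holds a.e.\ on $[\rho_0,r]$. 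In the latter case, integrating and using $m(r)\le h(r)$ together with the mean value theorem applied to $t\mapsto t^{N/(N-1)}$ yields
\[
\tfrac{N^2}{N-1}(2C_E)^{1/N}\,r^{1/(N-1)}\ge \tau,
\]
which contradicts the second bound on $r$ when $\rho_0>r/2$ and, via the elementary inequality $2N\le 2^{N+2}$, contradicts the first bound when $\rho_0=r/2$ (where instead $m(\rho_0)\ge 0$ is used and a factor of $2N$ replaces $N^2/(N-1)$).

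The main obstacle is the delicate case $\rho_0=r$, where $B_1$ accumulates at $r$ and $m(r)=h(r)$ by continuity, so no nondegenerate right-subinterval of $B_1^c$ is available for integration. I would handle it by writing the open set $B_1=\bigsqcup_i(a_i,b_i)$; continuity and the definition of $B_1$ force $m(a_i)=h(a_i)$ and $m(b_i)=h(b_i)$ at every endpoint interior to $[r/2,r]$, which yields the exchange identity $\int_{a_i}^{b_i}m'\,d\rho=\int_{a_i}^{b_i}h'\,d\rho$ on each component. Summing these identities and exploiting the convexity of $h$, together with the smallness of $r$ from both bounds in the hypothesis, bounds $|B_1|$; combining with the Chebyshev estimate $|B_2|\le r/8$ produces $|B_1\cup B_2|<r/2=|[r/2,r]|$, so that $[r/2,r]\setminus(B_1\cup B_2\cup B_3)$ is nonempty and any of its points satisfies the three claimed conclusions.
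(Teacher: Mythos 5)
Your overall strategy differs from the paper's: you set up three bad sets and try to show they fail to cover $[r/2,r]$ in measure, whereas the paper picks a single supremum
\[
s_0=\sup\Big\{s\in[0,r]:P(B_s(x),E^{(1)})\le\tau\, g(s)^{1-1/N},\ P(E,\partial B_s(x))=0\Big\}
\]
and runs an ODE comparison on $(s_0,r]$. Your Chebyshev bound on $|B_2|$ and your treatment of the case $\rho_0<r$ are essentially correct and match the spirit of the paper's estimate leading to $s_0\ge 3r/4$.

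The gap is in the case $\rho_0=r$, and it is not just a technical obstruction: your measure-theoretic framework can \emph{provably} fail there. Consider (already in $\R^N$) a set $E$ with $E\cap B_r(x)\subset B_{r/2}(x)$ and $\haus^N(E\cap B_r(x))=2C_E r^{N^2/(N-1)}=h(r)$, which is realizable for $r$ small since $h(r)\ll \omega_N(r/2)^N$. Then $m\equiv h(r)$ on $[r/2,r]$, so $B_1=[r/2,r)$ has full measure $r/2$, while $m'\equiv 0$ there so $B_2\cap(r/2,r)=\emptyset$, and $B_3$ is null. Hence $B_1\cup B_2\cup B_3$ covers $[r/2,r]$ up to a null set, and the only admissible radius is the single point $\rho=r$ (which does work: $P(B_r(x),E^{(1)})=0$ here). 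In this configuration $B_1$ is a single component whose left endpoint $r/2$ is not interior, so the exchange identity you propose does not hold on it, and no amount of convexity of $h$ or smallness of $r$ can bound $|B_1|$ below $3r/8$. More fundamentally, any argument that only counts Lebesgue measure in $[r/2,r]$ is blind to the possibility that the set of good radii is a singleton $\{r\}$ — which is exactly what happens in the degenerate case.

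The paper resolves this boundary case by a different mechanism: it uses the lower semicontinuity of $\rho\mapsto P(B_\rho(x),E^{(1)})$ along sequences $s_i\nearrow r$ (\cref{prop:Lahti}) to upgrade the supremum $s_0=r$ to an attained maximum, delivering $\rho=r$ directly. Your proposal never invokes this (or any other) semicontinuity input, and I do not see how to close the $\rho_0=r$ case without it. So the first two steps are sound, but the third needs to be replaced by something like the paper's soft argument rather than a refined covering estimate.
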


\begin{proof}
Let $g(s)\eqdef \meas(E \cap B_s(x))$ for $s \in [0,r]$. Then, by the coarea formula, $g'(s) = P(B_s(x), E^{(1)})$ for almost every $s$. We estimate
\[
\int_0^r (g^{1/N})' = g^{1/N}(r) \le (2C_{E})^{1/N} r^{\frac{N}{N-1}} < \frac{\tau}{N} r.
\]
Hence there is a set of positive measure in $(0,r)$ such that $(g(s)^{1/N})'\le \tau/N$ at such $s$'s. Observe that for almost every such $s$ this is equivalent to
\[
P(B_s(x),E^{(1)}) = g'(s) \le \tau g(s)^{1-\frac1N} =\tau\, \meas(B_s(x) \cap E)^{1-\frac1N}.
\]
It is well defined the supremum
\[
s_0\eqdef \sup \left\{ s \in [0,r] \st 
 P(B_s(x),E^{(1)})\le \tau g(s)^{1-\frac1N} , \quad
 P(E,\partial B_s(x)) = 0
\right\}.
\]
If $s_0=r$ then the supremum is a maximum. Indeed, since $\liminf_{s\to r^-} P(B_s(x),E^{(1)}) \ge P(B_r(x),E^{(1)})$ by \cref{prop:Lahti}, if there is a sequence of competitors $s_i\nearrow s_0=r$ then $P(B_r(x),E^{(1)}) \le \tau g(r)^{1-\frac1N}$. Hence, as $ P(E,\partial B_r(x)) = 0$ by assumption, the statement follows by taking $\rho=r$.

So we can assume that $s_0\in (0,r)$.
For any $s \in (s_0,r]$ we have that either $P(B_s(x),E^{(1)}) > \tau g(s)^{1-\frac1N}$, or $P(E,\partial B_s(x)) > 0$. Hence for almost every $s \in (s_0,r]$ we have that $g'(s) > \tau g(s)^{1-\frac1N}$, that is $(g(s)^{1/N})'>\tau/N$ for almost every $s \in (s_0,r]$. Integrating from $s_0$ to $r$ gives
\begin{equation}\label{eq:StimaS0r}
    g(r)^{1/N} - g(s_0)^{1/N} \ge \frac{\tau}{N}(r-s_0),
\end{equation}
that implies
\begin{equation*}
    r-s_0 \le \frac{N}{\tau}(2C_{E})^{1/N} r^{\frac{N}{N-1}} \le \frac{N}{2^{N+2}} r \le \frac14 r,
\end{equation*}
so that $s_0 \ge \tfrac34 r$. If $s_0$ is a maximum, then the statement follows by taking $\rho=s_0$, indeed
\[
g(s_0)^{1/N} \le g(r)^{1/N} -  \frac{\tau}{N}(r-s_0) \le (2C_{E})^{1/N}r^{\frac{N}{N-1}} -  \frac{\tau}{N}(r-s_0) 
\le (2C_{E})^{1/N}s_0^{\frac{N}{N-1}},
\]
where the last inequality follows from the smallness assumption on $r$ as in \cite[Proposition 3.2]{Xia05}.

So we can assume that the supremum $s_0$ is not attained. Then there exists $\alpha\in\left(\tfrac23, 1 \right)$ such that
\begin{equation}\label{eq:defAlphaproof}
    \begin{split}
    & s_0(1-\alpha) \le r-s_0, \\
    & \bar{s}\eqdef \alpha s_0 \quad\text{ satisfies }\quad   P(B_{\bar s}(x),E^{(1)}) \le \tau g(\bar s)^{1-\frac1N} , \quad
 P(E,\partial B_{\bar s}(x)) = 0,
 \quad \bar s \ge \frac{r}{2}.
    \end{split}
\end{equation}
We claim that $\rho=\bar s$ satisfies the statement. We need to prove that $g(\bar s) \le 2C_{E} \bar{s}^{\frac{N^2}{N-1}}$. By \eqref{eq:StimaS0r} we have $g(r)^{1/N} - g(\bar s)^{1/N} \ge \frac{\tau}{N}(r-s_0)$ and then
\[
g(\bar s)^{1/N} \le (2C_{E})^{1/N} r^{\frac{N}{N-1}} - \frac{\tau}{N}(r-s_0).
\]
Hence it is enough to check that
\begin{equation}\label{eq:StimaSbarraFinale}
    (2C_{E})^{1/N} r^{\frac{N}{N-1}} - \frac{\tau}{N}(r-s_0) \le (2C_{E})^{1/N} \bar{s}^{\frac{N}{N-1}}.
\end{equation}
Indeed
\[
\begin{split}
 r^{\frac{N}{N-1}}  - \bar{s}^{\frac{N}{N-1}}
    &= \frac{N}{N-1}\left(\int_{s_0}^r t^{\frac{1}{N-1}} \de t + \int_{\bar s}^{s_0}  t^{\frac{1}{N-1}} \de t  \right)\\
    &\le \frac{N}{N-1} \left(r^{\frac{1}{N-1}}(r-s_0) + s_0^{\frac{1}{N-1}} s_0(1-\alpha) \right) \\
    &\le \frac{Nr^{\frac{1}{N-1}}}{N-1} \left((r-s_0) +  s_0(1-\alpha) \right) \\
    &\le \frac{2Nr^{\frac{1}{N-1}}}{N-1} (r-s_0) \\
    &\le \frac{1}{(2C_{E})^{1/N}}\frac{\tau}{N}(r-s_0),
\end{split}
\]
that is equivalent to \eqref{eq:StimaSbarraFinale}. In the above chain, we used $s_0\le r$ in the second inequality, \eqref{eq:defAlphaproof} in the third inequality, and the smallness assumption on $r$ in the last inequality.
\end{proof}

We are now ready for the definitions playing a key role towards the proof of \cref{thm:InteriorAndExteriorPoints}.
From now on and for the rest of the section, we fix
\begin{equation}\label{eq:DefTau}
\begin{split}
    0<\tau = \left(2-2^{1-\frac1N}\right)\gamma_N \frac{C_N}{8},\\
    \gamma_N \le 8^{-N},
\end{split}
\end{equation}
where we recall that $C_N\eqdef N(\omega_N)^{1/N}$, and
\begin{equation}\label{eq:DefEpsilon}
    \epsilon= \min \left\{ \frac{\tau}{4C_N} , \frac{\bar\epsilon}{2} \right\},
\end{equation}
for $\bar\eps$ as given by \cref{prop:AlmostEuclIsop}.

For a given $\RCD(K,N)$ space $(\X,\dist,\haus^N)$, with $N\ge2$ natural number, and a given set of locally finite perimeter $E$ with $\haus^N(E^{(0)})>0$, let $o \in E^{(0)} \cap \{\vartheta[\X,\dist,\haus^N]=1 \}$ be fixed. Then let $R_o$, $C_{E}$ be given by \cref{lem:VolumeDecayEstimate}.

By Bishop--Gromov monotonicity, and the fact that the density at any point is $\leq 1$, there exist $b>a>0$ depending on $N,K,o$ such that
\begin{equation}\label{eq:Defab}
    a r^N \le \haus^N(B_r(x)) \le b r^N,
\end{equation}
for any $x \in B_{R_o}(o)$ and any $r\le 2$.

We define $A_\tau(E)$ as the family of balls $B_r(x)\subset \X$ with $x \in B_{R_o}(o)$ such that
\begin{equation}\label{eq:DefAtau}
\begin{split}
     r<   \left( \frac{a}{2b} \right)^{N-1} \min\left\{\frac{1}{(2C_{E})^{1-\frac1N}} \left( \frac{\tau}{2^{N+2}} \right)^{N-1} , 
    \frac{1}{(2C_{E})^{1-\frac1N}} 
    \left( \frac{N-1}{2N^2}  \tau \right)^{N-1}
    \right\} ,\\
     \haus^N(B_r(x) \cap E) \le 2 C_{E} r^{\frac{N^2}{N-1}},\\
     P(B_r(x),E^{(1)}) \le \tau\, \haus^N(B_r(x) \cap E)^{1-\frac1N},\\
     P(E, \partial B_r(x)) = 0.
\end{split}
\end{equation}

\begin{remark}\label{rem:AtauNonempty}
The family $A_\tau(E)$ is nonempty and contains balls of arbitrarily small radii.
Indeed for almost every $r\in(0,R_o)$ with $r$ bounded above as in \eqref{eq:DefAtau} we know that $P(E,\partial B_r(x))=0$ for $\haus^N$-a.e. $x$ by \cref{rem:QuasiOgniPuntoRaggio}. For any such $r$, there is $y\in B_{R_o}(o)$ such that $\haus^N(B_r(y) \cap E)\le \tfrac32 C_{E} r^{\frac{N^2}{N-1}}$ by \cref{lem:VolumeDecayEstimate}. As $x \mapsto \haus^N(B_r(x) \cap E) $ is continuous, we find $x\in B_{R_o}(o)$ such that $\haus^N(B_r(x) \cap E)\le 2C_{E}r^{\frac{N^2}{N-1}}$ and $P(E,\partial B_r(x))=0$. Hence applying \cref{lem:CoareaDiminuisceRaggio}, we get that $B_\rho(x) \in A_\tau(E)$ for some $\rho \in [r/2,r]$.
\fr\end{remark}

Without loss of generality we can assume that $R_o<R(\eps,o)$, where $R(\eps,o)$ is given by \cref{prop:AlmostEuclIsop}. Hence we also have that
\begin{equation}\label{eq:QuasiIsopApplicata}
    P(F) \ge C_N\left(1-\eps - \bar{C}\rho \right)(\haus^N(F))^{\frac{N-1}{N}},
\end{equation}
for any $F\subset B_r(x)$ with $ x \in B_{R_o}(o)$ and $r<\rho(o,\eps)$, where $\bar C$ and $\rho(o,\eps)$ were given by \cref{prop:AlmostEuclIsop}.

\begin{proposition}\label{prop:KeyAlternative}In the above setting, assume that $B_r(x) \in A_\tau(E)$ with
\[
0<r\le \min\left\{\rho(o,\eps), \frac{1}{\bar{C}}\left(\frac34 - \eps \right) \right\}.
\]
Then either
\[
P(E,B_r(x)) \ge  \left(C_N\left(1-\eps - \bar{C}r \right) + 4\tau \right)\haus^N(B_r(x) \cap E)^{1-\frac1N},
\]
or there exists $B_{r'}(x')\in A_\tau(E)$ such that
\[
B_{r'}(x')\subset B_r(x),
\qquad
r' \in \left(\frac{r}{16},\frac38 r \right).
\]
In particular, one of the following alternatives occurs:
\begin{itemize}
    \item Either for any $0<\lambda\le \min\left\{\rho(o,\eps), \tfrac{1}{\bar{C}}\left(\tfrac34 - \eps \right) \right\} $ there is $B_t(z) \in A_\tau(E)$ with $t\in(0,\lambda)$ and such that
    \[
     P(E,B_t(z)) \ge  \left(C_N(1-\eps - \bar{C}t) + 4\tau \right)\haus^N(B_t(z) \cap E)^{1-\frac1N};
    \]
    \item Or there is a sequence of balls $\{B_{r_i}(x_i)\}_{i \in \N} \subset A_\tau(E)$ such that $B_{r_{i+1}}(x_{i+1}) \subset B_{r_i}(x_i)$ and $r_{i+1}\in \left(\tfrac{r_i}{16},\tfrac38 r_i \right)$.
\end{itemize}
\end{proposition}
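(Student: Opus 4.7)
My plan is to assume that alternative (a) fails and construct a sub-ball realizing (b); the "In particular" conclusion will then follow by iteration.

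First I would extract the geometric consequences of the assumed failure of (a). Setting $V\eqdef\haus^N(B_r(x)\cap E)$, the failed inequality combined with the $A_\tau(E)$ bound $P(B_r(x),E^{(1)})\le\tau V^{(N-1)/N}$ and the decomposition of \cref{cor:per_inters_ball} (applicable since $P(E,\partial B_r(x))=0$) yields
\[
P(E\cap B_r(x))<\bigl(C_N(1-\eps-\bar C r)+5\tau\bigr)\,V^{\frac{N-1}{N}},
\]
while the lower bound from the almost Euclidean isoperimetric inequality \cref{prop:AlmostEuclIsop} applied to $E\cap B_r(x)\subset B_r(x)$ gives $P(E\cap B_r(x))\ge C_N(1-\eps-\bar C r)V^{(N-1)/N}$. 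Thus $E\cap B_r(x)$ nearly saturates isoperimetry inside $B_r(x)$.

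The core task is then to extract an intermediate radius $s$ with $[s/2,s]\subset(r/16,3r/8)$ and a center $x'\in B_{r/2}(x)\cap B_{R_o}(o)$ such that $B_s(x')\subset B_r(x)$, $\haus^N(B_s(x')\cap E)\le 2C_E s^{N^2/(N-1)}$, and $P(E,\partial B_s(x'))=0$; once such $(s,x')$ is found, \cref{lem:CoareaDiminuisceRaggio} applied at $B_s(x')$ produces $\rho\in[s/2,s]$ with $B_\rho(x')\in A_\tau(E)$, giving alternative (b). To locate $(s,x')$ I would adapt the localized covering strategy from the proof of \cref{lem:VolumeDecayEstimate}: use \cref{lem:CoveringLemma} to cover $B_{r/2}(x)\cap B_{R_o}(o)$ by balls of radius $s\sim r/8$, partition them into $\mathscr A$ (balls whose $E$-content exceeds half their volume) and $\mathscr B$ (the rest). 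The hypothesis $V\le 2C_E r^{N^2/(N-1)}$ together with Bishop--Gromov forces $\mathscr A$ to be empty once $r$ is small enough. On each $\mathscr B$-ball the relative isoperimetric inequality \cref{prop:RelativeIsoperimetricInequality} bounds $\haus^N(B_s(y_i)\cap E)^{(N-1)/N}$ by $C_{\rm RI}\,P(E,B_s(y_i))$, whose sum is controlled via the overlap estimate of \cref{lem:CoveringLemma} by $P(E,B_r(x))$. The failure of (a) provides $P(E,B_r(x))<(C_N+4\tau)V^{(N-1)/N}$, and a pigeonhole against the lower bound $|\mathscr B|\gtrsim 4^N$ from \cref{lem:CoveringLemma} singles out at least one $y_i$ with the desired volume bound. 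The zero-perimeter boundary condition is then enforced by perturbing $s$ within a positive-measure set, using \cref{rem:QuasiOgniPuntoRaggio}.

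The main obstacle is the delicate balancing of constants in this last pigeonhole: the ratio between $\bigl(\sum_{\mathscr B}\haus^N(B_s(y_i)\cap E)^{(N-1)/N}\bigr)/|\mathscr B|$ and $\bigl(2C_E s^{N^2/(N-1)}\bigr)^{(N-1)/N}$ must be strictly below one, and this is exactly what dictates the specific choices of $\tau$ and $\eps$ in \eqref{eq:DefTau}--\eqref{eq:DefEpsilon}. In particular the smallness $\gamma_N\le 8^{-N}$ mirrors the scale $s\sim r/8$, while the remaining factors are tailored so that the estimates coming from \cref{prop:AlmostEuclIsop} and \cref{prop:RelativeIsoperimetricInequality} close up. Verifying that the constants indeed produce the sharp bound $2C_E s^{N^2/(N-1)}$ rather than just some fixed multiple of it is the only place where the fine calibration is essential.

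For the "In particular" assertion I would argue by contradiction. If the first alternative fails, there exists $\lambda_*\le\min\{\rho(o,\eps),\bar C^{-1}(3/4-\eps)\}$ such that every $B_t(z)\in A_\tau(E)$ with $t<\lambda_*$ violates (a). By \cref{rem:AtauNonempty} I may pick $B_{r_0}(x_0)\in A_\tau(E)$ with $r_0<\lambda_*$, and iterated application of the dichotomy always falls into case (b), producing the nested sequence $\{B_{r_i}(x_i)\}_{i\in\N}\subset A_\tau(E)$ with $B_{r_{i+1}}(x_{i+1})\subset B_{r_i}(x_i)$ and $r_{i+1}\in(r_i/16,3r_i/8)$, which is precisely the second alternative.
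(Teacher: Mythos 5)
Your proposal departs from the paper's argument, and the departure introduces a genuine gap. You try to mine the smallness of $\haus^N(B_r(x)\cap E)$ by covering $B_{r/2}(x)$ with $\sim(\beta/2)^N$ balls of radius $s=r/\beta$, applying the relative isoperimetric inequality \cref{prop:RelativeIsoperimetricInequality} on each, summing against the failure of the perimeter estimate, and pigeonholing. But that pigeonhole cannot produce the precise threshold $2C_E s^{N^2/(N-1)}$ required by \cref{lem:CoareaDiminuisceRaggio}. Quantitatively: the best it can deliver is $\min_i\haus^N(B_s(y_i)\cap E)\lesssim\bigl(C_{\rm RI}(C_N+4\tau)/M\bigr)^{N/(N-1)}V$ with $M\lesssim(\beta/2)^N$ and $V\le 2C_E(\beta s)^{N^2/(N-1)}$; to obtain $\le 2C_E s^{N^2/(N-1)}$ you would need $C_{\rm RI}(C_N+4\tau)\,\beta^N/M\le 1$, i.e.\ $C_{\rm RI}(C_N+4\tau)\,2^N\le 1$, which is independent of $\beta$ and false (already $C_N\ge 2\sqrt\pi>1$). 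The issue persists even if you replace $C_{\rm RI}$ by the sharper almost-Euclidean constant from \cref{prop:AlmostEuclIsop}: one always loses a factor $\sim 2^N$ because disjoint sub-balls of radius $s$ inside $B_{r/2}(x)$ number strictly fewer than $(r/s)^N$. So this is not a matter of ``fine calibration'' of $\tau,\eps$ as you surmise; the route is structurally short by a factor $\gtrsim 2^N$, and tuning $\gamma_N$ in \eqref{eq:DefTau} cannot save it.

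The paper avoids this by not pigeonholing at all. Instead it fixes a maximal \emph{disjoint} family $\{B_{r/2}(y_i)\}_{i\le M}$ in $B_r(x)$, chooses via coarea a radius $s_0\in(r/8,3r/8)$ making $\sum_i P(B_{s_0}(y_i),E^{(1)})$ small, and then trichotomizes on where the $E$-mass sits relative to $V'=\bigcup_i B_{s_0}(y_i)$: if $\haus^N(E\cap B_r(x)\setminus V')\ge(1-\gamma_N^{N/(N-1)})V$ then some $B_{s_0}(y_i)$ has $E$-content $\le\gamma_N^{N/(N-1)}V$; if $\le\gamma_N^{N/(N-1)}V$ then a ``gap'' ball disjoint from all $B_{s_0}(y_i)$ does; and in the balanced case the superadditivity inequality $c^{1-1/N}+d^{1-1/N}-(c+d)^{1-1/N}\ge(2-2^{1-1/N})\min\{c,d\}^{1-1/N}$ combined with the almost-Euclidean isoperimetric inequality on $E\cap V'$ and $E\cap B_r(x)\setminus V'$ yields the perimeter lower bound directly. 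The key point your approach misses is that the measure-theoretic dichotomy gives a ball with $E$-content bounded by a \emph{freely chosen} fraction $\gamma_N^{N/(N-1)}$ of $V$, and $\gamma_N\le 8^{-N}$ is set precisely so that $\gamma_N^{N/(N-1)}\cdot 8^{N^2/(N-1)}\le 1$, absorbing the rescaling from $r$ to $s_0\ge r/8$ and landing exactly on $2C_E s_0^{N^2/(N-1)}$. No isoperimetric inequality (relative or otherwise) appears in that step; it is purely a statement about measures. Your closing deduction of the ``In particular'' alternatives from the dichotomy is fine and matches the paper.
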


\begin{proof}
Let $\left\{ B_{r/2}(y_i) \st i=1,\ldots,M\right\}$ be a maximal family of disjoint balls of radius $r/2$ contained in $B_r(x)$. By \eqref{eq:Defab} we have
\[
a \frac{r^N}{2^N} M \le \sum_{i=1}^M \haus^N(B_{r/2}(y_i)) \le \haus^N(B_r(x)) \le b r^N,
\]
so that $M \le 2^N b/a$. Using the coarea formula we estimate
\[
\begin{split}
    \frac{2^Nb}{a} \haus^N(B_r(x) \cap E) &\ge M \haus^N(B_r(x) \cap E) = \sum_{i=1}^M \int_0^{2r} P(B_t(y_i), E^{(1)} \cap B_r(x)) \de t \\
    &\ge k \mathcal{L}^1 \left( \left\{t \in (0,2r) \st \sum_{i=1}^M  P(B_t(y_i), E^{(1)} \cap B_r(x)) \ge k\right\} \right),
\end{split}
\]
for any $k>0$. Setting $k = \tfrac\tau2 \haus^N(B_r(x) \cap E)^{1-\frac1N}$ and using \eqref{eq:DefAtau} we get that
\[
\begin{split}
    \mathcal{L}^1 &\left( \left\{t \in (0,2r) \st \sum_{i=1}^M  P(B_t(y_i), E^{(1)} \cap B_r(x)) \ge k\right\} \right) 
    \le \frac{2^{N+1}b}{a\tau} \haus^N(B_r(x) \cap E)^{\frac1N} \\
    &\le \frac{2^{N+1}b}{a\tau} (2C_{E})^{\frac1N} r^{\frac{N}{N-1}} < \frac{r}{4},
\end{split}
\]
where in the last inequality we are using the first bound in \eqref{eq:DefAtau}. Hence there exists $s_0 \in (r/8,3r/8)$ such that
\begin{equation}\label{eq:PropertiesBallsSzero}
    \begin{split}
    \sum_{i=1}^M  P(B_{s_0}(y_i), E^{(1)} )=
    \sum_{i=1}^M  P(B_{s_0}(y_i), E^{(1)} \cap B_r(x)) < \frac\tau2 \haus^N(B_r(x) \cap E)^{1-\frac1N},  \\
    P(E, \partial B_{s_0}(y_i) ) = 0 \qquad \forall\,i=1,\ldots,M.
    \end{split}
\end{equation}
Observe that any two different balls in $\{ B_{s_0}(y_i)\}_{i=1}^M$ are located at positive distance one from the other. Denote by $V\eqdef \bigcup_{i=1}^M B_{s_0}(y_i)$.

We now distinguish a few cases, eventually yielding the alternative in the statement. Suppose first that
\[
\haus^N(E \cap B_r(x) \setminus V ) \ge \left(1 - \gamma_N^{\frac{N}{N-1}} \right) \haus^N(E\cap B_r(x)).
\]
Then
\begin{equation}\label{eq:Alternative2}
    \exists\,i \st \haus^N(E \cap B_{s_0}(y_i)) < \gamma_N^{\frac{N}{N-1}}  \haus^N(E\cap B_r(x)).
\end{equation}
Then, recalling \eqref{eq:DefAtau}, by assumption we get
\[
\begin{split}
    \haus^N(E \cap B_{s_0}(y_i)) < 2C_{E} \gamma_N^{\frac{N}{N-1}} r^{\frac{N^2}{N-1}} 
    < 2C_{E} \gamma_N^{\frac{N}{N-1}} s_0^{\frac{N^2}{N-1}} 8^{\frac{N^2}{N-1}} \le 2C_{E} s_0^{\frac{N^2}{N-1}},
\end{split}
\]
by definition of $\gamma_N$ in \eqref{eq:DefTau}. Employing \cref{lem:CoareaDiminuisceRaggio} we get that there is $r' \in [s_0/2,s_0]\subset(r/16,3r/8)$ such that $B_{r'}(y_i) \in A_\tau(E)$. Hence $B_{r'}(x')$ with $x'=y_i$ satisfies the second alternative in the statement.

Next we suppose that
\[
\haus^N(E \cap B_r(x) \setminus V ) \le  \gamma_N^{\frac{N}{N-1}}  \haus^N(E\cap B_r(x)).
\]
In this case we consider
\[
x' \in {\rm argmin}\, \left\{ \sfd(w,x) \st w \in \bigcup_{i=1}^M \partial B_{\frac{r}{2}}(y_i) \right\}.
\]
We have that $\sfd(x',x)\le r/2$, for otherwise $B_{r/2}(x)$ would be disjoint from any $B_{r/2}(y_i)$ and thus the family $\{B_{r/2}(y_i) \st i=1,\ldots,M\}\cup\{B_{r/2}(x)\}$ would contradict the maximality of the family $\{B_{r/2}(y_i) \st i=1,\ldots,M\}$. Up to renaming, we can say that $x' \in \partial B_{\frac{r}{2}}(y_1)$; thus letting $s_1\eqdef \tfrac{r}{2}- s_0 \in (r/8,3r/8)$, we see that $B_{s_1}(x')\subset B_r(x)$ and $B_{s_1}(x')$ is disjoint from any ball $B_{s_0}(y_i)$ by the triangle inequality. Hence there exists $s_1'\in(r/8,s_1]$ such that
\[
\begin{split}
&P(E,\partial B_{s_1'}(x')) = 0, \\
&\haus^N(E \cap B_{s_1'}(x')) \le \haus^N(E \cap B_r(x) \setminus V ) \le \gamma_N^{\frac{N}{N-1}}  \haus^N(E\cap B_r(x)),    
\end{split}
\]
and thus we are essentially back to the case \eqref{eq:Alternative2} with $s_1'$, $x'$ in place of $s_0$, $y_i$. Therefore, arguing as before, one finds $r' \in [s_1'/2,s_1']\subset(r/16,3r/8)$ such that $B_{r'}(x')$ satisfies the second alternative in the statement.

Finally, it remains to deal with the case
\[
\gamma_N^{\frac{N}{N-1}}  \haus^N(E\cap B_r(x)) <\haus^N(E \cap B_r(x) \setminus V ) < \left(1 - \gamma_N^{\frac{N}{N-1}} \right) \haus^N(E\cap B_r(x)).
\]
In this case 
\[
\gamma_N^{\frac{N}{N-1}}  \haus^N(E\cap B_r(x)) \le \haus^N(E \cap V) \le \left(1 - \gamma_N^{\frac{N}{N-1}} \right) \haus^N(E\cap B_r(x)),
\]
as well. Using the elementary inequality
\[
(2-2^{1-\frac1N})(\min\{c,d\})^{1-\frac1N} \le c^{1-\frac1N} + d^{1-\frac1N} -(c+d)^{1-\frac1N}
\qquad
\forall\,c,d\ge0,
\]
and \eqref{eq:DefTau} we obtain the estimate
\begin{equation}\label{eq:Alternativa1}
    \begin{split}
        \haus^N(E \cap V)^{1-\frac1N} 
        &+\haus^N(E \cap B_r(x) \setminus V )^{1-\frac1N}
        -\haus^N(E\cap B_r(x))^{1-\frac1N}\\
        &\ge (2-2^{1-\frac1N})\min\left\{\haus^N(E \cap V)^{1-\frac1N}, \haus^N(E \cap B_r(x) \setminus V )^{1-\frac1N}\right\} \\
        &\ge (2-2^{1-\frac1N})\gamma_N \haus^N(E\cap B_r(x))^{1-\frac1N} \\
        &= \frac{8\tau}{C_N} \haus^N(E\cap B_r(x))^{1-\frac1N}.
    \end{split}
\end{equation}
Since $P(E,\partial B_r(x))=0$, then $0=\haus^{\rm cod\text{-}1}(\partial^e E \cap \partial B_r(x)) \ge \haus^{\rm cod\text{-}1}(\partial^e E \cap \partial^e B_r(x)) $. Then $P(E \cap B_r(x),\cdot) = P(E,\cdot)|_{B_r(x)} + P(B_r(x),\cdot)|_{E^{(1)}}$ by \cref{lem:per_inters_general}. Recalling also \eqref{eq:PropertiesBallsSzero}, that the balls $\{B_{s_0}(y_i)\}$ are pairwise located at positive distance and that each of them is located at positive distance from $\partial B_r(x)$, we can estimate
\begin{equation*}
    \begin{split}
        P(E,B_r(x)) 
        &= P(E\cap B_r(x)) - P(B_r(x), E^{(1)})  \\
        & = P(E \cap V) + P(E \cap B_r(x) \setminus V) 
        - 2\sum_{i=1}^M  P(B_{s_0}(y_i), E^{(1)})
        - P(B_r(x), E^{(1)})  \\
        &\ge C_N\left(1-\eps - \bar{C}r \right)\left[(\haus^N(E\cap V))^{1-1/N} +
        \haus^N(E \cap B_r(x) \setminus V )^{1-1/N}
        \right]
        + {} \\  &\phantom{=}{}
        -2\frac\tau2\haus^N(B_r(x)\cap E)^{1-1/N} - \tau \haus^N(B_r(x)\cap E)^{1-1/N} \\
        &\ge C_N\left(1-\eps - \bar{C}r \right)\haus^N(B_r(x)\cap E)^{1-1/N} 
        + {} \\  &\phantom{=}{}
        + \left[8\tau \left(1-\eps - \bar{C}r \right)-2\tau\right]\haus^N(B_r(x)\cap E)^{1-1/N} \\
        &\ge \left(C_N\left(1-\eps - \bar{C}r \right) + 4\tau \right)\haus^N(B_r(x)\cap E)^{1-1/N},
    \end{split}
\end{equation*}
where we used \eqref{eq:QuasiIsopApplicata} in the first inequality since $r<\rho(o,\eps)$, \eqref{eq:Alternativa1} in the second inequality, and the assumed upper bound on $r$ in the last inequality.

The alternative stated in the second part of the statement follows because $A_\tau(E)$ contains balls of arbitrarily small radii by \cref{rem:AtauNonempty}. So, for given $0<\lambda\le \min\left\{\rho(o,\eps), \tfrac{1}{\bar{C}}\left(\tfrac34 - \eps \right) \right\}$, there is $B_r(x) \in A_\tau(E)$ with $0<r< \lambda$, and then an inductive application of the first part of the statement yields the sought alternative.
\end{proof}

\subsection{Interior and exterior points}\label{sec:InteriorExterior}

Let $(\X,\dist,\haus^N)$ be an $\RCD(K,N)$ space with $N\ge 2$ natural number. Let $\Omega\subset \X$ be an open set. In this section we consider a functional
\[
G:\left\{\text{$\haus^N$-measurable sets in $\Omega$} \right\}/\sim \,\,\to (-\infty,+\infty],
\]
where $E\sim F$ if and only if $\haus^N(E\Delta F)=0$, where $E\Delta F$ denotes the symmetric difference between $E$ and $F$,
such that
\begin{equation}\label{eq:ConditionG}
    \begin{array}{lll}
    &G(\emptyset)<+\infty,&\\
    &\forall\widetilde\Omega\Subset \Omega \text{ bounded open }\,\,&\exists C_G>0, \sigma>1-\frac1N \st \\ 
    &   &G(E)\le G(F) + C_G\haus^N(E\Delta F)^\sigma ,
    \end{array}
\end{equation}
for any Borel sets $E,F \subset \Omega$ such that $E\Delta F\subset\widetilde\Omega$. Observe both $C_G$ and $\sigma$ may depend on $\widetilde\Omega$. Also \eqref{eq:ConditionG} implies that $G$ is finite on bounded subsets of $\Omega$. Notice that \eqref{eq:ConditionG} exactly coincides with the assumption \eqref{eq:ConditionGIntro} we gave in the discussion in the Introduction.

For such a function $G$, we define the \emph{quasi-perimeter $\mathscr P$ restricted to $\Omega$} by
\begin{equation*}
    {\mathscr P}(E,\Omega) \eqdef P(E,\Omega) + G(E\cap \Omega),
\end{equation*}
for any $\haus^N$-measurable set $E$ in $\Omega$. When $\Omega=\X$ we can also define the \emph{quasi-perimeter}
\begin{equation}\label{eqn:QuasiPerimeter}
    {\mathscr P}(E) \eqdef P(E) + G(E),
\end{equation}
for any $\haus^N$-measurable set $E$ in $\X$.

\begin{definition}\label{def:Minimizers}
Let $(\X,\dist,\haus^N)$, $\Omega\subset \X$, $G$, $\mathscr P$ be as above.

We say that a set of locally finite perimeter $E \subset \X$ is a \emph{volume constrained minimizer of $\mathscr P$ in $\Omega$} if for any $\haus^N$-measurable set $F$ such that there is a compact set $K\subset \Omega$ with $\haus^N((E\Delta F)\setminus K)=0$, and such that $\haus^N(F \cap K)=\haus^N(E \cap K)$ it occurs that
\[
\mathscr P(E,\Omega) \le \mathscr P(F,\Omega).
\]

If $\Omega=\X$, $\haus^N(E)<+\infty$, and $E$ satisfies that $\mathscr P(E) \le \mathscr P(F)$ for any $F$ with $\haus^N(F)=\haus^N(E)$, we say that $E$ is a \emph{volume constrained minimizer of $\mathscr P$}.
\end{definition}

From now on and for the rest of the section, let $\Omega$ be an open set in $\X$ and let $E \subset \X$ be a volume constrained minimizer of $\mathscr P$ in $\Omega$. Without loss of generality we assume $P(E,\Omega)>0$. In fact, if $P(E,\Omega)=0$, then applying \cref{prop:RelativeIsoperimetricInequality} we have that either $E\cap\Omega=\emptyset$, $E\cap\Omega=\Omega$, or $E\cap\Omega$ is the union of some connected components of $\Omega$, and thus a good description of the set $E$ is readily established.
Notice moreover that, from the locality of the perimeter, having $P(E,\Omega)>0$ implies $\min\{ \haus^N(E\cap \Omega), \haus^N(\Omega\setminus E)\}>0$.

Let $\tau,\eps$ be as in \eqref{eq:DefTau}, \eqref{eq:DefEpsilon}.
Let $o \in \Omega \cap E^{(0)} \cap \{\vartheta[\X,\dist,\haus^N]=1 \}$ be fixed. Then let $R_o$, $C_{E}$ be given by \cref{lem:VolumeDecayEstimate}. Without loss of generality we can assume that $R_o<R(\eps,o)$, where $R(\eps,o)$ (together with $\rho(o,\eps)$ and $\bar C$) is given by \cref{prop:AlmostEuclIsop}. Let $a,b$ be as in \eqref{eq:Defab} and define $A_\tau(E)$ as in \eqref{eq:DefAtau}. Therefore we are in position to apply \cref{prop:KeyAlternative} on the set $E$. The next result tells that the minimality assumption on $E$ implies that only the second alternative in \cref{prop:KeyAlternative} occurs.

\begin{lemma}\label{lem:ExistenceSequenceBalls}
If $\mathscr P$, $E$, $A_\tau(E)$ are as above, then there exists a sequence of balls $\{B_{r_i}(x_i)\}_{i \in \N} \subset A_\tau(E)$ such that $B_{r_{i+1}}(x_{i+1}) \subset B_{r_i}(x_i)$ and $r_{i+1}\in \left(\tfrac{r_i}{16},\tfrac38 r_i \right)$.
\end{lemma}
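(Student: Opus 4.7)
The plan is to argue by contradiction and rule out the first alternative of \cref{prop:KeyAlternative}: once it is excluded, iterating the second alternative immediately yields the desired nested sequence. Suppose therefore that for every sufficiently small $\lambda>0$ there exists $B_t(z)\in A_\tau(E)$ with $t<\lambda$ and
\[
P(E,B_t(z))\ \geq\ \bigl(C_N(1-\eps-\bar C t)+4\tau\bigr)\,v^{(N-1)/N},\qquad v\eqdef\haus^N(E\cap B_t(z)).
\]
I will construct a volume-preserving competitor $F$ compactly supported in $\Omega$ (in the sense of \cref{def:Minimizers}) with $\mathscr P(F,\Omega)<\mathscr P(E,\Omega)$, contradicting the minimality of $E$. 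Concretely, I fix $y\in\Omega\cap E^{(0)}\cap\{\vartheta[\X,\dist,\haus^N]=1\}$ with $\sfd(y,z)>2t$ (such $y$ exists in abundance since $\haus^N$-a.e.\ point of $\Omega\setminus E$ has these density properties, and $\haus^N(\Omega\setminus E)>0$ because $P(E,\Omega)>0$), and determine $s>0$ by the volume balance $\haus^N(B_s(y)\setminus E)=v$. This is uniquely and continuously solvable because $\haus^N(B_s(y)\setminus E)=(1+o(1))\omega_N s^N$ as $s\to 0$, from the density conditions at $y$; in particular $s$ is of order $v^{1/N}$, hence small, and $B_s(y)\cap B_t(z)=\emptyset$ for $t$ small. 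Set
\[
F\eqdef(E\setminus B_t(z))\cup B_s(y)=(E\cup B_s(y))\setminus B_t(z),
\]
which satisfies $\haus^N(F)=\haus^N(E)$ and $E\Delta F\subset B_t(z)\cup B_s(y)\Subset\Omega$.

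The core of the argument is the perimeter estimate. Using $P(E,\partial B_t(z))=0$ (part of the definition of $A_\tau(E)$) and the disjointness $\overline{B_s(y)}\cap\overline{B_t(z)}=\emptyset$, \cref{lem:per_inters_general} (applied in its complement form, as in the proof of \cref{thm:MainEst}) gives
\[
P(F)=P(E\cup B_s(y))-P(E,B_t(z))+P(B_t(z),E^{(1)}).
\]
Submodularity yields $P(E\cup B_s(y))\leq P(E)+P(B_s(y))$, and \cref{cor:MJ-PalleVolumiPiccoli} applied to $B_s(y)$ (whose volume equals $v(1+o(1))$) bounds $P(B_s(y))\leq(C_N+\eps_1)(1+o(1))\,v^{(N-1)/N}$ for any preassigned $\eps_1>0$, provided $t$ is small. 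Inserting the first-alternative lower bound on $P(E,B_t(z))$ and the $A_\tau$ bound $P(B_t(z),E^{(1)})\leq\tau\,v^{(N-1)/N}$, the leading $C_N\,v^{(N-1)/N}$ contributions cancel, and the calibration $C_N\eps\leq\tau/4$ from \eqref{eq:DefEpsilon} together with the choice $\eps_1\leq\tau/4$ delivers $P(F)-P(E)\leq -2\tau\,v^{(N-1)/N}$ for $t$ sufficiently small. Finally, \eqref{eq:ConditionG} applied on a fixed $\widetilde\Omega\Subset\Omega$ containing the balls in play bounds $|G(F\cap\Omega)-G(E\cap\Omega)|\leq C_G(2v)^\sigma$ with $\sigma>(N-1)/N$, so
\[
\mathscr P(F,\Omega)-\mathscr P(E,\Omega)\ \leq\ -2\tau\,v^{(N-1)/N}+C_G(2v)^\sigma<0
\]
for $v$, and hence $t$, sufficiently small, yielding the desired contradiction.

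The main obstacle I expect is the sharp matching of constants in the perimeter estimate: the argument hinges on the fact that both the Morgan--Johnson bound for $P(B_s(y))$ and the almost-Euclidean isoperimetric constant embedded in the first-alternative lower bound (via \cref{prop:AlmostEuclIsop}) approach the sharp Euclidean constant $C_N$ near regular density-one points, so that their leading $C_N$ contributions cancel up to residuals controlled by $\eps$, $\eps_1$, and $\bar C t$. The precise choices of $\tau,\gamma_N,\eps$ in \eqref{eq:DefTau}--\eqref{eq:DefEpsilon} are exactly what delivers a net gain of order $-\tau\,v^{(N-1)/N}$, and the strict exponent inequality $\sigma>(N-1)/N$ built into \eqref{eq:ConditionG} is essential to ensure that the quasi-perimeter error $C_G(2v)^\sigma$ is defeated by this gain as $v\to0$.
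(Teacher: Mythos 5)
Your overall strategy — argue by contradiction, rule out the first alternative of \cref{prop:KeyAlternative}, then iterate the second alternative to produce the nested balls — is the same as the paper's, but your competitor is genuinely different and introduces a complication the paper avoids. The paper takes $F=(E\setminus B_t(z))\cup B$ where $B$ is the ball \emph{centered at $z$ itself} with $\haus^N(B)=\haus^N(E\cap B_t(z))$: the volume-decay bound in \eqref{eq:DefAtau} gives $\haus^N(B)\leq 2C_E t^{N^2/(N-1)}<a t^N\leq\haus^N(B_t(z))$, so $B\Subset B_t(z)$, the volume exchange is entirely local, the perimeter identity $P(F,\Omega)=P(E,\Omega)+P(B)+P(B_t(z),E^{(1)})-P(E,B_t(z))$ is exact (no submodularity step is needed), and \cref{cor:MJ-PalleVolumiPiccoli} is applied to $B$ with $\haus^N(B)=v$ on the nose. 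Your competitor ships the excised volume to a ball $B_s(y)$ at a remote density-zero point, which costs an extra submodularity inequality $P(E\cup B_s(y))\leq P(E)+P(B_s(y))$ and a $(1+o(1))$ factor relating $\haus^N(B_s(y))$ to $v$.

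That $(1+o(1))$ factor is where your argument has a genuine gap. You pick $y$ \emph{after} the pair $(z,t)$ (you demand $\sfd(y,z)>2t$), so the scale below which $\haus^N(B_s(y)\setminus E)/\haus^N(B_s(y))$ is close to $1$ depends on $y$, which depends on $(z,t)$, which depends on $\lambda$. But the smallness of $\lambda$ — and hence of $v\leq 2C_E\lambda^{N^2/(N-1)}$ and hence of $s$ — must be decided \emph{before} $(z,t)$ and $y$ are known, so that $s$ actually lands in the good regime for the chosen $y$. As written, the choice is circular. It is repairable, e.g.\ by passing to a subsequence $z_n\to z_\infty\in\overline{B_{R_o}(o)}$ and fixing one $y\in\Omega\cap E^{(0)}\cap\{\vartheta[\X,\dist,\haus^N]=1\}\setminus\{z_\infty\}$, or by an Egorov-type uniformization of the density decay over a positive-measure subset of $E^{(0)}$; but you supply no such argument, and the paper's inside-$B_t(z)$ construction makes the issue disappear entirely. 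A further, smaller omission: your final inequality $\mathscr P(F,\Omega)-\mathscr P(E,\Omega)\leq-2\tau v^{(N-1)/N}+C_G(2v)^{\sigma}<0$ is a contradiction only when $v=\haus^N(E\cap B_t(z))>0$. The paper secures this at the outset by reducing, without loss of generality, to the case where $E$ has no exterior points in $B_{R_o}(o)$ (otherwise the nested sequence can be built directly around such a point), and your proof implicitly needs the same reduction.
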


\begin{proof}
We can assume without loss of generality that $E$ has no exterior points in $B_{R_o}(o)$, i.e.,  $\haus^N(E\cap B_r(x))>0$ for any $r>0$ and $x \in B_{R_o}(o)$, otherwise the claim immediately follows.
We then prove that the occurrence of the first alternative in \cref{prop:KeyAlternative} leads to a contradiction with the minimality of $E$. So we assume by contradiction that for any $0<\lambda\le \min\left\{\rho(o,\eps), \tfrac{1}{\bar{C}}\left(\tfrac34 - \eps \right) \right\} $ there is $B_t(z) \in A_\tau(E)$ with $t\in(0,\lambda)$ and such that
\[
P(E,B_t(z)) \ge  \left(C_N\left(1-\eps - \bar{C}t \right) + 4\tau \right)\haus^N(B_t(z) \cap E)^{1-\frac1N}.
\]
We impose
\begin{equation}\label{eq:DefLambda}
\begin{split}
     &0< \lambda < \min \left\{ \rho(o,\eps), \frac{1}{\bar{C}}\left(\frac34 - \eps \right),
  \frac{\dist(o,\partial\Omega)}{2}, \left(\frac{a}{2C_{E}}\right)^{N-1} ,
  \left(\frac{V_\tau}{2C_{E}}\right)^{\frac{N-1}{N^2}},
  \frac{\tau}{4C_N\bar{C}}
   \right\}, \\
   & C_G2^\sigma(2C_{E}\lambda^{\frac{N^2}{N-1}})^{\sigma-\frac{N-1}{N}} \le \tau,
\end{split}
\end{equation}
where $C_G$ and $\sigma$ are as in \eqref{eq:ConditionG} and $V_\tau$ is given by \cref{cor:MJ-PalleVolumiPiccoli} taking $\eps_1=\tau$.
By absurd assumption, this gives a ball $B_t(z) \in A_\tau(E)$ as above. Observe that $B_t(z)\Subset \Omega$ by the choice of $\lambda$. We construct the comparison set
\[
F\eqdef (E\setminus B_t(z) ) \cup B,
\]
where $B$ is the ball centered at $z$ with measure $\haus^N(B) = \haus^N(E\cap B_t(z))$. Since
\[
\haus^N(B)=\haus^N(E \cap B_t(z)) \le 2C_{E} t^{\frac{N^2}{N-1}} < a t^N \le \haus^N(B_t(z)),
\]
by \eqref{eq:DefLambda}, we see that $B\Subset B_t(z)$ and $\haus^N(F\cap B_t(z))=\haus^N(E \cap B_t(z))$, and thus $\mathscr P(E,\Omega)\le \mathscr P(F,\Omega)$. Using \eqref{eq:DefLambda} we estimate
\begin{equation}\label{eq:EstimateSymmDifference}
\begin{split}
C_G\haus^N(F\Delta E)^\sigma
& \le C_G 2^\sigma \haus^N(E \cap B_t(z))^{\sigma-\frac{N-1}{N}} \haus^N(E \cap B_t(z))^{\frac{N-1}{N}} \\
&\le C_G 2^\sigma (2C_{E} t^{\frac{N^2}{N-1}})^{\sigma-\frac{N-1}{N}} \haus^N(E \cap B_t(z))^{\frac{N-1}{N}} \\
&\le \tau \haus^N(E \cap B_t(z))^{1-\frac1N}.
\end{split}    
\end{equation}
Hence we get
\[
\begin{split}
    \mathscr P(F,\Omega)
    &= P(E,\Omega) + P(B) + P(B_t(z),E^{(1)}) - P(E,B_t(z)) + G(F\cap \Omega) \\
    &\le P(E,\Omega) + P(B) + \tau\haus^N(B_t(z) \cap E)^{1-\frac1N} 
    + {} \\  &\phantom{=}{}
    - \left(C_N(1-\eps - \bar{C}t) + 4\tau \right)\haus^N(B_t(z) \cap E)^{1-\frac1N} + G(E\cap \Omega) + C_G\haus^N(F\Delta E)^\sigma \\
    &\le \mathscr P(E,\Omega) + P(B) + \left[\tau
    - C_N(1-\eps - \bar{C}t) - 4\tau +\tau
    \right]\haus^N(B_t(z) \cap E)^{1-\frac1N} \\
    &\le \mathscr P(E,\Omega) + \left[C_N+\tau - C_N(1-\eps - \bar{C}t) -2\tau \right]\haus^N(B_t(z) \cap E)^{1-\frac1N} \\
    &\le \mathscr P(E,\Omega) -\frac\tau2\haus^N(B_t(z) \cap E)^{1-\frac1N}
\end{split}
\]
where in the first equality we used that $P(E,\partial B_t(z))=0$ by \eqref{eq:DefAtau} and \cref{lem:per_inters_general}; in the first inequality we used \eqref{eq:DefAtau}, the absurd hypothesis, and \eqref{eq:ConditionG}; in the second inequality we used \eqref{eq:EstimateSymmDifference}; in the third inequality we used \cref{cor:MJ-PalleVolumiPiccoli}, as $\haus^N(B)=\haus^N(E\cap B_t(z))< V_\tau$ by \eqref{eq:DefLambda}; in the fourth inequality we used \eqref{eq:DefEpsilon} and \eqref{eq:DefLambda}. Since $B_t(z)\in A_\tau(E)$, then $z \in B_{R_o}(o)$ and we have $\haus^N(B_t(z) \cap E)>0$ by the assumption at the beginning of the proof. Hence $\mathscr P(F,\Omega)< \mathscr P(E,\Omega) \le \mathscr P(F,\Omega)$ gives the desired contradiction.
\end{proof}

\begin{remark}\label{rem:MinimoComplementare}
It is immediate to check that if $\Omega$, $G$ are as above and $E$ is a volume constrained minimizer of $\mathscr P$ in $\Omega$, then $\widetilde E\eqdef \Omega\setminus E$ is  a volume constrained minimizer of $\widetilde{\mathscr P}\eqdef P+ \widetilde G$ in $\Omega$, where $\widetilde G(F)\eqdef G(\Omega\setminus F)$ for any measurable set $F\subset \Omega$. Also $\widetilde G$ satisfies \eqref{eq:ConditionG} with the same constants $C_G$, $\sigma$ of $G$. For further details see also \cite[Lemma 4.2]{Xia05}.
\fr\end{remark}

Before proving the main theorem of this section, we need a last technical lemma. We stress that we are thinking about the balls as couples center-radius, and not as sets, so that the functions assigning to a ball its center and its radius are well-defined.

\begin{lemma}\label{lem:CurvaDiPalle}
Let $(\X,\dist,\meas)$ be a geodesic metric measure space. Let $0<r_{i+1}<r_i<\ldots<r_0$ be a sequence such that $\lim_i r_i=0$ and suppose that there exist points $x_i \in \X$ such that $B_{r_{i+1}}(x_{i+1})\subset B_{r_i}(x_i)$ for any $i$. Then there exists a one-parameter family of balls $\{B(s) \st s \in (0,r_0]\}$ such that
\begin{itemize}
    \item $B(r_i)=B_{r_i}(x_i)$ for any $i$;
    
    \item $B(s)\subset B_{r_i}(x_i)$ for any $s \le r_i$ for any $i$;
    
    \item the map $s\mapsto {\rm rad}(B(s)) \in \R$ associating $s$ with the radius of $B(s)$ is continuous and $r_{i+1}/2\le {\rm rad}(B(s)) \le r_i$ for $s \in [r_{i+1},r_i]$ for any $i$;
    
    \item the map $s \mapsto {\rm cent}(B(s)) \in \X$ associating $s$ with the center of $B(s)$ is continuous.
\end{itemize}
In particular $ s \mapsto \nchi_{B(s)}$ is continuous with respect to the $L^1(\meas)$ topology.
\end{lemma}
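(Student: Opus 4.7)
The plan is to construct $B(s)$ piecewise on each interval $[r_{i+1},r_i]$ by linearly interpolating center and radius. Fix $i\ge 0$, set $d_i\eqdef\dist(x_i,x_{i+1})<r_i$ (since $x_{i+1}\in B_{r_i}(x_i)$), and use the geodesic hypothesis to pick a unit-speed geodesic $\gamma_i\colon[0,d_i]\to\X$ with $\gamma_i(0)=x_i$ and $\gamma_i(d_i)=x_{i+1}$. For $s\in[r_{i+1},r_i]$ I put $\lambda(s)\eqdef(r_i-s)/(r_i-r_{i+1})\in[0,1]$ and define $c(s)\eqdef\gamma_i(\lambda(s)d_i)$, $\rho(s)\eqdef s$, and $B(s)\eqdef B_{\rho(s)}(c(s))$. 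By construction $c(r_i)=x_i$, $c(r_{i+1})=x_{i+1}$, $\rho(r_i)=r_i$, $\rho(r_{i+1})=r_{i+1}$, so the prescribed endpoints are attained and adjacent pieces match consistently across the partition. Continuity of $c$ and $\rho$ is immediate from that of $\gamma_i$, and $\rho(s)=s\in[r_{i+1},r_i]\subset[r_{i+1}/2,r_i]$ proves the third bullet.

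The crucial step will be to verify the inclusion $B(s)\subset B_{r_i}(x_i)$; the more general nesting $B(s)\subset B_{r_j}(x_j)$ for $s\le r_j$ then follows by chaining this with the given inclusions $B_{r_{k+1}}(x_{k+1})\subset B_{r_k}(x_k)$ and by observing that the entire construction on $[r_{i+1},r_i]$ is contained in $B_{r_i}(x_i)$. For $y\in B(s)$ the triangle inequality yields $\dist(y,x_i)<s+\lambda(s)d_i$, and a direct algebraic manipulation shows that this is $\le r_i$ for every $s\in[r_{i+1},r_i]$ exactly when $d_i+r_{i+1}\le r_i$. I plan to obtain this last inequality from the nested-balls hypothesis using the geodesic structure: for each $\eta\in(0,r_{i+1})$ one extends $\gamma_i$ past $x_{i+1}$ by arclength $\eta$ to get a point $p_\eta$ with $\dist(p_\eta,x_{i+1})=\eta$ and $\dist(p_\eta,x_i)=d_i+\eta$, so that $p_\eta\in B_{r_{i+1}}(x_{i+1})\subset B_{r_i}(x_i)$ forces $d_i+\eta<r_i$; letting $\eta\nearrow r_{i+1}$ gives $d_i+r_{i+1}\le r_i$. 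This geodesic extendability is the main technical point of the proof, and it is the step that genuinely uses the length-space structure of $(\X,\dist)$.

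Finally, the $L^1(\meas)$ continuity of $s\mapsto\nchi_{B(s)}$ will follow by dominated convergence. Given $s_n\to s_0$, the joint continuity of $c$ and $\rho$ implies that $\nchi_{B(s_n)}(y)\to\nchi_{B(s_0)}(y)$ for every $y$ satisfying $\dist(y,c(s_0))\ne\rho(s_0)$. The metric sphere $\{y\in\X\st\dist(y,c(s_0))=\rho(s_0)\}$ has zero $\meas$-measure for all $s_0$ outside an at most countable set, as one sees from the monotonicity of $\rho'\mapsto\meas(B_{\rho'}(c(s_0)))$ in $\rho'$; such a countable exceptional set can be eliminated by a small reparametrization of $s$ on each $[r_{i+1},r_i]$, which does not affect any of the previous bullets. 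Once the geodesic-extension argument above is in hand, everything else is a routine verification.
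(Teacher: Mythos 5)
Your construction hinges on the inequality $d_i+r_{i+1}\le r_i$, where $d_i=\dist(x_i,x_{i+1})$, and the geodesic-extension argument you sketch to obtain it does not work in a general geodesic metric measure space: such a space need not be geodesically complete (so $\gamma_i$ may not extend past $x_{i+1}$ at all), and even where it does extend, the extended curve need not remain minimizing, so you only get $\dist(p_\eta,x_i)\le d_i+\eta$ rather than equality. As a consequence the inequality $d_i+r_{i+1}\le r_i$ can genuinely fail under the hypothesis $B_{r_{i+1}}(x_{i+1})\subset B_{r_i}(x_i)$, and, more seriously, your interpolated family can actually leave $B_{r_i}(x_i)$. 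A concrete example: take $\X=\{(a,b)\in\R^2: b\ge 0\}$ with the Euclidean metric (a geodesic space), $x_i=(0,1)$, $r_i=3$, $x_{i+1}=(0,\tfrac12)$, and any $r_{i+1}\in\bigl(\tfrac{17}{6},\tfrac{\sqrt{33}}{2}\bigr)$, e.g.\ $r_{i+1}=2.85$. One checks that $B_{r_{i+1}}(x_{i+1})\subset B_{r_i}(x_i)$ (the farthest points of the clipped ball $B_{r'}((0,\tfrac12))$ from $(0,1)$ lie at distance $\sqrt{(r')^2+\tfrac34}$), while for some intermediate $s$ the ball $B_s(c(s))$ in your linear interpolation contains points outside $B_3((0,1))$, since the radius is still large while the center has already started to move toward the boundary line $\{b=0\}$.

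The paper avoids this entirely by a three-phase construction on each interval: first hold the center at $x_1$ and shrink the radius from $r_1$ to $r_1/2$; then hold the radius at $r_1/2$ and move the center along a geodesic from $x_1$ to $x_0$; then hold the center at $x_0$ and grow the radius from $r_1/2$ to $r_0$. The key point is that during the center-moving phase the radius is only $r_1/2$, and one can case-split: if the center is within distance $r_1/2$ of $x_1$ the ball is trivially inside $B_{r_1}(x_1)\subset B_{r_0}(x_0)$; otherwise the geodesic identity $\dist(\mathrm{cent},x_0)+\dist(\mathrm{cent},x_1)=\dist(x_0,x_1)$ plus the lower bound $\dist(\mathrm{cent},x_1)\ge r_1/2$ exactly cancels the radius $r_1/2$ and yields containment in $B_{r_0}(x_0)$. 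This is what makes the geodesic hypothesis carry the argument without any appeal to geodesic extendability; you would need to reorganize your proof along similar lines. (As a minor secondary point, your remark about a countable exceptional set for the $L^1$-continuity is off: the exceptional radii depend on the moving center $c(s_0)$, so the union over $s_0$ need not be countable and a reparametrization in $s$ does not obviously remove it; the clean route is to use continuity of $r\mapsto\meas(B_r(x))$, which holds in the spaces where the lemma is actually applied.)
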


\begin{proof}
It suffices to define $B(s)$ for $s \in [r_1,r_0]$, then the construction can be iterated. Let $\gamma:[0,1]\to B_{r_0}(x_0)$ be a geodesic from $x_1$ to $x_0$, i.e., $\gamma(0)=x_1$, $\gamma(1)=x_0$, and $\dist(\gamma(t),\gamma(r))=|t-r|\dist(x_0,x_1)$. We define
\[
B(s) \eqdef \begin{cases}
B_{a+bs}(x_1) & s \in \left[ r_1, r_1 + \frac{r_0-r_1}{3}  \right], \\
B_{\frac{r_1}{2}}\left(
\gamma\left( 
\frac{3}{r_0-r_1}(s-r_1-\tfrac{r_0-r_1}{3})
\right)
\right) 
& s \in\left( r_1+ \frac{r_0-r_1}{3}, r_0 - \frac{r_0-r_1}{3}  \right) , \\
B_{c+ds}(x_0)  & s \in\left[ r_0 - \frac{r_0-r_1}{3}, r_0  \right],
\end{cases}
\]
where
\begin{equation*}
    \begin{array}{lll}
    & a=r_1+\frac32 \frac{r_1^2}{r_0-r_1} , & b=-\frac32\frac{r_1}{r_0-r_1}, \\
    & c=\frac{r_1}{2} - \frac{(r_0-r_1/2)(2r_0+r_1)}{r_0-r_1} , 
    & d= 3\frac{r_0-r_1/2}{r_0-r_1}.
    \end{array}
\end{equation*}
For $s \in \left[ r_1, r_1 + \frac{r_0-r_1}{3}  \right] $ (resp.\! $s \in\left[ r_0 - \frac{r_0-r_1}{3}, r_0  \right]$) the center is fixed and the radius linearly passes from $r_1$ to $r_1/2$ (resp.\! from $r_1/2$ to $r_0$). Hence obviously $B(s)\subset B_{r_0}(x_0)$ for $s \in \left[ r_1, r_1 + \frac{r_0-r_1}{3}  \right] \cup \left[ r_0 - \frac{r_0-r_1}{3}, r_0  \right]$. While for $s \in\left( r_1+ \frac{r_0-r_1}{3}, r_0 - \frac{r_0-r_1}{3}  \right)$ we have that if $\dist({\rm cent}(B(s)),x_1) < r_1/2$, then $B(s)\subset B_{r_1}(x_1)\subset B_{r_0}(x_0)$; then for such $s$ we can assume that $\dist({\rm cent}(B(s)),x_1) \ge r_1/2$, that is
\begin{equation}\label{eq:StimaCurvaDiPalle}
\begin{split}
\dist(x_0,x_1)\frac{3}{r_0-r_1}(s-r_1-\frac{r_0-r_1}{3}) &=
\dist\left(\gamma\left( 
\frac{3}{r_0-r_1}(s-r_1-\frac{r_0-r_1}{3})
\right), x_1\right) \\
&= \dist({\rm cent}(B(s)),x_1) \ge \frac{r_1}{2}.
\end{split}
\end{equation}
And for any $q \in B(s)$ we estimate
\[
\begin{split}
\dist(q,x_0)
&\le \dist(q,{\rm cent}(B(s))) + \dist({\rm cent}(B(s)), x_0) \\
&\le \frac{r_1}{2} + \dist(x_0,x_1)\left[ 1 - \frac{3}{r_0-r_1}(s-r_1-\tfrac{r_0-r_1}{3})   \right] \\
&\le \dist(x_0,x_1) \le r_0,
\end{split}
\]
that implies $B(s)\subset B_{r_0}(x_0)$, where in the third inequality we used \eqref{eq:StimaCurvaDiPalle}. The remaining claims in the statement clearly follow from the construction.
\end{proof}

We can finally prove the main theorem of the section.

\begin{theorem}\label{thm:InteriorAndExteriorPoints}
Let $(\X,\dist,\haus^N)$ be an $\RCD(K,N)$ space with $N\ge 2$ natural number, let $\Omega\subset \X$ be open, and let $\mathscr P=P+G$ be a quasi-perimeter. Let $E$ be a volume constrained minimizer of $\mathscr P$ in $\Omega$ with $P(E,\Omega)>0$.

Then $E\cap \Omega$ has both exterior and interior points.
\end{theorem}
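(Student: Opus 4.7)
By \cref{rem:MinimoComplementare} it suffices to prove the existence of exterior points of $E\cap\Omega$: interior points then follow by applying the conclusion to the complement $\widetilde E\eqdef\Omega\setminus E$, which is itself a volume constrained minimizer of an analogous quasi-perimeter with the same structural hypotheses. I would argue by contradiction and assume that $\haus^N(B_s(y)\cap E)>0$ for every $y\in\Omega$ and every $s>0$ with $B_s(y)\subset\Omega$. Since $P(E,\Omega)>0$ forces $\haus^N(\Omega\setminus E)>0$, the Lebesgue differentiation theorem available on PI spaces together with the fact that $\haus^N$-a.e.\ point has density one (see \cref{rem:PerimeterMMS2}) produces a point $o\in\Omega\cap E^{(0)}\cap\{\vartheta[\X,\dist,\haus^N]=1\}$. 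With $o$ as base point and the constants $\tau,\eps$ fixed as in \eqref{eq:DefTau}--\eqref{eq:DefEpsilon}, \cref{lem:ExistenceSequenceBalls} then furnishes a nested sequence $\{B_{r_i}(x_i)\}_{i\in\N}\subset A_\tau(E)$ with $r_{i+1}\in(r_i/16,3r_i/8)$; in particular $r_i\to 0$ and $v_i\eqdef\haus^N(E\cap B_{r_i}(x_i))\le 2C_E r_i^{N^2/(N-1)}\to 0$, whereas the standing contradiction hypothesis forces $v_i>0$ for every $i$.

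The core quantitative step will be a volume-preserving deformation at each small scale. For $i$ large I would consider the competitor $F_i\eqdef (E\setminus B_{r_i}(x_i))\cup B_i'$, where $B_i'\Subset B_{r_i}(x_i)$ is a metric ball with $\haus^N(B_i')=v_i$; that such $B_i'$ fits strictly inside $B_{r_i}(x_i)$ is ensured by \eqref{eq:Defab}--\eqref{eq:DefAtau}, and $F_i$ is an admissible competitor because $P(E,\partial B_{r_i}(x_i))=0$ by the definition of $A_\tau(E)$. Using \cref{lem:per_inters_general}, one decomposes $P(F_i,\Omega)=P(E,\Omega\setminus B_{r_i}(x_i))+P(B_{r_i}(x_i),E^{(1)})+P(B_i')$, and then combines the $A_\tau$-bound $P(B_{r_i}(x_i),E^{(1)})\le\tau\,v_i^{(N-1)/N}$, the Morgan--Johnson-type estimate $P(B_i')\le(C_N+\tau)\,v_i^{(N-1)/N}$ of \cref{cor:MJ-PalleVolumiPiccoli}, the $\sigma$-H\"older continuity \eqref{eq:ConditionG} of $G$ (whose contribution is $\smallO(v_i^{(N-1)/N})$ since $\sigma>(N-1)/N$ and $v_i\to 0$), and the minimality inequality $\mathscr P(E,\Omega)\le\mathscr P(F_i,\Omega)$ to obtain an upper bound of the form $P(E,B_{r_i}(x_i))\le(C_N+\bigO(\tau))\,v_i^{(N-1)/N}$. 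Pairing this upper bound with the almost-Euclidean isoperimetric inequality \cref{prop:AlmostEuclIsop} applied to $E\cap B_{r_i}(x_i)$, through the decomposition $P(E\cap B_{r_i}(x_i))=P(E,B_{r_i}(x_i))+P(B_{r_i}(x_i),E^{(1)})$, yields the matching reverse estimate $P(E,B_{r_i}(x_i))\ge(C_N-\bigO(\tau))\,v_i^{(N-1)/N}$, so that $E\cap B_{r_i}(x_i)$ turns out to be isoperimetrically nearly optimal at every small scale $r_i$.

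The hard part, and the main obstacle I foresee, is to upgrade this near-optimality holding only along the discrete scales $\{r_i\}$ into a genuine contradiction with the standing assumption $v_i>0$ for every $i$. The strategy to overcome it is to deploy the continuous one-parameter family of balls $\{B(s)\}_{s\in(0,r_0]}$ produced by \cref{lem:CurvaDiPalle} and to exploit the continuity of $s\mapsto\haus^N(B(s)\cap E)$ and of $s\mapsto\nchi_{B(s)}$ in $L^1(\haus^N)$ to interpolate between consecutive radii $r_{i+1}$ and $r_i$. Applying the intermediate value theorem to a suitably chosen continuous functional of $s$, built from $\haus^N(B(s)\cap E)$ together with a normalized perimeter contribution, one selects an intermediate scale, reruns the competitor construction of the previous paragraph there, and refines the chain of estimates so that the minimality inequality becomes strict. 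This yields a volume-preserving competitor $F$ with $\mathscr P(F,\Omega)<\mathscr P(E,\Omega)$, contradicting the minimality of $E$ and closing the proof.
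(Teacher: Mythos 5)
There is a genuine gap, and you correctly sensed it in your last paragraph. The competitor $F_i=(E\setminus B_{r_i}(x_i))\cup B_i'$, with $B_i'$ a ball inside $B_{r_i}(x_i)$ of matching volume, is exactly the competitor that already gets used \emph{inside} the proof of \cref{lem:ExistenceSequenceBalls}, and the nested balls that this lemma outputs are, by construction, precisely the scales at which that competitor argument does \emph{not} close. More concretely: your two inequalities $P(E,B_{r_i}(x_i))\le(C_N+2\tau+\smallO(1))\,v_i^{(N-1)/N}$ (from minimality against $F_i$) and $P(E,B_{r_i}(x_i))\ge(C_N(1-\eps-\bar C r_i)-\tau)\,v_i^{(N-1)/N}$ (from \cref{prop:AlmostEuclIsop}) are perfectly compatible for every $i$ since the upper bound exceeds the lower one by $\bigO(\eps+\tau)\,v_i^{(N-1)/N}$ of the correct sign. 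The dominant term $C_N v_i^{(N-1)/N}$ cancels, so ``$E\cap B_{r_i}(x_i)$ is isoperimetrically nearly optimal'' is an observation, not a contradiction, and no choice of intermediate scale via the continuous family $B(s)$ and the intermediate value theorem can manufacture a strict inequality, because the failure is in the choice of competitor, not in the choice of scale.

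The paper's resolution is to move the removed volume far away and to pay for it with the Deformation Lemma rather than with a ball. After obtaining the nested family and the limit point $w$ of the centers, the proof passes to a connected component $\Omega'$ of $\Omega\setminus\{w\}$ with $P(E,\Omega')>0$, picks a density-one point $o'$ of $E$ there (either from \cref{lem:VolumeDecayEstimate} applied to $\Omega\setminus E$ in Case 1, or from an explicit interior ball produced by a path-following argument in Case 2), and builds a second one-parameter family $K(t)$ (resp.\ $Q(t)$) with $0<\haus^N(K(t)\setminus E)\le L_2 t^{N^2/(N-1)}$, disjoint from $B(r_0)$. The competitor is $F=(E\setminus B(s))\cup K(g(s))$ with $g(s)$ chosen to preserve volume. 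The point is that by \eqref{eq:main_claim2NEW} the perimeter cost of the union is $(C_{K,N,R}/g(s))\,\haus^N(K(g(s))\setminus E)$, and the super-$N$-th power volume decay at both $o$ and $o'$ forces $v(s)^{1/N}/g(s)\lesssim\tau/C_{K,N,R}$, so this cost is $\lesssim\tau\,v(s)^{(N-1)/N}$ --- an order of magnitude smaller than the $C_N v(s)^{(N-1)/N}$ your ball $B_i'$ must pay. That is what creates the strict gain $\mathscr P(F,\Omega)\le\mathscr P(E,\Omega)-3\tau\,v(s)^{(N-1)/N}$ and the contradiction. The passage to the component $\Omega\setminus\{w\}$ is not cosmetic: it is what guarantees $\haus^N(\Omega'\setminus E)>0$ and keeps the two families of balls disjoint. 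So your steps through \cref{lem:ExistenceSequenceBalls} are right, but the key idea --- that one must relocate the volume to a density-one region where the Deformation Lemma makes the perimeter cost effectively linear in volume --- is the missing ingredient.
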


\begin{proof}
It suffices to show that $E$ has exterior points in $\Omega$. Indeed, by \cref{rem:MinimoComplementare}, once we know that volume constrained minimizers have exterior points, we can apply this fact to the complement $\Omega\setminus E$, thus getting that $E$ also has interior points. So let us assume by contradiction that $\haus^N(E \cap B_r(x))>0$ for any $B_r(x)\subset \Omega$.

By hypotheses, we can take $o \in E^{(0)}\cap \Omega \cap \{\vartheta[\X,\dist,\haus^N]=1\}$ and define $A_\tau(E)$ together with all the parameters in \eqref{eq:DefAtau}, ending up in the hypotheses of \cref{lem:ExistenceSequenceBalls}. This yields a sequence of balls $\{B_{r_i}(x_i)\}_{i \in \N} \subset A_\tau(E)$ such that $B_{r_{i+1}}(x_{i+1}) \subset B_{r_i}(x_i)$ and $r_{i+1}\in \left(\tfrac{r_i}{16},\tfrac38 r_i \right)$. Also we can assume that $B_{r_0}(x_0)\Subset\Omega$ and $r_0<\rho(o,\eps)$, where $\rho(o,\eps)$ is given by \cref{prop:AlmostEuclIsop}. Given such a sequence, let $\{B(s) \st s \in (0,r_0]\}$ be the one-parameter family of balls given by \cref{lem:CurvaDiPalle}. Observe that $r_i/32 \le r_{i+1}/2\le {\rm rad}(B(s)) \le r_i$ for $s \in [r_{i+1},r_i]$ for any $i$. Also
\begin{equation}\label{eq:StimaVolumeBs}
\begin{split}
    0<\haus^N(E \cap B(s)) &\le \haus^N(E \cap B_{r_i}(x_i)) \le 2C_{E} r_i^{\frac{N^2}{N-1}} \le 2C_{E} 16^{\frac{N^2}{N-1}} r_{i+1}^{\frac{N^2}{N-1}} \\
    &\le 2C_{E} 16^{\frac{N^2}{N-1}} s^{\frac{N^2}{N-1}} \eqqcolon L_1  s^{\frac{N^2}{N-1}},
\end{split}
\end{equation}
where $i\in\N$ is the index such that $r_{i+1}\le s \le r_i$.

Let $w \in \Omega$ be the limit point $w\eqdef\lim_{s\to0}{\rm cent}(B(s))$ of the centers of $B(s)$. Let $\{\Omega_\ell \}$ be the family of connected components $\Omega_\ell$ of $\Omega\setminus \{w\}$. Since $\X$ is separable, such components are countably many at most. Also, since the space is geodesic, $\X$ is locally path-connected, and then $\Omega_\ell$ is open for any $\ell$. By the absurd assumption, we have that $\haus^N(E \cap \Omega_\ell)>0$ for any $\ell$.
Since $P(E,\Omega)>0$, there exists a connected component $\Omega_\ell$, in the following denoted by $\Omega'$, such that $P(E,\Omega')>0$. In particular we have that $\haus^N(\Omega'\setminus E)>0$. We now distinguish two cases, depending on whether $E$ has interior points in $\Omega'$.

\emph{Case 1.} 
Let us assume that $E$ has no interior points in $\Omega'$, i.e., $\haus^N(B_\rho(z)\setminus E)>0$ for any $B_\rho(z)\subset \Omega'$.
By \cref{rem:MinimoComplementare}, the same arguments at the beginning of the proof can be applied to the complement $\Omega\setminus E$ at some point $o' \in E^{(1)}\cap \Omega' \cap \{\vartheta[\X,\dist,\haus^N]=1\}$, which plays the role of $o$ above. Eventually, this yields a one-parameter family of balls $\{K(t) \st t \in (0,t_0]\}$ such that $K(t)\subset K(t_0)\Subset \Omega'\subset \Omega$ and
\begin{equation}\label{eq:StimaVolumeKt}
    0<\haus^N(K(t)\setminus E ) = \haus^N((\Omega \setminus E) \cap K(t) ) \le L_2 t^{\frac{N^2}{N-1}},
\end{equation}
for some $L_2>0$, where the first inequality follows since $E$ has no interior points in $\Omega'$. Moreover $t\mapsto {{\rm rad}(K(t))} \in \R$ and $t \mapsto \nchi_{K(t)} \in L^1(\haus^N)$ are continuous. Also, up to decreasing $r_0$ and $t_0$, we can assume that $B(r_0)$ and $K(t_0)$ are located at positive distance.

For $R=\max\{r_0,t_0\}$, let $C_{K,N,R}$ be given by \cref{thm:MainEst}. 
Fix $\eps_0\in (0, \min\{r_0,t_0\}]$ such that
\begin{equation}\label{eq:DefEpsZero}
 C_{K,N,R}L_2^{\frac1N}\eps_0^{\frac{1}{N-1}} \le \tau,
\end{equation}
and
\[
 \haus^N(E\cap B(\eps_0)) < \haus^N(K(t_0)\setminus E ).
\]
By continuity and since ${\rm rad}(K(t))\to0$ as $t\to0^+$, for any $s \in (0,\eps_0)$ there exists a minimum value $g(s) \in (0,t_0]$ such that
\begin{equation}\label{eq:EqualityVolumes}
    \haus^N(E \cap B(s)) = \haus^N(K(g(s))\setminus E).
\end{equation}
Let $C_G,\sigma$ be given by \eqref{eq:ConditionG} where $\widetilde\Omega\Subset \Omega$ is some bounded open set such that $B(r_0),K(t_0)\Subset \widetilde\Omega$.
From now on, we fix $s \in (0,\eps_0)$ such that
\begin{equation}\label{eq:DefS}
\begin{array}{lll}
    & B(s) \in A_\tau(E),
    & C_{K,N,R}L_1^{\frac1N}s^{\frac{N}{N-1}}\le \eps_0\tau,\\
    & C_N\bar{C}{\rm rad}(B(s)) \le \frac34\tau,
    & 2^\sigma C_G\haus^N(E\cap B(s)))^{\sigma-1+\frac1N} \le \tau.
\end{array}
\end{equation}
For such an $s$, we consider the comparison set
\[
F \eqdef (E \setminus B(s) ) \cup K(g(s)).
\]
Hence $E\Delta F \Subset \Omega$ and $\haus^N(E \cap ( B(r_0) \cup K(t_0)))=\haus^N(F \cap ( B(r_0) \cup K(t_0)))$ by \eqref{eq:EqualityVolumes}. Also, since $B(s) \in A_\tau(E)$, we have
\begin{equation}\label{eq:StimaProofPuntiIntExt}
\begin{split}
        P(F,\Omega) &= P(E \cup K(g(s)), \Omega) - P(E,B(s)) + P(B(s), E^{(1)}) \\
        &\le P(E,\Omega) + \frac{C_{K,N,R}}{g(s)}\haus^N(K(g(s))\setminus E)
        - P(E,B(s)) + P(B(s), E^{(1)}) \\
        &\le P(E,\Omega) + \frac{C_{K,N,R}}{g(s)}\haus^N(K(g(s))\setminus E)
        - P(E,B(s)) + \tau \haus^N(E\cap B(s))^{1-\frac1N}
\end{split}
\end{equation}
where in the equality we used \cref{lem:per_inters_general} and \eqref{eq:DefAtau}, in the first inequality we applied \eqref{eq:main_claim2NEW} locally in $\Omega$, and in the second inequality we used \eqref{eq:DefAtau}. Now if $g(s)\le\eps_0$, by \eqref{eq:EqualityVolumes}, \eqref{eq:StimaVolumeKt}, and \eqref{eq:DefEpsZero}, we have
\[
\begin{split}
    \frac{C_{K,N,R}}{g(s)}\haus^N(K(g(s))\setminus E)
    &\le\frac{C_{K,N,R}}{g(s)}\haus^N(E\cap B(s))^{1-\frac1N}L_2^{\frac1N}g(s)^{\frac{N}{N-1}} \\
    &\le C_{K,N,R}\haus^N(E\cap B(s))^{1-\frac1N}L_2^{\frac1N}\eps_0^{\frac{1}{N-1}} \\
    &\le \tau \haus^N(E\cap B(s))^{1-\frac1N},
\end{split}
\]
and if $g(s)>\eps_0$, by \eqref{eq:EqualityVolumes}, \eqref{eq:StimaVolumeBs}, and \eqref{eq:DefS}, we have
\[
\begin{split}
    \frac{C_{K,N,R}}{g(s)}\haus^N(K(g(s))\setminus E)
    &\le\frac{C_{K,N,R}}{g(s)}\haus^N(E\cap B(s))^{1-\frac1N}L_1^{\frac1N}s^{\frac{N}{N-1}} \\
    &\le C_{K,N,R}\haus^N(E\cap B(s))^{1-\frac1N}L_1^{\frac1N}s^{\frac{N}{N-1}}/\eps_0 \\
    &\le \tau \haus^N(E\cap B(s))^{1-\frac1N},
\end{split}
\]
as well. Plugging into \eqref{eq:StimaProofPuntiIntExt} we obtain
\begin{equation*}
\begin{split}
    P(F,\Omega) &\le  P(E,\Omega) + 2 \tau \haus^N(E\cap B(s))^{1-\frac1N} - P(E,B(s)).
\end{split}    
\end{equation*}
Since $B(s)\in A_\tau(E)$ and $r_0<\rho(o,\eps)$, the almost Euclidean isoperimetric inequality in \cref{prop:AlmostEuclIsop} reads as
\[
C_N(1-\eps-\bar{C}{\rm rad}(B(s))\haus^N(E\cap B(s)))^{1-\frac1N} \le P(E\cap B(s)) = P(E,B(s)) + P(B(s),E^{(1)}).
\]
Therefore
\[
\begin{split}
 P(F,\Omega) &\le P(E,\Omega) + \left[2\tau -C_N +\eps C_N + C_N\bar{C}{\rm rad}(B(s)) \right]\haus^N(E\cap B(s))^{1-\frac1N} + P(B(s),E^{(1)}) \\
 &\le  P(E,\Omega) + \left[3\tau -C_N +\eps C_N + C_N\bar{C}{\rm rad}(B(s)) \right]\haus^N(E\cap B(s))^{1-\frac1N} \\
 &\le P(E,\Omega) - 4\tau \haus^N(E\cap B(s)))^{1-\frac1N},
\end{split}
\]
where in the last equality we used \eqref{eq:DefTau} (which implies $C_N>8\tau$), \eqref{eq:DefEpsilon}, and \eqref{eq:DefS}. Finally this implies
\[
\begin{split}
    \mathscr P(F,\Omega) 
    &\le P(E,\Omega) - 4\tau \haus^N(E\cap B(s))^{1-\frac1N} + G(E\cap \Omega) + C_G\haus^N(E\Delta F)^\sigma \\
    &\le \mathscr P(E,\Omega)   + \left(2^\sigma C_G\haus^N(E\cap B(s))^{\sigma-1+\frac1N} - 4\tau\right) \haus^N(E\cap B(s))^{1-\frac1N} \\
    &\le \mathscr P(E,\Omega) - 3\tau\haus^N(E\cap B(s))^{1-\frac1N},
\end{split}
\]
where in the last inequality we used \eqref{eq:DefS}. As $E$ has no exterior points, this implies $\mathscr P(F,\Omega) < \mathscr P(E,\Omega)$, which contradicts the volume constrained minimality of $E$ in $\Omega$.

\emph{Case 2.} It remains to consider that case where there is a ball $B_\rho(x)\Subset\Omega'$ such that $\haus^N(B_\rho(x)\setminus E)=0$. Up to decrease $r_0$, we can assume that there is $y \in E^{(0)}\cap \Omega' \setminus \overline{B(r_0)}$.
We now argue similarly as in the proof of \cref{thm:VariazioniMaggi} in order to find a suitable interior point. Since $\Omega'$ is path-connected, there is a Lipschitz curve $\gamma:[0,L]\to \Omega'$ with finite length $L$ such that $\gamma(0)=x$ and $\gamma(L)=y$, and we parametrize $\gamma$ so that its metric derivative $|\gamma'|=1$ almost everywhere (see \cref{rem:LispchitzPath}). Let
\[
r\eqdef\frac12 \min\Big\{\rho,\inf\big\{ \dist(\gamma(t),\partial\Omega')\st t \in [0,L]\big\}\Big\}>0.
\]
Consider the family of balls
\[
\left\{B_r(\gamma(t_k)) \st
t_k = \min\left\{ k\frac{r}{2}, L\right\}, \quad 
k\in \N
\right\}.
\]
By construction $\dist(\gamma(t_{k+1}) ,\gamma(t_k))\le r/2$. Hence, if $k$ satisfies $\haus^N(B_r(\gamma(t_k))\setminus E) =0$, then $\gamma(t_{k+1})$ is an interior point. Since $y \in E^{(0)}$ there exists a first index $k_0$ such that
\[
\haus^N(B_r(\gamma(t_{k_0}))\setminus E) >0.
\]
As in the proof of \cref{thm:VariazioniMaggi}, the point $\gamma(t_{k_0})$ is an interior point and $0<t_{k_0}<L$. Finally, $\dist(w,B_r(\gamma(t_{k_0})))>0$ by definition of $r$ as $w \in \partial \Omega'$.

All in all, we found a point $z \in \Omega'$ and a radius $r>0$ such that
\begin{equation}\label{eq:DefZ}
    \haus^N(B_{\frac{r}{2}}(z)\setminus E)=0,
    \qquad
    \haus^N(B_r(z)\setminus E)>0,
    \qquad
    \dist(z,\partial \Omega')\ge2r.
\end{equation}
This time we consider the one-parameter family of balls $Q(t)\eqdef B_t(z)\Subset \Omega'$ for $t\in(0, r]$, that shall play the role of the family $K(t)$ of the previous case. Up to taking a smaller $r_0$, we additionally have that $Q(r)$ and $B(r_0)$ are located at positive distance.

We now consider $\widetilde R=\max\{r_0,r\}$ and $C_{K,N,\widetilde R}$ the constant given by \cref{thm:MainEst}. Fix $\eps_1 \in (0,\min\{r/2,r_0\})$ such that
\[
\haus^N(E \cap B(\eps_1)) < \haus^N( Q(r)\setminus E).
\]
By continuity and since $ \haus^N( Q(r/2)\setminus E)= 0$, we have that for any $s \in (0,\eps_1)$ there is a minimum value $h(s)\in (r/2,r]$ such that
\begin{equation}\label{eq:EqualityVolumes2}
    \haus^N(E \cap B(s)) = \haus^N(Q(h(s))\setminus E).
\end{equation}
Let $C_G,\sigma$ be given by \eqref{eq:ConditionG} where $\widetilde\Omega\Subset \Omega$ is some bounded open set such that $B(r_0),Q(r)\Subset \widetilde\Omega$.
From now on, we fix $\tilde{s} \in (0,\eps_1)$ such that
\begin{equation}\label{eq:DefS2}
\begin{array}{lll}
    & B(\tilde{s}) \in A_\tau(E),
    &  C_{K,N,\widetilde R}L_1^{\frac1N} \tilde s^{\frac{N}{N-1}}\le \eps_1\tau,\\
    &  C_N\bar{C}{\rm rad}(B(\tilde s)) \le \frac34\tau,
    &  2^\sigma C_G\haus^N(E\cap B(\tilde s))^{\sigma-1+\frac1N} \le \tau.
\end{array}
\end{equation}
For such an $\tilde{s}$, this time we consider the comparison set
\[
\widetilde F \eqdef (E \setminus B(\tilde{s}) ) \cup Q(h(\tilde{s})).
\]
Hence $E\Delta \widetilde F\Subset \Omega$ and $\haus^N(E \cap (B(r_0)\cup Q(r) ))=\haus^N(\widetilde F \cap (B(r_0)\cup Q(r) ))$ by \eqref{eq:EqualityVolumes2}. As in \eqref{eq:StimaProofPuntiIntExt} we estimate
\begin{equation}\label{eq:StimaProofPuntiIntExt2}
    \begin{split}
        P(\widetilde F, \Omega) \le P(E,\Omega) + \frac{C_{K,N,\widetilde R}}{h(\tilde s)} \haus^N( Q(h(\tilde s)) \setminus E ) - P(E,B(s)) + \tau \haus^N(E\cap B(s))^{1-\frac1N}.
    \end{split}
\end{equation}
This time, since $h(\tilde s)>r/2\ge \eps_1$, we have
\[
\begin{split}
\frac{C_{K,N,\widetilde R}}{h(\tilde s)} \haus^N( Q(h(\tilde s)) \setminus E ) 
&\le \frac{C_{K,N,\widetilde R}}{h(\tilde s)} \haus^N(E \cap B(\tilde s) )^{1-\frac1N}L_1^{\frac1N}\tilde s^{\frac{N}{N-1}} \\
&\le C_{K,N,\widetilde R}\haus^N(E \cap B(\tilde s) )^{1-\frac1N}L_1^{\frac1N}\tilde s^{\frac{N}{N-1}} / \eps_1 \\
&\le \tau \haus^N(E\cap B(\tilde s))^{1-\frac1N},
\end{split}
\]
by \eqref{eq:EqualityVolumes2}, \eqref{eq:StimaVolumeBs}, and \eqref{eq:DefS2}. Plugging into \eqref{eq:StimaProofPuntiIntExt2} we obtain
\[
 P(\widetilde F, \Omega) \le P(E,\Omega) + 2 \tau \haus^N(E\cap B(\tilde s))^{1-\frac1N} - P(E,B(\tilde s)) .
\]
Employing \cref{prop:AlmostEuclIsop} exactly as in Case 1 and \eqref{eq:DefS2} we infer
\[
 P(\widetilde F, \Omega) \le P(E,\Omega) -4 \tau \haus^N(E\cap B(\tilde s))^{1-\frac1N}.
\]
We then conclude as in Case 1 with the estimate
\[
\begin{split}
    \mathscr P(\widetilde F,\Omega) 
    &\le P(E,\Omega) - 4\tau \haus^N(E\cap B(\tilde s))^{1-\frac1N} + G(E\cap \Omega) + C_G\haus^N(E\Delta F)^\sigma \\
    &\le \mathscr P(E,\Omega)   + \left(2^\sigma C_G\haus^N(E\cap B(\tilde s))^{\sigma-1+\frac1N} - 4\tau\right) \haus^N(E\cap B(\tilde s))^{1-\frac1N} \\
    &\le \mathscr P(E,\Omega) - 3\tau\haus^N(E\cap B(\tilde s))^{1-\frac1N},
\end{split}
\]
where in the last equality we used \eqref{eq:DefS2}. Again, this contradicts the volume constrained minimality of $E$ in $\Omega$.
\end{proof}

\subsection{Isoperimetric inequality for small volumes in PI spaces}\label{sec:SmallPI}

In this section we prove that an isoperimetric inequality for small volumes holds on a subclass of PI spaces. Such class contains for example $\CD(K,N)$ spaces, with $K\in\mathbb R$, $N<+\infty$, that have a uniform lower bound on the volumes of unit balls. The proof of the forthcoming result is an adaption to the nonsmooth context of \cite[Chapter 3: Lemma 3.1, Lemma 3.2]{Heb00}.

Before stating the first result, we give a definition. Let $(\X,\dist,\meas)$ be a metric measure space. We say that a {\em weak local $(1,1)$ non-averaged Poincar\'{e} inequality} holds on $(\X,\dist,\meas)$ if there exists $\lambda\geq 1$ such that for every $R>0$ there exists $C_P>0$ such that the following holds. For every pair of functions $(f,g)$, where $f\in L^1_{\mathrm{loc}}(\X,\meas)$ and $g$ is an upper gradient of $f$, we have 
$$
\int_{B_r(x)}|f-\overline f(x)|\de\meas \leq C_Pr\int_{B_{\lambda r}(x)}g\de\meas,\quad \forall x\in \X\;\forall r<R,
$$
where $\overline f(x):=\fint_{B_r(x)}f\de\meas$.

Notice that, taking into account \cref{rem:BGBella}, a metric measure space $(\X,\dist,\meas)$ which satisfies a {\em weak local $(1,1)$ non-averaged Poincar\'{e} inequality} and on which $\meas$ is uniformly locally doubling, a weak local $(1,1)$-Poincar\'{e} inequality holds; and thus it is a PI space.

\begin{lemma}\label{lem:EnHancedPoincare}
Let $(\X,\dist,\meas)$ be a metric measure space such that $\meas$ is uniformly locally doubling and such that it satisfies a weak local $(1,1)$ non-averaged Poincar\'{e} inequality in the sense above. Let $R>0$. Then there exists a constant $C_1$ such that the following holds. 

For every $u\in L^1_{\mathrm{loc}}(\X)$, every upper gradient $g$ of $u$,  and every $r\in (0,R]$, we have that 
\begin{equation}\label{eqn:EnHancedPoincare}
\int_X |u-\overline u_r|\de\meas\leq C_1r\int_X g\de\meas,
\end{equation}
where, for $x\in \X$ and $r>0$, $\overline u_r$ is a function defined as follows
$$
\overline u_r(x):=\fint_{B_r(x)}u\de\meas.
$$
\end{lemma}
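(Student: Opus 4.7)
My plan is a Whitney-type covering argument combined with the hypothesized non-averaged Poincaré inequality on slight enlargements of covering balls; it is the nonsmooth transcription of the strategy of \cite[Lemma 3.1]{Heb00}. At the outset I fix the scale up to which both uniform doubling and the non-averaged Poincaré inequality are in force to be slightly larger than $R$ (say $\geq 2\lambda R$), so that every invocation below is admissible for $r\in(0,R]$.

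\textbf{Step 1 (covering).} Apply \cref{lem:CoveringLemma}(ii) with $\rho=r$ and $\Omega=D=\X$ to obtain countably many points $\{x_i\}_i\subset\X$ such that the balls $B_{r/2}(x_i)$ are pairwise disjoint, $\bigcup_i B_r(x_i)=\X$, and -- by the overlap estimate \eqref{eq:StimaCoveringOverlap} with $\beta=2\lambda$ -- every point of $\X$ lies in at most $N_0$ of the balls $B_{2\lambda r}(x_i)$, where $N_0$ depends only on $R$, $\lambda$ and the doubling constants.

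\textbf{Step 2 (pointwise estimate with anchor averages).} Set the anchor averages $u_i \eqdef \fint_{B_{2r}(x_i)} u\,\de\meas$. For every $x\in B_r(x_i)$ the triangle inequality gives
\[
|u(x)-\overline u_r(x)| \leq |u(x)-u_i| + |\overline u_r(x)-u_i|.
\]
To control the second term, observe that $B_r(x)\subset B_{2r}(x_i)\subset B_{3r}(x)$, so uniform local doubling yields a constant $C^*=C^*(R)$ with $\meas(B_{2r}(x_i))\leq C^*\meas(B_r(x))$, hence
\[
|\overline u_r(x)-u_i| \leq \fint_{B_r(x)} |u(y)-u_i|\,\de\meas(y) \leq \frac{C^*}{\meas(B_{2r}(x_i))}\int_{B_{2r}(x_i)} |u-u_i|\,\de\meas.
\]
Applying the non-averaged Poincaré inequality to $u$ and its upper gradient $g$ on $B_{2r}(x_i)$ yields
\[
\int_{B_{2r}(x_i)} |u-u_i|\,\de\meas \leq 2C_P\, r\int_{B_{2\lambda r}(x_i)} g\,\de\meas.
\]
Integrating the triangle inequality over $x\in B_r(x_i)$ and using the two bounds above,
\[
\int_{B_r(x_i)} |u-\overline u_r|\,\de\meas \leq 2(1+C^*)C_P\, r\int_{B_{2\lambda r}(x_i)} g\,\de\meas.
\]

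\textbf{Step 3 (summation).} Summing over $i$ and using the bounded overlap from Step 1,
\[
\int_\X |u-\overline u_r|\,\de\meas \leq \sum_i \int_{B_r(x_i)} |u-\overline u_r|\,\de\meas \leq 2(1+C^*)C_P N_0\, r \int_\X g\,\de\meas,
\]
which is the claim with $C_1\eqdef 2(1+C^*)C_P N_0$. The only delicate point is to ensure that the doubling constant, the overlap count $N_0$, and the Poincaré constant remain uniform for $r\in(0,R]$; but this is precisely what the uniform local doubling and the weak local non-averaged Poincaré inequality deliver (applied at a scale slightly larger than $R$), so no genuine obstacle arises.
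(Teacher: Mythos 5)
Your proof is correct, and it follows the same broad strategy as the paper (cover $\X$ by the balls $B_r(x_i)$ from \cref{lem:CoveringLemma}, split $|u-\overline u_r|$ by the triangle inequality through anchor averages centered at $x_i$, control each piece by the non-averaged Poincar\'e inequality, and sum using bounded overlap). The difference lies in the decomposition. The paper follows \cite[Lemma 3.1]{Heb00} faithfully: it anchors at both $\fint_{B_r(x_i)}u$ and $\fint_{B_{\lambda r}(x_i)}u$, yielding a three-term telescoping, assumes WLOG $\lambda\geq 2$ to control the third term, and tracks two separate overlap constants (one for the $B_{\lambda r}(x_i)$ and one for the $B_{\lambda^2 r}(x_i)$). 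You instead use the single anchor $u_i=\fint_{B_{2r}(x_i)}u$, exploiting that $B_r(x)\subset B_{2r}(x_i)\subset B_{3r}(x)$ for $x\in B_r(x_i)$. This collapses the argument to a two-term estimate controlled by a single application of the Poincar\'e inequality on $B_{2r}(x_i)$, and requires only one overlap count (for the $B_{2\lambda r}(x_i)$). Your version is slightly more economical and avoids the $\lambda\geq 2$ normalization, at the small cost of needing the Poincar\'e inequality up to radius $2R$ rather than $R$ --- which, as you correctly note, is harmless because the definition of ``weak local'' delivers a uniform $C_P$ on any prescribed bounded range of scales.
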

\begin{proof}
Let us fix $u,g$ as in the statement, and let us fix $r\in (0,R]$.

From the hypotheses we know that there exist $\lambda\geq 1$ and $C_P$ such that 
$$
\int_{B_s(x)}|f-\overline f(x)|\de\meas \leq C_Ps\int_{B_{\lambda s}(x)}g\de\meas, \quad \forall x\in \X\;\forall s\leq\lambda R.
$$
From an application of item (ii) of \cref{lem:CoveringLemma} we get the existence of countably many points $\{x_i\}_{i\in\mathbb N}$ such that
\begin{equation}\label{eq:CoveringIsop}
    \begin{split}
        B_{\frac r2}(x_i) \cap B_{\frac r2}(x_j) = \emptyset& \qquad \forall\,i\neq j, \\
        \bigcup_{i\in\mathbb N} B_r(x_i) =M& , \\
    \end{split}
    \end{equation}
and, for some constants $C_1,C_2$, the following holds
    \begin{equation}\label{eq:StimaCoveringOverlapIsop}
    \begin{split}
        \sharp \left\{ \text{balls $B_{\lambda r}(x_i) \st z \in B_{\lambda r}(x_i)$} \right\} \le \max\left\{ 1,C_1(8\lambda)^{C_2} \right\}=:A_1, \\
        \sharp \left\{ \text{balls $B_{\lambda^2r}(x_i) \st z \in B_{\lambda^2r}(x_i)$} \right\} \le \max\left\{ 1,C_1(8\lambda^2)^{C_2} \right\}=: A_2.
    \end{split}
    \end{equation}
We will just sketch the the proof since it closely follows \cite[Lemma 3.1]{Heb00}. By the triangle inequality and the second equality in \eqref{eq:CoveringIsop} we have
\begin{equation}\label{eqn:Finish}
    \begin{split}
        &\int_\X|u-\overline u_r|\de\meas\\
        \leq\,&\sum_{i\in\mathbb N}\left(\int_{B_r(x_i)}|u-\overline u_r(x_i)|\de\meas+\int_{B_r(x_i)}|\overline u_r(x_i)-\overline u_{\lambda r}(x_i)|\de\meas+\int_{B_r(x_i)}|\overline u_{\lambda r}(x_i)-\overline u_r|\de\meas\right).
    \end{split}
\end{equation}
By using the first estimate in \eqref{eq:StimaCoveringOverlapIsop} and the weak local $(1,1)$ non-averaged Poincar\'{e} inequality we get
$$
\sum_{i\in\mathbb N}\int_{B_r(x_i)}|u-\overline u_r(x_i)|\de\meas \leq A_1 C_P r \int_\X g\de\meas.
$$
By using the second estimate in \eqref{eq:StimaCoveringOverlapIsop} and the weak local $(1,1)$ non-averaged Poincar\'{e} inequality we moreover obtain
$$
\sum_{i\in\mathbb N}\int_{B_r(x_i)}|\overline u_r(x_i)-\overline u_{\lambda r}(x_i)|\de\meas \leq A_2 C_P \lambda r \int_\X g\de\meas.
$$
We notice that as a consequence of the fact that $\meas$ is uniformly locally doubling we have that there exists some constant $\vartheta>0$ such that $\meas(B_{2r}(x))/\meas(B_r(x))\leq \vartheta$ for every $x\in \X$ and every $r\leq R$. Exploiting the latter inequality, arguing as in \cite[Lemma 3.1]{Heb00}, and noticing that without loss of generality we may assume $\lambda\geq 2$ we conclude that 
$$
\sum_{i\in\mathbb N}\int_{B_r(x_i)}|\overline u_{\lambda r}(x_i)-\overline u_r|\de\meas \leq A_2 \lambda r \vartheta C_P\int_\X g\de\meas.
$$
The latter three inequalities together with \eqref{eqn:Finish} conclude the proof of the Lemma with $C_1:=C_P(A_1+A_2\lambda+A_2\lambda\vartheta)$.
\end{proof}

Before proving the main result of this section we introduce a definition. Given a real number $s>0$, we say that a metric measure space $(\X,\dist,\meas)$ is {\em uniformly lower $s$-Ahlfors regular} if there exists a radius $R>0$ and a constant $C$ such that $\meas(B_r(x))\geq Cr^s$ for every $x\in \X$ and $r\in (0,R]$.

\begin{proposition}\label{prop:IsopVolumiPiccoli}
Let $(\X,\dist,\meas)$ be a metric measure space, and let $s>1$. Assume that $(\X,\dist,\meas)$ is uniformly lower $s$-Ahlfors regular, that $\meas$ is uniformly locally doubling, and that $(\X,\dist,\meas)$ satisfies a weak local $(1,1)$ non-averaged Poincar\'{e} inequality in the sense above. Then there exist $C$ and $v$ such that the following holds. 

For every set of finite perimeter $E$, the following implication holds
$$
\meas(E)\leq v \quad\Rightarrow \quad \meas(E)^{s/(s-1)}\leq CP(E).
$$
\end{proposition}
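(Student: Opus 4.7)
The plan is to mimic the classical Hebey-style argument by applying the enhanced Poincaré inequality in \cref{lem:EnHancedPoincare} to Lipschitz approximations of $\chi_E$, passing to the limit, and then choosing the radius $r$ as a function of $\meas(E)$ using the uniform lower $s$-Ahlfors regularity. Throughout, let $c_0,R_0 > 0$ be constants such that $\meas(B_r(x)) \geq c_0 r^s$ for every $x \in \X$ and every $r \in (0,R_0]$, and let $R > 0$ be the radius giving the constant $C_1$ from \cref{lem:EnHancedPoincare}.

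First I would pass from Lipschitz functions to $\chi_E$ in the enhanced Poincaré inequality. By \cref{def:BVperimetro}, choose locally Lipschitz $f_i \to \chi_E$ in $L^1_{\rm loc}(\X,\meas)$ with $\int_\X \lip f_i \de\meas \to P(E)$; since $\lip f_i$ is an upper gradient, \cref{lem:EnHancedPoincare} gives
\begin{equation*}
\int_\X |f_i - \overline{(f_i)}_r| \de\meas \leq C_1 r \int_\X \lip f_i \de\meas, \qquad \forall r \in (0,R].
\end{equation*}
Extracting a subsequence converging $\meas$-a.e.\ and noting that $\overline{(f_i)}_r(x) \to \overline{(\chi_E)}_r(x)$ for every $x$ by $L^1$-convergence on each $B_r(x)$, Fatou's lemma on the LHS yields the key estimate
\begin{equation*}
\int_\X \left| \chi_E(x) - \frac{\meas(E \cap B_r(x))}{\meas(B_r(x))} \right| \de\meas(x) \leq C_1 r P(E), \qquad \forall r \in (0,R].
\end{equation*}

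Next I would optimize $r$. Set $v \eqdef c_0 (\min\{R,R_0\})^s / 2$, and for $E$ with $\meas(E) \leq v$ define $r \eqdef (2\meas(E)/c_0)^{1/s} \in (0,\min\{R,R_0\}]$. For every $x \in E$, lower $s$-Ahlfors regularity gives
\begin{equation*}
\overline{(\chi_E)}_r(x) = \frac{\meas(E \cap B_r(x))}{\meas(B_r(x))} \leq \frac{\meas(E)}{c_0 r^s} = \frac{1}{2},
\end{equation*}
so $|\chi_E(x) - \overline{(\chi_E)}_r(x)| \geq 1/2$ on $E$. Integrating over $E$ and inserting into the previous display:
\begin{equation*}
\frac{\meas(E)}{2} \leq \int_\X |\chi_E - \overline{(\chi_E)}_r| \de\meas \leq C_1 (2\meas(E)/c_0)^{1/s} P(E),
\end{equation*}
which rearranges to $\meas(E)^{(s-1)/s} \leq C' P(E)$ for an explicit $C' = C'(C_1,c_0,s)$. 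Finally, upon further shrinking $v$ to ensure $\meas(E) \leq 1$, the fact that $s/(s-1) > (s-1)/s$ for $s > 1$ gives $\meas(E)^{s/(s-1)} \leq \meas(E)^{(s-1)/s} \leq C P(E)$ (with the same $C = C'$), which is the statement.

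The main technical point is the limit passage in the first step: one needs both $L^1_{\rm loc}$-convergence of the Lipschitz approximations and pointwise convergence of the ball-averages $\overline{(f_i)}_r(x)$ in order to invoke Fatou. This is routine but must be handled by extracting an a.e.-convergent subsequence. Everything else reduces to a careful bookkeeping of the smallness threshold $v$, whose dependence on $R,R_0,c_0$ is dictated by the requirement $r = (2\meas(E)/c_0)^{1/s} \leq \min\{R,R_0\}$.
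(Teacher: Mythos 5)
Your proof is correct and is a cleaner execution of the same Hebey-type strategy as the paper, with one genuine technical difference in the ordering of the limit passage. The paper keeps the Lipschitz approximations $f_i$ until the very end: it splits $\meas(\{|f_i|\geq 1/2\})$ into $\meas(\{|\overline{(f_i)}_r|\geq 1/4\}) + \meas(\{|f_i-\overline{(f_i)}_r|\geq 1/4\})$, shows the first term vanishes for the chosen $r$ (by lower Ahlfors regularity and the $L^1$-smallness of the $f_i$), bounds the second term by Markov's inequality together with \cref{lem:EnHancedPoincare}, and only then lets $i\to\infty$ using $\meas(\{|f_i|\geq 1/2\})\to\meas(E)$. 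You instead pass to the limit first, via Fatou (which indeed requires exactly the a.e.-subsequence extraction and ball-average convergence that you flag), obtaining the Poincar\'e-type estimate directly for $\chi_E$, and then use the elementary pointwise observation that $\chi_E(x)-\overline{(\chi_E)}_r(x)\geq 1/2$ on $E$. This avoids the level-set decomposition and the Chebyshev/Markov step altogether and is shorter and more transparent. Both proofs hinge on the same two ingredients, namely \cref{lem:EnHancedPoincare} and the scaling choice $r\sim \meas(E)^{1/s}$.

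One remark on the exponent. What both your argument and the paper's actually establish is $\meas(E)^{(s-1)/s}\leq C'P(E)$, not $\meas(E)^{s/(s-1)}\leq CP(E)$; the exponent in the statement appears to be a misprint for $(s-1)/s$. This is confirmed by the way the inequality is used later: \cref{rem:FromLambdaMintoKquasiMin} and \cref{thm:FromMinToQuasiMin} require $\beta\leq\min\{1,\sigma\}$ with $\sigma>1-1/N$, and with $s=N$ only $\beta=(s-1)/s=1-1/N$ satisfies this, while $\beta=s/(s-1)=N/(N-1)>1$ would force $\sigma>1$, which is not assumed. Your closing step of shrinking $v\leq 1$ to pass from $\meas(E)^{(s-1)/s}\leq C'P(E)$ to the strictly weaker $\meas(E)^{s/(s-1)}\leq C'P(E)$ is mathematically sound, but it patches a typo rather than something the argument needs; the useful output is the stronger inequality with exponent $(s-1)/s$, which is the one you should state.
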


\begin{proof}
From the hypotheses we have that there exist $R>0$ and $C_2>0$ such that
\begin{equation}\label{eqn:EstimateBrX}
\meas(B_r(x))\geq C_2r^s,
\end{equation}
for every $r\in (0,R]$ and every $x\in \X$.
We claim that the assertion holds with the following constants
\begin{equation}\label{eqn:ChoiceOfConstants}
C:=\frac{4\cdot 16^{1/s}C_1}{C_2^{1/s}}, \qquad v:=\frac{C_2R^s}{16},
\end{equation}
where $C_1$ is the constant given by \cref{lem:EnHancedPoincare} associated to $R$.

Let us fix $E\subset \X$ with $\meas(E)\leq v$. By definition of perimeter, and since $\nchi_E\in L^1(\X,\meas)$, we can take $\{f_i\}_{i\in\mathbb N}\in \mathrm{Lip}_{\mathrm{loc}}(\X,\dist)$ such that 
$$
f_i\to \nchi_E \quad \text{in $L^1(\X,\meas)$}, \quad \text{and} \quad \int_\X \lip f_i\,\d\mm\to P(E).
$$
Convergence in $L^1$ implies that $\meas(\{|f_i|\ge1/2\})\to \meas(E)$. Also, for any $x$ such that $|f_i(x)|\ge1/2$, either $|\overline f_{i,r}(x)|\geq 1/4$ or $|f_i(x)-\overline f_{i,r}(x)|\geq 1/4$, for any $i$ and $r>0$. Hence
\begin{equation}\label{eqn:STIMA1}
\begin{split}
\meas(\{x\in \X: |f_i(x)|\geq 1/2\})&\leq \meas(\{x\in \X:|f_i(x)-\overline f_{i,r}(x)|\geq 1/4\})
 {} \\  &\phantom{=}{}
+\meas(\{x\in \X: |\overline f_{i,r}(x)|\geq 1/4\}),
\end{split}
\end{equation}
where, for every $x\in \X$,
$$
\overline f_{i,r}(x):=\fint_{B_{r}(x)}f_i\de \meas.
$$
For $i$ large enough, since $f_i\to \nchi_{E}$ in $L^1$, we get that, for every $x\in \X$ and every $r>0$, we have
\begin{equation}
|\overline f_{i,r}(x)|\leq \frac{2\meas(E)}{\meas(B_r(x))}.
\end{equation}
Now fix $r:=(16\mm(E)/C_2)^{1/s}\leq R$, cf.\ \eqref{eqn:ChoiceOfConstants}. We have that for every $x\in \X$ the following holds
$$
\frac{2\meas(E)}{\meas(B_r(x))}\leq \frac{2\meas(E)}{C_2r^s}= \frac18 , 
$$
where in the inequality we used \eqref{eqn:EstimateBrX}, and in the equality we used the definition of $r$. Hence, if $r=(16\mm(E)/C_2)^{1/s}$ and $i$ is large enough we obtain that 
\begin{equation}\label{eqn:STIMA2}
|\overline f_{i,r}(x)|\leq \frac18,
\end{equation}
for every $x\in \X$.
 
Moreover, choosing $r$ as above, by using Markov inequality and \eqref{eqn:EnHancedPoincare}, which we can do since $\lip f_i$ is an upper gradient of $f_i$, we have 
\begin{equation}\label{eqn:STIMA3}
\begin{split}
\meas(\{x\in \X:|f_i(x)-\overline f_{i,r}(x)|\geq 1/4\})&\leq 4\int_\X|f_i-\overline f_{i,r}|\de \meas\leq  4C_1r\int_\X\lip f_i\de\meas \\
&= \frac{4\cdot 16^{1/s}C_1\mm(E)^{1/s}}{C_2^{1/s}}\int_\X\lip f_i\de\meas.
\end{split}
\end{equation}
Hence, letting $i\to+\infty$ in \eqref{eqn:STIMA1}, taking into account \eqref{eqn:STIMA2}, \eqref{eqn:STIMA3}, and the fact that $\meas(\{|f_i|\geq 1/2\})\to\meas(E)$, we conclude that 
$$
\meas(E)\leq \frac{4\cdot 16^{1/s}C_1\mm(E)^{1/s}}{C_2^{1/s}}P(E),
$$
from which the conclusion follows with the constant chosen in \eqref{eqn:ChoiceOfConstants}.
\end{proof}
\begin{remark}[\cref{prop:IsopVolumiPiccoli} can be applied to $\CD$ spaces with uniform bounds on the volumes of unit balls]\label{rem:IsoperimetricaSiPuoApplicareARCD}

As a consequence of Bishop--Gromov comparison theorem, see \cref{rem:PerimeterMMS2}, we get that all the $\CD(K,N)$ spaces $(\X,\dist,\meas)$, and thus also the $\RCD(K,N)$ spaces, with $K\in\mathbb R$ and $N<+\infty$, are uniformly locally doubling. More precisely, for every $R>0$, there exists $C$ such that 
\begin{equation}\label{eqn:EssentiallyNregular}
\frac{\meas(B_{r_2}(x))}{\meas(B_{r_1}(x))} \leq \frac{v(N,K/(N-1),r_2)}{v(N,K/(N-1),r_1)} \leq C\left(\frac{r_2}{r_1}\right)^N,\quad \forall x\in \X\;\forall r_1\leq r_2\leq R.
\end{equation}

Moreover, as a result of the work by Rajala in \cite{Rajala12}, all the $\CD(K,N)$ spaces, and thus also the $\RCD(K,N)$ spaces, with $K\in\mathbb R$ and $N\leq +\infty$ satisfy a weak local $(1,1)$ non-averaged Poincar\'{e} inequality in the sense above. In particular, by a careful inspection of the proof of \cref{lem:EnHancedPoincare}, and by the fact that the constant in the non-averaged Poincar\'{e} inequality for $\CD(K,N)$ spaces only depends on $K$ and $R$ (\cite{Rajala12}); if we apply \cref{lem:EnHancedPoincare} with $R=1$ to $\CD(K,N)$ spaces with $K\in\mathbb R$ and $N<+\infty$ the constant $C_1$ only depends on $K,N$.

In addition, as a consequence of \eqref{eqn:EssentiallyNregular}, we get that if $(\X,\dist,\meas)$ is a $\CD(K,N)$ space with $K\in\mathbb R$, $N<+\infty$, and such that there exists $v_0$ satisfying $\meas(B_1(x))\geq v_0$ for all $x\in \X$, we conclude that $(\X,\dist,\meas)$ is uniformly lower $N$-Ahlfors regular with the constant $C_2$ as in \eqref{eqn:EstimateBrX} only depending on $K,N,v_0$. 

In conclusion, the result in \cref{prop:IsopVolumiPiccoli}, with the choice $s=N$, can, and will, be applied on the class of $\CD(K,N)$ (and thus also on $\RCD(K,N)$ spaces) with $K\in\mathbb R$, $1< N<+\infty$, and with a uniform lower bound on the volumes of unit balls. Moreover, the constants $C,v$ in the statement of \cref{prop:IsopVolumiPiccoli} (cf. \eqref{eqn:ChoiceOfConstants} taking $R=1$ in there) only depend on $K,N,v_0$.

Finally notice that, for instance, all the examples discussed in the setting of \cite{AmbrosioAhlfors} fall in the hypotheses of \cref{prop:IsopVolumiPiccoli}.
\fr\end{remark}

\subsection{$(\Lambda,r_0,\sigma)$-minimality, $(K,r_0)$-quasi minimality and density estimates}\label{sec:LambdaMin}

Following the exposition in \cite[Chapter 21]{MaggiBook}, we introduce the following notions.

\begin{definition}[Quasi-perimeter minimizers and quasi minimal sets]\label{def:QuasiMinimi}
Let $(\X,\dist,\meas)$ be a metric measure space. Let $\Omega\subset \X$ be open, and let $E$ be a set of locally finite perimeter in $\X$. 

Given $\Lambda\geq 0$, $r_0>0$, and $\sigma>0$, we say that a set $E$ is a {\em $(\Lambda,r_0,\sigma)$-perimeter minimizer in $\Omega$} if for every $F\subset \X$ such that $E\Delta F\Subset B_r(x)\cap\Omega$, for some $x$ and $r<r_0$, we have
\begin{equation}\label{eq:DefLamRSigPerimeterMinimizer}
P(E,B_r(x))\leq P(F,B_r(x))+\Lambda\meas(E\Delta F)^\sigma.
\end{equation}

Given $K\geq 1$, $r_0>0$, we say that $E$ is a {\em $(K,r_0)$-quasi minimal set in $\Omega$} if for every $F\subset \X$ such that $E\Delta F\Subset B_r(x)\cap\Omega$, for some $x$ and $r<r_0$, we have
\begin{equation}\label{eq:DefKRQuasiMinimal}
P(E,B_r(x))\leq K P(F,B_r(x)).
\end{equation}
\end{definition}
Notice that in the above definition, since $E\Delta F\Subset B_r(x)\cap\Omega$, it is equivalent to require the localized inequalities
\begin{equation*}
    P(E,B_r(x)\cap \Omega)\leq P(F,B_r(x)\cap \Omega)+\Lambda\meas(E\Delta F)^\sigma,
\end{equation*}
in place of \eqref{eq:DefLamRSigPerimeterMinimizer}, and
\begin{equation*}
    P(E,B_r(x)\cap \Omega)\leq K P(F,B_r(x)\cap \Omega)
\end{equation*}
in place of \eqref{eq:DefKRQuasiMinimal}.

We are now committed to link the previous two notions in the following Remark.

\begin{remark}[$(\Lambda,r_0,\sigma)$-perimeter minimizer and Isoperimetric inequality for small volumes imply $(K,r_0')$-quasi minimal set]\label{rem:FromLambdaMintoKquasiMin}
    Let us assume $(\X,\dist,\meas)$ is a metric measure space on which there exist $v,\beta,C>0$ for which the following isoperimetric inequality for small volumes holds 
    $$
    \meas(E)\leq v \quad\Rightarrow\quad CP(E)\geq \meas(E)^\beta.
    $$
    Moreover let us assume that $f(r):=\sup_{x\in \X}\meas(B_r(x))<+\infty$, and that $f(r)\to 0$ as $r\to 0^+$.
    
    Then, on such a metric measure space $\X$ every  $(\Lambda,r_0,\sigma)$-perimeter minimizer in an open set $\Omega\subset \X$, with $\sigma\geq\beta$, is a $(K,r_0')$-quasi minimal set in $\Omega$ for some $K,r_0'$ depending on $\Lambda,r_0,\sigma,v,\beta,C,f(\cdot)$.
    
    Indeed, first of all choose $r_0'<r_0$ such that whenever $E\Delta F\Subset B_r(x)\cap \Omega$ for some $r<r_0'$ and some $F\subset \X$, we have that $\meas(E\Delta F)\leq v$. This can be done by taking $r_0'$ such that $f(r)\leq v$ on $(0,r_0']$.
    
    Then, let us take $F\subset \X$ such that $E\Delta F\Subset B_r(x)\cap\Omega$ with $r<r_0'$. Hence we have
    \begin{equation}\label{eqn:Iniziamo}
        \begin{split} 
        P(E,B_r(x))&\leq P(F,B_r(x))+\Lambda\meas(E\Delta F)^{\beta}\meas(E\Delta F)^{\sigma-\beta} \\
        &\leq P(F,B_r(x))+\Lambda C f(r)^{\sigma-\beta}P(E\Delta F,B_r(x)) \\
        &\leq P(F,B_r(x))+\Lambda C f(r)^{\sigma-\beta}\left(P(E,B_r(x))+P(F,B_r(x))\right)
        \end{split}
    \end{equation}
where in the last inequality we are using the subadditivity of the perimeter, i.e., for every $B_r(x)\subset \X$ we have
$$
P(E\Delta F,B_r(x))\leq P(E\cup F,B_r(x))+P(E\cap F,B_r(x))\leq P(E,B_r(x))+P(F,B_r(x)).
$$

Hence, from \eqref{eqn:Iniziamo} we get that 
\begin{equation}\label{eqn:Iniziamo2}
\left(1-\Lambda C f(r)^{\sigma-\beta}\right)P(E,B_r(x))\leq \left(1+\Lambda C f(r)^{\sigma-\beta}\right)P(F,B_r(x)).
\end{equation}

Hence, taking $r_0'$ smaller if needed, in such a way that $1-\Lambda C f(r_0')^{\sigma-\beta} >0$, the previous inequality tells us that $E$ is a $(K,r_0')$-quasi minimal set, with 
$$
K=\frac{1+\Lambda C f(r_0')^{\sigma-\beta}}{1-\Lambda C f(r_0')^{\sigma-\beta}}.
$$
\fr\end{remark}

From now on we shall consider $\meas=\haus^N$. Let us fix $\Omega\subset \X$ an arbitrary open set. Let us fix $\sigma>1-1/N$, and $C_G>0$ two constants. We consider
\[
G:\left\{\text{$\haus^N$-measurable sets in $\Omega$} \right\}/\sim \,\,\to (-\infty,+\infty],
\]
where $E\sim F$ if and only if $\haus^N(E\Delta F)=0$, where $E\Delta F$ denotes the symmetric difference between $E$ and $F$,
such that
\begin{equation}\label{eq:ConditionG2}
    \begin{split}
    G(\emptyset)&<+\infty,\\
    G(E)\le G(F) &+ C_G\haus^N(E\Delta F)^\sigma ,
    \end{split}
\end{equation}
for any Borel sets $E,F \subset \Omega$ such that $E\Delta F\Subset\Omega$. Notice that, in contrast with \eqref{eq:ConditionG}, we are now asking that the constants $C_G,\sigma$ are uniform with respect to the set $\widetilde\Omega$ in \eqref{eq:ConditionG}.

\begin{theorem}\label{thm:FromMinToQuasiMin}
    Let $(\X,\dist,\haus^N)$ be an $\RCD(K,N)$ space with $N\ge 2$ natural number, let $\Omega\subset \X$ be open, and let $\mathscr P=P+G$ be the quasi-perimeter restricted to $\Omega$ associated to $G$ as in \eqref{eq:ConditionG2}. Let $E\subset \X$ be a volume constrained minimizer of $\mathscr P$ in $\Omega$, see \cref{def:Minimizers}, and assume $P(E,\Omega)>0$.
    
    Hence there exist $\Lambda\geq 0$, and $r_0>0$ such that $E$ is a $(\Lambda,r_0,\min\{1,\sigma\})$-perimeter minimizer in $\Omega$, where $\sigma$ is the exponent provided by \eqref{eq:ConditionG2}. 
    
    In particular, if there exists $v_0>0$ such that $\mathcal{H}^N(B_1(x))\geq v_0$ for every $x\in \X$, there exist $K'\geq 1$, $r_0'>0$ such that $E$ is a $(K',r_0')$-quasi minimal set in $\Omega$.
\end{theorem}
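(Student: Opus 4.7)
The strategy is to convert an arbitrary competitor $F$ with $E\Delta F\Subset B_r(x)\cap\Omega$ into an admissible competitor $F'$ for the volume-constrained problem (i.e., with $\haus^N(F')=\haus^N(E)$) at a perimeter cost \emph{linear} in $|\haus^N(F)-\haus^N(E)|$. This linearity, provided by the measure-prescribing deformation of \cref{thm:VariazioniMaggi}, is precisely what yields the exponent $\min\{1,\sigma\}$ rather than the weaker Euclidean isoperimetric exponent $1-1/N$ that one would obtain by ``punching a ball'' directly.

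By \cref{thm:InteriorAndExteriorPoints}, $E\cap\Omega$ has both interior and exterior points. Applying \cref{thm:VariazioniMaggi} inside a connected bounded open set $A\Subset\Omega$ containing such points and satisfying $P(E,A)>0$, one obtains disjoint balls $B_+,B_-\Subset\Omega$ together with constants $\eta_0,C_0>0$ such that for every $\eta\in[0,\eta_0)$ there exist $\widehat F_+\supset E$, $\widehat F_-\subset E$ with $E\Delta\widehat F_\pm\subset B_\pm$, $\haus^N(\widehat F_+)-\haus^N(E)=\eta=\haus^N(E)-\haus^N(\widehat F_-)$, and $P(\widehat F_\pm,\Omega)\le P(E,\Omega)+C_0\eta$. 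Now fix $r_0>0$ small enough that $r_0<\tfrac12\min\{\dist(B_+,B_-),\dist(B_+\cup B_-,\X\setminus\Omega)\}$, and, by the Bishop--Gromov estimate $\haus^N(B_r(x))\le v(N,K/(N-1),r)$ (uniform on $\X$ since the reference measure is $\haus^N$, cf.\ \cref{rem:PerimeterMMS2}), $\haus^N(B_r(x))<\min\{\eta_0,1\}$ for every $x\in\X$ and $r<r_0$. In particular, for such $r$, the ball $B_r(x)$ meets at most one of $B_+,B_-$.

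Given $F$ with $E\Delta F\Subset B_r(x)\cap\Omega$ and $r<r_0$, set $\eta:=\haus^N(F)-\haus^N(E)$, so that $|\eta|\le\haus^N(E\Delta F)<\eta_0$. Choose the deformation ball $B_\ast\in\{B_+,B_-\}$ whose sign is opposite to $\eta$ and which is disjoint from $B_r(x)$; the residual case in which the required ball is the one meeting $B_r(x)$ is handled by re-applying \cref{thm:VariazioniMaggi} on $A_x:=\Omega\setminus\overline{B_{r_0}(x)}$, whose relevant constants remain uniformly controlled as $x$ then varies in the bounded set $\{y\in\X:\dist(y,B_+\cup B_-)\le r_0\}$. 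Define $F'$ by $F'=F$ on $\X\setminus B_\ast$ and $F'=\widehat F_\mp$ on $B_\ast$; since $F=E$ on $B_\ast$, $F'$ is well-defined, $\haus^N(F')=\haus^N(E)$, and $E\Delta F'\subset B_r(x)\cup B_\ast\Subset\Omega$. Locality of the perimeter then gives
\[
P(F',\Omega)-P(E,\Omega)=\bigl[P(F,B_r(x))-P(E,B_r(x))\bigr]+\bigl[P(\widehat F_\mp,B_\ast)-P(E,B_\ast)\bigr]\le \bigl[P(F,B_r(x))-P(E,B_r(x))\bigr]+C_0|\eta|.
\]

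Volume-constrained minimality applied to $F'$ yields $\mathscr P(E,\Omega)\le\mathscr P(F',\Omega)$, while \eqref{eq:ConditionG2} combined with $\haus^N(E\Delta F')\le 2\haus^N(E\Delta F)$ gives $G(F'\cap\Omega)-G(E\cap\Omega)\le C_G 2^\sigma\haus^N(E\Delta F)^\sigma$. Adding these and using $|\eta|\le\haus^N(E\Delta F)\le 1$ to dominate both $|\eta|$ and $\haus^N(E\Delta F)^\sigma$ by $\haus^N(E\Delta F)^{\min\{1,\sigma\}}$, one obtains
\[
P(E,B_r(x))\le P(F,B_r(x))+(C_0+C_G 2^\sigma)\haus^N(E\Delta F)^{\min\{1,\sigma\}},
\]
proving $(\Lambda,r_0,\min\{1,\sigma\})$-perimeter minimality with $\Lambda:=C_0+C_G 2^\sigma$. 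For the final assertion, the uniform volume bound $\haus^N(B_1(x))\ge v_0$ allows \cref{prop:IsopVolumiPiccoli} (see \cref{rem:IsoperimetricaSiPuoApplicareARCD}) to supply an isoperimetric inequality for small volumes with exponent $1-1/N$, and since $\min\{1,\sigma\}>1-1/N$, \cref{rem:FromLambdaMintoKquasiMin} upgrades the $(\Lambda,r_0,\min\{1,\sigma\})$-minimality to $(K',r_0')$-quasi minimality. The main technical obstacle is the residual geometric case where $B_r(x)$ intersects the deformation ball whose sign is needed; handling it requires a careful re-selection of deformation balls but no fundamentally new estimate.
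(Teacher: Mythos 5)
Your overall strategy is the same as the paper's: use the interior/exterior points from \cref{thm:InteriorAndExteriorPoints} and the measure-prescribing deformations of \cref{thm:VariazioniMaggi} to turn an arbitrary competitor $F$ into a volume-preserving competitor $F'$ at a perimeter cost linear in $|\haus^N(F)-\haus^N(E)|$, then invoke volume-constrained minimality and \eqref{eq:ConditionG2}; the passage from $(\Lambda,r_0,\min\{1,\sigma\})$-minimality to quasi-minimality via \cref{rem:FromLambdaMintoKquasiMin} and \cref{prop:IsopVolumiPiccoli} is also exactly as in the paper. The arithmetic of your estimate (the factor $2^\sigma$, the bound $\haus^N(E\Delta F')\le 2\haus^N(E\Delta F)$, the domination by $\haus^N(E\Delta F)^{\min\{1,\sigma\}}$) is correct.

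The gap is in the ``residual case.'' You build one ball $B_+$ supporting only the outward deformation and one ball $B_-$ supporting only the inward one, so that when the small ball $B_r(x)$ carrying $E\Delta F$ happens to meet precisely the ball whose sign you need, your construction breaks. Your proposed repair — re-applying \cref{thm:VariazioniMaggi} on $A_x=\Omega\setminus\overline{B_{r_0}(x)}$ and asserting that ``the relevant constants remain uniformly controlled as $x$ varies'' — is not justified: the constants $\eta_i(E,A_x)$, $C_i(E,A_x)$ produced by \cref{thm:VariazioniMaggi} come from a path-and-covering construction inside $A_x$ whose output point and radius may jump discontinuously as $A_x$ varies, and nothing in the statement or proof of that lemma gives any modulus of continuity or lower bound uniform in the domain. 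You would need to either fix a third deformation ball at definite positive distance from $B_+\cup B_-$ (so that it survives removal of $\overline{B_{r_0}(x)}$ for every $x$ in the relevant compact set) or reorganize the choice of balls, and this is precisely what you have not done.

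The paper sidesteps this entirely by a cleaner arrangement: since $P(E,\Omega)>0$, it picks two points $x_1,x_2\in\partial^e E\cap\Omega$ with $\dist(x_1,x_2)>5s$ and $B_i:=B_s(x_i)\Subset\Omega$ with $P(E,B_i)>0$, then applies \cref{thm:InteriorAndExteriorPoints} to each $B_i$ \emph{separately} (this is legitimate because $E$ is a fortiori a volume-constrained minimizer of $\mathscr P$ in $B_i$), obtaining interior \emph{and} exterior points of $E$ inside each $B_i$. Each $B_i$ therefore supports both directions of the deformation of \cref{thm:VariazioniMaggi}, with the single pair of constants $\eta:=\min_{i,j}\eta_j(E,B_i)$, $C':=\max_{i,j}C_j(E,B_i)$ fixed once and for all. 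Since $r_0\le s$ guarantees $B_r(x)$ meets at most one $B_i$, the disjoint $B_i$ always supplies whichever sign of deformation is needed, and no residual case arises. Your proof becomes complete if you replace your pair $(B_+,B_-)$ by two such ``bidirectional'' balls far apart; as written, the argument has a real hole.
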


\begin{proof}
Let $\partial^eE$ be the essential boundary of $E$. Since the perimeter measure $P(E,\cdot)$ is concentrated on $\partial^e E$, and since by hypothesis we have $P(E,\Omega)>0$, we can choose two distinct points $x_1,x_2\in \Omega\cap\partial^eE$. Hence, by exploiting \cref{prop:RelativeIsoperimetricInequality}, we can find a radius $s>0$ such that the balls $B_1\eqdef B_s(x_1)$ and $B_2\eqdef B_s(x_2)$ satisfy
$$
\min\{P(E,B_1),P(E,B_2)\}>0,
$$ 
$\dist(x_1,x_2)>5s$, and $B_1,B_2\Subset \Omega$. 
Since a fortiori $E$ is a volume constrained minimizer of $\mathscr{P}$ in $B_1,B_2$, by applying \cref{thm:InteriorAndExteriorPoints} we have that $E\cap B_i$ contains both exterior and interior points for $i=1,2$.

In the notation of \cref{thm:VariazioniMaggi} we define 
$$
\eta:=\min\{\eta_1(E,B_1),\eta_2(E,B_1),\eta_1(E,B_2),\eta_2(E,B_2)\},
$$
and $C':=\max\{C_1(E,B_1),C_2(E,B_1),C_1(E,B_2),C_2(E,B_2)\}$. By Bishop--Gromov volume comparison, together with the fact that the density of $\haus^N$ is $\leq 1$ everywhere, there exists $R>0$ such that $\sup_{x\in \X}\mathcal{H}^N(B_r(x))<\eta$ for every $r\in (0,R]$. Let us take $r_0:=\min\{s,R\}$. Let us show that $E$ is a $(\Lambda,r_0,\min\{1,\sigma\})$-perimeter minimizer in $\Omega$ for some choice of $\Lambda$ that will be clear through the proof. 

Take $F$ such that $E\Delta F\Subset B_r(x)\cap\Omega$ for some $r<r_0$. By the choice of $r_0$, the ball $B_r(x)$ can intersect at most one $B_i$ as a consequence of the triangle inequality. Let us assume without loss of generality that $B_r(x)$ does not intersect $B_1$. Moreover, by the choice of $r_0$, we have $\tau:=\mathcal{H}^N(E\cap B_r(x))-\mathcal{H}^N(F\cap B_r(x))\leq \mathcal{H}^N(E\Delta F)< \eta$. Then, from \cref{thm:VariazioniMaggi}, there exists $F_1$ such that $E\Delta F_1\Subset B_1$ and 
\begin{equation}\label{eqn:EstimateLeiLei}
\mathcal{H}^N(F_1\cap B_1)=\mathcal{H}^N(E\cap B_1)+\tau,\qquad P(F_1,B_1)\leq C'|\tau|+P(E,B_1).
\end{equation}
Let us then consider the competitor $\widetilde F:=(F_1\cap B_1)\cup (F\cap B_r(x)) \cup (E\setminus (B_r(x)\cup B_1))$. Since $\widetilde F\Delta E \Subset B_r(x)\cup B_1 \subset \Omega$, and $\mathcal{H}^N(\widetilde F\cap (B_r(x)\cup B_1))=\mathcal{H}^N(E\cap (B_r(x)\cup B_1))$, by hypotheses we get that
\begin{equation}\label{eqn:LAFINE}
    \begin{split}
        P(E,\Omega)+G(E\cap \Omega) &\leq P(\widetilde F,\Omega)+G(\widetilde F\cap  \Omega)= P(\widetilde F,B_1)+P(\widetilde F,\Omega\setminus B_1)+G(\widetilde F\cap\Omega) \\
        &\leq P(E,B_1)+C'|\tau|+P(F,\Omega\setminus B_1)+G(\widetilde F\cap\Omega)  \\
        &\leq P(F,\Omega)+C'|\tau|+G(E\cap\Omega)+C_G\mathcal{H}^N(\widetilde F\Delta E)^\sigma \\
        &\leq P(F,\Omega)+C'\mathcal{H}^N(E\Delta F)+G(E\cap\Omega)+2^\sigma C_G\mathcal{H}^N(E\Delta F)^\sigma \\
        &\leq P(F,\Omega)+G(E\cap\Omega)+\Lambda\mathcal{H}^N(E\Delta F)^{\min\{1,\sigma\}},
    \end{split}
\end{equation}
where in the second inequality we are using the locality of the perimeter, together with the fact that $\mathcal{H}^N((\widetilde F\Delta F_1)\cap B_1)=0$, $\mathcal{H}^N((\widetilde F\Delta F)\cap (\Omega\setminus B_1))=0$, and the estimate \eqref{eqn:EstimateLeiLei}; in the third inequality we are using the hypothesis on $G$ and again the locality of the perimeter together with $\mathcal{H}^N((E\Delta F)\cap B_1)=0$; in the fourth inequality we are using that $\mathcal{H}^N(\widetilde F\Delta E)\leq 2\mathcal{H}^N(E\Delta F)$ from how we constructed $\widetilde F$ and the fact that $\mathcal{H}^N(F_1\Delta E)=|\tau|$ due to the fact that the variations made in \cref{thm:VariazioniMaggi} either contain or are contained in $E$; and the last inequality holds for some choice of $\Lambda$ depending on $C,C',\sigma,\eta$ since $\mathcal{H}^N(E\Delta F)\leq \eta$. Hence from \eqref{eqn:LAFINE} we get the first part of the statement. 

The last part of the statement follows from the first one and \cref{rem:FromLambdaMintoKquasiMin} since the hypotheses of the Remark are met due to the validity of the isoperimetric inequality for small volumes established in \cref{prop:IsopVolumiPiccoli}, cf. \cref{rem:IsoperimetricaSiPuoApplicareARCD}, and the Bishop--Gromov comparison for volumes.
\end{proof}

In the following Remark we discuss a Bishop--Gromov comparison for perimeters in metric measure spaces $(\X,\dist,\mathcal{H}^N)$ that are $\RCD(K,N)$ spaces.
\begin{remark}\label{rem:ImprovedEstimatePerimeter}
On every $(\X,\dist,\mathcal{H}^N)$ that is an $\RCD(K,N)$ space we have that 
\begin{equation}\label{eqn:ComparisonPerimeter}
P(B_r(x))\leq s(N,K/(N-1),r), \quad \text{for all $x\in \X$ and every $r>0$}.
\end{equation}
Indeed, by \cite[Corollary 2.14]{DePhilippisGigli18} we have that 
$$
\lim_{r\to 0^+}\frac{\mathcal{H}^N(B_r(x))}{\omega_Nr^N}= \lim_{r\to 0^+}\frac{\mathcal{H}^N(B_r(x))}{v(N,K/(N-1),r)}\leq 1,
$$
for all $x\in \X$. Hence, by Bishop--Gromov monotonicity and the coarea formula one deduces 
$$
\frac{P(B_r(x))}{s(N,K/(N-1),r)}\leq \frac{\mathcal{H}^N(B_r(x))}{v(N,K/(N-1),r)}\leq 1,
$$
for every $x\in \X$. 
\fr\end{remark}

The following proposition can be obtained combining the observation in \cref{rem:FromLambdaMintoKquasiMin} and the result in \cite[Theorem 4.2]{KKLS13}. Nevertheless, we give here a more direct proof which is heavily inspired by \cite[Theorem 21.11]{MaggiBook}, and uses the Bishop--Gromov comparison results. First we recall the definition of Ahlfors regular set (with respect to a measure).

\begin{definition}[Ahlfors regular set]\label{def:Ahlfors}
Let $(\X,\dist)$ be a metric space and $\mu$ be a Borel measure on it. Let $\Omega\subset \X$ be an open set. We say that a set $S\subset \X$ is {\em $k$-Ahlfors regular in $\Omega$ with respect to $\mu$} if there exist constants $C\geq 1$ and $r_0>0$ such that
$$
C^{-1}r^k\leq \mu(B_r(x))\leq Cr^k,
$$
for every $x\in S$ and every $r<r_0$ such that $B_r(x)\Subset \Omega$. We say that $S\subset \X$ is {\em locally $k$-Ahlfors regular in $\Omega$ with respect to $\mu$} if for any ball $B\subset \Omega$ it holds that $S\cap B$ is $k$-Ahlfors regular in $B$ with respect to $\mu$.

We say that $S\subset \X$ is {\em $k$-Ahlfors regular in $\Omega$} if there exist constants $C\geq 1$ and $r_0>0$ such that 
$$
C^{-1}r^k\leq \mathcal{H}^k(S\cap B_r(x))\leq Cr^k,
$$
for every $x\in S$ and every $r<r_0$ such that $B_r(x)\Subset\Omega$. We say that $S\subset \X$ is {\em locally $k$-Ahlfors regular in $\Omega$} if for any ball $B\subset \Omega$ it holds that $S\cap B$ is $k$-Ahlfors regular in $B$.
\end{definition}

\begin{proposition}\label{prop:FromQuasiMinToOpen}
Let $(\X,\dist,\haus^N)$ be an $\RCD(K,N)$ space with $N\ge 2$ natural number, such that $\mathcal{H}^N(B_1(x))\geq v_0$ for every $x\in \X$. Let $\Omega\subset \X$ be open, and let $E$ be a $(\Lambda,r_0,\sigma)$-perimeter minimizer in $\Omega$ for some $\Lambda\geq 0$, $r_0>0$, and $\sigma>1-1/N$. Then there exist $0\leq C_1:=C_1(N,K,v_0)\leq 1$, $C_2:=C_2(N,K,v_0)\geq 1$, and $r_0':=r_0'(\Lambda,r_0,\sigma,N,K,v_0)$ such that 
\begin{equation}\label{eqn:ConclusionQuasiMin}
    \begin{split}
        C_1&\leq \frac{\mathcal{H}^N(E\cap B_r(x))}{\mathcal{H}^N(B_r(x))}\leq 1-C_1, \\
        C_2^{-1}&\leq \frac{P(E,B_r(x))}{r^{N-1}}\leq C_2,
    \end{split}
\end{equation}
whenever $x\in\Omega\cap\partial E^{(1)}$, and $B_r(x)\subset \Omega$ with $r<r_0'$. 

In particular, $E^{(1)}\cap \Omega$ is open, $\partial^e E \cap \Omega = \partial E^{(1)} \cap \Omega$, and $\partial E^{(1)}$ is $(N-1)$-Ahlfors regular in $\Omega$ with respect to the perimeter measure $P(E,\cdot)$.
\end{proposition}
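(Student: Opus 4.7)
The plan is to establish the two-sided density and perimeter estimates \eqref{eqn:ConclusionQuasiMin} at every $x\in\partial E^{(1)}\cap\Omega$ via a differential-inequality argument driven by the $(\Lambda,r_0,\sigma)$-perimeter minimality combined with the small-volume isoperimetric inequality of \cref{prop:IsopVolumiPiccoli} (applicable here by \cref{rem:IsoperimetricaSiPuoApplicareARCD}), and then to deduce the topological consequences.

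Fix $x\in\partial E^{(1)}\cap\Omega$ and set $m(r)\eqdef\mathcal{H}^N(E\cap B_r(x))$, $m_c(r)\eqdef\mathcal{H}^N(B_r(x)\setminus E)$. Since $x\in\overline{E^{(1)}}\cap\overline{\X\setminus E^{(1)}}$, a short argument using the definitions of $E^{(1)}$, $E^{(0)}$ and $\partial^e E$ yields $m(r),m_c(r)>0$ for every $r>0$. For a.e.\ $r$ small with $B_r(x)\Subset\Omega$, one has $P(E,\partial B_r(x))=0$ and $\mathcal{H}^{\rm cod\text{-}1}(\partial^e E\cap\partial^e B_r(x))=0$ (cf.\ \cref{rem:QuasiOgniPuntoRaggio}); for such $r$, testing the $(\Lambda,r_0,\sigma)$-minimality against $F\eqdef E\setminus B_r(x)$ in an enclosing ball $B_R(x)\Subset\Omega$ with $r<R<r_0$, and using \cref{lem:per_inters_general} to split
$$P(E\setminus B_r(x),B_R(x))=P(E,B_R(x)\setminus B_r(x))+P(B_r(x),E^{(1)}),$$
I obtain after cancelling $P(E,B_R(x)\setminus B_r(x))$ the key differential inequality
\begin{equation*}
P(E,B_r(x))\leq m'(r)+\Lambda\,m(r)^{\sigma},
\end{equation*}
where $m'(r)=P(B_r(x),E^{(1)})$ by the coarea formula.

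I next feed $E\cap B_r(x)$ into the small-volume isoperimetric inequality, legitimate because $m(r)\le C_{K,N}r^N$ by Bishop--Gromov, and use $P(E\cap B_r(x))=P(E,B_r(x))+m'(r)$ (\cref{cor:per_inters_ball}) together with the differential inequality to get
\begin{equation*}
m(r)^{(N-1)/N}\le C_{\mathrm{iso}}\bigl(2m'(r)+\Lambda\,m(r)^{\sigma}\bigr).
\end{equation*}
Since $\sigma>1-1/N$ and $m(r)\to 0$ uniformly in $x$, the $\Lambda$-term is absorbed into half the left-hand side once $r<r_0'(\Lambda,r_0,\sigma,N,K,v_0)$, and integrating the resulting $m(r)^{(N-1)/N}\lesssim m'(r)$ from $0$ (using $m(0)=0$) gives the lower density estimate $m(r)\ge c_1 r^N$. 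The complementary upper bound $m(r)\le(1-C_1)\mathcal{H}^N(B_r(x))$ is obtained by running the same argument on $\X\setminus E$, which is itself a $(\Lambda,r_0,\sigma)$-perimeter minimizer and satisfies $m_c(r)>0$ at the chosen $x$. The perimeter upper bound is immediate from the differential inequality with $m'(r)\le P(B_r(x))\le s(N,K/(N-1),r)\le Cr^{N-1}$ (\cref{rem:ImprovedEstimatePerimeter}) and $N\sigma>N-1$, passing from a.e.\ $r$ to every $r$ by monotonicity along good radii $r'\searrow r$; the perimeter lower bound follows by plugging the two-sided density bounds into the relative isoperimetric inequality \cref{prop:RelativeIsoperimetricInequality}.

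From the density estimates, any $x\in\partial E^{(1)}\cap\Omega$ has density of $E$ in $[C_1,1-C_1]$, so $x\in\partial^e E$; simultaneously this forbids any $y\in E^{(1)}\cap\Omega$ from lying in $\partial E^{(1)}$, proving $E^{(1)}\cap\Omega$ is open and $\partial E^{(1)}\cap\Omega\subset\partial^e E\cap\Omega$. Conversely, if some $x\in\partial^e E\cap\Omega$ lay in the interior of $\X\setminus E^{(1)}$, then Lebesgue differentiation on the PI space $(\X,\sfd,\mathcal{H}^N)$ (which gives $\mathcal{H}^N(E\setminus E^{(1)})=0$) would force $\mathcal{H}^N(B_\varepsilon(x)\cap E)=0$, contradicting $\overline{D}(E,x)>0$; hence $\partial^e E\cap\Omega=\partial E^{(1)}\cap\Omega$. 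Since $P(E,\cdot)$ is concentrated on $\partial^e E$, the perimeter bounds are precisely the $(N-1)$-Ahlfors regularity of $\partial E^{(1)}$ in $\Omega$ with respect to $P(E,\cdot)$. The main technical obstacle is the careful bookkeeping in the derivation of the differential inequality, especially the use of \cref{lem:per_inters_general} in the enclosing ball to isolate the perimeter contributions of $E$ and of $B_r(x)$, together with ensuring that the absorption radius $r_0'$ depends only on the structural constants $K,N,v_0$ (which relies on the $v_0$-uniform constants in \cref{prop:IsopVolumiPiccoli}).
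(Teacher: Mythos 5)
Your proof is correct and follows essentially the same strategy as the paper's: test $(\Lambda,r_0,\sigma)$-minimality against $E\setminus B_r(x)$ in a slightly larger ball, split perimeters via \cref{lem:per_inters_general} and \cref{cor:per_inters_ball}, feed $E\cap B_r(x)$ into the small-volume isoperimetric inequality (\cref{prop:IsopVolumiPiccoli}, \cref{rem:IsoperimetricaSiPuoApplicareARCD}), absorb the $\Lambda$-term using $\sigma>1-1/N$ and Bishop--Gromov, integrate the resulting ODE to get the lower density bound, pass to the complement for the upper one, and conclude the perimeter bounds from \cref{rem:ImprovedEstimatePerimeter} and \cref{prop:RelativeIsoperimetricInequality}. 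The only cosmetic difference is that you keep the additive $\Lambda m(r)^\sigma$ term throughout and absorb it directly, whereas the paper first rewrites the $(\Lambda,r_0,\sigma)$-minimality in the multiplicative quasi-minimal form $(1-k(r))P(E,B_r)\le(1+k(r))P(F,B_r)$ (as in \cref{rem:FromLambdaMintoKquasiMin}) before proceeding; the outcome and the dependence of $C_1,C_2,r_0'$ on the structural constants are the same.
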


\begin{proof}
Without loss of generality we can assume that $\haus^N(E\Delta\Omega)>0$.
The representative of $E$ given by $E^{(1)}$ satisfies 
$$
\min\big\{\mathcal{H}^N(B_r(x)\cap E^{(1)}),\mathcal{H}^N(B_r(x)\setminus E^{(1)})\big\}>0,
$$
for every $x\in \Omega\cap \partial E^{(1)}$.
Let us denote by $C$ the constant for which the isoperimetric inequality for small volumes holds, see \cref{prop:IsopVolumiPiccoli}, and \cref{rem:IsoperimetricaSiPuoApplicareARCD}. Let us moreover notice that $\sup_{x\in \X}\mathcal{H}^N(B_r(x))\leq v(N,K/(N-1),r)=:V(r)$ for every $r>0$ from Bishop--Gromov comparison theorem and the fact that the density of $\mathcal{H}^N$ is everywhere $\leq 1$. Hence, from \eqref{eqn:Iniziamo2}, we have that 
\begin{equation}\label{eqn:ControlQuasi}
\left(1-\Lambda C V(r)^{\sigma-(N-1)/N}\right)P(E,B_r(x))\leq \left(1+\Lambda C V(r)^{\sigma-(N-1)/N}\right)P(F,B_r(x)),
\end{equation}
whenever $E\Delta F\Subset B_r(x)\cap\Omega$, and $r<\min\{r_0,r_1,1\}=:r_0'$, where $r_1$ is small enough such that $V(r_1)\leq v$, where $v$ is the volume for which \cref{prop:IsopVolumiPiccoli}, see \cref{rem:IsoperimetricaSiPuoApplicareARCD}, holds; and such that $\Lambda CV(r_1)^{\sigma-(N-1)/N}\leq 1/2$. For the sake of simplicity let us call $k(r):=\Lambda C V(r)^{\sigma-(N-1)/N}$. 

Let us fix $x\in \Omega\cap \partial E^{(1)}$ and take $r<\min\{r_0',d(x,\partial\Omega)\}:=d$. Hence the function $m:(0,d)\to\mathbb R$ given by $m(r):=\mathcal H^N(B_r(x)\cap E^{(1)})$ is such that $0<m(r)<v(N,K/(N-1),d)$ for every $r\in (0,d)$ and $m'(r)=P(B_r(x),E^{(1)})$ for $\mathcal{H}^N$-a.e. $r\in (0,d)$ due to the coarea formula. For almost every $r\in (0,d)$ we have $P(E,\partial B_r(x))=0$, and $m$ is differentiable at $r$. Take such an $r$, and take $s\in (r,d)$. Hence, using \eqref{eqn:ControlQuasi} evaluated at the radius $s$ with the competitor $F:=E^{(1)}\setminus B_r(x)$, exploiting \cref{lem:per_inters_general}, and taking $s\to r^+$ we get 
\begin{equation}\label{eqn:CiSiamoEh}
(1-k(r))P(E,B_r(x))\leq (1+k(r))P(B_r(x),E^{(1)})\leq (1+k(r))s(N,K/(N-1),r),
\end{equation}
    where in the last inequality we are using the inequality in \eqref{eqn:ComparisonPerimeter}. Hence, since $k(r)\leq 1/2$ and $r<1$, from the inequality in \eqref{eqn:CiSiamoEh} we get that there exists a constant $C_2'$, depending only on $K$ and $N$, such that 
\begin{equation}\label{eqn:Finale1}
P(E,B_r(x))\leq C_2'r^{N-1}.
\end{equation}

Adding $(1-k(r))P(B_r(x),E^{(1)})$ to the first inequality of \eqref{eqn:CiSiamoEh}, taking into account \cref{lem:per_inters_general} and the fact that $k(r)\leq 1/2$, we conclude that 
$$
(2C)^{-1}m(r)^{(N-1)/N}\leq (1-k(r))P(E\cap B_r(x))\leq 2P(B_r(x),E^{(1)})=2m'(r),
$$
where the first inequality comes from the fact that $V(r)\leq V(r_1)\leq v$, and then we can use the isoperimetric inequality for small volumes proved in \cref{prop:IsopVolumiPiccoli}, see \cref{rem:IsoperimetricaSiPuoApplicareARCD}, on the set $E\cap B_r(x)$. Integrating the previous inequality, which we can do since $m(r)$ is positive and bounded on $(0,d)$, we get that there exists a constant $C_1'$, only depending on the constant $C$, such that
\begin{equation}\label{eqn:ThePrevious1}
\mathcal{H}^N(B_r(x)\cap E)\geq C_1'r^N.
\end{equation}
Since $\mathcal{H}^N(B_r(x))\leq V(r)\leq C_3r^{N}$ for some constant $C_3$ only depending on $N,K$, since $r<1$, the previous inequality directly implies
\begin{equation}\label{eqn:ThePrevious}
\frac{\mathcal{H}^N(B_r(x)\cap E)}{\mathcal{H}^N(B_r(x))}\geq C_1,
\end{equation}
for some constant $C_1\leq 1$ depending on $C_3$ and $C_1'$. Applying \eqref{eqn:ThePrevious} to $\Omega\setminus E$, which is still a $(\Lambda,r_0,\sigma)$-perimeter minimizer in $\Omega$, and noticing that the constant $C_1$ in fact only depends on $K,N,v_0$, we get the first of \eqref{eqn:ConclusionQuasiMin}.

Finally, from the relative isoperimetric inequality in \cref{prop:RelativeIsoperimetricInequality}, \eqref{eqn:ThePrevious1} and the analogue of \eqref{eqn:ThePrevious1} applied to $\Omega\setminus E$, we get, for some constant $C_{\mathrm{RI}}$ only depending on $N,K,v_0$, that 
\begin{equation}\label{eqn:IsopIsop}
P(E,B_r(x))\geq C_{\mathrm{RI}}\min\{\mathcal{H}^N(B_r(x)\cap E)^{(N-1)/N},\mathcal{H}^N(B_r(x)\setminus E)^{(N-1)/N}\}\geq C_2''r^{N-1}.
\end{equation}
Taking into account \eqref{eqn:Finale1} and \eqref{eqn:IsopIsop} we thus have the second bound in \eqref{eqn:ConclusionQuasiMin} for some constant $C_2$ only depending on $N,K,v_0$.

In order to conclude the proof, observe that for any $y \in E^{(1)}\cap \Omega \subset \overline{E^{(1)}} \cap \Omega$, \eqref{eqn:ConclusionQuasiMin} implies that $y\not\in \partial E^{(1)}$, for otherwise the density of $E$ at $y$ would be different from $1$. Hence $E^{(1)}\cap\Omega$ has interior and, in fact, $E^{(1)}\cap\Omega={\rm int}(E^{(1)})\cap\Omega$ is open. The same argument works on the set of density zero points $E^{(0)}\cap \Omega$, which turns out to be open. Therefore
\[
\Omega=(E^{(1)}\cap \Omega  )\sqcup 
(E^{(0)}\cap \Omega )\sqcup 
(\partial^eE \cap \Omega)
= (E^{(1)}\cap \Omega  )\sqcup 
(E^{(0)}\cap \Omega )\sqcup 
(\partial E^{(1)} \cap \Omega),
\]
and $\partial^e E\cap \Omega =\partial E^{(1)} \cap \Omega $. Finally, by \eqref{eqn:ConclusionQuasiMin} it follows that $\partial E^{(1)}$ is $(N-1)$-Ahlfors regular in $\Omega$ with respect to the measure $P(E,\cdot)$.
\end{proof}

We are ready to prove \cref{thm:MainIntro2}, after the next final observation.
\begin{remark}[Locality]\label{rem:Locality}
The results in \cref{thm:FromMinToQuasiMin} and \cref{prop:FromQuasiMinToOpen} have obvious local counterparts. Namely, if $\overline{B} \subset \X$ is a closed ball in an $\RCD(K,N)$ space $(\X,\dist,\haus^N)$, by compactness there holds a lower bound on the measure of unit balls centered at points in $\overline{B}$. Also, a relative isoperimetric inequality holds by \cref{prop:RelativeIsoperimetricInequality}, which gives a local counterpart of \cref{prop:IsopVolumiPiccoli}. This is enough to deduce local versions of \cref{thm:FromMinToQuasiMin} and \cref{prop:FromQuasiMinToOpen} holding on $\overline{B}$ with constants depending on the local lower bound on the measure of unit balls.
\fr\end{remark}

\begin{proof}[Proof of \cref{thm:MainIntro2}]
By \cref{thm:InteriorAndExteriorPoints}, $E\cap \Omega$ has both interior and exterior points. Applying locally  \cref{thm:FromMinToQuasiMin} and \cref{prop:FromQuasiMinToOpen}, taking into account \cref{rem:Locality}, it follows that $E^{(1)}\cap B \cap \Omega$ is open, $\partial^e E \cap B \cap \Omega = \partial E^{(1)} \cap B \cap \Omega $, and $ \partial E^{(1)} \cap B$ is $(N-1)$-Ahlfors in $B\cap \Omega$ with respect to $P(E,\cdot)$, for any ball $B$. Therefore the first part of the statement follows by the previous discussion, and taking into account that whenever $E$ is a set of finite perimeter in an $\RCD(K,N)$ space $(\X,\dist,\mathcal{H}^N)$, then $P(E,\cdot)=\mathcal{H}^{N-1}\llcorner \partial^eE$. Indeed, this follows by putting together the representation given in \cite[Theorem 5.3]{Ambrosio02} and the recent one contained in \cite[Corollary 4.2]{BPS19}.

If also \eqref{eq:ConditionG2} holds and there holds a uniform positive lower bound on the measure of unit balls, then the second part of the statement directly follows by applying \cref{prop:FromQuasiMinToOpen}.
\end{proof}

\subsection{Bounded representatives}\label{sec:Boundedness}

We first prove that volume constrained minimizers of quasi-perimeters are bounded, and then we conclude the proof of the main results of the paper putting together the results in the previous sections.

\begin{theorem}\label{thm:Boundedness}
Let $(\X,\dist,\mathcal{H}^N)$ be an $\RCD(K,N)$ space such that there exists $v_0$ for which $\mathcal{H}^N(B_1(x))\geq v_0$ for all $x\in \X$. Let $G$ satisfy the bound in \eqref{eq:ConditionG2Intro}, and let $\mathscr{P}=P+G$ be the quasi-perimeter associated to it.
    
Let $E$ be a volume constrained minimizer for $\mathscr{P}$, with $P(E)>0$, see \cref{def:Minimizers}. Hence there exists a bounded set $\widetilde E$ such that $\mathcal{H}^N(\widetilde E\Delta E)=0$.
\end{theorem}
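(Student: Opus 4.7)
The plan is to adapt the classical ODE comparison argument (cf.\ \cite[Proposition 3.7]{RitRosales04}, \cite[Lemma 13.6]{MorganBook}, \cite[Theorem 3]{Nar14}, \cite[Appendix B]{AFP21}) to our nonsmooth setting, using the tools developed in the paper. Fix any $\bar x \in \X$ and set $V(r) := \mathcal{H}^N(E \setminus B_r(\bar x))$; this is non-increasing, bounded above by $\mathcal{H}^N(E)<+\infty$, and tends to $0$ as $r \to +\infty$. By the coarea formula one has $-V'(r) = P(B_r(\bar x), E^{(1)})$ for a.e.\ $r>0$. The goal is to show that $V(r_*) = 0$ for some finite $r_*$, which produces the bounded representative $\widetilde E := E^{(1)} \cap B_{r_*}(\bar x)$. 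Observe first that, since \eqref{eq:ConditionG2Intro} implies \eqref{eq:ConditionGIntro} and $E$ is a fortiori a volume constrained minimizer of $\mathscr P$ in $\X$, \cref{thm:MainIntro2} applies (with $\Omega = \X$, which is connected because $\RCD(K,N)$ spaces are geodesic): $E$ has both interior and exterior points and admits the open representative $E^{(1)}$.

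Next, I would apply \cref{thm:VariazioniMaggi}(i) to $E$ with $A = \X$ (the hypotheses $P(E,\X)>0$ and $E\cap\X=E$ having interior points are both satisfied), obtaining a fixed ball $B_\ast \Subset \X$ and constants $\eta_\ast, C_\ast > 0$ such that for every $\eta \in [0, \eta_\ast)$ there is $F_\eta \supset E$ with $E \Delta F_\eta \subset B_\ast$, $\mathcal{H}^N(F_\eta) = \mathcal{H}^N(E) + \eta$, and $P(F_\eta) \le P(E) + C_\ast \eta$. For $r$ large enough so that $B_\ast \Subset B_r(\bar x)$ and $V(r) < \eta_\ast$, I consider the volume-preserving competitor
\[
F_r := F_{V(r)} \cap B_r(\bar x).
\]
A direct check (using $F_{V(r)} = E$ outside $B_\ast$ and $B_\ast \subset B_r(\bar x)$) gives $\mathcal{H}^N(F_r) = \mathcal{H}^N(E)$, so $F_r$ is admissible for comparison against $E$ in the global definition of volume constrained minimizer.

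The key step is then the estimate of $P(F_r)$. For a.e.\ $r$ with $P(E, \partial B_r(\bar x)) = P(F_{V(r)}, \partial B_r(\bar x)) = 0$, applying \cref{cor:per_inters_ball} to $F_{V(r)}$ and using that $F_{V(r)} = E$ outside $B_\ast$ (hence on $\partial^e B_r(\bar x)$), one obtains
\[
P(F_r) \le P(E) + C_\ast V(r) - P(E, \X \setminus B_r(\bar x)) - V'(r).
\]
Combining with the global minimality of $E$ against $F_r$, and with the estimate $G(F_r) - G(E) \le 2^\sigma C_G V(r)^\sigma$ coming from \eqref{eq:ConditionG2Intro} together with $\mathcal{H}^N(E \Delta F_r) \le 2V(r)$, I obtain
\[
P(E, \X \setminus B_r(\bar x)) \le C_\ast V(r) + 2^\sigma C_G V(r)^\sigma - V'(r).
\]

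To conclude, I would invoke the isoperimetric inequality for small volumes on $\RCD(K,N)$ spaces with a uniform lower bound on the volumes of unit balls, i.e.\ \cref{prop:IsopVolumiPiccoli} via \cref{rem:IsoperimetricaSiPuoApplicareARCD}: for $V(r)$ smaller than a fixed threshold,
\[
V(r)^{\frac{N-1}{N}} \le C_{\mathrm{iso}}\, P(E \setminus B_r(\bar x)) = C_{\mathrm{iso}}\bigl[P(E, \X \setminus B_r(\bar x)) - V'(r)\bigr] \le C_{\mathrm{iso}}\bigl[C_\ast V(r) + 2^\sigma C_G V(r)^\sigma - 2 V'(r)\bigr].
\]
Since $\sigma > 1 - \tfrac1N$ and $V(r) \to 0$, for $r$ large both $V(r)$ and $V(r)^\sigma$ are absorbed into $\tfrac12 V(r)^{(N-1)/N}$, yielding $-V'(r) \ge c\, V(r)^{(N-1)/N}$ for some $c>0$. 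The standard comparison $\tfrac{d}{dr}V(r)^{1/N} \le -c/N$ then forces $V(r_*) = 0$ at some finite $r_*$, finishing the proof. The main technical obstacle lies in the careful perimeter bookkeeping used to obtain the bound on $P(F_r)$: one has to ensure that the boundary contribution on $\partial B_r(\bar x)$ contributes exactly $P(B_r(\bar x),E^{(1)})= -V'(r)$ (via \cref{cor:per_inters_ball}) and that $P(F_{V(r)}, \X \setminus B_r(\bar x))$ equals $P(E, \X \setminus B_r(\bar x))$ because $F_{V(r)}$ coincides with $E$ outside $B_\ast$; once this identity is in place, the ODE argument is routine.
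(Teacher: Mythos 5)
Your proposal is correct and follows essentially the same route as the paper: both arguments bound $A(r) = P(E,\X\setminus B_r(\bar x))$ by $|V'(r)|$ plus lower-order terms in $V(r)$ using a volume-restoring deformation in a fixed bounded region, then combine with the small-volume isoperimetric inequality of \cref{prop:IsopVolumiPiccoli} to run the ODE comparison $\frac{d}{dr}V^{1/N}\le -c/N$. The only cosmetic difference is that you invoke \cref{thm:VariazioniMaggi} to produce the fixed ball $B_\ast$ and the family $F_\eta$, whereas the paper applies \cref{thm:MainEst} directly to add a ball $B_{\widetilde R}(\bar x)$ centered at an interior point $\bar x$ with $\widetilde R\in(R,R+1)$ — the same mechanism, just one abstraction level apart.
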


\begin{proof}
Let $E$ be as in the statement. Let $\bar x\in \X$ be an interior point, which always exists by \cref{thm:InteriorAndExteriorPoints}. Let $R:=\sup\{s\in[0,+\infty):\mathcal{H}^N(E\cap B_s(\bar x))=\mathcal{H}^N(B_s(\bar x))\}$, and notice that $0<R<+\infty$ because $\bar x$ is an interior point and $P(E)>0$.

Let, for every $r>0$,
\[
V(r)\eqdef \mathcal{H}^N(E \setminus B_r(\bar x)), 
\qquad
A(r)\eqdef P(E, \X\setminus B_r(\bar x)).
\]
Since $\mathcal{H}^N(E)<+\infty$, there exists $r_0>0$ such that for any $r\ge r_0$ the volume $V(r)$ is sufficiently small to apply the isoperimetric inequality for small volumes proved in \cref{prop:IsopVolumiPiccoli}, cf. \cref{rem:IsoperimetricaSiPuoApplicareARCD}, on the set $E\setminus B_r(\bar x)$. In particular, for almost every $r\ge r_0$ we can write, by using the coarea formula and \cref{cor:per_inters_ball},
\[
| V'(r) | + A(r) = P(B_r(\bar x),E^{(1)}) + P(E, \X\setminus B_r(\bar x)) = P(E \setminus B_r(\bar x)) \ge C^{-1} V(r)^{\frac{N-1}{N}},
\]
where $C$ is the constant in \cref{prop:IsopVolumiPiccoli}, cf. \cref{rem:IsoperimetricaSiPuoApplicareARCD}.
We want to prove that
\begin{equation}\label{eq:GoalBddIsopRegion}
    A(r) \le | V'(r) |  + C_1 V(r)+C_2V(r)^\sigma ,
\end{equation}
for some constants $C_1,C_2$, and for almost every $r$ sufficiently big, where $\sigma>1-1/N$ is the coefficient in \eqref{eq:ConditionG2Intro}. Combining the previous two inequalities, in this way we would get, for almost every sufficiently big radii $r$, that
\[
C^{-1}V(r)^{\frac{N-1}{N}} \le  C_1 V(r)  + C_2V(r)^\sigma + 2| V'(r) | \le \frac{C^{-1}}{2}V(r)^{\frac{N-1}{N}} - 2 V'(r),
\]
because $| V'(r) | = - V'(r) $ and $ C_1 V(r)+C_2V(r)^\sigma\le \tfrac{C^{-1}}{2}V(r)^{\frac{N-1}{N}} $ for almost every sufficiently big radius $r$ because $\sigma>1-1/N$. Hence ODE comparison implies that $V(r)$ vanishes at some $r=\overline{r}<+\infty$, i.e., $E$ has a bounded representative, that is the sought claim.

So we are left to prove \eqref{eq:GoalBddIsopRegion}. From \eqref{eq:main_claim2NEW}, we have that there exist $\eps_0>0$ and $C_1>0$ such that for any $\eps\in (0,\eps_0)$ there is a radius $R<\widetilde R<R+1$ such that
\begin{equation}\label{eq:LocalModificationsSetsY}
    \mathcal{H}^N(E\cup B_{\widetilde R}(\bar x))=\mathcal{H}^N(E)+\eps,
    \qquad
    P(E\cup B_{\widetilde R}(\bar x)) \le P(E)+ C_1\eps.
\end{equation}
Notice that one can choose $\varepsilon_0:=\mathcal{H}^N(B_{R+1}(\bar x)\setminus E)$, and $C_1:=C_{K,N,R+1}/R$, where the constant $C_{K,N,R+1}$ is the one given in the estimate \eqref{eq:main_claim2NEW}. Moreover, notice that given $\varepsilon\in(0,\varepsilon_0)$, the choice of $R<\widetilde R<R+1$ is done in such a way that $\mathcal{H}^N(B_{\widetilde R}(\bar x)\setminus E)=\varepsilon$. 

Now consider any $r>R+1$ such that $V(r)<\eps_0$, and set $\eps:=V(r)$. Then there is $R<\widetilde R<R+1$ satisfying \eqref{eq:LocalModificationsSetsY}. Define $\widetilde{F}= (E\cup B_{\widetilde R}(\bar x))\cap B_r(\bar x)$, so that
\begin{equation}
\begin{split}
\mathcal{H}^N(\widetilde{F}) &= \mathcal{H}^N(E\cup B_{\widetilde R}(\bar x)) - \mathcal{H}^N(E\cup B_{\widetilde R}(\bar x)\setminus B_r(\bar x)) \\&= \mathcal{H}^N(E\cup B_{\widetilde R}(\bar x)) - \mathcal{H}^N(E\setminus B_r(\bar x)) = \mathcal{H}^N(E) + \eps - \eps = \mathcal{H}^N(E).
\end{split}
\end{equation}
By using the fact that $r>R+1>\widetilde R$, \cref{lem:per_inters_general}, and in particular its consequence in \cref{cor:per_inters_ball}, we get that for almost every choice of $r>R+1$ with $V(r)<\varepsilon_0$ we can perform the previous choice of $\widetilde F$ such that it holds that
\begin{equation}\label{eqn:Firstin}
\begin{split}
P(\widetilde{F}) =\,& P(E\cup B_{\widetilde R}(\bar x),B_r(\bar x)) + P(B_r(\bar x),E^{(1)}\cup B_{\widetilde R}(\bar x)) \\
=\,&P(E\cup B_{\widetilde R}(\bar x)) - P(E\cup B_{\widetilde R}(\bar x),\X\setminus B_r(\bar x)) - P(E\cup B_{\widetilde R}(\bar x),\partial B_r(\bar x)) \\
&{}+ P(B_r(\bar x),E^{(1)}\cup B_{\widetilde R}(\bar x)) \\
\leq\,& P(E\cup B_{\widetilde R}(\bar x)) - P(E\cup B_{\widetilde R}(\bar x),\X\setminus B_r(\bar x)) + P(B_r(\bar x),E^{(1)}\cup B_{\widetilde R}(\bar x)) \\
\leq\,& P(E)+C_1\varepsilon -A(r)+|V'(r)|,
\end{split}
\end{equation}
where in the last inequality we are using \eqref{eq:LocalModificationsSetsY}, and the facts that $P(B_r(\bar x),E^{(1)}\cup B_{\widetilde R}(\bar x))=P(B_r(\bar x),E^{(1)})$, and $P(E\cup B_{\widetilde R}(\bar x),\X\setminus B_r(\bar x))=P(E,\X\setminus B_r(\bar x))$, because $r>R+1>\widetilde R$. Notice moreover that by the hypothesis on $G$ we have, for some constants $C_G>0$ and $\sigma>1-1/N$, that the following holds
\begin{equation}\label{eqn:Secondin}
\begin{split}
G(\widetilde F)&\leq G(E)+C_G\mathcal{H}^N(E\Delta \widetilde F)^\sigma=G(E)+C_G\mathcal{H}^N(B_{\widetilde R}(\bar x)\setminus E\cup E\setminus B_r(\bar x))^\sigma  \\
&= G(E)+C_G(2\varepsilon)^\sigma,
\end{split}
\end{equation}
where in the last equality we are using that $\mathcal{H}^N(B_{\widetilde R}(\bar x)\setminus E)=\varepsilon$, and $\mathcal{H}^N(E\setminus B_r(\bar x))=V(r)=\varepsilon$.
Finally, since $E$ is a volume constrained minimizer we estimate
\[
\begin{split}
P(E)+G(E)
& \le P(\widetilde{F}) +G(\widetilde F) \le P(E) + C_1\eps - A(r) + |V'(r)| +G(\widetilde F)   \\
&\le P(E)+G(E)+C_1\varepsilon+ C_G(2\varepsilon)^\sigma -A(r)+|V'(r)|,
\end{split}
\]
where in the first inequality we are using \eqref{eqn:Firstin}, and in the second inequality we are using \eqref{eqn:Secondin}.
Hence we obtained that for almost every $r$ sufficiently big we have $A(r) \le |V'(r)| +  C_1 V(r)+C_G2^\sigma V(r)^\sigma$. Hence we see that \eqref{eq:GoalBddIsopRegion} holds for almost every $r$ sufficiently big, and with the choice $C_2=C_G2^\sigma$. Therefore, the proof is concluded.
\end{proof}

\begin{remark}
We stress that the previous technique to prove \cref{thm:Boundedness} is very likely to be adapted, under the same hypotheses on $\X$, for quasi-perimeters $\mathscr{P}$ restricted to open sets $\Omega$, where $\Omega$ satisfies the volume noncollapsedness condition $\inf_{x\in\Omega}\mathcal{H}^N(B_1(x)\cap\Omega)\geq v_0>0$, and where the volume constrained minimizer $E$ in $\Omega$ is such that $\mathcal{H}^N(E\cap\Omega)<+\infty$. Since this level of generality is out of the scope of this paper, we will not treat this case.
\fr\end{remark}

\begin{proof}[Proof of \cref{cor:FINALEIntro}]
The proof is a direct consequence of \cref{thm:Boundedness},  \cref{thm:FromMinToQuasiMin}, and \cref{prop:FromQuasiMinToOpen}.
\end{proof}

\begin{proof}[Proof of \cref{cor:Manifold}]
By assumptions, $E$ is a volume constrained minimizer in any ball $B\subset \Omega\setminus \partial M$. Also, for any $x \in \Omega \setminus\partial M$ there exists $r>0$ such that $B_r(x)$ has smooth boundary, it is diffeomorphic to a ball in $\R^n$, and $\partial B_r(x)$ is convex. Hence, by \cite[Theorem 1.1, Corollary 2.5, Corollary 2.6]{Han20}, it follows that $(\overline{B}_r(x),\dist_g,\vol)$ is an $\RCD(k,N)$ for some $k\in \R$ depending on $x$ and $r$, where $\dist_g$ is the geodesic distance, and $\vol$ the Riemannian volume measure on $M$. Therefore \cref{thm:MainIntro2} applies for $E$ on any such $B_r(x)$. Moreover, in any such ball we can apply \cref{thm:FromMinToQuasiMin}, which implies that $E$ is a $(\Lambda,r_0,\min\{1,\sigma\})$-perimeter minimizer. Since $\min\{1,\sigma\}>1-1/N$, we are in position to apply the classical regularity theory on $\Lambda$-perimeter minimizers developed in \cite[Theorem 1]{Tama82}, suitably adapted to smooth Riemannian manifolds. This completes the proof of the first part of the statement.

Assuming now either i) or ii) in the statement, by \cite{CorderoMcCann, SturmVonRenesse}, and \cite[Theorem 1.1, Corollary 2.5, Corollary 2.6]{Han20} we have that $(M,\dist,\mathcal{H}^N)$ is an $\RCD(K,N)$ space. Therefore the second part of the statement immediately follows from \cref{thm:MainIntro2} and \cref{cor:FINALEIntro}.
\end{proof}

\printbibliography

\end{document}